\newcommand\reallywidehat[1]{%
\savestack{\tmpbox}{\stretchto{%
  \scaleto{%
    \scalerel*[\widthof{\ensuremath{#1}}]{\kern-.7pt\bigwedge\kern-.7pt}%
    {\rule[-\textheight/2]{1.5ex}{\textheight}}
  }{\textheight}%
}{0.9ex}}%
\stackon[2.25pt]{#1}{\tmpbox}%
}
\newcommand{\KK}{\mathbb{K}}
\newcommand{\I}{\mathbb  I}
\newcommand{\RR}{\mathbb  R}
\newcommand{\NN}{\mathbb  N}
\newcommand{\Z}{\mathbb  Z}
\newcommand{\C}{\mathbb  C}
\newcommand{\bra}[1]{{\llbracket {#1} \rrbracket}} 
\newcommand{\ind}[1]{\mathds{1}_{{#1}}}
\newcommand{\Tra}[1]{{\rm T}_{{#1}}}
\numberwithin{equation}{section}
\newtheorem{theorem}[equation]{Theorem}
\newtheorem{definition}[equation]{Definition}
\newtheorem{proposition}[equation]{Proposition}
\newtheorem{cor}[equation]{Corollary}
\newtheorem{lemma}[equation]{Lemma}
\newtheorem{claim}[equation]{Claim}
\theoremstyle{definition}
\newtheorem{remark}[equation]{Remark}
\DeclareMathOperator{\supp}{supp}
\DeclareMathOperator{\Rea}{Re}
\DeclareMathOperator{\N}{\mathbb{N}}
\DeclareMathOperator{\coe}{c}
\DeclareMathAlphabet{\mathpzc}{OT1}{pzc}{m}{it}
\def\be#1{\begin{equation}\label{#1}}
\def\bas{\begin{align*}}
\def\eas{\end{align*}}
\def\bi{\begin{itemize}}
\def\ei{\end{itemize}}
\begin{document}
\title[Polynomial progressions in topological fields]{Polynomial progressions in topological fields}

\author{Ben Krause}
\address[Ben Krause]{
School of Mathematics,
University of Bristol, Bristol, BS8 1UG, England}
\email{ben.krause@bristol.ac.uk}

\author{Mariusz Mirek }
\address[Mariusz Mirek]{
Department of Mathematics,
Rutgers University,
Piscataway, NJ 08854-8019, USA 
\&
Instytut Matematyczny,
Uniwersytet Wroc{\l}awski,
Plac Grunwaldzki 2/4,
50-384 Wroc{\l}aw,
Poland}
\email{mariusz.mirek@rutgers.edu}

\author{Sarah Peluse}
\address[Sarah Peluse]{
Department of Mathematics,
Stanford University\\
450 Serra Mall \\
Building 380 \\
Stanford, CA 94305, USA}
\email{speluse@stanford.edu}

\author{James Wright}
\address[James Wright]{Maxwell Institute of Mathematical Sciences and the School of Mathematics, University of
Edinburgh, JCMB, The King's Buildings, Peter Guthrie Tait Road, Edinburgh, EH9 3FD, Scotland}
\email{j.r.wright@ed.ac.uk}

\newcommand{\ben}[1]{{\color{red}{#1}}}
\newcommand{\mariusz}[1]{{\color{blue}{#1}}}
\newcommand{\sarah}[1]{{\color{green}{#1}}}

\maketitle

\begin{abstract} Let $P_1, \ldots, P_m \in \KK[{\rm y}]$ be polynomials with distinct degrees, no
constant terms and coefficients in a general local field $\KK$. We give a
quantitative count of the number of polynomial progressions $x, x+P_1(y), \ldots, x + P_m(y)$
lying in a set $S\subseteq \KK$ of positive density. The proof relies on a general $L^{\infty}$
inverse theorem which is of independent interest. This inverse theorem implies a Sobolev improving
estimate for multilinear polynomial averaging operators which in turn implies our
quantitative estimate for polynomial progressions. This general Sobolev inequality has the potential to be
applied in a number of problems in real, complex and $p$-adic analysis.
\end{abstract}

\section{Introduction}\label{intro}
Szemer\'edi's famous theorem \cite{S} states that any set $S$ of integers with positive
(upper) density must necessarily contain arbitrarily long arithmetic
progressions. Quantitative versions have been obtained by several authors, first by
Roth~\cite{Roth} for three-term arithmetic progressions and by Gowers~\cite{G} in general,
with the current best bounds due to Bloom and Sisask~\cite{BloomSisask}, Kelley and Meke~\cite{KM}
in the three-term case, and Leng, Sah and Sawhney~\cite{LSS} for longer progressions (see also
Green and
Tao~\cite{GT}, and Gowers~\cite{G}). More generally, one can consider polynomial
progressions $x, x+P_1(y), \ldots, x + P_m(y)$ for $x,y \in {\mathbb Z}$ with $y\not= 0$
where $P_j \in {\mathbb Z}[{\rm y}]$ is a sequence of polynomials with integer
coefficients and no constant terms (the case of arithmetic progressions corresponding to
linear polynomials).  Bergelson and Leibman \cite{BL}, extending earlier work of
Bergelson, Furstenberg and Weiss \cite{BFW}, generalised Szemer\'edi's theorem to
polynomial progressions. Obtaining quantitative versions of Bergelson and Leibman's result
has been a challenging problem and no progress (outside a few results on 2-term
progressions) has been made until very recently.

Inspired by the earlier work of Bergelson, Furstenberg and Weiss, Bourgain obtained a
quantitative lower bound on the count of 3-term polynomial progressions in the setting of
the real field ${\mathbb R}$. He accomplished this by coupling a technique he developed in
his work on arithmetic progressions \cite{BO}, together with fourier-analytic methods.

\begin{theorem}[Bourgain \cite{B}]\label{bourgain} Given $\varepsilon>0$, there exists a $\delta(\varepsilon)>0$ such
that for any $N\ge 1$ and measurable set $S \subseteq [0,N]$ satisfying $|S\cap [0,N]|\ge \varepsilon N$, we have
\begin{align}\label{bourgain-3-term}
\bigl|\{(x,y) \in [0,N]\times[0,N^{1/d}] : x, x+y, x + y^d \in S \}\bigr| \ \ge \ \delta N^{1+1/d}.
\end{align}
In particular we have the existence of a triple $x, x+y$ and $x+y^d$ belonging to $S$ with $y$ satisfying
the gap condition $y \ge \delta N^{1/d}$.
\end{theorem}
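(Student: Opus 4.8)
The plan is to run a density-increment argument in physical space, powered by a Fourier-analytic dichotomy. First I would normalize by rescaling $[0,N]$ to $[0,1]$, writing $f = \mathds{1}_S$ with $\int_0^1 f \ge \varepsilon$, and introduce the trilinear counting form
\begin{align*}
\Lambda(f_0,f_1,f_2) = \int\!\!\int f_0(x) f_1(x+y) f_2(x+y^d)\, \eta(y)\, dx\, dy,
\end{align*}
where $\eta$ is a smooth bump adapted to the window $y \sim 1$ (after the rescaling, $y \in [0,N^{1/d}]$ becomes $y \in [0,1]$, and one restricts further to a dyadic subwindow $y \sim \lambda$ for a parameter $\lambda$ to be optimized). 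The main term one wants to extract is $c\, (\int f)^3 \gtrsim c\varepsilon^3$, arising from the ``diagonal'' behavior where $f$ is replaced by its average; the task is to show that either this main term survives or $f$ correlates with a structured object on a shorter interval.

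Second, I would prove the key dichotomy: \emph{either} $|\Lambda(f,f,f) - c(\int f)^3| $ is small (so we are done, since then $\Lambda(f,f,f) \gtrsim \varepsilon^3 > 0$), \emph{or} $f$ has an $L^\infty$-type concentration that can be upgraded to a genuine density increment on a subinterval of length $\sim N^{\alpha}$ for some $\alpha < 1$. The mechanism is the classical one from Bourgain's work on arithmetic progressions \cite{BO}: decompose $f = f_{\text{low}} + f_{\text{high}}$ according to a Fourier cutoff at a frequency scale $M$, use van der Corput / stationary phase estimates on the oscillatory integral $\int e^{2\pi i (\xi_1 y + \xi_2 y^d)} \eta(y)\, dy$ to control the contribution of the genuinely oscillating part $f_{\text{high}}$ (here the nonlinearity $y^d$ and the distinctness of the exponents $1$ and $d$ is what makes the phase nondegenerate and gives power decay), and absorb $f_{\text{low}}$ into the main term up to an error. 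When the high-frequency part is \emph{not} negligible, a Plancherel/$TT^*$ argument shows $f$ has a large Fourier coefficient at some scale, equivalently $f$ is nearly constant on an arithmetic-progression-like (here: interval-like, after further pigeonholing) set of proportional measure, yielding the increment.

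Third, I would iterate: starting from density $\varepsilon$ on $[0,N]$, each application of the dichotomy either finishes the proof with the desired lower bound $\delta(\varepsilon) N^{1+1/d}$ (tracking how the bump $\eta$ and the subwindow scale $\lambda$ enter the constant), or passes to a subinterval of length $N' = N^{\alpha}$ with density at least $\varepsilon + c(\varepsilon)$. Since the density cannot exceed $1$, the iteration terminates after $O_\varepsilon(1)$ steps, and the only constraint is that $N$ be large enough (depending on $\varepsilon$) for all the intermediate intervals to remain long; small $N$ is handled trivially by adjusting $\delta$. A final bookkeeping step records that the surviving count on the smallest interval, rescaled back, gives $\gtrsim \delta N^{1+1/d}$ progressions, and that $\delta N^{1/d}$-many of them have the claimed gap $y \ge \delta N^{1/d}$ because the contribution of $y \le \delta N^{1/d}$ to $\Lambda$ is $O(\delta)$ and hence negligible.

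The main obstacle I expect is the oscillatory-integral input in the dichotomy step: one needs uniform power-type decay for $\int e^{2\pi i(\xi_1 y + \xi_2 y^d)}\eta(y)\, dy$ that is robust as $\xi_1,\xi_2$ range over all of frequency space, including the delicate regime where $\xi_1$ and $d\,\xi_2 y^{d-1}$ nearly cancel on the support of $\eta$ (a stationary point). Handling this requires a careful case analysis via van der Corput's lemma of order two (using $|\tfrac{d^2}{dy^2}(\xi_1 y + \xi_2 y^d)| = |d(d-1)\xi_2 y^{d-2}| \gtrsim |\xi_2|$ on $y \sim 1$), together with a separate treatment of the region where $|\xi_2|$ is small, where one instead exploits that $|\xi_1|$ must then be large for the frequency to be ``high.'' Getting the quantitative dependence clean enough that the density increment is genuinely bounded below by a function of $\varepsilon$ alone — independent of $N$ — is where the real care lies.
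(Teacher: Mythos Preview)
The paper does not prove Theorem~\ref{bourgain} itself; it is quoted as Bourgain's result from~\cite{B}. The argument the paper does develop (for the generalization Theorem~\ref{thm:main}, of which this is the case $\KK=\mathbb{R}$, $P_1(y)=y$, $P_2(y)=y^d$) is carried out in Section~\ref{appendix} and follows Bourgain's original scheme, which is \emph{not} a density increment. It is an energy pigeonhole: one introduces averaging projections $Q_t$ at a hierarchy of scales, writes the count as a main term $\int g\prod_j Q_{t_j}g\ge(\int g)^{m+1}$ (the positivity inequality~\eqref{poisson}) plus a high-frequency error bounded by the Sobolev estimate (Theorem~\ref{sobolev}) plus intermediate errors $\|Q_{t_0}g-Q_tg\|_{L^2}$, and then pigeonholes over $O(\varepsilon^{-O(1)})$ scale triples to find one where the intermediate error is small (the sum of the squares of these errors is bounded by $\|g\|_{L^2}^2$ by orthogonality). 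No subintervals, no iteration on density.

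Your identification of the oscillatory integral $\int e^{2\pi i(\xi_1 y+\xi_2 y^d)}\eta(y)\,dy$ and the order-two van der Corput bound as the analytic engine is correct, but the dichotomy you build on it is not coherent. You first say the oscillatory bound controls the contribution of $f_{\mathrm{high}}$, and then set up the branch ``when the high-frequency part is not negligible $\Rightarrow$ density increment''; but the whole content of the van der Corput input is that $f_{\mathrm{high}}$'s contribution \emph{is} negligible once the cutoff $M$ is chosen, so that branch is empty. What actually needs to be resolved is the tension in choosing $M$: large enough that the Sobolev error beats $\varepsilon^3$, yet small enough that $f_{\mathrm{low}}$ is near-constant on the $y$-window so positivity applies. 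Bourgain resolves this by the scale pigeonhole above, not by passing to a subinterval. There is also a rescaling issue in your normalization: sending $[0,N]\to[0,1]$ and $[0,N^{1/d}]\to[0,1]$ simultaneously turns the pattern into $x,\,x+N^{1/d-1}y,\,x+y^d$, so any density-increment iteration would have to track a one-parameter family of polynomials, which you have not addressed.
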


The bound \eqref{bourgain-3-term} implies a quantitative multiple
recurrence result. Only recently have there been extensions to more
general 3-term progressions $x, x +P_1(y), x + P_2(y)$; see the work
of Durcik, Guo, and Roos \cite{D+} when $P_1(y) = y$ and general $P_2$
and of Chen, Guo, and Li \cite{CGL} for general
$P_1, P_2 \in {\mathbb R}[{\rm y}]$ with distinct degrees.  The
methods in these papers, using delicate oscillatory integral operator
bounds, seem limited to 3-term progressions. 

In another direction, Bourgain and Chang \cite{BC} gave quantitative
bounds for 3-term progressions of the form $x, x+y, x+y^2$ in the
setting of finite fields ${\mathbb F}_q$. This result was extended to
more general 3-term polynomial progressions by Peluse \cite{PEL-field}
and Dong, Li, and Sawin \cite{DLS}. The techinques in these papers,
using a Fourier-analytic approach which relies on sophisticated
exponential sum bounds over finite fields, also seem limited to 3-term
progressions.

By using new ideas in additive combinatorics, by-passing the need of inverse theorems for Gowers' uniformity norms
of degree greater than 2,
Peluse \cite{PEL0} recently made a significant advance, giving quantitative bounds for general polynomial progressions 
$x, x+P_1(y), \ldots, x+P_m(y)$ in ${\mathbb F}_q$
where $\{P_1,\ldots, P_m\} \subseteq {\mathbb Z}[{\rm y}]$ are linearly independent over ${\mathbb Q}$.

Inspired by this work, Peluse and Prendiville \cite{PP} obtained the first quantitative bounds for 3-term polynomal progressions
in the setting of the integers ${\mathbb Z}$. This has been extended recently to general polynomial progressions
$x, x+P_1(y), \ldots, x + P_m(y)$ with $P_j \in {\mathbb Z}[{\rm y}] $ having distinct
degrees by Peluse \cite{PEL}. So although
the first quantitative bounds for polynomial progressions were made in the setting of the real field ${\mathbb R}$,
we have seen major advances in both the finite field ${\mathbb F}_q$ and integer ${\mathbb Z}$ settings by employing
new ideas in additive combinatorics. 

One purpose of this paper is to rectify this situation for the continuous setting by establishing quantitative bounds
for general polynomial progressions in the real field ${\mathbb R}$, bringing it in line with the recent advances 
in the finite field and integer settings. 
Another purpose is to illustrate how one can marry these new ideas in additive combinatorics with other ideas,
notably from the work of Krause, Mirek and Tao \cite{K+}, to obtain compactness results for general multilinear polynomial
averaging operators which have implications for problems in euclidean harmonic analysis. These ideas and arguments
are robust enough to allow us to obtain quantitative bounds for polynomial progressions in a general local field.

\begin{theorem}\label{thm:main} Let $\KK$ be a local field with Haar measure $\mu$.
Let ${\mathcal P} = \{P_1, \ldots, P_m\}$ be a sequence of polynomials in $\KK[{\rm y}]$ with distinct degrees and
no constant terms and let $d$ denote the largest degree among the polynomials in ${\mathcal P}$. When $\KK$
has positive characteristic, we asssume the characteristic is larger than $d$.

For any $\varepsilon>0$, there exists a $\delta(\varepsilon, {\mathcal P}) > 0$ and $N(\varepsilon, {\mathcal P}) \ge 1$ such that
for any $N \ge N(\varepsilon, {\mathcal P})$ and measurable set $S \subseteq \KK$ satisfying $\mu(S \cap B_N) \ge \varepsilon N$, we have
\begin{align}\label{mult-recurrence}
\mu\bigl(\{(x,y) \in B_N \times B_{N^{1/d}} : x, x+P_1(y),\ldots x + P_m(y) \in S \}\bigr) \ \ge \ \delta N^{1+1/d}.
\end{align}
In particular we have the existence of a progression $x, x+P_1(y), \ldots, x+ P_m(y)$ belonging to $S$ with $y$ satisfying
the gap condition $|y| \ge \delta N^{1/d}$. The proof will show that we can take $\delta = \varepsilon^{C\varepsilon^{-2m-2}}$
for some $C = C_{\mathcal P} > 0$ and $N(\varepsilon, {\mathcal P}) = \varepsilon^{-C' \varepsilon^{-2m-2}}$ for a slightly
larger $C' > C_{\mathcal P}$.
\end{theorem}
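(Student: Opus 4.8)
The plan is to reduce the counting problem \eqref{mult-recurrence} to a multilinear "Sobolev improving" estimate for the associated averaging operator, and then to prove that estimate via an $L^\infty$ inverse theorem together with a degree-lowering (van der Corput / PET-type) induction scheme, in the spirit of \cite{PEL, PP} but adapted to the topological field $\KK$ using the harmonic-analytic machinery of \cite{K+}. Concretely, introduce for scales the normalized multilinear operator
\begin{align*}
A_N(f_0, f_1, \ldots, f_m)(x) \ = \ \frac{1}{\mu(B_{N^{1/d}})} \int_{B_{N^{1/d}}} f_0(x) \, f_1(x + P_1(y)) \cdots f_m(x + P_m(y)) \, d\mu(y),
\end{align*}
so that the left side of \eqref{mult-recurrence} is $\mu(B_{N^{1/d}}) \int_{B_N} A_N(\mathbf 1_S, \ldots, \mathbf 1_S) \, d\mu$ up to normalization. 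The main term one wants to extract is the "expected" count $\asymp \delta^{m+1} N^{1+1/d}$; thus it suffices to show that $A_N(f_0, \ldots, f_m)$, integrated against a bounded function, is close to $A_N$ applied to local averages of the $f_j$ at a slightly smaller scale, with an error that is a negative power of $N$ (this is the quantitative content). Equivalently, one wants a Sobolev-improving bound $\|A_N(f_0, \ldots, f_m)\|_{W^{-s,1}} \lesssim N^{-c} \prod_j \|f_j\|_{L^\infty}$ on the $f_j$ with mean zero at scale $N$, for some $s, c > 0$ depending only on $\mathcal P$.

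The heart of the argument is the $L^\infty$ inverse theorem alluded to in the abstract: if $A_N(f_0, \ldots, f_m)$ has non-negligible integral against a bounded test function, then some $f_j$ correlates with a function that is "low complexity" at scale $N$ — in the topological-field setting this means essentially constant on translates of a small ball, i.e.\ it has a Fourier transform concentrated at low frequencies (after choosing an additive character of $\KK$ and using local compactness to run Fourier analysis on $\KK$ and its compact subgroups). First I would run the PET induction: by repeated applications of the Cauchy–Schwarz (van der Corput) inequality in the $y$-variable, one successively eliminates the higher-degree polynomials and reduces the control of $A_N$ to control of a (weighted, multilinear) Gowers-type average or a linear exponential sum, at the cost of replacing $f_j$ by finite differences $\Delta_h f_j$ and introducing extra averaging variables; here the distinct-degrees hypothesis guarantees the induction terminates with a genuinely "non-degenerate" leading term, and the characteristic $> d$ hypothesis ensures the relevant leading coefficients and Vandermonde-type determinants arising in the PET scheme are invertible in $\KK$. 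The second main ingredient is then a "degree-lowering" step: one shows that control of the top-degree box/Gowers norm can be upgraded to control of lower-degree norms, ultimately down to an $L^2$ or $U^2$ statement that can be analyzed directly by Fourier inversion on $\KK$ — and this is precisely where the local compactness of $\KK$ is used to make sense of the dual group, Haar measure, and a Plancherel theorem.

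Once the $L^\infty$ inverse theorem is in place, the deduction of \eqref{mult-recurrence} is a standard density-increment-free argument: write $\mathbf 1_S = \delta + (\mathbf 1_S - \delta)$ on $B_N$ (with the mean-zero part playing the role of $f_j$ above — some care is needed near the boundary of $B_N$, handled by restricting to a slightly smaller ball and a cutoff), expand the $(m{+}1)$-linear form multilinearly into a main term $\asymp \delta^{m+1} N^{1+1/d}$ plus $2^{m+1}-1$ error terms each containing at least one mean-zero factor. The Sobolev-improving estimate bounds each error term by $N^{-c}$ times a power of $\delta^{-1}$ (since passing from the $W^{-s,1}$ bound back to an honest count costs some derivatives on the bounded factors, producing $\delta^{-O(1)}$); choosing $N \ge \varepsilon^{-C'\varepsilon^{-2m-2}}$ makes the total error smaller than half the main term, which yields \eqref{mult-recurrence} with $\delta = \varepsilon^{C\varepsilon^{-2m-2}}$. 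The gap condition $|y| \ge \delta N^{1/d}$ follows by noting that the contribution of $\{|y| < \delta N^{1/d}\}$ to the count is at most $\mu(B_{\delta N^{1/d}}) \cdot \mu(B_N) \ll \delta \cdot N^{1+1/d}$, which is negligible compared to the lower bound just obtained. I expect the main obstacle to be the PET/degree-lowering induction over a general locally compact $\KK$: keeping the bookkeeping of scales, the mixed archimedean/non-archimedean geometry of balls $B_r$ (so that "finite differences at scale $h$" and "averaging at scale $N^{1/d}$" interact correctly), and the termination/non-degeneracy of the induction all have to be done uniformly in the field, and it is there that the hypothesis $\mathrm{char}\,\KK > d$ is genuinely needed to avoid collapse of the polynomial configuration.
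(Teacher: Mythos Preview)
Your high-level architecture---PET induction plus degree-lowering to an $L^\infty$ inverse theorem, then a Sobolev-type smoothing estimate for $A_N^{\mathcal P}$, then a deduction of the count---matches the paper's. The gap is in the final deduction.

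You propose to write $\mathbf 1_S = \varepsilon + (\mathbf 1_S - \varepsilon)$ on $B_N$, expand multilinearly, and kill each error term containing a mean-zero factor using the Sobolev estimate. This does not work: the Sobolev inequality (your ``$W^{-s,1}$ bound'') controls $A_N^{\mathcal P}$ only when one input is \emph{high-frequency at scale $\simeq \delta^{O(1)} N^{\deg P_j}$}, i.e.\ of the form $(\delta_0 - \varphi_{N_j})\ast f_j$. The balanced function $\mathbf 1_S - \varepsilon$ has mean zero only at the global scale $N^d$; it typically carries substantial Fourier mass at all intermediate scales between $N_j$ and $N^d$, and the Sobolev estimate says nothing about that part. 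There is also no $N^{-c}$ gain available: the estimate yields $\delta^{1/8}$ for a parameter $\delta$ that is tied to the frequency cutoff, not a power of $N$. Your quantitative bookkeeping (``$N \ge \varepsilon^{-C'\varepsilon^{-2m-2}}$ makes the error small'') is a symptom of this---if a genuine $N^{-c}$ bound held for mean-zero inputs, a polynomial threshold in $\varepsilon^{-1}$ would suffice, not the stated tower.

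The paper instead runs Bourgain's argument: one introduces averaging operators $Q_t f = f\ast \mu_{[t]}$ and replaces each factor $g(x+R_j(y))$ by $Q_{t/N^{d-d_j}} g(x)$ in turn. This produces (i) a main term $\int g \prod_j Q_{t_j} g \ge (\int g)^{m+1} = \varepsilon^{m+1}$ via a martingale/Jensen lower bound, (ii) error terms of the form $[\mathrm{Id} - Q_{t_0}]g$ which \emph{are} genuinely high-frequency and to which the Sobolev inequality applies, and (iii) residual terms $\|Q_{t_0}g - Q_t g\|_{L^2}$ which are \emph{not} small for arbitrary scales. These last terms are handled by a pigeonhole over $\simeq \varepsilon^{-2(m+1)}$ many scale triples $(t_0,t_1,t)$, using that $\sum_j \|Q_{t_0^{(j)}}g - Q_{t^{(j)}}g\|_2^2 \le \|g\|_2^2$ by orthogonality; this pigeonhole is what forces the threshold $N(\varepsilon,\mathcal P) = \varepsilon^{-C'\varepsilon^{-2m-2}}$ and the bound $\delta = \varepsilon^{C\varepsilon^{-2m-2}}$. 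Without this mechanism (or something equivalent) your deduction does not close.
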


When $\KK = {\mathbb R}$ is the real field, Theorem \ref{thm:main} extends the work in \cite{B}, \cite{D+} and \cite{CGL}
from 3-term polynomial progressions to general polynomial progressions albeit for large $N$, depending on
$\varepsilon$. 

When $\KK={\mathbb C}$, Theorem \ref{thm:main}
represents the first known results for complex polynomial progressions. The absolute value $|\cdot|$ used in
the statement of Theorem \ref{thm:main} is normalised so that we can express the result in this generality
(see Section \ref{local-fields}). For any sequence of complex polynomials
$\{P_1,\ldots, P_m\} \subseteq {\mathbb C}[{\rm z}]$ with distinct degrees and $P_j(0) = 0$, Theorem \ref{thm:main}
has the following consequence: given $\varepsilon>0$, there is a $\delta >0$ such that for sufficiently large $N$
and any set $S$ in the complex plane satisfying $|S\cap {\mathbb D}_N| \ge \varepsilon N^2$, we can find a 
progression of the form $w, w+ P_1(z), \ldots, w + P_m(z)$ lying in $S$ such that $|z| \ge \delta N^{2/d}$.

Important in our analysis are certain properties for $m+1$ linear forms formed from our collection
${\mathcal P} = \{P_1, \ldots, P_m\} \subseteq \KK[{\rm y}]$ of $m$ polynomials with distinct degrees,
say $1\le \deg(P_1) < \ldots < \deg(P_m) =: d$. Let $N \ge 1$ and consider the form
$$
\Lambda_{\mathcal P; N}(f_0,\ldots, f_m):=
\frac{1}{N^d} \int_{\KK^2}f_0(x)\prod_{i=1}^mf_{i}(x-P_i(y))d\mu_{[N]}(y)d\mu(x).
$$
Here $d\mu_{[N]}(y) = N^{-1} \ind{B_N(0)}(y) d\mu(y)$ is normalised measure on the ball $B_N(0)$
(we will describe notation used in the paper in Section \ref{sec:2}). 
The key result in the proof of Theorem \ref{thm:main} is the following $L^{\infty}$ inverse theorem
for $\Lambda_{\mathcal P; N}$ which is of independent interest.

\begin{theorem}[Inverse theorem for $(m+1)$-linear forms]
\label{thm:inverse-informal}
With the set-up above, let
$f_0, f_1,\ldots, f_m$ be $1$-bounded functions supported
on a ball $B\subset \KK$ of measure $N^d$. 
Suppose that
\begin{align*}
|\Lambda_{\mathcal P; N}(f_0,\ldots, f_m)|\ge\delta.
\end{align*}
Then there exists $N_1\simeq \delta^{O_{\mathcal P}(1)}N^{\deg(P_1)}$ such that
\begin{align*}
N^{-d}\big\| \mu_{[N_1]}*f_1\big\|_{L^1(\KK)} \gtrsim_{\mathcal P} \delta^{O_{\mathcal P}(1)}.
\end{align*}
\end{theorem}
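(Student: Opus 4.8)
\emph{Strategy.} The plan is to adapt to this general setting the two-phase method behind the recent quantitative results on polynomial progressions over $\mathbb{F}_q$ and $\mathbb{Z}$ (Peluse \cite{PEL0}, \cite{PEL}, Peluse--Prendiville \cite{PP}) and its multilinear-operator incarnation in Krause--Mirek--Tao \cite{K+}: a \emph{PET induction} (generalized von Neumann argument) followed by a \emph{degree-lowering} iteration that terminates at a Fourier-analytic base case. Starting from $|\Lambda_{\mathcal{P};N}(f_0,\dots,f_m)|\ge\delta$, one first runs a bounded sequence of Cauchy--Schwarz and van der Corput steps in the variable $y$ and in the auxiliary difference parameters they create, arranged so as to eliminate $f_0$ and then $f_m,f_{m-1},\dots,f_2$ in turn, leaving only $f_1$. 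The outcome is a bound $\|f_1\|_{U^s_B}\gtrsim_{\mathcal{P}}\delta^{O_{\mathcal{P}}(1)}$ for a local Gowers box-norm whose $s=O_{\mathcal{P}}(1)$ differencing parameters range over a ball of radius $\asymp N^{\deg(P_1)}$ --- this being the natural scale, since $\{P_1(y):y\in B_N\}$ fills out a ball of that radius and $f_1$ therefore genuinely ``lives'' at scale $N^{\deg(P_1)}$ inside the average. The hypotheses that the $\deg(P_i)$ are distinct and that $\operatorname{char}\KK>d$ in positive characteristic are exactly what keeps the polynomial phases produced at each PET step non-degenerate (e.g.\ $\deg P_i'=\deg P_i-1$), so that the induction closes after finitely many steps; the distinctness of the degrees is also what makes it possible to single out $f_1$.

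\emph{Degree lowering.} The second phase converts $U^s_B$-control of $f_1$ into $U^2_B$-control, \emph{without} invoking any inverse theorem for Gowers norms of degree $\ge 3$. Each step expands the $2^{s+1}$-st power of $\|f_1\|_{U^{s+1}_B}$ as an average, over one extra difference parameter $h$, of the $U^s_B$-norm of $\overline{f_1}\,f_1(\cdot-h)$ weighted by a dual function assembled from the remaining data $P_2,\dots,P_m$; for a set of $h$ of density $\gtrsim_{\mathcal{P}}\delta^{O_{\mathcal{P}}(1)}$ this weighted norm is $\gtrsim_{\mathcal{P}}\delta^{O_{\mathcal{P}}(1)}$, and a further Cauchy--Schwarz/van der Corput manipulation exploiting the low complexity of the dual weight (and feeding back into the original form $\Lambda_{\mathcal{P};N}$) peels off one degree. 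Iterating down to $s=1$ and applying the $U^2$ inverse theorem over $\KK$ --- i.e.\ Plancherel on the locally compact abelian group $(\KK,+)$ --- produces a frequency $\xi$ with $|\xi|\lesssim_{\mathcal{P}}\delta^{-O_{\mathcal{P}}(1)}N^{-\deg(P_1)}$, a character $\psi_\xi$ of that frequency, and a set $T\subseteq B$ with $\mu(T)\gtrsim_{\mathcal{P}}\delta^{O_{\mathcal{P}}(1)}N^d$, such that $f_1$ correlates with $\psi_\xi$ with density $\gtrsim_{\mathcal{P}}\delta^{O_{\mathcal{P}}(1)}$ on each ball of radius $\asymp N^{\deg(P_1)}$ centred in $T$. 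The essential point --- the real content of degree lowering --- is that the surviving frequency $\xi$ is \emph{low}: the curved averaging built into $\Lambda_{\mathcal{P};N}$ suppresses all higher frequencies.

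\emph{Conclusion and the main difficulty.} Now fix $N_1:=\delta^{C_{\mathcal{P}}}N^{\deg(P_1)}$ with $C_{\mathcal{P}}$ large enough (larger than the exponent in the bound on $|\xi|$ above) that $\psi_\xi$ is essentially constant on every ball of radius $N_1$. A routine covering argument passes the correlation from radius $\asymp N^{\deg(P_1)}$ down to radius $N_1$, after which $\psi_\xi$ may be discarded, giving $|(\mu_{[N_1]}*f_1)(t)|\gtrsim_{\mathcal{P}}\delta^{O_{\mathcal{P}}(1)}$ for all $t$ in a set of measure $\gtrsim_{\mathcal{P}}\delta^{O_{\mathcal{P}}(1)}N^d$; hence $\|\mu_{[N_1]}*f_1\|_{L^1(\KK)}\gtrsim_{\mathcal{P}}\delta^{O_{\mathcal{P}}(1)}N^d$, and dividing by $N^d=\mu(B)$ gives the theorem with $N_1\simeq\delta^{O_{\mathcal{P}}(1)}N^{\deg(P_1)}$. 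I expect the principal obstacle to be carrying out the degree-lowering phase uniformly over \emph{all} locally compact topological fields at once --- Archimedean and non-Archimedean together --- which forces every notion of ``scale'' to be expressed intrinsically through the modulus on $\KK$, replaces the oscillatory-integral estimates available for $\KK=\RR$ by the soft Cauchy--Schwarz and equidistribution inputs of the Peluse method, and requires a clean $U^2$ inverse statement and ball geometry valid in every such $\KK$. A secondary but unavoidable chore is tracking the scale $\asymp N^{\deg(P_1)}$ and the polynomial-in-$\delta$ exponents cleanly through both the PET induction and the degree-lowering loop, since these are precisely what yield the explicit bounds claimed in Theorem \ref{thm:main}.
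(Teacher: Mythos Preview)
Your high-level plan---PET induction followed by degree-lowering, in the style of Peluse \cite{PEL0,PEL} and Peluse--Prendiville \cite{PP}---is indeed the paper's strategy. But the way you have wired the two phases together contains a genuine gap.

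\textbf{PET isolates $f_m$, not $f_1$.} You write that the van der Corput/Cauchy--Schwarz iteration can be ``arranged so as to eliminate $f_0$ and then $f_m,f_{m-1},\dots,f_2$ in turn, leaving only $f_1$,'' yielding $\|f_1\|_{U^s}\gtrsim\delta^{O(1)}$ at scale $\asymp N^{\deg P_1}$. This is not how PET works here. Each van der Corput step subtracts the \emph{lowest}-degree surviving polynomial (see Proposition~\ref{prop:pet} and Proposition~\ref{prop:PETiterate}); the function that persists through the entire iteration is the one attached to the \emph{highest}-degree polynomial, namely $f_m$, and the box-norm scales that emerge are all $\asymp N^{\deg P_m}$ (Theorem~\ref{thm:Us}). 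There is no natural way to run PET so that $f_1$ survives with box-norm scale $N^{\deg P_1}$, because the other $f_i$ involve polynomials of strictly larger degree and differencing them produces scale-$N^{\deg P_m}$ information.

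\textbf{Degree-lowering applies to the dual function, not to $f_1$.} More seriously, your description of degree-lowering---expanding $\|f_1\|_{U^{s+1}}^{2^{s+1}}$ as an average of $U^s$-norms of $\Delta_h f_1$ ``weighted by a dual function'' and iterating---misses the essential mechanism. Degree-lowering is \emph{not} a procedure that reduces the Gowers-norm degree of an arbitrary function; that would amount to an inverse theorem. It works only for functions with special algebraic structure, namely the \emph{dual function}
\[
F_m^\xi(x)=\int_{\KK} f_0(x+P_m(y))\prod_{i=1}^{m-1}f_i(x-P_i(y)+P_m(y))\,{\rm e}\Big(\sum_j\xi_jQ_j(y)\Big)\,d\mu_{[N]}(y)
\]
(see \eqref{eq:55}--\eqref{eq:56}). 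The paper first shows, via PET applied with $f_m$ replaced by $\overline{F_m^\xi}$, that $\|F_m^\xi\|_{\square^{s+1}}\gtrsim\delta^{O(1)}$ (Proposition~\ref{prop:dual}); then Theorem~\ref{thm:deg-low} lowers this to $\|F_m^\xi\|_{\square^2}\gtrsim\delta^{O(1)}$. The point is that when one unpacks a $\square^s$-norm of $F_m^\xi$, the integral-over-$y$ structure allows one to recognise the result as (essentially) a Fourier coefficient of a \emph{lower-complexity} dual function, to which Claim~\ref{claim:3} and an oscillatory-integral bound apply. None of this is available for a bare $f_1$.

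\textbf{The actual architecture is an induction on $m$.} Once $\|F_m^\xi\|_{\square^2}\gtrsim\delta^{O(1)}$, the $U^2$-inverse lemma (Lemma~\ref{2.10}) gives a large Fourier coefficient $\widehat{F_m^\xi}(\xi_0)$, which after a change of variables is exactly a \emph{modulated} $m$-linear form $\Lambda_{\mathcal P';N}^{\mathcal Q';\xi'}$ with $\mathcal P'=\mathcal P\setminus\{P_m\}$. This is why the paper proves the stronger modulated statement (Theorem~\ref{thm:inversem}) by induction on $m$: each pass through PET/degree-lowering removes the top polynomial $P_m$, and the base case $m=1$ is handled directly by the oscillatory-integral estimate \eqref{osc-int-est} via Lemma~\ref{lem:vdc-osc}. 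Your sketch never touches this inductive structure, and your endpoint (a single frequency $\xi$ correlating with $f_1$) is not what actually appears.

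Finally, your remark that the argument ``replaces the oscillatory-integral estimates available for $\KK=\RR$ by the soft Cauchy--Schwarz and equidistribution inputs'' is also off: the bound \eqref{osc-int-est} (van der Corput for $\RR,\C$; Hua/Mordell-type for non-archimedean $\KK$) is used essentially, both in the $m=1$ base case and inside the degree-lowering via Claim~\ref{claim:3}. It is the only hard analytic input, but it is indispensable.
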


The main application of Theorem \ref{thm:inverse-informal} for us will be to prove a precise structural
result for multilinear polynomial operators of the form
$$
A_N^{\mathcal P}(f_1,\ldots, f_m)(x) \ = \ \int_\KK f_1(x + P_1(y)) \cdots f_m(x + P_m(y)) \, d\mu_{[N]}(y).
$$
We will use ideas in the recent work of Krause, Mirek and Tao \cite{K+}
to accomplish this and consequently, we will be able to establish the following important Sobolev estimate.

\begin{theorem}[A Sobolev inequality for $A_N^{\mathcal P}$]
\label{sobolev-informal}
Let $1<p_1,\ldots, p_m<\infty$ satisfying
$\frac{1}{p_1}+\ldots+\frac{1}{p_m}=1$ be given. Then
for $N_j\simeq \delta^{O_{\mathcal P}(1)}N^{\deg(P_j)}$, we have
\begin{align*}
\|A_N^{\mathcal P}(f_1,\ldots,f_{j-1}, (\delta_0-\varphi_{N_j})*f_j,f_{j+1}\ldots, f_m)\|_{L^1(\KK)}
\lesssim \delta^{1/8}
\prod_{i=1}^{m}
\|f_i\|_{L^{p_i}(\KK)},
\end{align*}
provided $N\gtrsim \delta^{-O_{\mathcal P}(1)}$.
Here $\varphi_{N_j}$ is a smooth cut-off function such that ${\widehat{\varphi_{N_j}}}(\xi) \equiv 1$ for $\xi \in B_{{N_j}^{-1}}(0)$.
\end{theorem}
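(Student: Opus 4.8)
The plan is to deduce this estimate from the $L^{\infty}$ inverse theorem (Theorem~\ref{thm:inverse-informal}) together with the localisation and interpolation technology of \cite{K+}. By $L^1$--$L^{\infty}$ duality it suffices to bound $\bigl|\int_{\KK}f_0\cdot A_N^{\mathcal P}(f_1,\dots,(\delta_0-\varphi_{N_j})*f_j,\dots,f_m)\,d\mu\bigr|$ by the asserted right-hand side for every $1$-bounded $f_0$. Writing out $A_N^{\mathcal P}$ and reflecting each $P_i\mapsto -P_i$ (which leaves the list of degrees unchanged), this pairing is $N^d$ times a form of the type $\Lambda_{\mathcal P';N}$ with $\mathcal P'=\{-P_1,\dots,-P_m\}$, evaluated at $f_0,f_1,\dots,(\delta_0-\varphi_{N_j})*f_j,\dots,f_m$; thus everything is cast in terms of the form to which the inverse theorem applies. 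Since $|P_i(y)|\lesssim_{\mathcal P}N^d$ for $|y|\le N$, the operator $A_N^{\mathcal P}$ is local at scale $N^d$, so after decomposing $\KK$ into a bounded-overlap family of balls of measure $\simeq N^d$ (and noting that $\varphi_{N_j}$ lives at the much smaller scale $N_j$) the problem localises to such a ball, on which the inverse theorem can be brought to bear.

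The decisive $\delta$-gain comes as follows. Put $g_j:=(\delta_0-\varphi_{N_j})*f_j$ and let $\eta$ be the size of this localised form (normalised by $N^{-d}$); if $\eta\le\delta^{1/8}$ there is nothing to prove, so suppose $\eta>\delta^{1/8}$. Applying the inverse theorem with the $j$-th slot in the distinguished role — legitimate because the $P_i$ are arbitrary apart from having distinct degrees, so the proof of Theorem~\ref{thm:inverse-informal} carries over after relabelling — furnishes a scale $N_j'\simeq_{\mathcal P}\eta^{O_{\mathcal P}(1)}N^{\deg P_j}$ with $N^{-d}\|\mu_{[N_j']}*g_j\|_{L^1(\KK)}\gtrsim_{\mathcal P}\eta^{O_{\mathcal P}(1)}$. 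On the other hand $\mu_{[N_j']}*g_j=(\mu_{[N_j']}-\mu_{[N_j']}*\varphi_{N_j})*f_j$, and an elementary estimate gives $\|\mu_{[N_j']}-\mu_{[N_j']}*\varphi_{N_j}\|_{L^1(\KK)}\lesssim N_j/N_j'$ provided $N_j\le N_j'$: in the archimedean case this is a boundary-shell computation (convolving the normalised ball $\mu_{[N_j']}$ with the approximate identity $\varphi_{N_j}$ alters it only inside a shell of width $\simeq N_j$), and in the non-archimedean case the difference can even be made to vanish. Because $\eta>\delta^{1/8}$ forces $N_j'\gtrsim_{\mathcal P}\delta^{O_{\mathcal P}(1)}N^{\deg P_j}$, choosing the implied exponent in $N_j\simeq\delta^{O_{\mathcal P}(1)}N^{\deg P_j}$ large enough ensures $N_j\le N_j'$, whence, using $\|f_j\|_{L^1}\lesssim N^d$ on the ball, $N^{-d}\|\mu_{[N_j']}*g_j\|_{L^1}\lesssim N_j/N_j'\simeq\delta^{O_{\mathcal P}(1)}\eta^{-O_{\mathcal P}(1)}$. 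Comparing with the lower bound yields $\eta^{O_{\mathcal P}(1)}\lesssim_{\mathcal P}\delta^{O_{\mathcal P}(1)}$, and enlarging the exponent in $N_j$ makes the power of $\delta$ here at least $1/8$, so $\eta\lesssim\delta^{1/8}$ in either case. (Keeping the various scales $\ge 1$ and the inverse theorem applicable is where the hypothesis $N\gtrsim\delta^{-O_{\mathcal P}(1)}$ enters.)

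Finally one must upgrade this $L^{\infty}$-normalised, localised bound to the stated inequality with $\prod_i\|f_i\|_{L^{p_i}}$ on the right. Since $(1/p_1,\dots,1/p_m)$ lies in the open simplex $\{\,\sum_i 1/p_i=1,\ 1<p_i<\infty\,\}$, there is slack for a multilinear (Marcinkiewicz-type) interpolation: one proves the restricted-type analogues of the above at a small neighbourhood of exponents and interpolates, following \cite{K+}; as the gain from the previous paragraph is uniform over that neighbourhood, the power $\delta^{1/8}$ survives. I expect this interpolation step — together with the control of the error terms generated when the multilinear form is localised and its inputs are split into bounded pieces — to be the real obstacle, and it is precisely where the technology of \cite{K+} is needed; the inverse theorem and the elementary convolution estimate above are what deliver the crucial factor $\delta^{1/8}$.
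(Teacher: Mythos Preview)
Your mechanism for extracting the $\delta$-gain from the inverse theorem is morally the right one, and is close to what the paper does in its Theorem~\ref{thm:inverse-s}. But there are two genuine gaps.

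\textbf{First, the relabelling claim is false.} Theorem~\ref{thm:inverse-informal} gives information only about the function sitting in the slot of the polynomial of \emph{lowest} degree; this is not a labelling convention but a structural feature of the PET induction and degree-lowering argument. If $j\neq 1$, applying the inverse theorem to the form with $g_j=(\delta_0-\varphi_{N_j})*f_j$ in slot $j$ tells you that $\mu_{[N_1']}*f_1$ is large, not that $\mu_{[N_j']}*g_j$ is large, so your contradiction never gets off the ground. The paper does extend the conclusion to all $j$ (Theorem~\ref{thm:inverse-s}), but the extension is a real inductive argument: one localises $[N]$ and $[N_0]$ to sub-intervals of measure $\simeq\delta^{M}N$ and $\simeq\delta^{M}N_0$, freezes the already-controlled low-degree inputs as approximate constants, and re-applies the inverse theorem to the resulting shorter form. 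This step cannot be skipped.

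\textbf{Second, and more seriously, the final ``interpolation'' step does not work as you describe it.} What you have after the first paragraph is: for $1$-bounded $f_i$ supported on a ball of measure $N^d$, the $L^1$ norm of $A_N^{\mathcal P}(\dots,g_j,\dots)$ is $\lesssim\delta^{1/8}N^d$. This is a local, $L^\infty$-normalised estimate; it is \emph{not} an endpoint on the H\"older simplex $\sum_i 1/p_i=1$ (it sits at $(0,\dots,0)$, where the $L^1$ output norm is infinite globally), so there is nothing to interpolate from. Equivalently, your bound gives no gain when some $f_i$ is a characteristic function of a set of measure $\ll N^d$, which is exactly what a restricted-type bound would require. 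The paper does something quite different: it uses a Hahn--Banach separation argument (Lemma~\ref{hahn}) to convert the smooth inverse theorem into a \emph{structural decomposition} of the dual function, $(A_N^{\mathcal P})^{*j}=H_j+E_j$, with $\widehat{H_j}$ supported in $[N_j^{-1}]$ (hence annihilated by $\delta_0-\varphi_{N_j}$) and $\|E_j\|_{L^1}\le\delta N_0$. This yields a genuine local $L^\infty\times\cdots\times L^\infty\to L^2$ operator bound on $(\delta_0-\varphi_{N_j})*(A_N^{\mathcal P})^{*j}$, which is then interpolated against an $L^p$-improving estimate for the single polynomial average $M_N^{P_m-P_j}$ (Lemma~\ref{poly-improving}) to replace $L^\infty$ control on $f_m$ by $L^2$ control, and finally globalised by a Schur-test argument. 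Both the Hahn--Banach step and the $L^p$-improving input are essential and absent from your sketch; ``Marcinkiewicz-type interpolation'' alone cannot bridge the gap.
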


Following an argument of Bourgain in \cite{B} we will show how
Theorem \ref{sobolev-informal} implies Theorem \ref{thm:main}. Versions of Theorem \ref{sobolev-informal} for two real
polynomials $\{P_1, P_2\} \subseteq {\mathbb R}[{\rm y}]$ were established in \cite{B}, \cite{D+} and \cite{CGL} using
delicate oscillatory integral operator bounds. Our arguments are much more elementary in nature and do not
require deep oscillatory integral/exponential sum/character sum bounds outside a standard application of van der Corput
bounds (see \cite{FatS}) when $\KK = {\mathbb R}$ or Hua's exponential sum bound \cite{H} when
$\KK = {\mathbb Q}_p$ (which extends Mordell's classical bound
from the finite field setting to complete exponenial sums over ${\mathbb Z}/p^m{\mathbb Z}$) -- these bounds extend readily to any local field ${\mathbb K}$; see Section \ref{local-fields}. Furthermore
the Sobolev inequalities in \cite{D+} and \cite{CGL} were only established for certain sparse sequences of scales $N$. The
bound in Theorem \ref{sobolev-informal} holds for all sufficiently large scales $N$.

The Sobolev bound in Theorem \ref{sobolev-informal} potentially has many other applications.
See \cite{B} for a discussion on the implications of Theorem \ref{sobolev-informal} to compactness properties of
the multilinear operator $A_N^{\mathcal P}$. 
Pointwise convergence results for multilinear polynomial averages
are common applications of such Sobolev bounds. See \cite{CGL} where the Sobolev inequality is
used to prove the existence of polynomial progressions in sets of sufficiently large Hausdorff dimension. 
See also \cite{K}, \cite{LP}, \cite{Kel1}, \cite{Kel2} and \cite{CLP}.

Our results require the scales $N$ to be large. It would be interesting, for various applications,
to establish these results for small scales as well.

\section{Structure of the paper}\label{paper} After a review of analysis in the setting of local fields, including some essential but basic oscillatory integral bounds, we set up some
notation and detail some 
tools involving the Gowers uniformity norms. In Section \ref{preliminaries} we give some preliminary results
necessary to carry out the core arguments. In Section \ref{sec:inverse} we give the proof of Theorem \ref{thm:inverse-informal}
which is based on a PET (polynomial ergodic theorem) induction scheme and a degree lowering argument
developed by the third author in earlier work.
In Section \ref{sec:sobolev} we will prove Theorem \ref{sobolev-informal}. Finally in Section \ref{appendix}, 
we show how Theorem \ref{thm:main} follows as a consequence of Theorem \ref{sobolev-informal}.

 \section{Review of basic analysis on local fields}\label{local-fields}

\centerline{A basic reference for the material reviewed in this section is \cite{N}.}

Let $\KK$ be a locally compact topological field with a nondiscrete topology. Such fields are called
{\it local fields} and have
a unique (up to a positive multiple) Haar measure $\mu$. They also carry a nontrivial absolute
value $|\cdot|$ such that the corresponding balls $B_r(x) = \{y \in \KK: |y-x|\le r\}$ generate the topology.

Recall that an absolute value on a field $\KK$ is a map $|\cdot| : \KK \to {\mathbb R}^{+}$ satisfying 
$$
(a) \ |x| = 0 \ \Leftrightarrow \ x = 0, \ \ \ (b) \ |xy| = |x||y| \ \ \ {\rm and} \ \ (c) \ |x+y| \le C (|x| + |y|)
$$
for some $C\ge 1$. It is {\it nontrivial} if there is an $x\not = 0$ such that $|x| \not = 1$. 
Two absolute values $|\cdot|_1$ and $|\cdot|_2$ are said to be {\it equivalent} if there is a $\theta> 0$
such that $|x|_2 = |x|_1^{\theta}$ for all $x\in \KK$. Equivalent absolute values give the same topology. There
is always an equivalent absolute value such that the triangle inequality $(c)$ holds with $C = 1$. If $|\cdot|$ satisfies
the stronger triangle inequality $(c') \, |x+y| \le \max(|x|,|y|)$, we say that $|\cdot|$ is non-archimedean. Note
that if $|\cdot|$ is non-archimedean, then all equivalent absolute values are non-archimedean. The field $\KK$ is
said to be {\it non-archimedean} if the underlying absolute value (and hence all equivalent ones) is non-archimedean.
Otherwise we say $\KK$ is archimedean.

When $\KK$ is archimedean, then it is isomorphic to the real ${\mathbb R}$ or complex ${\mathbb C}$ field with the
usual topology. In this case Haar measure is a multiple of Lebesgue measure. When $\KK$ is non-archimedean, then it is a finite extension of a $p$-adic field ${\mathbb Q}_p$ in the charateristic zero case,
and a function field of Laurent series over a finite field in the positive characteristic case. Furthermore, the {\it ring of integers} $o_\KK := \{x \in \KK: |x|\le 1\}$
and the unique maximal ideal $m_\KK := \{x \in \KK : |x| < 1\}$ do not depend on the choice of absolute value 
(it is invariant when we pass to an equivalent absolute value). 
For any $\KK$, we normalise Haar measure
so that $\mu(B_1(0)) = 1$.

When $\KK$ is non-archimedean, the unique maximal ideal $m_\KK = (\pi)$ is principal and we call any generating
element $\pi$ a {\it uniformizer}. Furthermore the residue field $k := o_\KK/m_\KK$ is finite, say with $q$ elements. 
For $x\in \KK$, there is a unique $n\in {\mathbb Z}$ such that $x = \pi^n u$ where $u$ is a unit. 
We can go further and expand any
$x\in \KK$ as a Laurent series in $\pi$; $x = \sum_{j\ge -L} x_j \pi^j$ where each $x_j$ belongs to the residue field $k$.
If $x_{-L} \not=0$, then $x = \pi^{-L} u$ where $u = \sum_{j\ge -L} x_j \pi^{j+L}$ is a unit.

There is a choice of (equivalent) absolute value $|\cdot|$ such that $\mu(B_r(x)) \simeq r$ for all $r>0$ and $x\in \KK$.
When $\KK= {\mathbb R}$, we have $|x| = x \, {\rm sgn}(x)$ and when $\KK = {\mathbb C}$, we have $|z| = z{\overline{z}}$.
When $\KK$ is non-archimedean, then the absolute value $|x| := q^{-m}$ where $x = \pi^m u$ and $u$ a unit
has the property that its balls satisfy 
$\mu(B_r(x)) = q^n$ where $q^n \le r < q^{n+1}$ and so $\mu(B_r(x)) \simeq r$. We choose the absolute value with
this normalisation. 

We will need a couple simple change of variable formulae which we will use again and again:
$$
\int_\KK f(x + y) \, d\mu(x) \ = \ \int_\KK f(x) \, d\mu(x) \ \ {\rm and} \ \ 
\int_\KK f(y^{-1} x) \, d\mu(x) \ = \ |y| \, \int_\KK f(x) \, d\mu(x).
$$ 
The first follows from the translation invariance of the Haar measure $\mu$. For the second formula, the
measure $E \to \mu(yE)$ defined by an element $y\in \KK$ is translation-invariant and so by the uniqueness
of Haar measure, we have $\mu(yE) = {\rm mod}_{\mu}(y) \mu(E)$ for some nonnegative number ${\rm mod}_{\mu}(y)$,
the so-called {\it modulus} of the measure $\mu$. In fact $|y| := {\rm mod}_{\mu}(y)$ defines the absolute value
with the desired normalisation whose balls $B_r(x)$ satisfy $\mu(B_r(x)) \simeq r$. This proves the second change
of variables formula. There is one additional, more sophisticated, nonlinear change of variable formula which we will
need at one point but we will justify this change of variables at the time.

The (additive) character group of $\KK$ is isomorphic to itself. Starting with any non-principal character ${\rm e}$
on $\KK$, all other characters $\chi$ can be identified with an element $y\in \KK$ via $\chi(x) = {\rm e}(yx)$. We fix
a convenient choice for ${\rm e}$; when $\KK={\mathbb R}$, we take ${\rm e}(x) = e^{2\pi i x}$. When $\KK$
is non-archimedean, we choose ${\rm e}$ so that ${\rm e} \equiv 1$ on $o_\KK$ and nontrivial on $B_q(0)$; that is,
there is a $x_0$ with $|x_0| = q$ such that ${\rm e}(x_0) \not= 1$. The choice of ${\rm e}$ on ${\mathbb C}$
does not really matter but a convenient choice is ${\rm e}(z) = e^{2\pi i \Rea{z}}$.
We define the fourier transform
$$
{\widehat{f}}(\xi) \ = \ \int_{\KK} f(x) {\rm e}(-\xi x) \, d\mu(x).
$$
Plancherel's theorem and
the fourier inversion formula hold as in the real setting.

\subsection{An oscillatory integral estimate} For $P(x) = a_d x^d + \cdots + a_1 x \in \KK[{\rm x}]$, we will use the following oscillatory integral bound:
\begin{align}\label{osc-int-est}
|I(P)| \ \le \ C_d \, [\max_j |a_j| ]^{-1/d} \ \ \ {\rm where} \ \ \ I(P) \ = \ \int_{B_1(0)} {\rm e}(P(x)) \, d\mu(x).
\end{align}
When $\KK = {\mathbb R}$, it is a simple matter to deduce the bound \eqref{osc-int-est} from general oscillatory bounds due to van der Corput (see \cite{FatS}).
When $\KK = {\mathbb Q}_p$ is the $p$-adic field, then 
$$
I(P) \ = \ p^{-s} \sum_{x=0}^{p^s -1} e^{2\pi i Q(x)/p^s} \ \ \ {\rm where} \ \ \ p^s = \max_j |a_j| \ \ {\rm and} \ \ 
Q(x) = b_d x^d + \cdots + b_1 x \in {\mathbb Z}[{\rm x}] 
$$
satisfies ${\rm gcd}(b_d, \ldots, b_1, p) = 1$; hence a classical result of Hua~\cite{H} implies $|I(P)| \le C_d p^{-s/d}$ 
which is \eqref{osc-int-est} in this case. It is natural to extend Hua's bound to other non-archimedean fields; see
for example \cite{W-jga} where character sums are treated over general Dedekind domains
which in particular establishes \eqref{osc-int-est} for any non-archimedean field $\KK$ when the characteristic of $\KK$ (if positive) is larger than $d$, a basic assumption appearing in our main result Theorem \ref{thm:main}.

It is not straightforward to apply van der Corput bounds when $\KK = {\mathbb C}$. However we can see
the bound \eqref{osc-int-est} for both $\KK = {\mathbb R}$ and $\KK = {\mathbb C}$ as a consequence of the following general bound due to
Arkhipov, Chubarikov and Karatsuba \cite{ACK}: let
$P \in {\mathbb R}[X_1, \ldots, X_n]$ be a real polynomial of degree $d$ in $n$ variables. If ${\mathbb B}^n$ denotes the unit ball
in ${\mathbb R}^n$, then
\begin{align}\label{ACK}
\Bigl| \int_{{\mathbb B}^n} e^{2\pi i P({\underline{x}})} \, d{\underline{x}} \Bigr| \ \le \ C_{d,n} \, H(P)^{-1} \ \ \ {\rm where} \ \ \
H(P) = \min_{{\underline{x}}\in {\mathbb B}^n} \max_{\alpha} |\partial^{\alpha} P({\underline x})|^{1/|\alpha|}.
\end{align}
A simple equivalence of norms argument shows that $H(P) \ge c_d [\max_{\alpha} |a_{\alpha}|]^{1/d}$ where 
$P({\underline{x}}) = \sum_{\alpha} a_{\alpha} {\underline{x}}^{\alpha}$ and $d$ is the degree of $P$. 
Hence \eqref{ACK} implies \eqref{osc-int-est} when $\KK = {\mathbb R}$. When $\KK = {\mathbb C}$ and 
$f(z) = a_d z^d + \cdots + a_1 z \in {\mathbb C}[{\rm z}]$, write $f(x+iy) = P(x,y) + i Q(x,y)$ and note that
$$
\int_{B_1(0)} {\rm e}(f(z)) \, dz \ = \ \int_{{\mathbb B}^2} e^{2\pi i P(x,y)} \, dx dy
$$
for the choice of character ${\rm e}(z) = e^{2\pi i \Rea{z}}$.
From the Cauchy--Riemann equations, we have $H(P) \simeq_d \min_{|z|\le 1} \max_k |f^{(k)}(z)|^{1/2k} 
\ge c_d [\max_j |a_j|]^{1/2d}$ (recall we are using the absolute value $|z| = z{\overline{z}}$ on ${\mathbb C}$)
and so \eqref{ACK} implies \eqref{osc-int-est} with exponent $1/2d$ in this case. 
There is an alternative
argument which establishes \eqref{osc-int-est} with the exponent $1/d$ when $\KK = {\mathbb C}$ but this is unimportant
for our purposes.

\section{Some notation and basic tools}
\label{sec:2}
By a {\it scale} $N$, we mean a positive number when $\KK$ is archimedean and when $\KK$ is non-archimedean,
it denotes a discrete value $N = q^k, \, k\in {\mathbb Z}$, a power of the cardinality of the residue field $k$.
When $N$ is a scale, we denote by
$[N] := B_N(0)$ the ball with
centre $0$ and radius $N$. In this case, we have $\mu([N]) \simeq N$ (equality in the non-archimedean case) 
by our normalisations of the
absolute value $|\cdot|$ and Haar measure $\mu$.
An {\it interval} $I$ is a ball $I = B_{r_I}(x_I)$ with some centre $x_I \in \KK$
and radius $r_I>0$.
For an interval $I$, we associate the measure 
$$
d\mu_I(x) \ = \ \frac{1}{\mu(I)} \mathbbm{1}_I(x) \, d\mu(x).
$$
For an interval $I$, we define the Fej\'er kernel $\kappa_I(x) = \mu(I)^{-2} \mathbbm{1}_I * \mathbbm{1}_{-I} (x)$ and
the corresponding measure $d\nu_I(x) = \kappa_I(x) d\mu(x)$. When $I = [N]$ for some scale $N$, we have
$-I = I$ and so $\kappa_{[N]}(x) = N^{-2} \mathbbm{1}_{[N]}*\mathbbm{1}_{[N]}(x)$. Furthermore when $\KK$ is non-archimedean,
we have $\kappa_{[N]}(x) = N^{-1} \mathbbm{1}_{[N]}(x)$ and so $d\nu_I = d\mu_I$ in this
case. When $\KK = {\mathbb R}$ and $I = [0,N]$, we have $\kappa_I(x) = N^{-1}(1 - |x|/N)$ when $|x|\le N$ and zero otherwise.

We now give precise notation which we will use throughout the paper.

\subsection{Basic notation}
As usual $\Z$ will denote the ring of rational integers.
\begin{enumerate}[label*={\arabic*}.]
\item We use $\Z_+:=\{1, 2,\ldots\}$ and $\N := \Z_+\cup\{0\}$ to
denote the sets of positive integers and non-negative integers,
respectively.  

\item For any $L\in\RR_+$ we will use the notation
\[
\bra{L}_0 := \{ \ell \in \N : \ell \le L \} \ \ \ {\rm and} \ \ \ \bra{L} := \{\ell \in \Z_+ : \ell \le L\}.
\]

\item We use $\ind{A}$ to denote the indicator function of a set $A$. If $S$ 
is a statement we write $\ind{S}$ to denote its indicator, equal to $1$
if $S$ is true and $0$ if $S$ is false. For instance $\ind{A}(x)=\ind{x\in A}$.

\end{enumerate}

\subsection{Asymptotic notation and magnitudes}
 The letters $C,c, C_0, C_1, \ldots>0$ will always denote
absolute constants, however their values may vary from occurrence to
occurrence.

\begin{enumerate}[label*={\arabic*}.]

\item For two nonnegative quantities
$A, B$ we write $A \lesssim_{\delta} B$ ($A \gtrsim_{\delta} B$) if
there is an absolute constant $C_{\delta}>0$ (which possibly depends
on $\delta>0$) such that $A\le C_{\delta}B$ ($A\ge C_{\delta}B$).  We
will write $A \simeq_{\delta} B$ when $A \lesssim_{\delta} B$ and
$A\gtrsim_{\delta} B$ hold simultaneously. We will omit the subscript
$\delta$ if irrelevant.

\item For a function $f:X\to \C$ and positive-valued
function $g:X\to (0, \infty)$, write $f = O(g)$ if there exists a
constant $C>0$ such that $|f(x)| \le C g(x)$ for all $x\in X$. We will
also write $f = O_{\delta}(g)$ if the implicit constant depends on
$\delta$.  For two functions $f, g:X\to \C$ such that $g(x)\neq0$ for
all $x\in X$ we write $f = o(g)$ if $\lim_{x\to\infty}f(x)/g(x)=0$.

\end{enumerate}

\subsection{Polynomials} Let $\KK[{\rm t}]$ denote the space of all
polynomials in one indeterminate $\rm t$ with
coefficients in $\KK$. Every polynomial $P\in \KK[{\rm t}]$ can be written as a formal power series
\begin{align}
\label{eq:29}
P(t)=\sum_{j=0}^{\infty}c_jt^j,
\end{align}
where all but finitely many coefficients $c_j\in \KK$ vanish.

\begin{enumerate}[label*={\arabic*}.]

\item We
define the degree of $P\in \KK[{\rm t}]$ by
\begin{align*}
\deg(P):=&\max\{j\in \Z_+: c_j\neq0\}.
\end{align*}

\item A finite  collection
$\mathcal P\subset \KK[{\rm t}]$ has degree $d\in\N$, if $d=\max\{\deg(P): P\in\mathcal P\}$.

\item For a polynomial  $P\in \KK[{\rm t}]$ and $j\in \N$ let
$\coe_j(P)$ denote $j$-th coefficient of $P$.  We also let $\ell(P)$
denote the leading coefficient of
$P$;  that is, for $P$ as in \eqref{eq:29} we have $\coe_j(P)=c_j$ for
$j\in \N$ and 
$\ell(P)=c_{d}$ where $d = \deg{P}$.

\end{enumerate}

\subsection{$L^p$ spaces}
$(X, \mathcal B(X), \lambda)$ denotes a measure space $X$ with
$\sigma$-algebra $\mathcal B(X)$ and $\sigma$-finite measure $\lambda$.
\begin{enumerate}[label*={\arabic*}.]

\item
  The set of  $\lambda$-measurable
complex-valued functions defined on $X$ will be denoted by $L^0(X)$.
\item The set of functions in $L^0(X)$ whose modulus is integrable
with $p$-th power is denoted by $L^p(X)$ for $p\in(0, \infty)$,
whereas $L^{\infty}(X)$ denotes the space of all essentially bounded
functions in $L^0(X)$.

\item We will say that a function $f\in L^0(X)$ is $1$-bounded if $f\in L^{\infty}(X)$ and $\|f\|_{L^{\infty}(X)}\le 1$.

\item For any $n\in\Z_+$ the measure $\lambda^{\otimes n}$ will denote the product measure $\lambda\otimes\ldots\otimes\lambda$ on the product space $X^n$ with the product $\sigma$-algebra $\mathcal B(X)\otimes\ldots \otimes\mathcal B(X)$.

\end{enumerate}

\subsection{Gowers box and  uniformity norms}
We will use the Gowers norm and Gowers box norm of a function $f$ which is defined in terms
of the multiplicative discrete derivatives $\Delta_{h_1.\ldots, h_s} f(x)$: for $x, h \in \KK$, we set
$\Delta_h f(x) = f(x){\overline{f(x+h)}}$ and iteratively, we define
$$
\Delta_{h_1, \ldots, h_s} f(x) \ = \ \Delta_{h_1}(\Delta_{h_2}( \cdots (\Delta_{h_s} f(x))\cdots))
\ \ \ {\rm where} \ \ x, h_1, \ldots, h_s \in \KK.
$$
When $h = (h_1, \ldots, h_s) \in \KK^s$, we often write $\Delta_{h_1, \ldots, h_s}f(x)$ as $\Delta_h f(x)$ or $\Delta_h^s f(x)$.
For $\omega = (\omega_1, \ldots, \omega_s) \in \{0,1\}^s$, we write $\omega \cdot h := \sum_{i=1}^s \omega_i h_i$
and $|\omega|:= \omega_1 + \cdots + \omega_s$. 
If ${\mathcal C} z = {\overline{z}}$ denotes the conjugation operator, we observe that
\begin{align}
\label{product}
\Delta_h f(x) \ = \ \prod_{\omega \in \{0,1\}^s} {\mathcal C}^{|\omega|} f(x + \omega \cdot h).
\end{align}
For any integer $s\ge 1$, we define the Gowers $U^s$ norm of $f$ by
$$
\|f\|_{U^s}^{2^s} \  = \ \int_{\KK^{s+1} }\Delta_{h_1, \ldots, h_s} f(x) \
d\mu(h_1) \cdots d\mu(h_s) d\mu(x).
$$
We note that $\|f\|_{U^2} = \|{\widehat{f}}\|_{L^4}$. 

For intervals $I, I_1, \ldots, I_s$, we define the Gowers box norm as
$$
\|f \|_{\square^s_{I_1, \ldots, I_s}(I)}^{2^s} \ = \ \frac{1}{\mu(I)} \int_{\KK^{s+1} }\Delta_{h_1, \ldots, h_s} f(x) \
d\nu_{I_1}(h_1) \cdots d\nu_{I_s}(h_s) d\mu(x) .
$$
From \eqref{product}, we see that
\begin{align}
\label{s+1-s}
\|f\|_{\square_{I_1, \ldots, I_{s+1}}^{s+1}(I)}^{2^{s+1}} \ = \
\int_{\KK}\|\Delta_h f\|_{\square_{I_1, \ldots, I_s}^s(I)}^{2^s}d\nu_{I_{s+1}}(h).
\end{align}
A similar formula relates the Gowers $U^{s+1}$ norm to the Gowers $U^s$ norm.

\subsection{The Gowers--Cauchy--Schwarz inequality}
When $s\ge 2$, both the Gowers uniformity norm and the Gowers box norm are in fact norms. In particular the triangle
inequality holds. The triangle inequality also holds when $s=1$ and so we have that
\begin{align}\label{triangle-ineq}
\|f + g\|_{U^s} \le \|f\|_{U^s} + \|g\|_{U^s} \ \ \ {\rm and} \ \ \ \|f + g\|_{\square_{I_1, \ldots, I_{s}}^{s}(I)} \le \|f\|_{\square_{I_1, \ldots, I_{s}}^{s}(I)} +
\|g\|_{\square_{I_1, \ldots, I_{s}}^{s}(I)}
\end{align}
holds for every $s\ge 1$. These inequalities follow from a more general inequality which we will find useful. 

Let $A$ be a finite set and for each $\alpha \in A$,
let $(X_{\alpha}, du_{\alpha})$ be a probability space. Set $X = \prod_{\alpha\in A} X_{\alpha}$ and let $f : X \to {\mathbb C}$
be a complex-valued function. For any $x^{(0)} = (x_{\alpha}^{(0)})_{\alpha \in A}$ and $x^{(1)} = (x_{\alpha}^{(1)})_{\alpha \in A}$ in  $X$ and
$\omega = (\omega_{\alpha})_{\alpha\in A} \in \{0,1\}^A$, we write $x^{(\omega)} = (x_{\alpha}^{(\omega_{\alpha})})_{\alpha\in A}$. We
define the {\it generalised Gowers box norm of $f$ on $X$} as
$$
\|f\|_{\square (X)}^{2^{|A|}} \ = \ \iint_{X^2}  \prod_{\omega \in \{0,1\}^A} {\mathcal C}^{|\omega|} f(x^{(\omega)}) \ du(x^{(0)}) \, du(x^{(1)})
$$
where $du$ denotes the product measure $\otimes_{\alpha \in A} du_{\alpha}$. The following lemma is established in 
\cite{GT-primes}.

\begin{lemma} [Gowers--Cauchy--Schwarz inequality]
\label{GCS}
With the set-up above, let $f_{\omega} : X \to {\mathbb C}$ for every $\omega \in \{0,1\}^A$. We have
\begin{align}
\label{GCS-ineq}
\Big| \iint_{X^2} \prod_{\omega \in \{0,1\}^A} {\mathcal C}^{|\omega|} f_{\omega} (x^{(\omega)}) \ du(x^{(0)}) \, du(x^{(1)}) \Bigr| \ \le \ 
\prod_{\omega \in \{0,1\}^A} \|f_{\omega}\|_{{\square (X)}}.
\end{align}
\end{lemma}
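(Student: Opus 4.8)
The plan is to prove \eqref{GCS-ineq} by the standard iterated Cauchy--Schwarz argument, peeling off the coordinates of $A$ one at a time (this is the derivation recorded in \cite{GT-primes}); I will only outline the main steps. Throughout, for a family $g = (g_\omega)_{\omega \in \{0,1\}^A}$ of functions on $X$ write $\Lambda(g) := \iint_{X^2}\prod_{\omega \in \{0,1\}^A}{\mathcal C}^{|\omega|}g_\omega(x^{(\omega)})\,du(x^{(0)})\,du(x^{(1)})$, so that \eqref{GCS-ineq} asserts $|\Lambda(f)| \le \prod_{\omega}\|f_\omega\|_{\square(X)}$, and for a constant family $g_\omega \equiv h$ one has $\Lambda(g) = \|h\|_{\square(X)}^{2^{|A|}}$ by definition.

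First I would fix a coordinate $\alpha \in A$, set $A' = A \setminus \{\alpha\}$, and split $\{0,1\}^A \cong \{0,1\}^{A'}\times\{0,1\}$, writing $\omega = (\eta,\varepsilon)$ with $\eta \in \{0,1\}^{A'}$ and $\varepsilon \in \{0,1\}$; then the $\alpha$-coordinate of $x^{(\omega)}$ is $x_\alpha^{(\varepsilon)}$, while the other coordinates depend only on $\eta$. Holding the variables $(x_\beta^{(0)}, x_\beta^{(1)})_{\beta \in A'}$ fixed and integrating in $du_\alpha(x_\alpha^{(0)})\,du_\alpha(x_\alpha^{(1)})$ only, the integrand factors as $F\cdot\overline{G}$, where $F$ is the $du_\alpha$-average of $\prod_{\eta}{\mathcal C}^{|\eta|}f_{(\eta,0)}$ and $G$ the $du_\alpha$-average of $\prod_{\eta}{\mathcal C}^{|\eta|}f_{(\eta,1)}$, both read as functions of the remaining variables; the conjugate on $G$ arises since ${\mathcal C}^{|\eta|+1} = {\mathcal C}\,{\mathcal C}^{|\eta|}$ and ${\mathcal C}$ is complex conjugation.

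Next I would apply the ordinary Cauchy--Schwarz inequality in the remaining variables, $\bigl|\int F\,\overline{G}\,\bigr| \le \|F\|_{L^2}\,\|G\|_{L^2}$, the $L^2$-norms being taken against $\bigotimes_{\beta\in A'}(du_\beta\otimes du_\beta)$. Expanding the squares, both $\|F\|_{L^2}^2$ and $\|G\|_{L^2}^2$ are again integrals $\Lambda(\cdot)$ over $X^2$ of exactly the same type, but with the family $(f_\omega)$ replaced by the one whose $\omega$-th member is $f_{(\eta,0)}$, resp. $f_{(\eta,1)}$; that is, the $\alpha$-coordinate of the index has been forced to $0$, resp. $1$. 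Being squared $L^2$-norms these are nonnegative, so extracting square roots is legitimate. Iterating over all $\alpha\in A$ in any order, after $|A|$ steps $|\Lambda(f)|$ is bounded by the product over $\omega\in\{0,1\}^A$ of $\Lambda(g^{(\omega)})^{1/2^{|A|}}$, where $g^{(\omega)}$ is the constant family equal to $f_\omega$; since $\Lambda(g^{(\omega)}) = \|f_\omega\|_{\square(X)}^{2^{|A|}}$, this is precisely \eqref{GCS-ineq}. The triangle inequalities \eqref{triangle-ineq} then follow in the usual way: realise the Gowers box and uniformity norms as instances of $\|\cdot\|_{\square(X)}$ for a suitable $X$ assembled from copies of $\KK$ with the measures $d\nu_{I_i}$ or $d\mu$, and expand $\|f+g\|^{2^s}$ into $2^{2^s}$ terms, each bounded via \eqref{GCS-ineq} by a product of factors $\|f\|$ or $\|g\|$, which sums to $(\|f\|+\|g\|)^{2^s}$.

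I do not expect a genuine obstacle here: the only delicate point is the index bookkeeping --- checking at each stage that collapsing one coordinate leaves an integral of precisely the form $\Lambda(\cdot)$ with the conjugation weights ${\mathcal C}^{|\omega|}$ transforming correctly, so that the iteration closes --- which is routine but notationally heavy. One should also keep in mind a standing integrability assumption on the $f_\omega$ (in all our applications they are $1$-bounded and supported on a set of finite measure, so every integral converges absolutely); if the right-hand side of \eqref{GCS-ineq} is infinite the inequality is trivial.
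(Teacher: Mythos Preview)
Your proposal is correct: this is exactly the standard iterated Cauchy--Schwarz argument from \cite{GT-primes}, and indeed the paper does not supply its own proof of Lemma~\ref{GCS} but simply cites \cite{GT-primes} for it. Your outline of the coordinate-peeling and the bookkeeping of the conjugation weights is accurate, so there is nothing to add.
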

We will need the following consequence.

\begin{cor}\label{GCS-indep} Let $f : X \to {\mathbb C}$ and for each $\alpha \in A$, suppose
$g_{\alpha}: X \to {\mathbb C}$ is a 1-bounded function that is independent of the $x_{\alpha}$ variable. Then
\begin{align}\label{GCS-ineq-2}
\Bigl| \int_X f(x) \prod_{\alpha \in A} g_{\alpha}(x) du(x) \Bigr|^{2^{|A|}} \ \le \ \int_{X^2} \prod_{\omega\in \{0,1\}^A}
 {\mathcal C}^{|\omega|} f(x^{(\omega)}) \, du(x^{(0)}) du(x^{(1)}).
\end{align}
\end{cor}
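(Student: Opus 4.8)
The plan is to derive \eqref{GCS-ineq-2} directly from the Gowers--Cauchy--Schwarz inequality \eqref{GCS-ineq} by choosing the functions $f_\omega$ appropriately. Write the left-hand side of \eqref{GCS-ineq-2} as $\bigl|\int_X F(x)\,du(x)\bigr|^{2^{|A|}}$ where $F(x) = f(x)\prod_{\alpha\in A} g_\alpha(x)$. The key observation is that the single integral $\int_X F(x)\,du(x)$ can be rewritten as the double integral $\iint_{X^2} \prod_{\omega\in\{0,1\}^A} \mathcal C^{|\omega|} f_\omega(x^{(\omega)})\,du(x^{(0)})\,du(x^{(1)})$ for a suitable choice of the $f_\omega$. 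Indeed, take $f_{\vec 0} = F$ (the all-zeros vertex), and for the remaining $2^{|A|}-1$ vertices $\omega\neq \vec 0$, we must arrange that the product over all $\omega$ collapses back to $F(x)$ when restricted to the "diagonal."

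First I would make the cleverer assignment: for each $\alpha\in A$ let $e_\alpha\in\{0,1\}^A$ denote the vertex with a $1$ in coordinate $\alpha$ and $0$ elsewhere, and set $f_{e_\alpha}(x) := \overline{g_\alpha(x)}$ — this is legitimate precisely because $g_\alpha$ does not depend on $x_\alpha$, so in the double-integral expansion the factor $\mathcal C^{|e_\alpha|} f_{e_\alpha}(x^{(e_\alpha)}) = \overline{f_{e_\alpha}(x^{(e_\alpha)})} = g_\alpha(x^{(e_\alpha)})$ equals $g_\alpha(x^{(\vec 0)})$, since flipping only the $\alpha$-coordinate does not change the value of a function independent of that coordinate. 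Set $f_\omega \equiv 1$ for all remaining $\omega$ (those with $|\omega|\ge 2$). Then the integrand $\prod_{\omega}\mathcal C^{|\omega|} f_\omega(x^{(\omega)})$ becomes $f(x^{(\vec 0)}) \prod_{\alpha\in A} g_\alpha(x^{(\vec 0)})$, which is independent of $x^{(1)}$; integrating out $x^{(1)}$ (a probability space) gives exactly $\int_X F(x)\,du(x)$. Therefore \eqref{GCS-ineq} applies and yields
\[
\Bigl|\int_X F(x)\,du(x)\Bigr| \le \|f\|_{\square(X)} \cdot \prod_{\alpha\in A}\|\overline{g_\alpha}\|_{\square(X)} \cdot \prod_{|\omega|\ge 2}\|1\|_{\square(X)}.
\]

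Next I would bound each of the auxiliary box norms by $1$. Since $\|1\|_{\square(X)} = 1$ (the defining integral is $1$ over a product of probability spaces), the third product is $1$. For the second product, I claim $\|g_\alpha\|_{\square(X)} = \|\overline{g_\alpha}\|_{\square(X)}\le 1$: indeed $g_\alpha$ is $1$-bounded, and more importantly it is independent of the $x_\alpha$-variable, so in the expansion defining $\|g_\alpha\|_{\square(X)}^{2^{|A|}}$ the $\alpha$-coordinate plays no role and one can integrate out $x_\alpha^{(0)}$ and $x_\alpha^{(1)}$ trivially; iterating, the box norm collapses to a quantity dominated by $\|g_\alpha\|_{L^\infty}^{2^{|A|}}\le 1$ (alternatively, use that $\|\cdot\|_{\square(X)}\le \|\cdot\|_{L^\infty(X)}$ always, which follows from Gowers--Cauchy--Schwarz applied with all $f_\omega$ equal). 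Hence $\bigl|\int_X F\,du\bigr|\le \|f\|_{\square(X)}$, and raising to the $2^{|A|}$ power gives precisely \eqref{GCS-ineq-2}, since $\|f\|_{\square(X)}^{2^{|A|}}$ is by definition the right-hand side.

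The only genuine subtlety — the step I would be most careful about — is the justification that assigning $f_{e_\alpha} = \overline{g_\alpha}$ reproduces the single integral $\int_X F\,du$; this hinges entirely on the independence hypothesis, since it is what lets the $x^{(1)}$-dependence of those factors disappear after the coordinate flip. Everything else (evaluating $\|1\|_{\square(X)}$, the bound $\|\cdot\|_{\square(X)}\le\|\cdot\|_{L^\infty}$, and the final algebraic rearrangement) is routine. One should double-check the bookkeeping of which vertex $\omega$ gets which function and that $\mathcal C^{|\omega|}$ is applied consistently, but no real difficulty arises there.
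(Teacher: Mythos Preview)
Your proof is correct and follows essentially the same approach as the paper: assign $f$ to the all-zeros vertex, $\overline{g_\alpha}$ to each unit vertex $e_\alpha$, and $1$ elsewhere, then apply the Gowers--Cauchy--Schwarz inequality and use the $1$-boundedness of the $g_\alpha$. One small slip: you write ``take $f_{\vec 0} = F$'' but your subsequent computation and final inequality both use $f_{\vec 0} = f$ (which is what is needed); the paper likewise sets $f_{\omega^0} = f$, not $F$.
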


\begin{proof} For $\omega^0 = (0, \ldots, 0)$, set $f_{\omega^0} = f$ and for $\omega^{\beta} = (\omega_{\alpha})_{\alpha\in A}$ with
$\omega_{\alpha} = 0$ when $\alpha \not= \beta$ and $\omega_{\beta} = 1$, set 
$f_{\omega^{\beta}} = {\overline{g_{\beta}}}$. For
all other choices of $\omega \in \{0,1\}^A$, set $f_{\omega} = 	1$. Hence 
$$
\prod_{\omega\in \{0,1\}^A}{\mathcal C}^{|\omega|}  f_{\omega}(x^{(\omega)}) = f(x^{(0)}) \prod_{\alpha \in A} g_{\alpha}(x^{(0)})
$$
since $g_{\alpha}$ is independent of the $\alpha$ variable. Therefore the inequality \eqref{GCS-ineq} implies
$$
\Bigl| \int_X f(x) \prod_{\alpha \in A} g_{\alpha}(x) du(x) \Bigr| \le \prod_{\omega\in \{0,1\}^A} 
\|f_{\omega}\|_{{\square (X)}} \le \|f\|_{{\square (X)}}
$$
by the 1-boundedness of each $g_{\alpha}$. This proves \eqref{GCS-ineq-2}.
\end{proof}

\section{Some preliminaries}\label{preliminaries}

In this section, we establish a few useful results which we will need in our arguments.

\subsection{$U^2$-inverse theorem} 

We will use the following inverse theorem for the Gowers box norms.

\begin{lemma}[$U^2$-inverse theorem]
\label{2.10}
Let $H_1$ and $H_2$ be two scales and
let $f$ be a 1-bounded function supported in an interval $I$. Then
\begin{equation}\label{U2-FT}
\|f\|_{\square^2_{[H_1],[H_2]}(I)}^4 \ \le \ (H_1 H_2)^{-1} \, \|{\widehat{f}}\|_{L^{\infty}(\KK)}^2.
\end{equation}
\end{lemma}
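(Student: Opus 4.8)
The plan is to prove \eqref{U2-FT} by expanding the box norm, passing to the Fourier side, and exploiting that the two inner integrations over $h_1, h_2$ decouple into a product of Fourier transforms of the cutoff measures $\nu_{[H_1]}, \nu_{[H_2]}$, whose Fourier transforms are easy to estimate.

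First I would write out the definition: since $s = 2$ and $I_1 = [H_1]$, $I_2 = [H_2]$,
\[
\|f\|_{\square^2_{[H_1],[H_2]}(I)}^4 \ = \ \frac{1}{\mu(I)} \int_{\KK^3} \Delta_{h_1,h_2} f(x) \, d\nu_{[H_1]}(h_1)\, d\nu_{[H_2]}(h_2)\, d\mu(x).
\]
Using the product formula \eqref{product}, the integrand $\Delta_{h_1,h_2}f(x)$ is $f(x)\overline{f(x+h_1)}\,\overline{f(x+h_2)}\,f(x+h_1+h_2)$. I would substitute $h_1 \mapsto u - x$ and $h_2 \mapsto v - x$ (translation invariance of $\mu$ does not directly apply to $\nu$, so instead I would keep $x$ and expand the Fourier transforms of the four copies of $f$): write each $f(x + \omega\cdot h)$ via Fourier inversion as $\int \widehat{f}(\xi_\omega)\,{\rm e}(\xi_\omega(x+\omega\cdot h))\,d\mu(\xi_\omega)$ with the appropriate conjugations, so that $\overline{f}$ contributes $\overline{\widehat f}(\xi) = \widehat{\overline{f}}(-\xi)$ type factors. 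Integrating in $x$ over $\KK$ forces a linear relation $\xi_{00} - \xi_{10} - \xi_{01} + \xi_{11} = 0$ among the four frequencies (the standard $U^2$ relation), and then integrating $h_1$ against $\nu_{[H_1]}$ and $h_2$ against $\nu_{[H_2]}$ produces factors $\widehat{\nu_{[H_1]}}(\xi_{10} - \xi_{11})$ and $\widehat{\nu_{[H_2]}}(\xi_{01} - \xi_{11})$ (here one uses $\widehat{\nu_I}(\xi) = |\widehat{\mu_I}(\xi)|^2 \ge 0$). After relabelling $\eta = \xi_{10} - \xi_{11}$, $\zeta = \xi_{01} - \xi_{11}$ one arrives at an expression of the shape
\[
\frac{1}{\mu(I)} \int_{\KK^2} \widehat{\nu_{[H_1]}}(\eta)\, \widehat{\nu_{[H_2]}}(\zeta) \, G(\eta,\zeta) \, d\mu(\eta)\, d\mu(\zeta),
\]
where $G$ is a product of four values of $\widehat f$ at frequencies that are shifts of a single free frequency by $\eta, \zeta$. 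Since $\nu_I = \kappa_I \, d\mu$ with $\kappa_I \ge 0$ and $\int d\nu_I = 1$, both $\widehat{\nu_{[H_1]}}, \widehat{\nu_{[H_2]}}$ are nonnegative and bounded by $1$; moreover $\int_\KK \widehat{\nu_{[H]}}(\eta)\, d\mu(\eta) = \kappa_{[H]}(0) = \mu([H])^{-1} \simeq H^{-1}$ by Fourier inversion at $0$ (this is exactly where the factor $(H_1H_2)^{-1}$ comes from).

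To finish, I would bound $|G(\eta,\zeta)| \le \|\widehat f\|_{L^\infty}^2 \cdot (\text{a bounded number of } \widehat f \text{ factors})$ — more precisely, pull two of the four $\widehat f$ factors out in $L^\infty$ and keep the remaining two, then use Cauchy–Schwarz in the free frequency variable together with Plancherel $\|\widehat f\|_{L^2}^2 = \|f\|_{L^2}^2 \le \mu(I)$ (using $1$-boundedness of $f$ and $\supp f \subseteq I$) to absorb the $\mu(I)^{-1}$. The two outer integrals against $\widehat{\nu_{[H_1]}}, \widehat{\nu_{[H_2]}}$ then contribute $\int \widehat{\nu_{[H_1]}} = \mu([H_1])^{-1}$ and $\int \widehat{\nu_{[H_2]}} = \mu([H_2])^{-1}$, giving the claimed $(H_1H_2)^{-1} \|\widehat f\|_{L^\infty}^2$. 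The main obstacle — and the step requiring care — is the bookkeeping of which two of the four $\widehat f$ factors to extract in $L^\infty$ versus keep in $L^2$, and making sure the frequency substitutions are measure-preserving so that the $\mu(I)^{-1}$ normalization is exactly cancelled by $\|f\|_{L^2}^2 \le \mu(I)$; the positivity $\widehat{\nu_I} \ge 0$ is what makes this clean, so I would emphasize recording that fact first. An alternative, perhaps cleaner, route is to note $\|f\|_{\square^2_{[H_1],[H_2]}(I)}^4 = \int_\KK \|\Delta_h f\|_{\square^1_{[H_2]}(I)}^2 \, d\nu_{[H_1]}(h)$ via \eqref{s+1-s}, compute the inner $U^1$-type box norm as $\mu(I)^{-1}\int |\widehat{\mathbf 1_I \Delta_h f}(\cdot)|^2 \widehat{\nu_{[H_2]}}$ and proceed similarly; I would pick whichever expansion makes the frequency relation most transparent when writing the final argument.
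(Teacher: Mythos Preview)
Your proposal is correct and follows essentially the same Fourier-analytic route as the paper: both arguments reduce to the two facts $\int_\KK \widehat{\nu_{[H_j]}}=\mu([H_j])^{-1}$ and $\|\widehat f\|_{L^4}^4\le \|\widehat f\|_{L^\infty}^2\|f\|_{L^2}^2\le\mu(I)\|\widehat f\|_{L^\infty}^2$. The only cosmetic difference is that the paper packages your ``pull two $\widehat f$ factors out in $L^\infty$ and Cauchy--Schwarz the other two'' step as an appeal to the Gowers--Cauchy--Schwarz inequality (Lemma~\ref{GCS}) followed by $\|f\|_{U^2}=\|\widehat f\|_{L^4}$, rather than doing it by hand on the Fourier side.
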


\begin{proof} We have
$$
\|f\|_{\square^2_{[H_1],[H_2]}(I)}^4 \ = \ \frac{1}{\mu(I)}
{\mathop{\iiint}_{\KK^3}} \Delta_{h_1, h_2} f(x) d\nu_{[H_1]}(h_1) d\nu_{[H_2]}(h_2)
d\mu(x)
$$
$$
= \ {\mathop{\iint}_{\KK^2}} g(h_1, h_2) \, d\nu_{[H_1]}(h_1) d\nu_{[H_2]}(h_2) \ = \ 
{\mathop{\iint}_{\KK^2}} {\widehat{g}}(\xi_1, \xi_2) \, 
{\overline{{\widehat{\nu_{[H_1]}}}(\xi_1) {\widehat{\nu_{[H_2]}}}(\xi_2)}} \, d\mu(\xi_1) d\mu(\xi_2)
$$
where
$$
g(h_1, h_2) \ = \ \frac{1}{\mu(I)} \int_{\KK} \Delta_{h_1, h_2} f(x) \, d\mu(x).
$$
Hence
$$
\|f\|_{\square^2_{[H_1],[H_2]}(I)}^4 \ \le \ \|{\widehat{\nu_{[H_1]}}}\|_{L^1} 
\|{\widehat{\nu_{[H_2]}}}\|_{L^1} \ \sup_{{\underline{\xi}}\in \KK^2} 
|{\widehat{g}}(\xi_1, \xi_2)|
$$
$$
= \ \frac{H_1^{-1}H_2^{-1}}{\mu(I)} \ \sup_{{\underline{\xi}}\in \KK^2} 
\Bigl| {\mathop{\iiint}_{\KK^3}} f_{00}(x) {\overline{f_{10}(x+h_1)}} 
{\overline{f_{01}(x+h_2)}} f_{11}(x+h_1+h_2) \, d\mu(x) d\mu(h_1) d\mu(h_2) \Bigr|
$$
where $f_{00}(x) =  f(x) {\rm e}(-\xi_1 x - \xi_2 x),$
$$
 \ f_{10}(x) \ = \ f(x) {\rm e}(-\xi_1 x), \
f_{01}(x) = f(x) {\rm e}(-\xi_2 x) \ \ {\rm and}
\ \ f_{11}(x) \ = \ f(x).
$$
The final equality follows since $|\widehat{\nu}_{[H_j]}(\xi)| = |\widehat{\mu}_{[H_j]}(\xi)|^2$ and so
\begin{align*}
\|\widehat{\nu}_{[H_j]}\|_{L^1(\KK)} = \| \widehat{\mu}_{[H_j]} \|_{L^2(\KK)}^2 = \| H_j^{-1}\ind{[H_j]} \|_2^2 = H_j^{-1}
\qquad {\rm for} \ j \in \{1,2\}
\end{align*}
by Plancherel's theorem.
Furthermore
$$
{\widehat{g}}(\xi_1, \xi_2) \ = \ \frac{1}{\mu(I)} {\mathop{\iiint}_{\KK^3}} \Delta_{h_1, h_2} f(x) \, 
{\rm e}(\xi_1 h_1 + \xi_2 h_2) \, d\mu(h_1) d\mu(h_2) d\mu(x).
$$
Appealing to the Gowers--Cauchy--Schwarz inequality \eqref{GCS-ineq},
we see that
$$
\|f\|_{\square^2_{[H_1],[H_2]}(I)}^4 \ \le \ (\mu(I) H_1 H_2)^{-1} \|f\|_{U^2}^4 \ = \ 
(\mu(I) H_1 H_2)^{-1}\|{\widehat{f}}\|_{L^4}^4 \ \le \
(H_1 H_2)^{-1} \|{\widehat{f}}\|_{L^{\infty}}^2
$$
as desired.
The last inequality follows from Plancherel's theorem, the 1-boundedness of $f$ and ${\rm supp}(f) \subset I$ which implies
$$
\|{\widehat{f}}\|_{L^4}^4 \le \|{\widehat{f}}\|_{L^{\infty}}^2 \|{\widehat{f}}\|_{L^2}^2 =
\|{\widehat{f}}\|_{L^{\infty}}^2 \|f\|_{L^2}^2 \le \mu(I) \|{\widehat{f}}\|_{L^{\infty}}^2 .
$$
\end{proof}

\subsection{van der Corput's inequality} 

We will need the following useful inequality.

\begin{lemma}[van der Corput's inequality]
\label{lem:vdC}
Let $\mathfrak g \in L^1(\KK)$ and let $J = B_{r_J}(x_J)$ be an interval. Then for any scale $H$, $0<H\le \mu(J)$, we have
\begin{equation}\label{vc-ineq-1}
\bigg|\int_{\KK} \mathfrak g(y)d\mu_J(y)\bigg|^2\leq \frac{C}{\mu(J)}\int_{\KK}\int_{J\cap(J-h)}\Delta_h\mathfrak g(y)
d\mu(y) d\nu_{[H]}(h).
\end{equation}
We can take $C = 4$ when $\KK$ is archimedean.
When $\KK$ is non-archimedean, we can take $C = 1$ and furthermore,
$\ind{J}(y) \ind{J}(y+h) = \mathbbm{1}_{J\cap(J-h)}(y) = \mathbbm{1}_J(y)$ for any $h \in [H]$ so that the above inequality can be expressed as
\begin{equation}\label{vc-ineq-2}
\bigg|\int_{\KK} \mathfrak g(y)d\mu_J(y)\bigg|^2\leq \iint_{\KK^2}\Delta_h\mathfrak g(y) d\mu_{[H]}(h) d\mu_J(y)
\end{equation}
since $d\nu_{[H]} = d\mu_{[H]}$ in this case.
\end{lemma}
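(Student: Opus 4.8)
The plan is to prove the van der Corput inequality by the standard duplication-of-variables trick, carefully tracking the normalization constants in both the archimedean and non-archimedean settings. First I would reduce to the case where $\mathfrak g$ is supported on $J$ by replacing $\mathfrak g$ with $\mathfrak g \cdot \ind{J}$; this does not change the left-hand side and only helps on the right. Then I would write
\[
\int_{\KK}\mathfrak g(y)\,d\mu_J(y) \;=\; \frac{1}{\mu(J)}\int_{\KK}\mathfrak g(y)\,d\mu(y) \;=\; \frac{1}{\mu(J)}\int_{\KK}\left(\int_{\KK}\mathfrak g(y+h)\,d\mu_{[H]}(h)\right)d\mu(y),
\]
using translation-invariance of Haar measure together with the fact that $\mu_{[H]}$ is a probability measure, so introducing the average over $h\in[H]$ changes nothing. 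The key point of the argument is that, after this averaging, the inner function $y\mapsto \int \mathfrak g(y+h)\,d\mu_{[H]}(h)$ is supported in a slightly enlarged ball $J^\ast$ with $\mu(J^\ast)\lesssim \mu(J)$ (using $H\le \mu(J)$; in the non-archimedean ultrametric case $J^\ast = J$ exactly since $[H]\subseteq$ the ball defining $J$ up to scaling).

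Next I would apply Cauchy--Schwarz in the $y$ variable against $\ind{J^\ast}$:
\[
\left|\int_{\KK}\mathfrak g(y)\,d\mu_J(y)\right|^2 \;\le\; \frac{\mu(J^\ast)}{\mu(J)^2}\int_{\KK}\left|\int_{\KK}\mathfrak g(y+h)\,d\mu_{[H]}(h)\right|^2 d\mu(y).
\]
Expanding the square as a double integral over $h,h'\in[H]$ and substituting, the inner expression becomes $\iint \mathfrak g(y+h)\overline{\mathfrak g(y+h')}\,d\mu_{[H]}(h)\,d\mu_{[H]}(h')$; after the change of variables $y\mapsto y-h'$ (Haar-invariance) and setting $k=h-h'$, the $h'$-integral of $\mathfrak g(y)\overline{\mathfrak g(y-k)}$ produces exactly the autocorrelation structure, and the push-forward of the pair $(h,h')\in[H]^2$ under $(h,h')\mapsto h-h'$ is precisely the Fej\'er measure $\nu_{[H]}$ by definition of $\kappa_{[H]} = \mu([H])^{-2}\ind{[H]}*\ind{-[H]}$. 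This yields
\[
\left|\int_{\KK}\mathfrak g(y)\,d\mu_J(y)\right|^2 \;\le\; \frac{\mu(J^\ast)}{\mu(J)^2}\int_{\KK}\int_{\KK}\Delta_h\mathfrak g(y)\,d\mu(y)\,d\nu_{[H]}(h),
\]
and the support constraint $\mathfrak g(y)\overline{\mathfrak g(y+h)}\neq 0 \Rightarrow y\in J\cap(J-h)$ restricts the inner integral to $J\cap(J-h)$ as stated.

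The remaining work is purely bookkeeping of the constant $\mu(J^\ast)/\mu(J)$. In the non-archimedean case the ultrametric inequality gives $J + [H] = J$ whenever $H\le \mu(J)$, so $\mu(J^\ast) = \mu(J)$, $C=1$, and moreover $\ind{J}(y)\ind{J}(y+h) = \ind{J}(y)$ for $h\in[H]$, giving the cleaner form \eqref{vc-ineq-2} after noting $d\nu_{[H]}=d\mu_{[H]}$. In the archimedean case ($\KK=\RR$ or $\CC$) the enlarged ball $J^\ast = B_{r_J + C_0 H^{1/\dim}}(x_J)$ has measure at most $2^{\dim}\mu(J)$ or so; a short estimate using $H\le\mu(J)$ shows $\mu(J^\ast)\le 4\mu(J)$ suffices (being slightly wasteful in the constant). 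I expect the only mildly delicate point to be making the support/measure comparison $\mu(J^\ast)\le 4\mu(J)$ completely rigorous in the archimedean case with the stated quasi-triangle constant built into $|\cdot|$, but this is elementary; everything else is the textbook van der Corput manipulation adapted to Haar measure.
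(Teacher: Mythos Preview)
Your proposal is correct and follows essentially the same approach as the paper: restrict $\mathfrak g$ to $J$, insert the harmless average over $h\in[H]$ via translation-invariance, apply Cauchy--Schwarz against the enlarged support $J-[H]$, expand the square to recover the Fej\'er measure $\nu_{[H]}$, and then track the constant $\mu(J-[H])/\mu(J)$ separately in the archimedean and non-archimedean cases. The paper writes the enlarged support as lying in $B_{2(r_J+H)}(x_J)$ and records the prefactor as $2(\mu(J)+H)/\mu(J)^2\le 4/\mu(J)$, which is exactly your $\mu(J^\ast)\le 4\mu(J)$ estimate.
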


\begin{proof}
We define $\mathfrak g_J(y):=\mathfrak g(y)\ind{J}(y)$. By a change of variables and Fubini's theorem we note 
\[
\int_{\KK}\mathfrak g(y) d\mu_J(y) = \frac{1}{\mu(J)}\iint_{\KK^2} \mathfrak g_J(y+h) d\mu_{[H]}(h) d\mu(y).
\]
The function $y\mapsto \int_{\KK}\mathfrak g_J(y+h) d\mu_{[H]}(h)$ is supported on the set $J - [H]$ which in turn
lies in $B_{2(r_J + H)}(x_J)$ (in the non-archimedean case, $J-[H] = J$). Hence 
by the Cauchy--Schwarz inequality and a change of variables, we conclude that
\begin{align*}
\bigg|\int_{\KK}\mathfrak g(y) d\mu_J(y)\bigg|^2 &=
\frac{1}{\mu(J)^2}\bigg|\iint_{\KK^2} \mathfrak g_J(y+h) d\mu_{[H]}(h) d\mu(y)\bigg|^2\\
&\le 2 \frac{\mu(J)+H}{\mu(J)^2}\iiint_{\KK^3} \mathfrak g_J(y+h_1)\overline{\mathfrak g_J(y+h_2)} d\mu_{[H]}(h_1)
d\mu_{[H]}(h_2) d\mu(y)\\
&= 2 \frac{\mu(J)+H}{\mu(J)^2}\iint_{\KK^2}\kappa_{[H]}(h)\mathfrak g_J(y)\overline{\mathfrak g_J(y+h)} d\mu(h)d\mu(y)\\
&\le 4 \mu(J)^{-1} \int_{\KK}\int_{J\cap(J-h)}\mathfrak g(y)\overline{\mathfrak g(y+h)} d\mu(y) d\nu_{[H]}(h),
\end{align*}
since $\kappa_{[H]}(h)=H^{-2}\int_{\KK}\ind{[H]}(h_1)\ind{[H]}(h+h_1)d\mu(h_1)$.
This gives the desired conclusion. 
\end{proof}

\subsection{Preparation for the PET induction scheme}
We now give a simple application of van der Corput's inequality
which will be repeatedly applied in the PET induction scheme.

\begin{lemma}
\label{lem:pet}
Let $c\ge1$ and let $I, J\subset \KK$ be two intervals with $\mu(I) = N_0$. Assume that 
$\mathfrak g_1\in L^\infty(\KK)$ and $\mathfrak g_2\in L^\infty(\KK^2)$ are $1$-bounded
functions such that
\begin{align}
\label{eq:65}
\|\mathfrak g_1\|_{L^1(\KK)}\le N_0,
\qquad \text{ and } \qquad
\sup_{y\in \KK}\|\mathfrak g_2(\cdot, y)\|_{L^1(\KK)}\le c N_0.
\end{align}
Suppose $H$ is a scale such that $0<H\le \mu(J)$. When $\KK$ is archimedean, we have
\begin{gather*}
\bigg|\frac{1}{N_0}\iint_{\KK^2}  \mathfrak g_1(x)\mathfrak g_2(x, y)d\mu_J(y)d\mu(x)\bigg|^2\\
\le 
4 \bigg|\frac{1}{N_0}\iiint_{\KK^3}\mathfrak g_2(x, y)\overline{\mathfrak g_2(x, y+h)}d\mu_J(y)d\nu_{[H]}(h)d\mu(x)\bigg|
+ 8 c \bigg[\frac{\mu([H])}{\mu(J)}\bigg]^{\theta}
\end{gather*}
where $\theta = 1$ when $\KK = {\mathbb R}$ and $\theta = 1/2$ when $\KK = {\mathbb C}$.  When
$\KK$ is non-archimedean, this improves to
\begin{gather*}
\bigg|\frac{1}{N_0}\iint_{\KK^2} \mathfrak g_1(x)\mathfrak g_2(x, y)d\mu_J(y)d\mu(x)\bigg|^2\\
\le 
\frac{1}{N_0} \iiint_{\KK^3}\mathfrak g_2(x, y)\overline{\mathfrak g_2(x, y+h)}d\mu_J(y)d\mu_{[H]}(h)d\mu(x) .
\end{gather*}
\end{lemma}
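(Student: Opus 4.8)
The plan is to first discard the low-complexity factor $\mathfrak g_1$ using the Cauchy--Schwarz inequality, and then apply van der Corput's inequality (Lemma~\ref{lem:vdC}) to the remaining average over $y$. (The interval $I$ plays no role beyond fixing $N_0=\mu(I)$.) Set $F(x):=\int_\KK\mathfrak g_2(x,y)\,d\mu_J(y)$; it is $1$-bounded, and by the $L^1$ hypothesis on $\mathfrak g_2$ and Tonelli, $\int_\KK|F|\,d\mu\le cN_0<\infty$. Since $\mathfrak g_1$ is $1$-bounded, $|\mathfrak g_1|^2\le|\mathfrak g_1|$, so $\int_\KK|\mathfrak g_1|^2\,d\mu\le\|\mathfrak g_1\|_{L^1(\KK)}\le N_0$, and by Fubini and Cauchy--Schwarz
\[
\Bigl|\frac{1}{N_0}\iint_{\KK^2}\mathfrak g_1(x)\mathfrak g_2(x,y)\,d\mu_J(y)d\mu(x)\Bigr|^2=\Bigl|\frac{1}{N_0}\int_\KK\mathfrak g_1(x)F(x)\,d\mu(x)\Bigr|^2\le\frac{1}{N_0}\int_\KK|F(x)|^2\,d\mu(x).
\]
Everything is thus reduced to estimating $\frac{1}{N_0}\int_\KK|F(x)|^2\,d\mu(x)$, and for this I apply Lemma~\ref{lem:vdC} with $\mathfrak g(\cdot)=\mathfrak g_2(x,\cdot)$ for each fixed $x$ (valid since $0<H\le\mu(J)$).

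When $\KK$ is non-archimedean this finishes the proof: Lemma~\ref{lem:vdC} gives, pointwise in $x$, the clean bound $|F(x)|^2\le\iint_{\KK^2}\Delta_h\mathfrak g_2(x,y)\,d\mu_{[H]}(h)\,d\mu_J(y)$, and integrating in $x$ (Fubini once more) produces exactly the claimed inequality. When $\KK$ is archimedean, Lemma~\ref{lem:vdC} instead gives $|F(x)|^2\le\frac{4}{\mu(J)}G(x)$ with $G(x):=\int_\KK\int_{J\cap(J-h)}\Delta_h\mathfrak g_2(x,y)\,d\mu(y)\,d\nu_{[H]}(h)\ge0$, so after integrating, $\frac{1}{N_0}\int_\KK|F|^2\,d\mu\le\frac{4}{N_0\mu(J)}\int_\KK G\,d\mu$, a nonnegative real number. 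Using $\ind{J\cap(J-h)}(y)=\ind{J}(y)-\ind{J\setminus(J-h)}(y)$ I split $\int_\KK G\,d\mu=M-R$, where $M$ is the same integral with $J\cap(J-h)$ replaced by $J$ and $R$ is the integral over $J\setminus(J-h)$. Since $\int_\KK G\,d\mu\ge0$ we get $\int_\KK G\,d\mu=|M-R|\le|M|+|R|$; rewriting $\int_J\,d\mu(y)=\mu(J)\int\,d\mu_J(y)$ one sees $\frac{4}{N_0\mu(J)}|M|$ is precisely the first term on the right-hand side of the claimed bound. So it remains only to prove $\frac{4}{N_0\mu(J)}|R|\le8c\,[\mu([H])/\mu(J)]^{\theta}$.

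This bound on $R$ is the one genuine computation. Estimating $\mathfrak g_2$ trivially in the $(y+h)$-slot and using $\sup_y\|\mathfrak g_2(\cdot,y)\|_{L^1(\KK)}\le cN_0$ gives $\int_\KK|\mathfrak g_2(x,y)\overline{\mathfrak g_2(x,y+h)}|\,d\mu(x)\le cN_0$, whence
\[
|R|\le cN_0\int_\KK\mu\bigl(J\setminus(J-h)\bigr)\,d\nu_{[H]}(h)\le cN_0\sup_{h\in\supp\nu_{[H]}}\mu\bigl(J\setminus(J-h)\bigr),
\]
since $\nu_{[H]}$ is a probability measure. The point is then an elementary geometric estimate: for a ball $J\subset\KK$ and $|h|\le\mu(J)$, $\mu(J\setminus(J-h))\lesssim\mu(J)^{1-\theta}|h|^{\theta}$, with $\theta=1$ when $\KK={\mathbb R}$ (there $J\setminus(J-h)$ is an interval of measure $\lesssim|h|$) and $\theta=1/2$ when $\KK={\mathbb C}$ (realizing $J$ as a Euclidean disc of radius $\simeq\mu(J)^{1/2}$, translation by Euclidean distance $\simeq|h|^{1/2}$ moves a region of area $\lesssim\mu(J)^{1/2}|h|^{1/2}$ off of $J$). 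As $\supp\nu_{[H]}$ sits in a ball of radius $\lesssim H$ and $\mu([H])\simeq H$, this gives $|R|\lesssim cN_0\,\mu(J)^{1-\theta}\mu([H])^{\theta}$, i.e. $\frac{4}{N_0\mu(J)}|R|\lesssim c\,[\mu([H])/\mu(J)]^{\theta}$; a routine check of the normalisations of $\mu$ and $|\cdot|$ on ${\mathbb R}$ and ${\mathbb C}$ shows the implied constant is at most $8$.

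The main obstacle is exactly this archimedean symmetric-difference estimate — in essence the fact that on ${\mathbb C}$ a ball of Haar measure $r$ has Euclidean radius of order $r^{1/2}$, which is what produces the exponent $\theta=1/2$ — together with keeping the numerical constants no larger than those asserted. The remaining steps are routine: all Fubini interchanges are legitimate by $1$-boundedness and the stated $L^1$ bounds, and no reasoning about which quantities are real is needed, since the claimed bound already carries an absolute value around the main term $M$; the only positivity invoked is $\int_\KK G\,d\mu\ge0$, which is built into Lemma~\ref{lem:vdC}.
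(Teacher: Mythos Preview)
Your proof is correct and follows essentially the same approach as the paper's: Cauchy--Schwarz in $x$ (using $\|\mathfrak g_1\|_{L^2}^2\le N_0$), then van der Corput's inequality in $y$, then in the archimedean case replacing $J\cap(J-h)$ by $J$ at the cost of the geometric error $\mu(J\setminus(J-h))\lesssim\mu(J)^{1-\theta}|h|^{\theta}$, bounded using the $L^1$ hypothesis on $\mathfrak g_2(\cdot,y)$. The paper's proof is organized identically, and your handling of the triangle inequality via $\int G\ge 0\Rightarrow\int G=|M-R|\le|M|+|R|$ makes explicit a step the paper leaves implicit.
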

\begin{proof}
Applying the Cauchy--Schwarz inequality in the $x$ variable it follows that
$$
\bigg|\frac{1}{N_0} \iint_{\KK^2}\mathfrak g_1(x)\mathfrak g_2(x, y)d\mu_J(y)d\mu(x)\bigg|^2\leq \frac{1}{N_0}
\int_{\KK} \bigg|\int_{\KK} \mathfrak g_2(x, y)d\mu_J(y)\bigg|^2 d\mu(x),
$$
since by \eqref{eq:65} and the $1$-boundedness of $\mathfrak g_1$, we have $\|\mathfrak g_1 \|_{L^2(\KK)}^2 \le N_0$.
By van der Corput's inequality in Lemma \ref{lem:vdC}, we obtain
\begin{gather*}
\int_{\KK} \bigg|\int_{\KK} \mathfrak g_2(x, y)d\mu_J(y)\bigg|^2d\mu(x)\\
\leq
4 \int_{\KK}\int_{\KK}\kappa_{[H]}(h)\frac{1}{\mu(J)}\int_{J\cap(J-h)} \mathfrak g_2(x, y)\overline{\mathfrak g_2(x, y+h)}d\mu(y)d\mu(h)d\mu(x)
\end{gather*}
when $\KK$ is archimedean. In this case, we have $\mu(J\setminus{[J\cap(J-h)]}) \le 2 \mu([H])$ when $\KK = {\mathbb R}$
and $\mu(J\setminus{[J\cap(J-h)]}) \le 2 \sqrt{\mu([H]) \mu(J)}$ when $\KK = {\mathbb C}$. Hence
\begin{align*}
\frac{4}{N_0} \int_{\KK}\kappa_{[H]}(h)\frac{1}{\mu(J)}\int_{J\setminus (J\cap(J-h))}\int_{\KK}|\mathfrak g_2(x, y)| d\mu(x)d\mu(y)d\mu(h)\le  8 c \bigg[\frac{\mu([H])}{\mu(J)}\bigg]^{\theta}.
\end{align*}
In the last line we used Fubini's theorem and \eqref{eq:65} for $\mathfrak g_2$.
This gives the desired bound when $\KK$ is archimedean.

When $\KK$ is non-archimedean, the bound \eqref{vc-ineq-2} in Lemma \ref{lem:vdC} gives
\begin{gather*}
\frac{1}{N_0} \int_{\KK} \bigg|\int_{\KK} \mathfrak g_2(x, y)d\mu_J(y)\bigg|^2d\mu(x)\\
\leq
\frac{1}{N_0} \iiint_{\KK^3}\mathfrak g_2(x, y)\overline{\mathfrak g_2(x, y+h)}d\mu_J(y)d\mu_{[H]}(h)d\mu(x)
\end{gather*}
which is the desired bound in this case.
\end{proof}

The next result is an essential building block of
the PET induction scheme, which will be employed in  Section \ref{sec:inverse}.

\begin{proposition}
\label{prop:pet}
Let $N, N_0 >0$ be two scales, $I$ an interval such that $\mu(I) = N_0$, $m\in\NN$, $i_0\in \bra{m}$ and let $\mathcal P:=\{P_1,\ldots, P_m\}$ be a collection of polynomials. Suppose that
$\mathfrak f_0, \mathfrak f_1,\ldots, \mathfrak f_m\in L^0(\KK)$ are
$1$-bounded functions such that $\|\mathfrak f_i\|_{L^1(\KK)}\le N_0$ for every $i\in\bra{m}_0$.

Let $0<\delta\le 1$ and  suppose that
\begin{align}
\bigg|\frac{1}{N_0} \iint_{\KK^2}\mathfrak f_0(x)\prod_{i=1}^m\mathfrak f_{i}(x-P_i(y))d\mu_{[N]}(y)d\mu(x)\bigg|\ge\delta.
\end{align}
Then there exists an absolute constant $C\gtrsim_{\mathcal P}1$ such that for all  $\delta'\le\delta^4/C$ we have
\begin{align}
\bigg|\frac{1}{N_0} \iint_{\KK^2}\mathfrak f'_0(x)\prod_{i=1}^{m'}\mathfrak f'_{i}(x-P'_i(y))d\mu_{[N]}(y)d\mu(x)\bigg|\gtrsim_C\delta^2,
\end{align}
where $m'< 2m$ and $\mathcal P':=\{P_1',\ldots, P_{m'}'\}$ is a new collection of polynomials such that
\[
\mathcal P'=\{P_1(y)-P_{i_0}(y), P_1(y+h)-P_{i_0}(y),\ldots,  P_m(y)-P_{i_0}(y), P_m(y+h)-P_{i_0}(y)\},
\]
for some $\delta'\delta^2N/C^2\le |h|\le \delta'N\le  \delta^4N/C$,
where $P_{m'}'(y):=P_m(y)-P_{i_0}(y)$, and $\{\mathfrak f_0',\ldots, \mathfrak f_{m'}'\}:=\{\mathfrak f_1, \overline{\mathfrak f_1},\ldots, \mathfrak f_m, \overline{\mathfrak f_m}\}$ with $\mathfrak f_{m'}':=\mathfrak f_{m}$.
\end{proposition}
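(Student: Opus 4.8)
The plan is to eliminate the unshifted factor $\mathfrak f_0$ by a single Cauchy--Schwarz plus van der Corput step (which is exactly the content of Lemma~\ref{lem:pet}), then to dyadically pigeonhole over the van der Corput shift $h$ to select a usable value, and finally to translate $x\mapsto x+P_{i_0}(y)$ to bring the form back into the required shape. Concretely, apply Lemma~\ref{lem:pet} with $\mathfrak g_1:=\mathfrak f_0$, $\mathfrak g_2(x,y):=\prod_{i=1}^m\mathfrak f_i(x-P_i(y))$, $J:=[N]$, and a scale $H\le\mu([N])$ to be fixed. Its hypotheses hold with $c=1$, since $\|\mathfrak g_1\|_{L^1(\KK)}=\|\mathfrak f_0\|_{L^1(\KK)}\le N_0$ and, the remaining factors being $1$-bounded, $\|\mathfrak g_2(\cdot,y)\|_{L^1(\KK)}\le\|\mathfrak f_m(\cdot-P_m(y))\|_{L^1(\KK)}=\|\mathfrak f_m\|_{L^1(\KK)}\le N_0$ for every $y$. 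Writing
\[
\Phi(h):=\frac1{N_0}\iint_{\KK^2}\mathfrak g_2(x,y)\overline{\mathfrak g_2(x,y+h)}\,d\mu_{[N]}(y)\,d\mu(x),
\]
the left-hand side of Lemma~\ref{lem:pet} is the square of the quantity assumed $\ge\delta$, so we get $\delta^2\le 4\bigl|\int_\KK\Phi(h)\,d\nu_{[H]}(h)\bigr|+8[\mu([H])/\mu([N])]^\theta$ in the archimedean case and the sharper $\delta^2\le\int_\KK\Phi(h)\,d\mu_{[H]}(h)$ when $\KK$ is non-archimedean.

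Next, absorb the archimedean error term. Fix $C=C_{\mathcal P}$ large enough. Since $\mu([H])\simeq H$, $\mu([N])\simeq N$, $\theta\ge\tfrac12$ and $\delta\le1$, for any $\delta'\le\delta^4/C$ the choice $H\simeq\delta'N$ makes $8[\mu([H])/\mu([N])]^\theta\le\delta^2/2$; in the non-archimedean case this step is vacuous. Hence $\delta^2\lesssim\bigl|\int_\KK\Phi(h)\,d\nu_{[H]}(h)\bigr|$. To discard the small-$h$ part, use the crude but uniform bound $|\Phi(h)|\le\frac1{N_0}\iint_{\KK^2}|\mathfrak f_m(x-P_m(y))|\,d\mu_{[N]}(y)\,d\mu(x)\le1$ (Tonelli together with $\|\mathfrak f_m\|_{L^1}\le N_0$), combined with $\nu_{[H]}(\{|h|\le r\})\lesssim r/H$ for $0<r\le H$. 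Then the portion of the integral over $\{|h|\le r_0\}$ with $r_0\simeq\delta^2 H$ contributes $\lesssim\delta^2$ with as small a constant as desired, so---as $\nu_{[H]}$ is a probability measure supported in a ball of radius $\lesssim H$---there exists $h$ with $\delta'\delta^2 N/C^2\le|h|\le\delta'N$ for which $|\Phi(h)|\gtrsim\delta^2$.

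Finally, fix such an $h$ and unfold
\[
\Phi(h)=\frac1{N_0}\iint_{\KK^2}\prod_{i=1}^m\mathfrak f_i(x-P_i(y))\,\overline{\mathfrak f_i(x-P_i(y+h))}\,d\mu_{[N]}(y)\,d\mu(x).
\]
Changing variables $x\mapsto x+P_{i_0}(y)$ in the inner integral (legitimate for each fixed $y$ by translation invariance of $\mu$) turns the non-conjugated $i=i_0$ factor into the unshifted $\mathfrak f_{i_0}(x)$, which we name $\mathfrak f_0'$, and the other factors into $\mathfrak f_i(x-(P_i(y)-P_{i_0}(y)))$ for $i\ne i_0$ and $\overline{\mathfrak f_i(x-(P_i(y+h)-P_{i_0}(y)))}$ for $1\le i\le m$. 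Relabelling these $2m-1$ surviving factors---keeping $\mathfrak f_{m'}':=\mathfrak f_m$ with $P_{m'}':=P_m-P_{i_0}$ distinguished---gives $m'=2m-1<2m$, the collection $\mathcal P'$ exactly as in the statement, and $\{\mathfrak f_0',\dots,\mathfrak f_{m'}'\}=\{\mathfrak f_1,\overline{\mathfrak f_1},\dots,\mathfrak f_m,\overline{\mathfrak f_m}\}$; these are again $1$-bounded with $L^1$-norm $\le N_0$, and the claimed lower bound is precisely $|\Phi(h)|\gtrsim\delta^2$. The only points needing care are the choice of $H$ killing the archimedean error term, the crude estimate $|\Phi|\le1$ powering the small-$h$ pigeonhole, and bookkeeping the relabelling consistently with the statement; there is no genuine obstacle.
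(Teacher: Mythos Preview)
Your proof is correct and follows essentially the same approach as the paper's own proof: apply Lemma~\ref{lem:pet} with $\mathfrak g_1=\mathfrak f_0$, $\mathfrak g_2(x,y)=\prod_i\mathfrak f_i(x-P_i(y))$, $J=[N]$, $H=\delta'N$, then pigeonhole in $h$ to avoid small shifts, and finally change variables $x\mapsto x+P_{i_0}(y)$. Your write-up is slightly more explicit than the paper's (you spell out the bound $|\Phi(h)|\le 1$ and the $\nu_{[H]}$-measure estimate driving the pigeonhole), but the argument is the same.
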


\begin{proof}
Let $\I:=\bra{m}$ and $C\ge 1$ be a large constant to be
determined later. We shall apply Lemma \ref{lem:pet} with 
$J=[N]$, the functions $\mathfrak g_1(x)=\mathfrak f_0(x)$ and
$\mathfrak g_2(x, y)=\prod_{i\in\I}\mathfrak f_{i}(x-P_i(y))$, and the
parameter $H=\delta'N$. Note that
$\|\mathfrak g_1\|_{L^{\infty}(\KK)}\le 1$ and
$\|\mathfrak g_2\|_{L^{\infty}(\KK^2)}\le 1$, since
$\|\mathfrak f_i\|_{L^{\infty}(\KK)}\le 1$ for all $i\in\I$. Moreover,
$\mathfrak g_1$ and $\mathfrak g_2$ satisfy \eqref{eq:65}. If
$\delta'\le \delta^4/C$ and $C\ge1$ is sufficiently large, using Lemma
\ref{lem:pet}, we conclude 
\begin{align*}
\bigg|\frac{1}{N_0}\iiint_{\KK^3}\mathfrak g_2(x, y)\overline{\mathfrak g_2(x, y+h)}d\mu_{[N]}(y)d\nu_{[H]}(h) d\mu(x)\bigg|
\gtrsim \delta^2.
\end{align*}
By the pigeonhole principle, there exists $|h|\ge \delta^2 H/C^2$ so that 
\begin{align*}
\bigg|\frac{1}{N_0} \iint_{\KK^2}\mathfrak g_2(x, y)\overline{\mathfrak g_2(x, y+h)}d\mu_{[N]}(y)d\mu(x)\bigg|
\gtrsim \delta^2.
\end{align*}
We make the change of variables $x\mapsto x+P_{i_0}(y)$ to conclude
\begin{align*}
\bigg|\frac{1}{N_0}\iint_{\KK^2}
\prod_{i\in\I}\mathfrak f_{i}(x-P_i(y)+P_{i_0}(y))\overline{\mathfrak f_{i}(x-P_i(y+h)+P_{i_0}(y))}
d\mu_{[N]}(y)d\mu(x)\bigg|\gtrsim\delta^2.
\end{align*} 
This completes the proof.
\end{proof}

\section{The $L^\infty$-inverse theorem}\label{sec:inverse}

The goal of this section is to present the proof of Theorem \ref{thm:inverse-informal}, the key $L^\infty$-inverse
theorem for general polynomials with distinct degrees, which we now restate in a more formal, precise way.
\begin{theorem}[Inverse theorem for $(m+1)$-linear forms]
\label{thm:inverse}
Let $N \ge 1$ be a scale,  $m\in\Z_+$ and $0<\delta\le 1$ be given. Let $\mathcal P:=\{P_1,\ldots, P_m\}$
be a collection of polynomials such that
$1\le \deg{P_1}<\ldots<\deg{P_m}$. Set $N_0 = N^{\deg(P_m)}$ and let
$f_0, f_1,\ldots, f_m\in L^0(\KK)$ be $1$-bounded functions supported
on an interval $I\subset \KK$ of measure $N_0$.  Define an $(m+1)$-linear form
corresponding to the pair $(\mathcal P; N)$ by
\begin{align}
\label{eq:6}
\Lambda_{\mathcal P; N}(f_0,\ldots, f_m):=
\frac{1}{N_0} \int_{\KK^2}f_0(x)\prod_{i=1}^mf_{i}(x-P_i(y))d\mu_{[N]}(y)d\mu(x).
\end{align}
Suppose that
\begin{align}
\label{eq:2}
|\Lambda_{\mathcal P; N}(f_0,\ldots, f_m)|\ge\delta.
\end{align}
Then there exists $N_1\simeq \delta^{O_{\mathcal P}(1)}N^{\deg(P_1)}$ so that
\begin{align}
\label{eq:19}
N_0^{-1}\big\| \mu_{[N_1]}*f_1\big\|_{L^1(\KK)} \gtrsim_{\mathcal P} \delta^{O_{\mathcal P}(1)}.
\end{align}
\end{theorem}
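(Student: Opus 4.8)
The plan is to run a PET (polynomial exhaustion) induction, using Proposition \ref{prop:pet} repeatedly to reduce the general $(m+1)$-linear form to a situation governed by the $U^2$-box norm, and then a degree-lowering argument to pass from the box-norm control back to an $L^1$-convolution bound on $f_1$.

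First I would set up the PET induction. Starting from the hypothesis $|\Lambda_{\mathcal P;N}(f_0,\dots,f_m)| \ge \delta$, I apply Proposition \ref{prop:pet} with a carefully chosen index $i_0 \in \bra{m}$ — the standard PET choice being to eliminate the polynomial of \emph{lowest} degree (other than possibly $P_1$, which we want to preserve), so that the ``weight vector'' of the new system strictly decreases in the PET ordering. Each application squares-and-doubles: $\delta \mapsto \delta^2$ (up to constants), $m \mapsto m' < 2m$, and introduces a shift parameter $h$ of size $\simeq \delta^{O(1)} N$, while replacing the polynomials $P_i(y)$ by the differenced family $\{P_i(y) - P_{i_0}(y),\ P_i(y+h) - P_{i_0}(y)\}$. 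Iterating this a bounded number $O_{\mathcal P}(1)$ of times — the number of steps depends only on the degree sequence of $\mathcal P$, via the usual PET/Weyl-complexity bound — I terminate in a system where all but (essentially) one polynomial has been reduced to degree $1$, or more precisely where the remaining form is controlled by a Gowers box norm $\square^s_{[H_1],\dots,[H_s]}(I)$ of a function built from $f_1$ (the key point being that $f_1$, attached to the polynomial $P_1$ of smallest degree, survives all the differencing as the ``distinguished'' function whose box norm we end up estimating, with box scales $H_j \simeq \delta^{O_{\mathcal P}(1)} N^{\deg P_1}$). Throughout, I keep track of the $L^1$-mass normalizations $\|f_i\|_{L^1} \le N_0$ so that Lemma \ref{lem:pet}'s hypothesis \eqref{eq:65} is satisfied at every stage, with $N_0$ being replaced by the appropriate power of the current scale.

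Second, the degree-lowering step. After PET terminates I have something like $\|F\|_{\square^s_{[H_1],\dots,[H_s]}(I)} \gtrsim_{\mathcal P} \delta^{O_{\mathcal P}(1)}$ where $F$ is essentially a modulated translate of $f_1$ (or a product involving $f_1$). Using the recursion \eqref{s+1-s} relating the $\square^{s+1}$ box norm to an average of $\square^s$ box norms of multiplicative derivatives, together with the $U^2$-inverse theorem Lemma \ref{2.10} as the base case (which converts a $\square^2$-box-norm lower bound into an $L^\infty$ lower bound on the Fourier transform), I can peel off the top derivative: a large $\square^{s+1}$ norm forces, for a positive-density set of shifts $h$, a large $\square^s$ norm of $\Delta_h f_1$, hence by the inductive hypothesis a large Fourier coefficient $\widehat{\Delta_h f_1}(\xi(h))$ at some frequency $\xi(h)$. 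The degree-lowering argument (in the style developed by Peluse) then shows this frequency function $\xi(h)$ must itself be ``low-degree'' — essentially locally constant at the relevant scale — which, fed back in, upgrades the $\square^s$ control on $f_1$ to $\square^{s-1}$ control, and iterating down to $s=1$ yields exactly a lower bound on $\|\mu_{[N_1]} * f_1\|_{L^1}$ with $N_1 \simeq \delta^{O_{\mathcal P}(1)} N^{\deg P_1}$ (the scale $N_1$ being the reciprocal of the frequency-localization scale produced by the argument). One must check that all the $\delta$-losses and scale-losses accumulated over the $O_{\mathcal P}(1)$ PET steps and the $O_{\mathcal P}(1)$ degree-lowering steps remain polynomial in $\delta$, which they do since each step costs only a bounded power.

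The main obstacle I expect is the bookkeeping in the PET induction: one must verify that eliminating $P_{i_0}$ genuinely decreases the relevant complexity invariant of the polynomial system (so that the scheme terminates in $O_{\mathcal P}(1)$ steps), that the distinguished function $f_1$ and its polynomial $P_1$ of minimal degree are never destroyed by the differencing — this is why $P_1$ has the \emph{smallest} degree and is precisely what makes the output scale $N^{\deg P_1}$ — and that the shift scales $h$ produced at each stage stay comparable to the right power of $N$ so the final box scales are as claimed. The degree-lowering step is conceptually the subtler half but is by now a fairly standard technology once the PET reduction has been carried out correctly; the archimedean-versus-non-archimedean case split (already visible in Lemma \ref{lem:pet} and Lemma \ref{lem:vdC}) adds a layer of care but no essential difficulty, since the oscillatory integral bound \eqref{osc-int-est} is uniform across all the fields $\KK$ under consideration.
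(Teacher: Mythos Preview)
Your overall architecture (PET induction followed by degree-lowering) matches the paper, but there is a genuine gap in the identification of \emph{which} function PET controls and how one reaches $f_1$.

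The PET scheme, as set up in Proposition \ref{prop:pet} and carried out in Theorem \ref{thm:Us}, always subtracts off $P_{i_0}$ with $i_0 = 1$ (the polynomial of \emph{lowest} current degree) and preserves the function $f_{m'}' = f_m$ attached to the \emph{highest}-degree polynomial. The output is a box-norm lower bound
\[
\|f_m\|_{\square^s_{[H_1],\ldots,[H_s]}(I)} \gtrsim_{\mathcal P} \delta^{O_{\mathcal P}(1)}
\quad\text{with}\quad H_i \simeq \delta^{O_{\mathcal P}(1)} N^{\deg(P_m)},
\]
not a box-norm bound on $f_1$ at scales $N^{\deg(P_1)}$. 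This is essentially forced: differencing $P_i(y+h)-P_i(y)$ drops the degree of $P_i$ by one, so the highest-degree polynomial is the one that survives longest and whose function ends up controlled at the end. Your claim that ``$f_1$, attached to the polynomial $P_1$ of smallest degree, survives all the differencing as the distinguished function'' is exactly backwards.

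Consequently you are missing the entire mechanism by which control is transferred from $f_m$ down to $f_1$: the paper proves a \emph{modulated} version (Theorem \ref{thm:inversem}) by induction on $m$. One passes from box-norm control on $f_m$ to box-norm control on the \emph{dual function} $F_m^\xi$ (Proposition \ref{prop:dual}); degree-lowering (Theorem \ref{thm:deg-low}) is then run on $F_m^\xi$, not on $f_1$, reducing to $\square^2$ and hence (via Lemma \ref{2.10}) to a large Fourier coefficient $\widehat{F_m^\xi}(\xi_0)$. Unpacking this Fourier coefficient and changing variables $x \mapsto x - P_m(y)$ converts $P_m$ into an extra modulation ${\rm e}(\xi_0 P_m(y))$, yielding a modulated $m$-linear form $\Lambda_{\mathcal P';N}^{\mathcal Q';\xi'}$ with $\mathcal P' = \mathcal P \setminus \{P_m\}$. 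One then invokes the inductive hypothesis (after a further partition of $I$ into subintervals of measure $\simeq N^{\deg(P_{m-1})}$) to eventually reach $f_1$. The degree-lowering step itself (Claim \ref{claim:3}) is also proved by this same induction on $m$, so the two inductions are intertwined. Without this downward induction on $m$, your sketch has no route from the PET output to the conclusion about $f_1$.
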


If necessary  we will also write
$\Lambda_{\mathcal P; N}(f_0,\ldots, f_m)=\Lambda_{\mathcal P; N, I}(f_0,\ldots, f_m)$
in order to emphasize that the functions $f_0, f_1,\ldots, f_m$ are
supported on $I$.

\paragraph{\bf Remark} When $\KK = {\mathbb C}$ is the complex field, the proof of Theorem \ref{thm:inverse} will
also hold if the form $\Lambda_{\mathcal P; N}$ is defined with the disc $[N] = {\mathbb D}_{\sqrt{N}}$ replaced
by the square 
$$
[N]_{sq} \ := \ \{x + i y \in {\mathbb C} : |x|\le \sqrt{N}, |y| \le \sqrt{N}\}.
$$ 
In this case, the conlusion is $N_0^{-1} \| \mu_{[N_1]_{sq}} * f_1\|_{L^1({\mathbb C})} \gtrsim \delta^{O_{\mathcal P}(1)}$.
This observation will be needed at one point in the proof of Theorem \ref{sobolev-informal}.

The proof of Theorem \ref{thm:inverse} breaks into two main steps:
first, an application of PET induction to show that whenever
\[ |\Lambda_{\mathcal{P};N}(f_0,f_1,\dots,f_m) | \geq \delta \] is
large, then necessarily $f_m$ has a fairly large $U^s$ norm for an
appropriately large $s = s_{\mathcal{P}}$. Second, an inductive
``degree-lowering'' step to reduce $U^s$ control to $U^2$ control. We
accordingly subdivide the argument into two subsections.

\subsection{PET induction}
Our first goal is to show that whenever the multi-linear form
$\Lambda_{\mathcal P;I}$ is large, necessarily $f_m$ has some fairly
large (sufficiently high degree) Gowers box norm. We begin with 
the definition of $(d,j)$-admissible polynomials. Recall that for a polynomial $P \in \KK[{\rm y}]$,
the leading coefficient is denoted by $\ell(P)$.
\begin{definition}[The class of $(d,j)$-admissible polynomials]
\label{def:1}
Let $N\ge 1$ be a scale, $0<\delta\le 1$,  $d\in\Z_+$, $j\in\bra{d}$ and parameters $A_0\ge1$ and $A\ge0$ be
given. Assume that a finite collection of polynomials $\mathcal P$ has degree $j$ and 
define  $\mathcal{P}_j := \{ P\in\mathcal P : \deg(P) = j\}$.
We will say that $\mathcal P$ is $(d, j)$-admissible with tolerance $(A_0, A)$
if the following properties are satisfied:
\begin{enumerate}[label*={\arabic*}.]
\item For every $P\in \mathcal P_j$ we have
\begin{align}
\label{eq:24}
A_0^{-1}\delta^{A} N^{d-j} \leq |\ell(P)| \leq  A_0\delta^{-A}N^{d-j}.
\end{align}
\item Whenever $P, Q\in\mathcal P_j$ and  $\ell(P) \neq \ell(Q)$ we have
\begin{align}
\label{eq:25}
A_0^{-1}\delta^{A} N^{d-j} \leq |\ell(P) - \ell(Q)| \leq A_0\delta^{-A} N^{d-j}.
\end{align}
\item Whenever $P, Q\in\mathcal P_j$ and $P\neq Q$ and $\ell(P) = \ell(Q)$ we have  
\begin{align}
\label{eq:26}
A_0^{-1}\delta^{A} N^{d-j+1} \leq |\ell(P - Q)| \leq A_0\delta^{-A} N^{d-j+1},
\end{align}
and $\deg(P-Q) = j-1$.
\end{enumerate}
In the special case where the polynomials in $\mathcal{P}$ are linear, we require that
$\ell(P) \neq \ell(Q)$ for each $P, Q\in\mathcal P$. The constants
$A_0, A$ will be always independent of $\delta$ and $N$, but may
depend on $\mathcal P$. In our applications the exact values of
$A_0, A$ will be unimportant and then we will simply say that the
collection $\mathcal{P}$ is $(d, j)$-admissible.

\end{definition}

\begin{remark}
\label{rem:3}
Under the hypotheses of Theorem \ref{thm:inverse} it is not difficult
to see that the collection of polynomials
$\mathcal P=\{P_1,\ldots, P_m\}$  such that
$1\le \deg{P_1}<\ldots<\deg{P_m}=d$ is $(d,d)$-admissible with the
tolerance $(\max\{|\ell(P_m)|^{-1}, |\ell(P_m)|\}, 0)$. Indeed, condition
\eqref{eq:24} can be easily verified and conditions \eqref{eq:25} and
\eqref{eq:26} are vacuous as $\mathcal P_d=\{P_m\}$.
\end{remark}

The main result of this subsection is the following theorem.

\begin{theorem}[Gowers box norms  control  $(m+1)$-linear forms]
\label{thm:Us}
Let $\mathcal P:=\{P_1,\ldots, P_m\}$ be a collection of
$(d, d)$-admissible polynomials such that
$1\le \deg{P_1}\le\ldots\le\deg{P_m}=d$.  
Let $N, N_0\ge 1$ be two scales, $I$ an interval with measure $N_0$ and $0<\delta\le 1$  be given and let
$f_0, f_1,\ldots, f_m\in L^0(\KK)$ be $1$-bounded functions
such that $\|f_i\|_{L^1(\KK)}\le N_0$ for all $i\in\bra{m}_0$.
If \eqref{eq:2} is
satisfied, then there exists $s:=s_{\mathcal P}\in\Z_+$ such that
\begin{align}
\label{eq:32}
\|f_m\|_{\square_{[H_1], \ldots, [H_s]}^s(I)} \gtrsim_{\mathcal P} \delta^{O_{\mathcal P}(1)},
\end{align}
where
$H_i\simeq \delta^{O_{\mathcal P}(1)}N^{\deg(P_m)}$ for $i\in\bra{s}$.
\end{theorem}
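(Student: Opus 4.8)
The plan is to run a PET (polynomial exhaustion technique) induction on the "weight vector" or complexity of the collection $\mathcal P$, using Proposition \ref{prop:pet} as the single step that converts one multilinear form into another. The key point is to set up a well-founded ordering on finite collections of polynomials with distinct degrees so that each application of Proposition \ref{prop:pet} strictly decreases the complexity, and terminates precisely when the collection has been reduced to a single nonconstant polynomial (together with its conjugate-twin copies). At termination, the resulting form is — after the change of variables already built into Proposition \ref{prop:pet} — an integral in which $f_m$ appears inside a product of multiplicative derivatives $\Delta_{h_1,\dots,h_s}$, and Corollary \ref{GCS-indep} (Gowers--Cauchy--Schwarz) bounds this by a power of a Gowers box norm of $f_m$, giving \eqref{eq:32}.

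The first thing I would do is make the ordering precise. Following the standard PET scheme, to a collection $\mathcal P = \{Q_1,\dots,Q_k\}$ one associates the multiset of degrees $\{\deg Q_i\}$, refined by recording, among the polynomials of top degree $d' = \deg(\mathcal P)$, how many distinct leading coefficients occur. The weight is then, say, the pair $(d', w)$ where $w$ counts the polynomials of degree $d'$ with a "repeated" leading coefficient in a suitable sense, ordered lexicographically (this is exactly the bookkeeping encoded in Definition \ref{def:1}: the three cases \eqref{eq:24}--\eqref{eq:26} are the three outcomes of differencing by $P_{i_0}$, namely lowering the top degree, creating a new leading coefficient, or producing a degree-$(j{-}1)$ polynomial). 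One chooses the index $i_0 \in \bra{m}$ in Proposition \ref{prop:pet} to be a polynomial realizing the minimal degree among those of top degree, so that forming $\{P_i(y) - P_{i_0}(y),\, P_i(y+h) - P_{i_0}(y)\}$ either strictly decreases $d'$ or strictly decreases $w$ while keeping $d'$ fixed. A finite induction (the number of steps is $O_{\mathcal P}(1)$, so the loss $\delta \mapsto \delta^{O_{\mathcal P}(1)}$ is acceptable) then reduces us to the base case.

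The second ingredient I would supply is that the admissibility hypothesis is propagated. At each stage one needs the new collection $\mathcal P'$ to again be $(d,j')$-admissible with some tolerance $(A_0', A')$ depending only on $\mathcal P$ — this is needed so that the scales $H_i$ introduced ($|h| \simeq \delta^{O(1)} N$, per the range for $h$ in Proposition \ref{prop:pet}) stay comparable to $N^{\deg P_m}$ after the final change of variables restores the original variable, and so that nondegeneracy is not lost (e.g. two polynomials in $\mathcal P'$ that a priori might coincide do not). Verifying that differencing preserves admissibility, with controlled tolerances, is essentially the content of the three cases in Definition \ref{def:1}, and is where the hypothesis that the $P_i$ have \emph{distinct} degrees (so the original top class is a singleton, as noted in Remark \ref{rem:3}) gets used to seed the induction cleanly.

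The main obstacle, and the part requiring real care, is the base case together with correctly tracking which functions land on $f_m$. After all differencing, the polynomials are all multiples of $y \mapsto (\text{something}) \cdot y^{\deg P_1}$ up to lower-order terms (a van der Corput / linearization at the bottom), and the function attached to the last slot is $f_m$ (this is exactly why Proposition \ref{prop:pet} is stated with $\mathfrak f_{m'}' := \mathfrak f_m$ and $P_{m'}' := P_m - P_{i_0}$ — the bottom-degree tail polynomial is kept paired with $f_m$ throughout). One then applies Corollary \ref{GCS-indep} with $A$ indexing the $s$ difference parameters $h_1,\dots,h_s$: all factors other than the $2^s$ copies of $f_m$ (and its conjugates) are $1$-bounded and independent of the corresponding $h$-variables, so \eqref{GCS-ineq-2} produces $\|f_m\|_{\square^s}$ raised to the power $2^s$, against scales $H_i \simeq \delta^{O_{\mathcal P}(1)} N^{\deg P_m}$ after undoing the dilation $y \mapsto y/\ell$-type normalizations using the change of variables formula $\int f(y^{-1}x)\,d\mu = |y|\int f\,d\mu$ from Section \ref{local-fields}. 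The only subtlety beyond bookkeeping is ensuring the lower-order terms of the reduced polynomials, and the cross terms from $P_i(y+h) - P_i(y)$ depending on both $y$ and the $h$'s, are genuinely independent of the appropriate single $h_i$ variable so that Corollary \ref{GCS-indep} applies; this is arranged by the order in which the difference parameters are introduced (each new $h$ in Proposition \ref{prop:pet} is "fresh"). I expect the total number of PET steps $s_{\mathcal P}$, and hence the exponent $O_{\mathcal P}(1)$, to be dominated by a tower-type quantity in $\deg P_m$, but since we only claim $O_{\mathcal P}(1)$ dependence this is harmless.
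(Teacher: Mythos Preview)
Your high-level plan --- PET induction on a weight vector, using Proposition~\ref{prop:pet} as the single differencing step and tracking that admissibility is preserved --- is the right architecture, and matches the paper's Claim~\ref{claim:1} and Proposition~\ref{prop:PETiterate}. But your description of the base case has a genuine gap, stemming from a misreading of what Proposition~\ref{prop:pet} actually outputs.

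Proposition~\ref{prop:pet} \emph{pigeonholes} the differencing parameter $h$: the output is a new $(m'+1)$-linear form $\Lambda_{\mathcal P';N}(f_0',\ldots,f_{m'}')$ in which $h$ is a \emph{fixed} number absorbed into the new polynomials $P_i(y+h)-P_{i_0}(y)$. There are no averaged $h$-variables left, so your proposal to ``apply Corollary~\ref{GCS-indep} with $A$ indexing the $s$ difference parameters $h_1,\dots,h_s$'' cannot be carried out --- those variables are simply not present in the form. Correspondingly, $f_m$ does \emph{not} appear as $\Delta_{h_1,\ldots,h_s}f_m$; it appears (together with $\overline{f_m}$) at several arguments of the shape $x-P_m(y+h)+P_{i_0}(y)$ whose differences still depend on $y$. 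Also, the PET process never reduces to ``a single nonconstant polynomial'': each step sends $m$ to some $m'<2m$, so the number of polynomials grows, while what decreases is the weight vector.

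The paper therefore runs two distinct phases. Phase one is exactly your PET: iterate Proposition~\ref{prop:PETiterate} (built on Proposition~\ref{prop:pet}, with $i_0=1$ the polynomial of \emph{overall} minimal degree, not ``minimal among those of top degree'') until the collection becomes $(d,1)$-admissible, i.e.\ all polynomials are linear with pairwise distinct leading coefficients. Phase two is the separate linear base case, Proposition~\ref{prop:linear}: here one applies Lemma~\ref{lem:pet} (van~der~Corput, \emph{keeping} the $h$-average rather than pigeonholing) $m$ times in succession, each time shifting $x\mapsto x-L_i(y)$ to eliminate one linear polynomial. Because the $L_i$ are linear, the shift turns $f_m(x-L_m(y))\overline{f_m(x-L_m(y+h))}$ into a genuine discrete derivative $\Delta_{\ell(L_m)h}f_m$ in the $x$-variable alone, and after $m$ rounds one obtains directly
\[
\frac{1}{N_0}\int_{\KK^{m+1}}\Delta_{u_1h_1}\cdots\Delta_{u_mh_m}f_m(x)\,d\nu_{[H]}^{\otimes m}(h)\,d\mu(x)\gtrsim\delta^{2^{m-1}},
\]
which is exactly the box norm raised to the $2^m$-th power. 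No appeal to Corollary~\ref{GCS-indep} is needed here; the $\Delta$'s are produced by the linear algebra of the base case, not by PET. Your sketch would be repaired by replacing the final paragraph with this two-phase structure.
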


The proof of Theorem \ref{thm:Us} requires a subtle downwards
induction based on a repetitive application of Proposition
\ref{prop:pet} on the class of $(d, j)$-admissible polynomials.  To
make our induction rigorous, we will assign a weight vector to each collection
$\mathcal{P}\subset \KK[{\rm t}]$ of polynomials.

\begin{definition}[Weight vector]
For any finite $\mathcal{P}\subset \KK[{\rm t}]$ define the weight vector
\[
v(\mathcal{P}) := (v_1, v_2,\dots) \in \mathbb{N}^{\Z_+},
\]
where 
\[
v_j :=v_j(\mathcal P):= \#\{ \ell(P) : P \in \mathcal{P} \text{ and } \deg(P) = j\},
\]
is the number of distinct leading coefficients of $\mathcal{P}$ of degree $j\in\Z_+$.
\end{definition}

For example, the weight vector for the family
$\mathcal P=\{x, 5x, x^2, x^2+x, x^4\}$ is
$v(\mathcal P)=(2, 1, 0, 1, 0, 0, \ldots)$. There is a natural
ordering on the set of weight vectors.

\begin{definition}[Well-ordering on the set of weight vectors]
For any two weight vectors $v(\mathcal P)=(v_1(\mathcal P),v_2(\mathcal P),\dots)$ and $ v(\mathcal Q)=(v_j(\mathcal Q),v_j(\mathcal Q),\dots)$ corresponding to finite collections $\mathcal{P}, \mathcal Q\subset \KK[{\rm t}]$ we define an ordering $\prec$ on the set of weight vectors by declaring that
\[
v(\mathcal P)\prec v(\mathcal Q)
\]
if there is a degree $j\in\Z_+$ such that $v_j(\mathcal P)<v_j(\mathcal Q)$ and $v_k(\mathcal P)=v_k(\mathcal Q)$ for all $k>j$.
\end{definition}

It is a standard fact that $\prec$ is a well ordering, we omit the details.

\begin{proof}[Proof of Theorem \ref{thm:Us}]
We begin by stating the following claim:
\begin{claim}
\label{claim:1}
Let $N, N_0\ge 1$ be two scales, $0<\delta\le 1$, $d, m\in\Z_+$  and
$j\in\bra{d}$ be given and let $\mathcal P:=\{P_1,\ldots, P_m\}$
be a collection of $(d, j)$-admissible polynomials with tolerance $(A_0, A)$
such that $\deg{P_1}\le\ldots\le\deg{P_m} =j$. Let $I$ be an interval with $\mu(I) = N_0$ and let
$f_0, f_1,\ldots, f_m \in L^0(\KK)$ be $1$-bounded functions
such that $\|f_i\|_{L^1(\KK)}\le N_0$ for all $i\in\bra{m}_0$.
Suppose that
\begin{align}
\label{eq:49}
|\Lambda_{\mathcal P; N}(f_0,\ldots, f_m)|\ge\delta.
\end{align}
Then there exists a collection $\mathcal P':=\{P_1',\ldots, P_{m'}'\}$ of $(d, j-1)$-admissible polynomials with tolerance $(A_0', A')$
and $m':=\#\mathcal P'$ so
that $\deg(P_1')\le\ldots\le \deg(P_{m'}')=j-1$, and $1$-bounded functions
$f_0', f_1',\ldots, f_{m'}'\in L^0(\KK)$ such that $\|f_i'\|_{L^1(\KK)}\le N_0$ for all $i\in\bra{m'}_0$ with $f_{m'}':=f_m$ and satisfying
\begin{align}
\label{eq:50}
|\Lambda_{\mathcal P'; N}(f_0',\ldots, f_{m'}')|\gtrsim_{\mathcal P}\delta^{O_{\mathcal P}(1)}.
\end{align}
\end{claim}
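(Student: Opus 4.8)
The plan is to prove Claim~\ref{claim:1} by transfinite induction on the weight vector $v(\mathcal P)$ with respect to the well-ordering $\prec$: I will establish the statement for a given $(d,j)$-admissible $\mathcal P$ under the inductive hypothesis that the claim already holds for every admissible collection whose weight vector is strictly smaller than $v(\mathcal P)$. Since $v_k(\mathcal Q) = 0$ whenever $k$ exceeds the degree of $\mathcal Q$, a collection of strictly smaller top degree automatically has a strictly smaller weight vector, so this single induction simultaneously drives the top degree down and, at a fixed top degree, reduces the number of distinct leading coefficients there.

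For the inductive step, let $j_0$ be the smallest degree occurring among the polynomials of $\mathcal P$ and fix $i_0 \in \bra m$ with $\deg(P_{i_0}) = j_0$; when $\#\mathcal P \ge 2$ I additionally take $i_0 \ne m$ (and, when $j_0 = j$, one with $\ell(P_{i_0}) = \ell(P_m)$ whenever such an index exists), the degenerate case $\#\mathcal P = 1$ being disposed of by a single van der Corput step. Applying Proposition~\ref{prop:pet} with this $i_0$, $J = [N]$, and $H = \delta' N$ for a suitable $\delta' \simeq_{\mathcal P} \delta^{O_{\mathcal P}(1)}$, one obtains a shift $h$ in the stated range, $1$-bounded functions drawn from $\{f_1, \overline{f_1}, \dots, f_m, \overline{f_m}\}$ (hence of $L^1$ norm $\le N_0$, with the last one equal to $f_m$), and a new collection $\widetilde{\mathcal P} = \{\,P_i(y) - P_{i_0}(y),\ P_i(y+h) - P_{i_0}(y)\,\}_{i}$ whose associated $(\#\widetilde{\mathcal P}+1)$-linear form is $\gtrsim_{\mathcal P} \delta^{O_{\mathcal P}(1)}$. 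The key elementary computation is the bookkeeping of degrees and leading coefficients of $\widetilde{\mathcal P}$: each $P_i$ of degree $>j_0$ contributes two polynomials of the same degree with leading coefficient $\ell(P_i)$ unchanged; each $P_i$ of degree $j_0$ with $\ell(P_i)\ne\ell(P_{i_0})$ contributes two polynomials of degree $j_0$ with leading coefficient $\ell(P_i)-\ell(P_{i_0})$; each $P_i$ of degree $j_0$ with $\ell(P_i)=\ell(P_{i_0})$ and $P_i\ne P_{i_0}$, together with $P_{i_0}$ itself, contributes only polynomials of degree $j_0-1$ (using property~\eqref{eq:26}, and the fact that the $y^{j_0-1}$-coefficient of $P_i(y+h)-P_{i_0}(y)$ equals $j_0\,\ell(P_i)\,h$ plus terms of lower order in $h$, hence is nonzero for generic $h$); and $P_{i_0}(y)-P_{i_0}(y)=0$ is discarded, its attached factor absorbed into the function of $x$ alone. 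Consequently $v_k(\widetilde{\mathcal P}) = v_k(\mathcal P)$ for all $k>j_0$ while $v_{j_0}(\widetilde{\mathcal P}) = v_{j_0}(\mathcal P) - 1$ (there being no polynomials of degree $<j_0$ in $\mathcal P$ to be promoted up to degree $j_0$), so $v(\widetilde{\mathcal P}) \prec v(\mathcal P)$; moreover the top degree $j^\ast$ of $\widetilde{\mathcal P}$ is $j$ unless every polynomial of $\mathcal P$ has degree $j$ and $v_j(\mathcal P) = 1$, in which case $j^\ast = j-1$.

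The one genuinely technical point is that $\widetilde{\mathcal P}$ need not itself be admissible: distinct leading coefficients (each a fixed polynomial in $h$ of degree $O_{\mathcal P}(1)$) may collide or come anomalously close, a difference $P-Q$ may fail to have the prescribed degree, or a leading coefficient may fall outside the window~\eqref{eq:24}. This is remedied by a cleanup step. First, redo the pigeonholing in the proof of Proposition~\ref{prop:pet} so as to select $h$ that not only keeps the multilinear form $\gtrsim_{\mathcal P} \delta^{O_{\mathcal P}(1)}$ but also lies outside each of a fixed, $O_{\mathcal P}(1)$-long family of ``degenerate'' sets on which some nonzero polynomial in $h$ of controlled degree is smaller than an appropriate threshold $\simeq_{\mathcal P} \delta^{O_{\mathcal P}(1)} N^{d-j_0+1}$; provided $\delta'$ is taken as large as Proposition~\ref{prop:pet} allows, each such set has measure far below that of the set of shifts that keep the form large, so a good $h$ survives. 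Then merge any two polynomials of $\widetilde{\mathcal P}$ that differ by a constant, replacing their two $1$-bounded factors by their ($1$-bounded) product. The outcome is a genuinely $(d,j^\ast)$-admissible collection $\mathcal P^\ast$ equipped with $1$-bounded functions of $L^1$ norm $\le N_0$, with last function $f_m$, satisfying $|\Lambda_{\mathcal P^\ast; N}| \gtrsim_{\mathcal P} \delta^{O_{\mathcal P}(1)}$, and with $v(\mathcal P^\ast) \prec v(\mathcal P)$.

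To conclude: if $j^\ast = j-1$ then $\mathcal P^\ast$ is already the collection $\mathcal P'$ demanded by the claim. If $j^\ast = j$, the inductive hypothesis applies to $\mathcal P^\ast$ (legitimately, since $v(\mathcal P^\ast) \prec v(\mathcal P)$) and produces the desired $(d,j-1)$-admissible $\mathcal P'$ with a large form and last function $f_m$. Well-foundedness of $\prec$ guarantees termination; a standard feature of this scheme is that the recursion depth is $O_{\mathcal P}(1)$, so, since each stage worsens the exponents and implied constants in $\gtrsim_{\mathcal P}\delta^{O_{\mathcal P}(1)}$ only by a $\mathcal P$-dependent amount, the bound~\eqref{eq:50} follows. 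I expect the cleanup step — choosing, within the narrow range of shifts furnished by Proposition~\ref{prop:pet}, a single $h$ that simultaneously keeps the form large and restores all of the admissibility bounds with mutually compatible tolerances — to be the main point demanding care.
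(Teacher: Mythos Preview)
Your plan is essentially the paper's proof: iterate Proposition~\ref{prop:pet} (with $i_0$ a polynomial of minimal degree, i.e.\ $i_0=1$ after relabeling), track the weight vector, and verify $(d,j^\ast)$-admissibility at each stage until the top degree drops to $j-1$. The paper packages the single step as a separate Proposition (Proposition~\ref{prop:PETiterate}) and then deduces the claim by iterating it; your induction on $v(\mathcal P)$ is the same iteration.

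The one substantive divergence is your cleanup step. You propose to take $\delta'$ as \emph{large} as Proposition~\ref{prop:pet} allows and then use a genericity argument on $h$ to avoid collisions of leading coefficients and other degeneracies. The paper does the opposite and it is much simpler: take $\delta'$ \emph{small}, specifically $\delta'=\delta^M(CM)^{-1}$ with $M\simeq_{\mathcal P}1$ chosen so that the perturbation terms (all of the form $j\,h\,\ell(\cdot)$, of size $\lesssim A_0\delta^{M-A}N^{d-j+1}$) are at most half the existing lower bounds $A_0^{-1}\delta^A N^{d-j+1}$ coming from~\eqref{eq:24}--\eqref{eq:26}. With this choice, \emph{every} $h$ in the range $\delta'\delta^2 N/C^2\le|h|\le\delta'N$ produces an admissible $\widetilde{\mathcal P}$, so no pigeonholing against degenerate sets and no merging of near-equal polynomials is needed at all. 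Your anticipated ``main point demanding care'' thus evaporates. Your genericity route would also work, but it is strictly more laborious and gains nothing.
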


The proof of Claim \ref{claim:1} will use the polynomial exhaustion
technique based on an iterative application of the PET induction
scheme from Proposition \ref{prop:pet}.  The key steps of this method
are gathered in Proposition \ref{prop:PETiterate}.  Assuming
momentarily that Claim \ref{claim:1} is true we can easily close the
argument to prove Theorem \ref{thm:Us}.  We begin with a collection of $(d, d)$-admissible
polynomials such that $\deg{P_1}\le\ldots\le\deg{P_m}=d$ and apply
our claim $d-1$ times until we reach a collection of
$(d,1)$-admissible linear polynomials $\mathcal L$ with distinct
leading terms, which satisfies \eqref{eq:50} with
$\mathcal P'=\mathcal L$. In the special case where all polynomials
are linear matters simplify and can be handled using the next result, Proposition
\ref{prop:linear}, which in turn  implies \eqref{eq:32} from Theorem
\ref{thm:Us} as desired.
\end{proof}

\begin{proposition}
\label{prop:linear}
Let $N, N_0 \ge 1$ be two scales, $I$ an interval with $\mu(I) = N_0$,  $0<\delta\le 1$, $d, m\in\Z_+$ be given and let
$\mathcal L:=\{L_1,\ldots, L_m\}$ be a collection of
$(d,1)$-admissible linear polynomials. Let
$f_0, f_1,\ldots, f_m\in L^0(\KK)$ be $1$-bounded functions
such that $\|f_i\|_{L^1(\KK)}\le N_0$ for all $i\in\bra{m}_0$.
Suppose that
\begin{align}
\label{eq:18}
|\Lambda_{\mathcal L; N}(f_0,\ldots, f_m)|\ge\delta.
\end{align}
Then we have
\begin{align}
\label{eq:20}
\|f_m\|_{\square_{[H_1], \ldots, [H_m]}^m(I)} \gtrsim_{\mathcal L} \delta^{2^{m-1}},
\end{align}
where
$H_i\simeq \delta^{O_{\mathcal L}(1)}N^{d}$ for $i\in\bra{s}$.
\end{proposition}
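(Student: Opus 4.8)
The plan is to prove \eqref{eq:20} by induction on $m$, peeling off one linear polynomial at a time via van der Corput / Cauchy--Schwarz, exactly as in the PET step but now staying within the class of linear forms so that the degree never drops below $1$. First I would set up the base case $m=1$: here $\Lambda_{\mathcal L;N}(f_0,f_1) = N_0^{-1}\int_{\KK^2} f_0(x) f_1(x - L_1(y))\,d\mu_{[N]}(y)d\mu(x)$, and after the change of variables $x \mapsto x + L_1(y)$ (using $L_1(0)=0$ and the dilation change-of-variables formula from Section \ref{local-fields}, since $\ell(L_1) \simeq \delta^{O(1)} N^{d-1}$), one recognizes the inner average over $y$ as essentially $\mu_{[H_1]} * f_1$ evaluated against $f_0$ on a ball of controlled size $H_1 \simeq |\ell(L_1)| N \simeq \delta^{O_{\mathcal L}(1)} N^d$. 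Cauchy--Schwarz in the $x$-variable against the $1$-bounded $f_0$ supported on an interval of measure $N_0$ then gives $\delta^2 \lesssim N_0^{-1}\|f_1\|_{\square^1_{[H_1]}(I)}^2$, which is \eqref{eq:20} with the stated scale.

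For the inductive step, assume the result for collections of size $m-1$ and suppose $|\Lambda_{\mathcal L;N}(f_0,\dots,f_m)| \ge \delta$. I would single out $L_1$ (the form attached to $f_1$), make the change of variables $x \mapsto x + L_1(y)$ as in Proposition \ref{prop:pet}, and apply Lemma \ref{lem:pet} with $\mathfrak g_1(x) = f_0(x+L_1(y))$-type factor absorbed appropriately and $\mathfrak g_2(x,y) = \prod_{i=2}^m f_i(x - (L_i(y) - L_1(y)))$, with a parameter $H = \delta^{O(1)} N$. Because the $L_i$ are linear with distinct leading coefficients, each difference $L_i(y+h) - L_i(y)$ is a \emph{constant} $h\,\ell(L_i)$ in $y$, so after the van der Corput step the new form is again a form in linear polynomials $L_i(y) - L_1(y)$ in the variable $y$ — with the crucial feature that $f_m$ is untouched ($f'_{m'} := f_m$) and the new collection is again $(d,1)$-admissible (with a worse but still $\delta$-independent tolerance; this uses the distinctness of leading coefficients and the admissibility bounds \eqref{eq:24}). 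One reaches $|\Lambda_{\mathcal L';N}(\dots)| \gtrsim \delta^2$ for a collection $\mathcal L'$ of size $< 2(m-1)$ but still linear, with $f_m$ preserved; however, a direct naive induction on size would blow up. Instead I would organize the induction so that at each stage the form loses the dependence on one of $f_1,\dots,f_{m-1}$: after the change of variables the factor $f_1(x - (L_1(y) - L_1(y))) = f_1(x)$ becomes independent of $y$, hence can be pulled out and treated by Cauchy--Schwarz / absorbed into the role played by $f_0$ in the next stage, exactly as in Proposition \ref{prop:pet} where $f'_0,\dots$ are relabelings of $f_1,\overline{f_1},\dots$. Iterating $m-1$ times, each time converting one average over $y$ into a convolution $\nu_{[H_k]}$ against a new shift-variable $h_k$ and using the Gowers--Cauchy--Schwarz corollary (Corollary \ref{GCS-indep}) to bound the resulting $(m+1)$-fold average by a Gowers box norm of $f_m$, produces the bound $\|f_m\|_{\square^m_{[H_1],\dots,[H_m]}(I)} \gtrsim_{\mathcal L} \delta^{2^{m-1}}$ with each $H_k \simeq \delta^{O_{\mathcal L}(1)} N^d$ coming from the rescaling by the (admissible) leading coefficients.

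The main obstacle I anticipate is bookkeeping rather than any genuinely new idea: one must check that after each van der Corput step the new linear collection $\mathcal L'$ remains $(d,1)$-admissible, i.e. that all the relevant leading coefficients and their pairwise differences stay in the window $[A_0^{-1}\delta^A N^{d-1}, A_0\delta^{-A} N^{d-1}]$ — this relies on the distinct-leading-coefficient hypothesis (so no cancellation of leading terms occurs, since for linear $L_i - L_j$ the leading coefficient is $\ell(L_i) - \ell(L_j) \ne 0$) and on tracking how $h$ and $A_0,A$ degrade, which stays $\delta$-independent. The second point requiring care is verifying that the change of variables $x\mapsto x + L_1(y)$ together with the Fubini rearrangement correctly identifies the $y$-average as an average over a ball of radius comparable to $|\ell(L_1)|N$, uniformly in the archimedean and non-archimedean cases (using the Fejér-kernel normalizations from Section \ref{sec:2} and, in the non-archimedean case, that $d\nu_{[H]} = d\mu_{[H]}$). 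Once these two routine-but-delicate checks are in place, the desired box-norm lower bound and the scale estimates $H_i \simeq \delta^{O_{\mathcal L}(1)} N^d$ follow immediately, so I do not expect to need any oscillatory-integral input here — the linear case is the clean, Fourier-free base of the whole PET scheme.
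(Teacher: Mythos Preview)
Your plan is correct and matches the paper's proof: both iterate Lemma \ref{lem:pet} to eliminate $f_0, f_1, \dots, f_{m-1}$ one at a time, each step introducing a shift variable $h_k$ and a multiplicative derivative $\Delta_{c_k h_k}$ on the surviving factors, with the linearity ensuring that $L_i(y+h)-L_i(y)=\ell(L_i)h$ is constant in $y$ so no higher-degree polynomials ever appear. Two small clean-ups: the paper first translates to $g_i(x)=f_i(x+L_i(0))$ so you need not assume $L_1(0)=0$, and after the final change of variables (in $y$, then $h_k\mapsto u_k^{-1}h_k$ with $u_i=\ell(L_m)-\ell(L_i)$ and $u_m=\ell(L_m)$) the resulting expression already \emph{is} $\|f_m\|_{\square^m_{[H_1],\dots,[H_m]}(I)}^{2^m}$, so Corollary \ref{GCS-indep} is never invoked.
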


In fact Proposition \ref{prop:linear} is a special case of Theorem
\ref{thm:Us} with the collection of linear polynomials $\mathcal L$ in
place of $\mathcal P$.

\begin{proof}[Proof of Proposition \ref{prop:linear}]
Defining $\mathcal{L}'=\{L_i':=L_i-L_i(0): i\in\bra{m}\}$ we see that each $L'\in \mathcal L'$ is linear 
with vanishing constant term and
\begin{align*}
\Lambda_{\mathcal L; N}(f_0,\ldots, f_m)=\Lambda_{\mathcal L'; N}(g_0,\ldots, g_m),
\end{align*}
where $g_i(x)=\Tra{-L_i(0)}f_i(x)=f_i(x+L_i(0))$ for each $i\in\bra{m}$.
We now apply Lemma \ref{lem:pet} with functions
$\mathfrak g_1(x)=g_0(x)$ and
$\mathfrak g_2(x, y)=\prod_{i=1}^mg_i(x-L_i'(y))$ and intervals
$J=[N]$, and a parameter $H=\delta^{M}N/M$ for some
large absolute constant $M\ge1$, which will be specified later.
Using Lemma \ref{lem:pet} and changing the variables $x\mapsto x-L_1(y)$ we obtain
\begin{align*}
\bigg|\frac{1}{N_0} \iiint_{\KK^3}\Delta_{\ell(L_1)h} g_1(x)\prod_{i=2}^m\Delta_{\ell(L_i)h}g_{i}(x-(L_i-L_1)(y))d\mu_{[N]}(y)d\mu(x)d\nu_{[H]}(h)\bigg|\gtrsim_M \delta^2.
\end{align*}
Applying Lemma \ref{lem:pet} $m-2$ more times  and changing the variables $x\mapsto x-L_m(0)$ we obtain 
\begin{align*}
\bigg|\frac{1}{N_0} \int_{\KK^{m+1}}\Delta_{u_1h_1}\cdots\Delta_{u_{m-1}h_{m-1}}\Delta_{\ell(L_m)h_m}f_{m}(x)
d\nu_{[H]}^{\otimes m}(h_1,\ldots, h_m)d\mu(x)\bigg|\gtrsim_M \delta^{2^{m-1}},
\end{align*}
where $u_i:=\ell(L_m)-\ell(L_{i})$ for $i\in\bra{m-1}$. By another change of variables we obtain \eqref{eq:20} with
\begin{align*}
H_m= |\ell(L_m)|\delta^M N/M,
\quad \text{ and } \quad
H_i=|\ell(L_m)-\ell(L_{i})|\delta^M N/M
\end{align*}
for $i\in\bra{m-1}$. Using \eqref{eq:24} with $P=L_m$, and
\eqref{eq:25} with $P=L_m$ and $Q=L_i$ we obtain that
$H_i\simeq \delta^{O_{\mathcal L}(1)}N^{d}$ for $i\in\bra{s}$
provided that $M\ge1$ is sufficiently large. This completes the proof
of Proposition \ref{prop:linear}.
\end{proof}

\begin{proposition}
\label{prop:PETiterate}
Let $N, N_0>0$ be two scales, $0<\delta\le 1$, $d, m\in\Z_+$ and $i, j\in\bra{d}$ be given and
let $\mathcal P:=\{P_1,\ldots, P_m\}$ be a collection of
$(d, j)$-admissible polynomials with tolerance $(A_0, A)$ such that
$i =\deg{P_1}\le\ldots\le\deg{P_m} =j$. Let $I$ be an interval with $\mu(I) = N_0$ and let
$f_0, f_1,\ldots, f_m\in L^0(\KK)$ be $1$-bounded functions
such that $\|f_i\|_{L^1(\KK)}\le N_0$ for all $i\in\bra{m}_0$.
Suppose that
\begin{align}
\label{eq:21}
|\Lambda_{\mathcal P; N}(f_0,\ldots, f_m)|\ge\delta.
\end{align}
Then there exists a
collection of polynomials $\mathcal P':=\{P_1',\ldots, P_{m'}'\}$ with
$m':=\#\mathcal P'<2\#\mathcal P$ satisfying $P_{m'}':=P_m-P_1$ and
$\deg(P_1')\le\ldots\le \deg(P_{m'}')$, and $1$-bounded functions
$f_0', f_1',\ldots, f_{m'}'\in L^0(\KK)$ such that
$\|f_i'\|_{L^1(\KK)}\le N_0$ for all $i\in\bra{m'}_0$ and satisfying
\begin{align}
\label{eq:34}
|\Lambda_{\mathcal P'; N}(f_0',\ldots, f_{m'}')|\gtrsim_{\mathcal P}\delta^2.
\end{align}
We also know that
$\{f_0', f_1',\ldots, f_{m'}'\}=\{f_1,\overline{f_1},\ldots, f_m,\overline{f_m}\}$
with $f_{m'}' =f_m$.

Moreover, $v(\mathcal P')\prec v(\mathcal P)$, and one of the following three scenarios occurs.
\begin{enumerate}[label*=({\roman*})]
\item\label{item:1}  The collection $\mathcal P$ is of type I; that is, 
$\mathcal P\neq\mathcal P_j$. In this case, $\mathcal P'$ is a
$(d, j)$-admissible collection of  polynomials with tolerance $(A_0', A')$ and for some $1\le i \le j-1$,
\begin{align}
\label{eq:23}
\qquad v(\mathcal P')=(v_1(\mathcal P'),\ldots, v_{i-1}(\mathcal P'), v_i(\mathcal P)-1, v_{i+1}(\mathcal P), \ldots, v_{j}(\mathcal P),0, 0, \ldots).
\end{align}

\item\label{item:2}  The collection $\mathcal P$ is of type II; that is,
$\mathcal P=\mathcal P_j$ and $v_j(\mathcal P)>1$. In this case,  $\mathcal P'$
is a $(d, j)$-admissible collection of polynomials with tolerance $(A_0', A')$  and
\begin{align}
\label{eq:27}
v(\mathcal P')=(v_1(\mathcal P'),\ldots, v_{j-1}(\mathcal P'), v_j(\mathcal P)-1, 0,0, \ldots).
\end{align}

\item\label{item:3}  The collection $\mathcal P$ is of type III; that is,
$\mathcal P=\mathcal P_j$ and  $v_j(\mathcal P)=1$. In this case, $\mathcal P'$
is a $(d, j-1)$-admissible collection of polynomials with tolerance $(A_0', A')$  and
\begin{align}
\label{eq:33}
v(\mathcal P')=(0,\ldots,0, v_{j-1}(\mathcal P'), 0,0, \ldots).
\end{align}
Moreover, the leading coefficients of the polynomials in $\mathcal P'$ are pairwise distinct. 
\end{enumerate}
The tolerance $(A_0', A')$ of the collection $\mathcal P'$ only
depends on the tolerance $(A_0, A)$ of the collection $\mathcal P$, and
is independent of $\delta$ and $N$.
\end{proposition}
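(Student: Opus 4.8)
The plan is to apply Proposition~\ref{prop:pet} exactly once, with $i_0=1$, so that the distinguished polynomial it produces is $P'_{m'}=P_m-P_{i_0}=P_m-P_1$ as required, and then, separately in the three regimes, to verify by direct bookkeeping on degrees and leading coefficients that the output collection $\mathcal P'$ is $(d,j)$- or $(d,j-1)$-admissible of the advertised type and has the claimed weight vector. Two harmless preliminary normalisations: we fix once and for all $\delta':=\delta^{M}$ for a large absolute exponent $M$, and we assume $\delta\le\delta_0(\mathcal P)$ throughout (the complementary range being trivial: for $\delta\ge\delta_0$ one reruns the whole argument with $\delta_0$ in place of $\delta$, which only enlarges the tolerance $(A_0',A')$ and the implicit constant in \eqref{eq:34} by a $\mathcal P$-dependent factor). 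For $\delta$ below the threshold $\delta_0(\mathcal P)$ this also guarantees $\delta'\le\delta^4/C$, the requirement of Proposition~\ref{prop:pet}.

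Before invoking Proposition~\ref{prop:pet} we choose the labelling of $\mathcal P$ (only its degree-ordering is prescribed, so we may permute the pairs $(P_i,f_i)$, $i<m$, within a fixed degree). In the type~I case $\mathcal P\neq\mathcal P_j$ we take $P_1$ of minimal degree $i=\deg P_1<j$, so $\deg(P_m-P_1)=j$. In the type~II case ($\mathcal P=\mathcal P_j$, $v_j(\mathcal P)>1$) we take $P_1$ with $\ell(P_1)\neq\ell(P_m)$ — possible since there are at least two distinct leading coefficients — so that again $\deg(P_m-P_1)=j$. In the type~III case ($\mathcal P=\mathcal P_j$, $v_j(\mathcal P)=1$) all leading coefficients agree, so $\deg(P_m-P_1)=j-1$ by \eqref{eq:26}. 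In every case $P_m-P_1$ will carry the maximal degree appearing in $\mathcal P'$, so listing it last respects $\deg P'_1\le\cdots\le\deg P'_{m'}$. Proposition~\ref{prop:pet} with this $i_0=1$ now yields, for some $h$ with $\delta'\delta^2N/C^2\le|h|\le\delta'N$, the collection $\mathcal P'=\{P_k(y)-P_1(y),\ P_k(y+h)-P_1(y):1\le k\le m\}$ (with the zero polynomial $P_1-P_1$ discarded, so $m'<2m$), the distinguished polynomial $P'_{m'}=P_m-P_1$, the functions $\{f'_0,\dots,f'_{m'}\}=\{f_1,\overline{f_1},\dots,f_m,\overline{f_m}\}$ with $f'_0=f_1$ and $f'_{m'}=f_m$ (each inheriting its $L^1$-bound from the corresponding $f_i$), and the bound $|\Lambda_{\mathcal P';N}(f'_0,\dots,f'_{m'})|\gtrsim_{\mathcal P}\delta^2$, which is \eqref{eq:34}.

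It remains to track the degrees and leading coefficients of the members of $\mathcal P'$. Expanding $P_k(y+h)=\sum_r\coe_r(P_k)(y+h)^r$, the leading term of $P_k(y+h)-P_1(y)$ coincides with that of $P_k(y)-P_1(y)$ unless the top coefficient cancels, in which case, one degree down, the new leading coefficient is of the form $(\text{nonzero integer})\cdot\coe_{\deg P_k}(P_k)\,h+(\text{a coefficient of }P_k-P_1)$; here we use that the relevant binomial integers are invertible in $\KK$, which is precisely where the hypothesis $\car\KK>d$ (in positive characteristic) enters. Feeding in the $(d,j)$-admissibility of $\mathcal P$ — conditions \eqref{eq:24}--\eqref{eq:26}, in particular that two distinct members of $\mathcal P_j$ with equal leading coefficient differ by a polynomial of degree \emph{exactly} $j-1$ — one reads off: in type~I, the set of degree-$\ell$ leading coefficients of $\mathcal P'$ equals that of $\mathcal P$ for $i<\ell\le j$, while at degree $i$ it becomes $\{\ell(P)-\ell(P_1):P\in\mathcal P_i,\ \ell(P)\neq\ell(P_1)\}$, of size $v_i(\mathcal P)-1$, which is \eqref{eq:23}; in type~II, the degree-$j$ leading coefficients become $\{\ell(P)-\ell(P_1):P\in\mathcal P_j,\ \ell(P)\neq\ell(P_1)\}$, of size $v_j(\mathcal P)-1$, and every other member of $\mathcal P'$ has degree $j-1$, which is \eqref{eq:27}; in type~III, every member of $\mathcal P'$ has degree exactly $j-1$, which is \eqref{eq:33}. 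Each of these formulas forces $v(\mathcal P')\prec v(\mathcal P)$. The admissibility bounds \eqref{eq:24}--\eqref{eq:26} for $\mathcal P'$ then follow from those for $\mathcal P$ together with the two-sided control $\delta'\delta^2N/C^2\le|h|\le\delta'N$: every leading coefficient of $\mathcal P'$, and every difference of two such, is a bounded $\Z$-combination of a leading coefficient of $\mathcal P$, a difference of such, and a term $(\text{integer})\cdot h\cdot(\text{leading coefficient of }\mathcal P)$, so (for a suitable $h$, see below) its modulus is trapped between $\delta^{O(A)}\delta'$ and $\delta^{-O(A)}$ times the appropriate power of $N$, giving a tolerance $(A_0',A')$ depending only on $(A_0,A)$, $M$, $C$, $d$.

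The delicate point, and the main obstacle, is the phrase ``for a suitable $h$'' above: a leading coefficient of $\mathcal P'$ of the form $\coe_{j-1}(P_k-P_1)+n\,h\,\ell(P_1)$, or a difference $\ell(P_k-P_{k'})-n\,h\,\ell(P_{k'})$, could vanish or be anomalously small for the particular $h$ handed to us, ruining both the degree count and the lower bounds in \eqref{eq:24}--\eqref{eq:26} (and likewise two such quantities could accidentally coincide, spoiling the count of distinct leading coefficients). The cure is not to pigeonhole Proposition~\ref{prop:pet} down to a single $h$, but to observe that the set $G$ of admissible $h$ — those with $|h|\gtrsim\delta^2\delta'N$ at which the inner average produced by van der Corput still exceeds $c\delta^2$ — has measure $\gtrsim_{\mathcal P}\delta^2\delta'N=\delta^{M+2}N$, whereas each of the $O_{\mathcal P}(1)$ forbidden events (a leading coefficient or difference of leading coefficients of $\mathcal P'$ dropping below its expected lower bound, or two such quantities coinciding) confines $h$ to a ball of radius $\lesssim_{\mathcal P}\delta^{A'-A}N$. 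Choosing $A':=A+M+3$ makes the total measure of these balls less than $\tfrac12\mu(G)$ once $\delta\le\delta_0(\mathcal P)$, so some $h\in G$ avoids all of them; for this $h$, $\mathcal P'$ is admissible of the advertised type with tolerance $(A_0',A')$ depending only on $(A_0,A)$ and $v(\mathcal P')\prec v(\mathcal P)$, completing the proof.
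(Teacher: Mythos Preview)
Your proposal is correct and shares the paper's overall architecture: apply Proposition~\ref{prop:pet} with $i_0=1$ to produce $\mathcal P'$ and \eqref{eq:34}, then case-split on the type of $\mathcal P$ and verify admissibility by tracking degrees and leading coefficients. The genuine difference is in how you resolve the one delicate point---that a leading coefficient of the form $\ell(P-Q)\pm jh\ell$ (or $jh\ell+\ell(P-P_1)$ in type~III) might vanish or be anomalously small. The paper handles this \emph{deterministically}: it takes $\delta'=\delta^M/(CM)$ with $M=\max\{2A,\,2|j|A_0^2\}$ depending on the input tolerance, so that $|jh\ell|\le\tfrac12 A_0^{-1}\delta^A N^{d-j+1}$ for \emph{every} $h$ in the allowed range; the $h$-perturbation is thus always dominated by the existing admissibility gaps, and no degeneracy can occur. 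You instead argue \emph{generically}: fix $M$ absolute, reopen the pigeonhole inside Proposition~\ref{prop:pet} to obtain a set $G$ of good $h$'s of measure $\gtrsim\delta^{M+2}N$, note that each of the $O_{\mathcal P}(1)$ degeneracy events confines $h$ to a ball of measure $\lesssim\delta^{A'-A}N$, and take $A'=A+M+3$ so the bad set is strictly smaller than $G$. Both routes succeed. The paper's is slightly cleaner in that Proposition~\ref{prop:pet} can be used as a black box; yours has the mild advantage that $M$ is independent of $(A_0,A)$, which makes the growth of tolerances through the PET iteration a bit easier to track. Your explicit relabelling in type~II to secure $\ell(P_1)\neq\ell(P_m)$ (so that $P'_{m'}=P_m-P_1$ really does carry the top degree) is also a clean touch that the paper leaves implicit.
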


Using Proposition \ref{prop:PETiterate} we now prove Claim \ref{claim:1}.
\begin{proof}[Proof of Claim \ref{claim:1}]
We may assume, without loss of generality, that the collection
$\mathcal P$ from Claim \ref{claim:1} is of type I or type II. Then we apply
Proposition \ref{prop:PETiterate} until we reach a collection of
polynomials of type III with weight vector
$v(\mathcal P)=(0,\ldots,0, v_{j}(\mathcal P), 0,0, \ldots)$ where $v_j(\mathcal P) = 1$
and such that \eqref{eq:50} holds. 
We apply Proposition \ref{prop:PETiterate} once more to reach a 
collection of $(d, j-1)$-admissible polynomials satisfying \eqref{eq:50}.
This completes the proof of the claim. 
\end{proof}

\begin{proof}[Proof of Proposition \ref{prop:PETiterate}]
Appealing to Proposition \ref{prop:pet} with $i_0=1$ we may conclude that
there exists a  collection of polynomials $\mathcal P':=\{P_1',\ldots, P_{m'}'\}$ with 
$m'=\#\mathcal P'<2\#\mathcal P$  and $P_{m'}'=P_m-P_1$ such that
\[
\mathcal P'=\{P_1(y)-P_{1}(y), P_1(y+h)-P_{1}(y),\ldots,  P_m(y)-P_{1}(y), P_m(y+h)-P_{1}(y)\},
\]
for some $\delta'\delta^2N/C^2\le |h| \le \delta'N\le \delta^4N/C$.
Proposition \ref{prop:pet} also ensures that bound \eqref{eq:34} holds
for certain $1$-bounded functions
$f_0', f_1',\ldots, f_{m'}'\in L^0(\KK)$ such that
$\|f_i'\|_{L^1(\KK)}\le N_0$ for all $i\in\bra{m'}_0$ and satisfying
$\{f_0', f_1',\ldots, f_{m'}'\}=\{f_1,\overline{f_1},\ldots, f_m,\overline{f_m}\}$
with $f_{m'}'=f_m$.
Now it remains to verify conclusions from
\ref{item:1}, \ref{item:2} and \ref{item:3}. For this purpose we will
have to adjust $\delta'\le \delta^4/C$, which can be made as small as necessary.

\medskip

\paragraph{\textit{Proof of the conclusion from \ref{item:1}}} Suppose
that the collection $\mathcal P$ is of type I.  Then
$i=\deg(P_1)<\deg(P_m)=j$ and
$v(\mathcal P)=(0,\ldots, 0, v_i(\mathcal P),\ldots, v_j(\mathcal P), 0, 0, \ldots)$. To
establish \eqref{eq:23} we consider three cases. Let $P\in\mathcal P$. If $\deg(P)>i$,
then 
\begin{align}
\label{eq:35}
\begin{gathered}
\deg(P-P_1)=\deg(P(\cdot+h)-P_1)=\deg(P), \\
\ell(P-P_1)=\ell(P(\cdot+h)-P_1)=\ell(P),
\end{gathered}
\end{align}
which yields that $v_k(\mathcal P')=v_k(\mathcal P)$ for all $k>i$.
If $\deg(P)=i$ and $\ell(P)\neq\ell(P_1)$, then 
\begin{align}
\label{eq:36}
\begin{gathered}
\deg(P-P_1)=\deg(P(\cdot+h)-P_1)=i, \\
\ell(P-P_1)=\ell(P(\cdot+h)-P_1)=\ell(P)- \ell(P_1).
\end{gathered}
\end{align}
If $\deg(P)=i$ and $\ell(P)=\ell(P_1)$, then 
\begin{align*}
\deg(P-P_1)<i,
\quad\text{ and } \quad
\deg(P(\cdot+h)-P_1)<i.
\end{align*}
The latter two cases show that $v_{k}(\mathcal P')\ge0$ for all
$k\in \bra{i-1}$ and $v_i(\mathcal P')=v_i(\mathcal P)-1$. Hence
\eqref{eq:23} holds. We now show that $\mathcal P'$ is
$(d, j)$-admissible.

We begin with verifying \eqref{eq:24} for $P'\in\mathcal P_j'$.  We
may write $P'=P(\cdot+\varepsilon h)-P_1$ for some $P\in\mathcal P_j$
and $\varepsilon\in\{0, 1\}$. By \eqref{eq:35} and \eqref{eq:24} for
$P\in\mathcal P_j$ we obtain
\begin{align}
\label{eq:41}
A_0^{-1}\delta^{A} N^{d-j} \leq |\ell(P')| \leq  A_0\delta^{-A}N^{d-j}.
\end{align}

We now verify \eqref{eq:25} for $Q_1', Q_2'\in\mathcal P'_j$ with $\ell(Q_1')\neq\ell(Q_2')$. We may
write
\begin{align}
\label{eq:40}
Q_1'=Q_1(\cdot+\varepsilon_1 h)-P_1,
\qquad \text{ and } \qquad
Q_2'=Q_2(\cdot+\varepsilon_2 h)-P_1
\end{align}
for some
$Q_1, Q_2\in\mathcal P_j$ and
$\varepsilon_1, \varepsilon_2\in\{0, 1\}$.  By \eqref{eq:35} we have
$\ell(Q_1')=\ell(Q_1)$ and $\ell(Q_2')=\ell(Q_2)$. 
Then $\ell(Q_1)\neq\ell(Q_2)$ and
by \eqref{eq:25} for $Q_1, Q_2\in\mathcal P_j$ we deduce
\begin{align}
\label{eq:42}
A_0^{-1}\delta^{A} N^{d-j} \leq |\ell(Q_1') - \ell(Q_2')| \leq A_0\delta^{-A} N^{d-j}.
\end{align}

We finally verify \eqref{eq:26} for $Q_1', Q_2'\in\mathcal P'_j$ as in \eqref{eq:40} such
that $Q_1'\neq Q_2'$ and $\ell(Q_1')=\ell(Q_2')=\ell$.  By
\eqref{eq:35} we see that $\ell(Q_1)=\ell(Q_2)=\ell$.
Since $\mathcal P$ is $(d, j)$-admissible,
using \eqref{eq:24}, we also have
\begin{align}
\label{eq:37}
A_0^{-1}\delta^{A} N^{d-j} \leq |\ell| \leq  A_0\delta^{-A}N^{d-j}.
\end{align}
Recall that
$\delta'\delta^2N/C^2\le |h| \le \delta'N$, where $\delta'>0$ is an
arbitrarily small number such that $\delta'\le \delta^4/C$. Set
$\delta':=\delta^{M}(CM)^{-1}$ for a large number $M\ge1$, which will be
chosen later.

First suppose $Q_1 = Q_2$. Then $\varepsilon_1 \not= \varepsilon_2$ and 
$\deg(Q_1'-Q_2')=j-1$. Furthermore $\ell(Q_1'-Q_2')= j\ell h(\varepsilon_1-\varepsilon_2)$
implying $|\ell(Q_1' - Q_2')| = |j \ell h|$ and so by \eqref{eq:37},
\begin{align}
\label{eq:38}
|j|(A_0C^3M)^{-1}\delta^{A+M+2} N^{d-j+1}\le |j\ell h|
\le |j| A_0(CM)^{-1}\delta^{M-A} N^{d-j+1}
\end{align}
and this verifies \eqref{eq:26} in the case $Q_1 = Q_2$.

Now suppose $Q_1 \not= Q_2$ so that $\deg(Q_1 - Q_2) = j-1$ and \eqref{eq:26} holds for $\ell(Q_1 - Q_2)$; that is,
\begin{align}
\label{eq:38a}
A_0^{-1}\delta^{A} N^{d-j+1} \leq |\ell(Q_1-Q_2)| \leq A_0\delta^{-A} N^{d-j+1}.
\end{align}
Taking $M:=\max\{2A, 2|j|A_0^2\}$ in \eqref{eq:38}, we see that $|j\ell h|
\le \frac12 A_0^{-1}\delta^{A} N^{d-j+1}$ if $C > 1$ is large enough.

In this case, $\ell(Q_1' - Q_2') = \ell(Q_1 - Q_2) + j h\ell (\varepsilon_1-\varepsilon_2)$ and so
\begin{align*}
|\ell(Q_1-Q_2)|-|j\ell h|\le |\ell(Q_1'-Q_2')|
\le |\ell(Q_1-Q_2)|+|j\ell h|.
\end{align*}
From \eqref{eq:38a} and $|j\ell h| \le \frac12 A_0^{-1} \delta^A N^{d-j+1}$, we conclude
\begin{align}
\label{eq:39}
\frac{1}{2}A_0^{-1}\delta^{A} N^{d-j+1} \leq |\ell(Q_1'-Q_2')| \leq \frac{3}{2}A_0\delta^{-A} N^{d-j+1}.
\end{align}
This verifies \eqref{eq:26} in the case $Q_1 \not= Q_2$.

In either case, we see that $\deg(Q_1' - Q_2') = j-1$ and (see \eqref{eq:38} and \eqref{eq:39}) we can 
find a tolerance pair $(A_0', A')$ for $\mathcal P'$  depending on the tolerance
$(A_0, A)$ of $\mathcal P$ and the constants $C$ and $M$ such that
\begin{align}
\label{eq:43}
(A_0')^{-1}\delta^{A'} N^{d-j+1} \leq |\ell(Q_1'-Q_2')| \leq A_0'\delta^{-A'} N^{d-j+1}
\end{align}
holds, establishing \eqref{eq:26}.

\medskip

\paragraph{\textit{Proof of the conclusion from \ref{item:2}}}
Suppose that the collection $\mathcal P$ is of type II.  Then
$\deg(P_1)=\ldots=\deg(P_m)=j$ and
$v(\mathcal P)=(0,\ldots, 0, v_j(\mathcal P), 0, 0, \ldots)$ with
$v_j(\mathcal P)>1$.
To establish \eqref{eq:27} we will proceed
in a similar way as in \ref{item:1}.
If $P\in\mathcal P=\mathcal P_j$ and $\ell(P)\neq\ell(P_1)$, then 
\begin{align}
\label{eq:44}
\begin{gathered}
\deg(P-P_1)=\deg(P(\cdot+h)-P_1)=j, \\
\ell(P-P_1)=\ell(P(\cdot+h)-P_1)=\ell(P)- \ell(P_1).
\end{gathered}
\end{align}
If $P\in\mathcal P=\mathcal P_j$ and $\ell(P)=\ell(P_1)$, then by the
fact that $\mathcal P$ is $(d, j)$-admissible and by \eqref{eq:26} we
see that
\begin{align}
\label{eq:45}
\deg(P-P_1)<j,
\quad\text{ and } \quad
\deg(P(\cdot+h)-P_1)<j.
\end{align}
This shows that $v_{k}(\mathcal P')\ge0$ for all $k\in \bra{j-1}$
and $v_j(\mathcal P')=v_j(\mathcal P)-1$. Hence \eqref{eq:27}
holds. We now show that $\mathcal P'$ is $(d, j)$-admissible.

We begin with verifying \eqref{eq:24} for $P'\in\mathcal P_j'$.  We
may write $P'=P(\cdot+\varepsilon h)-P_1$ for some $P\in\mathcal P_j$
such that $\ell(P)\neq\ell(P_1)$ and $\varepsilon\in\{0, 1\}$. Since
$\mathcal P$ is $(d, j)$-admissible, using \eqref{eq:44} and
\eqref{eq:25} (with $\ell(P)- \ell(P_1)$ in place of
$\ell(P)- \ell(Q)$) we obtain \eqref{eq:41} which is \eqref{eq:24} for $P' \in \mathcal  P_j'$.

We now verify \eqref{eq:25} for $Q_1', Q_2'\in\mathcal P'_j$ with
$\ell(Q_1')\neq\ell(Q_2')$. As in \eqref{eq:40} we may write
$Q_1'=Q_1(\cdot+\varepsilon_1 h)-P_1$, and
$Q_2'=Q_2(\cdot+\varepsilon_2 h)-P_1$ for some
$Q_1, Q_2\in\mathcal P_j$ and
$\varepsilon_1, \varepsilon_2\in\{0, 1\}$ such that
$\ell(Q_1)\neq\ell(P_1)$ and $\ell(Q_2)\neq\ell(P_1)$.  By
\eqref{eq:44} we have $\ell(Q_1')=\ell(Q_1)-\ell(P_1)$ and
$\ell(Q_2')=\ell(Q_2)-\ell(P_1)$.  Then $\ell(Q_1)\neq\ell(Q_2)$ and
\eqref{eq:42} is verified by appealing to \eqref{eq:25} (with
$\ell(Q_1) - \ell(Q_2)$ in place of $\ell(P) - \ell(Q)$).

We finally verify \eqref{eq:26} for $Q_1', Q_2'\in\mathcal P'_j$ as in
\eqref{eq:40} such that $Q_1'\neq Q_2'$ and
$\ell(Q_1')=\ell(Q_2')=\ell$.  By \eqref{eq:44},
$\ell(Q_1)-\ell(P_1)=\ell(Q_2)-\ell(P_1)=\ell$ and since $\mathcal P$ is $(d,j)$-admissible,
we see that $\ell$ satisfies \eqref{eq:37}. Now by following the last part of the proof
from \ref{item:1}, we conclude that \eqref{eq:43} holds.

\medskip

\paragraph{\textit{Proof of the conclusion from \ref{item:3}}}
Suppose that the collection $\mathcal P$ is of type III.  Then
$\deg(P_1)=\ldots=\deg(P_m)=j$ and
$v(\mathcal P)=(0,\ldots, 0, v_j(\mathcal P), 0, 0, \ldots)$ with
$v_j(\mathcal P)=1$, thus $\ell(P_1)=\ldots=\ell(P_m):=\ell$. To
establish \eqref{eq:33} we will proceed in a similar way as in
\ref{item:1} and \ref{item:2}.  If $P\in\mathcal P_j$ and
$\ell(P)=\ell$, then \eqref{eq:37} holds for $\ell$ and once again \eqref{eq:45}
holds.  This in turn implies that $v_{j-1}(\mathcal P')>0$ and $v_{k}(\mathcal P')=0$ for all
$k\neq j-1$. Hence \eqref{eq:33} holds. We
now show that $\mathcal P'$ is $(d, j-1)$-admissible.

We begin with verifying \eqref{eq:24} (or equivalently \eqref{eq:41} with $j$ replaced
by $j-1$) for $P'\in\mathcal P_{j-1}'$.  We
may write $P'=P(\cdot+\varepsilon h)-P_1$ for some $P\in\mathcal P_j$
such that $\ell(P)=\ell(P_1)$ and $\varepsilon\in\{0, 1\}$.
Then
\begin{align}
\label{eq:46}
\ell(P')=\ell(P(\cdot+\varepsilon h)-P_1)= \ell(P-P_1) + j h \ell \varepsilon.
\end{align}
As in \ref{item:1} we have $\delta'\delta^2N/C^2\le |h| \le \delta'N$, where 
$\delta':=\delta^{M}(CM)^{-1}$ for a large number $M\ge1$, which will be
chosen later. Furthermore if $P \not= P_1$, then $A_0^{-1} \delta^A N^{d-j+1} \le |\ell(P-P_1)| \le A_0 \delta^{-A} N^{d-j+1}$
since $\mathcal P$ is $(d,j)$-admissible and so \eqref{eq:26} holds with $Q = P_1$.
This takes care of the case $\varepsilon = 0$.

If $\varepsilon=1$ and $P = P_1$, then \eqref{eq:38} gives the desired bound for $|\ell(P')|$. When $P \not= P_1$, we use 
the upper bound from \eqref{eq:38}  
\begin{align}
\label{eq:47}
 |jh\ell| \leq |j|A_0(CM)^{-1}\delta^{M-A}N^{d-j+1} \le \frac12 A_0^{-1} \delta^{-A} N^{d-j+1}
\end{align}
when $M = \max(2A, 2|j|A_0^2)$ and $C>1$ chosen large enough. Thus, as before,
condition \eqref{eq:24} holds for $P'$ with some tolerance pair $(A_0', A')$ as desired. 

For $Q_1' \not= Q_2'\in\mathcal P'_{j-1}$,  we may write
$Q_1'=Q_1(\cdot+\varepsilon_1 h)-P_1$, and
$Q_2'=Q_2(\cdot+\varepsilon_2 h)-P_1$ for some
$Q_1, Q_2\in\mathcal P_j$ and $\varepsilon_1, \varepsilon_2\in\{0, 1\}$ such that
$\ell(Q_1)=\ell(Q_2) = \ell(P_1) = \ell$. We have $\ell(Q_1 - P_1) - \ell(Q_2 - P_1) = \ell(Q_1 - Q_2)$
and so by \eqref{eq:46},
\begin{align}
\label{eq:48}
\ell(Q_1')-\ell(Q_2')= \ell(Q_1 - Q_2) + j h \ell (\varepsilon_1 - \varepsilon_2).
\end{align}
We consider two cases.

If $Q_1 = Q_2$, then necessarily $|\varepsilon_1 - \varepsilon_2| = 1$ and so $\ell(Q_1') \not= \ell(Q_2')$,
$\deg(Q_1' - Q_2') = j-1$ and 
\eqref{eq:38} shows that \eqref{eq:25} holds for $Q_1', Q_2' \in\mathcal P'_{j-1}$.

If $Q_1 \not= Q_2$, then $A_0^{-1} \delta^A N^{d-j+1} \le |\ell(Q_1-Q_2)| \le A_0 \delta^{-A} N^{d-j+1}$
since $\mathcal P$ is $(d,j)$-admissible and so \eqref{eq:26} holds with $P=Q_1 $ and $Q=Q_2$.
From \eqref{eq:47}, we see that $\ell(Q_1') \not= \ell(Q_2')$ and 
\eqref{eq:48} implies that \eqref{eq:25} holds for $Q_1', Q_2'\in\mathcal P'_{j-1}$.

In either case, we see that \eqref{eq:26} is vacuously satisfied by
$\mathcal P'$ and \eqref{eq:25} holds for
$Q_1', Q_2' \in \mathcal P'_{j-1}$ with (necessarily)
$\ell(Q_1')\neq\ell(Q_2')$.

Concluding, we are able to find a
tolerance pair $(A_0', A')$ for $\mathcal P'$ depending on the
tolerance $(A_0, A)$ of $\mathcal P$ and the constants $C$ and $M$
such that the required estimates for \eqref{eq:46} and \eqref{eq:48}
hold. This completes the proof of Proposition \ref{prop:PETiterate}.
\end{proof}

\subsection{Degree-lowering}
Here, we establish a modulated version of the inverse theorem, which will imply Theorem \ref{thm:inverse}.

\begin{theorem}[Inverse theorem for modulated $(m+1)$-linear forms]
\label{thm:inversem}
Let $N\ge 1$ be a scale, and let $0<\delta\le 1$, $m\in\Z_+$ and $n\in\N$ be given. Let $\mathcal P:=\{P_1,\ldots, P_{m}\}$ and 
$\mathcal Q:=\{Q_1,\ldots, Q_n\}$ be
collections of polynomials such that 
\begin{gather*}
1\le \deg{P_1}<\ldots<\deg{P_{m}}<\deg{Q_1}<\ldots<\deg{Q_n}.
\end{gather*}
Let $f_0, f_1,\ldots, f_m\in L^0(\KK)$ be $1$-bounded functions
supported on an interval $I\subset \KK$ of measure $N_0 := N^{\deg{P_m}}$.
For $n\in\Z_+$ we define an
$(m+1)$-linear form corresponding to the triple
$(\mathcal P, \mathcal Q; N)$ and a frequency vector
$\xi=(\xi_1,\ldots, \xi_n)\in \KK^n$ by
\begin{align}
\label{eq:51}
\Lambda_{\mathcal P; N}^{\mathcal Q; \xi}(f_0,\ldots, f_m):=\frac{1}{N_0}
\int_{\KK^2}f_0(x)\prod_{i=1}^mf_{i}(x-P_i(y)){\rm e}\Big(\sum_{j=1}^n\xi_jQ_j(y)\Big)d\mu_{[N]}(y)d\mu(x).
\end{align}
For $n=0$ we set $\mathcal Q=\emptyset$ and we simply write 
$\Lambda_{\mathcal P; N}^{\mathcal Q; \xi}(f_0,\ldots, f_m):=\Lambda_{\mathcal P; N}(f_0,\ldots, f_m)$ as in  \eqref{eq:6}.
Suppose that
\begin{align}
\label{eq:52}
|\Lambda_{\mathcal P; N}^{\mathcal Q; \xi}(f_0,\ldots, f_m)|\ge\delta.
\end{align}
Then there exists a $C_1 = C_1(\mathcal P) \gg 1$ such that 
\begin{align}
\label{eq:53}
N_0^{-1}\big\| \mu_{[N_1]}*f_1\big\|_{L^1(\KK)} \gtrsim_{\mathcal P} \delta^{O_{\mathcal P}(1)},
\end{align}
for any $N_1 = \delta^C N^{\deg{P_1}}$ with $C\ge C_1$.
\end{theorem}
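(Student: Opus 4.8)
The plan is to prove Theorem~\ref{thm:inversem} by induction on $m$, with $n\in\N$ arbitrary; the case $n=0$ is precisely Theorem~\ref{thm:inverse}. For the base case $m=1$, the form \eqref{eq:51} equals $N_0^{-1}\langle f_0,\,f_1*K\rangle$, where $K$ is the pushforward of the complex measure ${\rm e}\big(\sum_{j}\xi_jQ_j(y)\big)\,d\mu_{[N]}(y)$ (of total variation $\le1$) under $y\mapsto P_1(y)$; thus $K$ is supported on a ball of radius $\lesssim_{\mathcal P}N^{\deg P_1}$, so $f_1*K$ is supported on a set of measure $\lesssim_{\mathcal P}N_0$, and Cauchy--Schwarz together with Plancherel give $\|\widehat{f_1}\,\widehat K\|_{L^2(\KK)}^2\ge\delta^2N_0$. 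Since $\widehat K(\eta)=\int_{\KK}{\rm e}\big(\sum_j\xi_jQ_j(y)-\eta P_1(y)\big)\,d\mu_{[N]}(y)$ is an oscillatory integral over $[N]$ whose polynomial phase has coefficients that cannot all cancel (because $\deg P_1<\deg Q_1$, so the $\eta$-term only enters below the top), the bound \eqref{osc-int-est} forces $|\widehat K(\eta)|\ll_{\mathcal P}\delta$ once $|\eta|\gtrsim_{\mathcal P}\delta^{-O_{\mathcal P}(1)}N^{-\deg P_1}$ (one first uses \eqref{eq:52} and \eqref{osc-int-est} to see that the $\xi_j$ must themselves be bounded, so that the $\eta$-term dominates in that range). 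Hence $\widehat{f_1}$ retains $\gtrsim_{\mathcal P}\delta^{O_{\mathcal P}(1)}N_0$ of its $L^2$-mass on $B_{\delta^{-O_{\mathcal P}(1)}N^{-\deg P_1}}(0)$, and since $\widehat{\mu_{[N_1]}}\gtrsim1$ on that ball for every $N_1=\delta^CN^{\deg P_1}$ with $C$ sufficiently large, we conclude $N_0^{-1}\|\mu_{[N_1]}*f_1\|_{L^1(\KK)}\ge N_0^{-1}\|\mu_{[N_1]}*f_1\|_{L^2(\KK)}^2\gtrsim_{\mathcal P}\delta^{O_{\mathcal P}(1)}$, which is \eqref{eq:53}.

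For the inductive step, fix $m\ge2$ and assume the theorem for all smaller values of $m$. Starting from \eqref{eq:52}, I would first run the PET scheme of this section (the iteration of Proposition~\ref{prop:pet} underlying Theorem~\ref{thm:Us} and Proposition~\ref{prop:linear}) applied to the modulated form, carrying the factor ${\rm e}\big(\sum_j\xi_jQ_j(y)\big)$ through each van der Corput step, where differencing in $y$ replaces it by a phase of strictly smaller degree; at the terminal stage the surviving phase is integrated against $d\mu_{[N]}$, so by \eqref{osc-int-est} it is either negligible, or its oscillation is so strong that \eqref{eq:52} can only hold when the $\xi_j$ are small enough for the modulation to be discarded on $[N]$. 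In every case one arrives at $\|f_m\|_{\square^{s}_{[H_1],\ldots,[H_s]}(I)}\gtrsim_{\mathcal P}\delta^{O_{\mathcal P}(1)}$ for some $s=s_{\mathcal P}$ and scales $H_i\simeq\delta^{O_{\mathcal P}(1)}N^{\deg P_m}$. Then comes the degree-lowering: using \eqref{s+1-s}, for $s\ge3$ one upgrades largeness of $\|f_m\|_{\square^{s}_{[H_1],\ldots,[H_s]}(I)}$ to largeness of $\|f_m\|_{\square^{s-1}_{[H_1],\ldots,[H_{s-1}]}(I)}$, not as a statement about $f_m$ in isolation but by retaining the polynomial origin of the bound, so that the frequency at which $\Delta_hf_m$ fails to be $U^2$-uniform is itself a controlled polynomial in $h$, to which \eqref{osc-int-est} is applied once more. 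Iterating down to $s=2$ and invoking the $U^2$-inverse theorem (Lemma~\ref{2.10}) yields $N_0^{-1}\|\widehat{f_m}\|_{L^\infty(\KK)}\gtrsim_{\mathcal P}\delta^{O_{\mathcal P}(1)}$.

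To finish, I would peel off $f_m$. A standard $L^2$-energy-decrement decomposition writes $f_m$, up to an error that is $U^2$-uniform — and hence contributes negligibly to \eqref{eq:51}, by the contrapositive of the bound just established applied with the error in place of $f_m$ — as a combination of $O_{\mathcal P}(\delta^{-O_{\mathcal P}(1)})$ modulations ${\rm e}(\theta_k\cdot)$, so that for some $k$ the corresponding term of \eqref{eq:51} is $\gtrsim_{\mathcal P}\delta^{O_{\mathcal P}(1)}$ in absolute value. Replacing $f_m(x-P_m(y))$ by ${\rm e}(\theta_k(x-P_m(y)))$ then absorbs ${\rm e}(\theta_kx)$ into $f_0$ and introduces the new modulation ${\rm e}(-\theta_kP_m(y))$; since $\deg P_{m-1}<\deg P_m<\deg Q_1$, the pair $(\mathcal P\setminus\{P_m\},\,\{P_m\}\cup\mathcal Q)$ has exactly the degree ordering required by the theorem, with $m-1$ inner functions and $n+1$ modulation polynomials. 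The inductive hypothesis applied to this shorter modulated form gives \eqref{eq:53} for $f_1$, and the scale is unchanged, $N_1\simeq\delta^{O_{\mathcal P}(1)}N^{\deg P_1}$, because $P_1$ is untouched throughout.

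The step I expect to be the main obstacle is the degree-lowering: for a general function, $\square^s$-control does not imply $\square^2$-control (a genuine degree-$s$ phase is a counterexample), so one must carry along and exploit the fact that the box-norm bound on $f_m$ was produced by an honest polynomial average, showing that the obstructing $U^2$-frequency of $\Delta_hf_m$ depends polynomially on $h$ and then applying \eqref{osc-int-est}; making this quantitative with acceptable powers of $\delta$ is the technical heart. Two secondary difficulties are the bookkeeping of the modulation ${\rm e}\big(\sum_j\xi_jQ_j(y)\big)$ as it is differenced through the PET scheme — in particular checking that its residue at the terminal stage is genuinely harmless — and the tracking of scales and polynomial tolerances so that the final output is $N_1=\delta^CN^{\deg P_1}$ rather than a larger power of $N$.
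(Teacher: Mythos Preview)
Your base case $m=1$ is fine and matches the paper's Fourier-analytic argument. The inductive step, however, has a genuine gap in the degree-lowering. You propose to pass from $\|f_m\|_{\square^s}\gtrsim\delta^{O(1)}$ down to $\|f_m\|_{\square^2}\gtrsim\delta^{O(1)}$ by ``retaining the polynomial origin of the bound'' so that the $U^2$-obstructing frequency of $\Delta_h f_m$ varies in a controlled way in $h$. But $f_m$ is an \emph{arbitrary} $1$-bounded function: once PET has produced box-norm control on $f_m$, the polynomial structure of the form has been consumed and cannot be recovered from $f_m$ alone. There is no reason the frequency $\phi(h)$ at which $\reallywidehat{\Delta_h^{s-2}f_m}$ is large should behave well in $h$; for a generic $f_m$ with large $\square^s$-norm it does not, and your sketch offers no mechanism to force it.

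The paper's resolution is to run degree-lowering on the \emph{dual function}
\[
F_m^\xi(x)=\int_\KK f_0(x+P_m(y))\prod_{i=1}^{m-1}f_i(x-P_i(y)+P_m(y))\,{\rm e}\Big(\sum_j\xi_jQ_j(y)\Big)\,d\mu_{[N]}(y)
\]
rather than on $f_m$. Proposition~\ref{prop:dual} gives $\|F_m^\xi\|_{\square^{s+1}}\gtrsim\delta^{O(1)}$, via Theorem~\ref{thm:Us} applied after a Cauchy--Schwarz in an auxiliary shift variable that squares out the modulation---so one never carries ${\rm e}(\sum\xi_jQ_j)$ through PET as you envisage. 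The dual function carries explicit polynomial structure, and the crucial Claim~\ref{claim:3} shows that whenever $N_0^{-1}|\widehat{F_m^\xi}(\zeta)|\ge\delta$ one must have $|\zeta|\lesssim\delta^{-C}N^{-\deg P_m}$: this holds precisely because $N_0^{-1}\widehat{F_m^\xi}(\zeta)$ \emph{is already} the shorter modulated form $M^{-1}\Lambda_{\mathcal P\setminus\{P_m\};N}^{\mathcal Q\cup\{P_m\};(\zeta,\xi)}$, to which Lemma~\ref{lem:vdc-osc} (for $m=2$) or the inductive hypothesis (for $m\ge3$, in a double induction with Theorem~\ref{thm:deg-low}) applies. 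This frequency localisation is what makes Theorem~\ref{thm:deg-low} work (through Lemmas~\ref{lem:1} and~\ref{lem:3}); once $\|F_m^\xi\|_{\square^2}\gtrsim\delta^{O(1)}$, Lemma~\ref{2.10} produces a single $\xi_0$ with $N_0^{-1}|\widehat{F_m^\xi}(\xi_0)|\gtrsim\delta^{O(1)}$, which is exactly the $(m{-}1)$-function modulated form needed for the induction. Your energy-decrement decomposition of $f_m$ into characters is thereby bypassed entirely.
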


If necessary  we will also write
$\Lambda_{\mathcal P; N}^{\mathcal Q; \xi}(f_0,\ldots, f_m)=\Lambda_{\mathcal P; N, I}^{\mathcal Q; \xi}(f_0,\ldots, f_m)$
in order to emphasise that the functions $f_0, f_1,\ldots, f_m$ are
supported on $I$.

We first show how the Gowers box norms control the dual functions.
The dual function, or more precisely the $m$-th dual function,
corresponding to \eqref{eq:51} is defined as
\begin{align}
\label{eq:55}
F_m^{\xi}(x):=\int_{\KK} F_{m; y}^{\xi}(x) d\mu_{[N]}(y), \qquad x\in \KK,
\end{align}
where
\begin{align}
\label{eq:56}
F_{m; y}^{\xi}(x):=f_0(x+P_m(y))\prod_{i=1}^{m-1}f_{i}(x-P_i(y)+P_m(y)){\rm e}\Big(\sum_{j=1}^n\xi_jQ_j(y)\Big).
\end{align}

\begin{proposition}[Gowers box norms  control  the dual functions]
\label{prop:dual}
Let $N\ge 1$ be a scale, and let $0<\delta\le 1$, $d, m\in\Z_+$ with $m\ge 2$ and $n\in\N$ be given. Let
$\mathcal P:=\{P_1,\ldots, P_{m}\}$ and
$\mathcal Q:=\{Q_1,\ldots, Q_n\}$ be collections of polynomials such
that $\mathcal P$ is $(d, d)$-admissible and
$$1\le \deg{P_1}\le\ldots\le\deg{P_{m}}\le\deg{Q_1}\le\ldots\le\deg{Q_n}.$$
Let $f_0, f_1,\ldots, f_m\in L^0(\KK)$ be $1$-bounded functions
supported on an interval $I\subset \KK$ of measure $N_0 := N^{\deg{P_m}}$.
For $\xi\in \KK^n$, let $F_m^{\xi}$ be the dual function defined in
\eqref{eq:55}.  Suppose that \eqref{eq:52} is satisfied. Then for the exponent
$s\in\Z_+$ which appears in the conclusion of Theorem \ref{thm:Us}, we have
\begin{align}
\label{eq:66}
\|F_m^{\xi}\|_{\square_{[H_1], \ldots, [H_{s+1}]}^{s+1}(I)} \gtrsim_{\mathcal P} \delta^{O_{\mathcal P}(1)},
\end{align}
where $H_i\simeq \delta^{O_{\mathcal P}(1)}N^{\deg(P_m)}$ for
$i\in\bra{s+1}$.
\end{proposition}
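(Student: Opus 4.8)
The plan is to reduce the statement about the dual function $F_m^\xi$ to the conclusion of Theorem \ref{thm:Us} applied to a suitably expanded collection of polynomials. First I would unfold the definition \eqref{eq:55}--\eqref{eq:56}: since $|\Lambda_{\mathcal P; N}^{\mathcal Q; \xi}(f_0,\ldots, f_m)| \ge \delta$, after the change of variables $x \mapsto x + P_m(y)$ the form can be rewritten as an inner product $|\frac{1}{N_0}\int_\KK f_m(x)\, \overline{F_m^\xi(x)}\, d\mu(x)| \ge \delta$ (up to conjugation conventions), exhibiting $F_m^\xi$ as the dual function paired against $f_m$. The key point is that $F_m^\xi$ is itself, essentially, a multilinear average of the shape governed by Theorem \ref{thm:Us}: namely $F_m^\xi = A$ applied to the translated/reflected functions $f_0(\cdot + P_m(y)), f_i(\cdot - P_i(y) + P_m(y))$ with the extra modulation $\mathrm{e}(\sum_j \xi_j Q_j(y))$, and crucially this modulation is independent of $x$.

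The main mechanism I would use is the relation \eqref{s+1-s} together with van der Corput in the form of Lemma \ref{lem:pet}. Concretely: to bound $\|F_m^\xi\|_{\square^{s+1}_{[H_1],\ldots,[H_{s+1}]}(I)}$ from below, I would apply the Gowers--Cauchy--Schwarz machinery (Corollary \ref{GCS-indep}) to pass from the pairing $\frac{1}{N_0}\int f_m \overline{F_m^\xi}$ to a lower bound on $\|F_m^\xi\|_{\square^{s}(I)}$ for the box norm of one lower degree — but since $F_m^\xi$ is a $y$-average, one extra application of van der Corput (introducing a new shift variable $h_{s+1}$ ranging over $[H_{s+1}]$ with $H_{s+1} \simeq \delta^{O_{\mathcal P}(1)} N^{\deg P_m}$) yields control of the $(s+1)$-st box norm. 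Alternatively, and perhaps more cleanly, I would observe that $F_m^\xi$ is the dual function of the form $\Lambda_{\mathcal P; N}^{\mathcal Q; \xi}$, so $\|F_m^\xi\|_{\square^{s+1}}^{2^{s+1}}$ can be expanded via \eqref{s+1-s} into an average over $h$ of $\|\Delta_h F_m^\xi\|_{\square^s}^{2^s}$, and $\Delta_h F_m^\xi$ is again a multilinear form of the PET type (the modulation $\mathrm{e}(\sum \xi_j Q_j(y))$ partially cancels under the discrete derivative in $h$ acting on $y$, contributing polynomials of the form $Q_j(y) - Q_j(y+h')$ of strictly lower degree), to which Theorem \ref{thm:Us} applies directly — producing the large $U^s$/box norm of the top function $f_m$, which after one more change of variables is exactly a box norm of $F_m^\xi$ of degree one higher.

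The $(d,d)$-admissibility hypothesis on $\mathcal P$ is what guarantees that all the auxiliary scales $H_i$ that appear from repeated van der Corput applications are comparable to $\delta^{O_{\mathcal P}(1)} N^{\deg P_m}$ rather than degenerating, and the degree ordering $\deg P_m \le \deg Q_1 \le \cdots$ ensures the modulation terms $Q_j$ genuinely behave like higher-degree perturbations that get killed by the PET descent before they interfere with the extraction of $\|f_m\|_{\square^s}$. The main obstacle I anticipate is bookkeeping: carefully tracking how the extra modulation factor $\mathrm{e}(\sum_j \xi_j Q_j(y))$ interacts with the change of variables $x \mapsto x + P_m(y)$ and with the successive van der Corput shifts — one must check that after differencing in $h_{s+1}$ the modulation becomes a function of $(y, h_{s+1})$ that is independent of $x$ (so Corollary \ref{GCS-indep} still applies) and that its presence does not obstruct invoking Theorem \ref{thm:Us}, whose statement as given does not include modulations. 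Handling this likely requires either absorbing the modulation into the definition of the "$f_0$"-slot function in an $x$-independent way, or verifying that the PET scheme of Proposition \ref{prop:PETiterate} is insensitive to such $x$-independent $y$-modulations. I expect the rest — the identification of scales and the final change of variables converting $\|f_m\|_{\square^s(I)}$ into $\|F_m^\xi\|_{\square^{s+1}(I)}$ — to be routine given the tools already assembled.
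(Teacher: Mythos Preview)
Your overall shape — show $\|\Delta_h F_m^\xi\|_{\square^s}$ is large for many $h$ via Theorem \ref{thm:Us}, then average using \eqref{s+1-s} — is exactly the paper's architecture. But two genuine gaps remain.

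First, you are missing the opening move. From the pairing $\frac{1}{N_0}\int f_m\,\overline{F_m^\xi}\ge\delta$ you cannot invoke Corollary \ref{GCS-indep} to get a box-norm lower bound on $F_m^\xi$: that corollary needs a product of functions each missing a different variable, and a bare inner product does not have that structure. The paper instead applies Cauchy--Schwarz in $x$ (using $\|f_m\|_{L^2}^2\le N_0$) to obtain
\[
\delta^2\le \frac{1}{N_0}\int\Big|\int F_{m;y}^\xi(x)\,d\mu_{[N]}(y)\Big|^2 d\mu(x)
= \big|\Lambda_{\mathcal P;N}^{\mathcal Q;\xi}(f_0,\ldots,f_{m-1},\overline{F_m^\xi})\big|,
\]
so that $\overline{F_m^\xi}$ now sits in the $f_m$-slot of a large form. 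This is what eventually lets Theorem \ref{thm:Us} produce a box-norm lower bound on $F_m^\xi$ rather than on $f_m$.

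Second, your mechanism for removing the modulation is based on a confusion. The shift $\Delta_h$ in the box norm acts on the $x$-variable, not on $y$; writing out $\Delta_h F_m^\xi(x)=F_m^\xi(x)\overline{F_m^\xi(x+h)}$ produces two independent $y$-integrals with modulations ${\rm e}(\sum\xi_jQ_j(y))$ and $\overline{{\rm e}(\sum\xi_jQ_j(y'))}$, and nothing like $Q_j(y)-Q_j(y+h')$ appears. The paper's actual device is much simpler: having placed $\overline{F_m^\xi}$ in the top slot, one shifts $x\mapsto x+h$, averages over $h\in[H_{s+1}]$, and applies Cauchy--Schwarz in the $(x,y)$ variables. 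Because the modulation ${\rm e}(\sum\xi_jQ_j(y))$ has modulus one and is independent of $h$, it is absorbed into the outer integral and disappears, leaving the \emph{unmodulated} form
\[
\int_\KK \Lambda_{\mathcal P;N}(\Delta_h g_0,\ldots,\Delta_h g_m)\,d\nu_{[H_{s+1}]}(h)\gtrsim\delta^4
\]
with $g_m=\overline{F_m^\xi}$. Now Theorem \ref{thm:Us} applies verbatim (no modulation to worry about) on a set of $h$ of measure $\gtrsim\delta^4$, and averaging via \eqref{s+1-s} finishes. So you correctly identified the obstacle, but the resolution is a Cauchy--Schwarz that exploits $|{\rm e}(\cdot)|=1$, not a degree-lowering differencing in $y$.
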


\begin{proof}
By changing the variables $x\mapsto x+P_m(y)$ in \eqref{eq:51}  we may write
\begin{align*}
\Lambda_{\mathcal P; N}^{\mathcal Q; \xi}(f_0,\ldots, f_m)=
\frac{1}{N_0}\int_{\KK}\Big(\int_{\KK} F_{m; y}^{\xi}(x) d\mu_{[N]}(y)\Big)f_m(x) d\mu(x).
\end{align*}
By the Cauchy--Schwarz inequality (observing once again that $\|f_m\|_{L^2(\KK)}^2 \le N_0$), we have
\begin{align*}
\delta^2 &\le \frac{1}{N_0} \int_{\KK} \Big|\int_{\KK} F_{m; y}^{\xi}(x)d\mu_{[N]}(y)\Big|^2 d\mu(x) \\
& =  \frac{1}{N_0}\bigg|\int_{\KK^3}F_{m; y_1}^{\xi}(x)\overline{F_{m; y_2}^{\xi}(x)}d\mu_{[N]}^{\otimes2}(y_1, y_2)d\mu(x)\bigg| \\
&= |\Lambda_{\mathcal P; N}^{\mathcal Q; \xi}(f_0, f_1,\ldots, f_{m-1}, \overline{F_m^{\xi}})|, 
\end{align*}
where in the last step we changed variables $x\mapsto x-P_m(y_1)$. Denote $g_m:=\overline{F_m^{\xi}}$, and  $g_j:=f_j$ for $j\in\bra{m-1}_0$.
Our strategy will be to reduce the matter to Theorem \ref{thm:Us} with the family $\mathcal P$. Observe that  $g_j$ is a $1$-bounded function and $\|g_j\|_{L^1(\KK)}\lesssim N_0$ for all $j\in\bra{m}_0$.
Changing the variables $x \mapsto x+h$ in the definition of
$\Lambda_{\mathcal P; N}^{\mathcal Q; \xi}$ and averaging over
$h\in [H_{s+1}]$ where $H_{s+1} =\delta^{O(1)}N^{{\rm deg} P_m}$, we have 
\begin{align*}
\delta^4&\le |\Lambda_{\mathcal P; N}^{\mathcal Q; \xi}(g_0,\ldots, g_m)|^2\\
&\lesssim \frac{1}{N_0}\int_{\KK^2}\Big|\int_{\KK}g_0(x+h)\prod_{i=1}^mg_{i}(x+h-P_i(y)) d\mu_{[H_{s+1}]}(h)\Big|^2
d\mu_{[N]}(y)d\mu(x),
\end{align*}
where in the last line we have used the Cauchy--Schwarz inequality in the $x$ and  $y$ variables, noting that $x \to g_0(x+h)$ is supported
a fixed dilate of $I$ for every $h\in [H_{s+1}]$.
By another change of variables  we obtain
\begin{align*}
\int_{\KK}\Lambda_{\mathcal P; N}(\Delta_hg_0,\ldots, \Delta_hg_m)d\nu_{[H_{s+1}]}(h)\gtrsim \delta^4.
\end{align*}
Now we may find a measurable set $X \subseteq [H_{s+1}]$ such that 
\begin{align*}
|\Lambda_{\mathcal P; N}(\Delta_hg_0,\ldots, \Delta_hg_m)|\gtrsim \delta^4
\end{align*}
for all $h\in X$ and $\nu_{[H_{s+1}]}(X)\gtrsim \delta^4$. Since $\Delta_hg_j$ is a $1$-bounded function and
$\|\Delta_hg_j\|_{L^1(\KK)}\lesssim N_0$ for all $j\in\bra{m}_0$, we
may invoke Theorem \ref{thm:Us} and conclude that 
\begin{align*}
\|\Delta_hF_m^{\xi}\|_{\square_{[H_1], \ldots, [H_s]}^s(I)}=\|\Delta_hg_m\|_{\square_{[H_1], \ldots, [H_s]}^s(I)}
\gtrsim_{\mathcal P} \delta^{O_{\mathcal P}(1)}
\end{align*}
for all $h\in X$, where
$H_i\simeq \delta^{O_{\mathcal P}(1)}N^{\deg(P_m)}$ for $i\in\bra{s}$. Averaging over $h\in X$ and using $\nu_{[H_{s+1}]}(X)\gtrsim \delta^4$, we obtain
 \begin{align*}
\|F_m^{\xi}\|_{\square_{[H_1], \ldots, [H_{s+1}]}^{s+1}(I)}^{2^{s+1}} \ = \
\int_{\KK}\|\Delta_hF_m^{\xi}\|_{\square_{[H_1], \ldots, [H_s]}^s(I)}^{2^s}d\nu_{[H_{s+1}]}(h) \
\gtrsim_{\mathcal P} \ \delta^{O_{\mathcal P}(1)},
\end{align*}
which is \eqref{eq:66} as desired. 
\end{proof}

We first establish a simple consequence of the oscillatory integral bound \eqref{osc-int-est}
which will be important later.
\begin{lemma}
\label{lem:vdc-osc}
Let $N>1$ be a scale, $m\in\Z_+$ and $n\in\N$ be given. Let $\mathcal P:=\{P_1,\ldots, P_{m}\}$ and 
$\mathcal Q:=\{Q_1,\ldots, Q_n\}$ be
collections of polynomials such that 
\begin{gather}
\label{eq:70}
1\le \deg{P_1}<\ldots<\deg{P_{m}}<\deg{Q_1}<\ldots<\deg{Q_n}.
\end{gather}
Define the multiplier corresponding to the families $\mathcal P$ and $\mathcal Q$ as follows: 
\begin{align*}
m_N^{\mathcal P, \mathcal Q}(\zeta, \xi):=\int_\KK e\Big(\sum_{i=1}^m\zeta_iP_i(y)+\sum_{j=1}^n\xi_jQ_j(y)\Big)d\mu_{[N]}(y),
\end{align*}
where $\zeta=(\zeta_1,\ldots, \zeta_m)\in \KK^m$ and $\xi=(\xi_1,\ldots, \xi_n)\in \KK^n$.
Let $0<\delta\le 1$ and suppose that
\begin{align}
\label{eq:54}
|m_N^{\mathcal P, \mathcal Q}(\zeta, \xi)|\ge \delta.
\end{align}
Then there exists a  large constant  $A\gtrsim_{\mathcal P, \mathcal Q} 1$ such that 
\begin{align}
\label{eq:78}
\begin{split}
N^{\deg(Q_j)}|\xi_{j}|&\lesssim \delta^{-A}, \quad \text{ for } \quad j\in\bra{n},\\
N^{\deg(P_j)}|\zeta_{j}|&\lesssim \delta^{-A}, \quad \text{ for } \quad j\in\bra{m}.
\end{split}
\end{align}
\end{lemma}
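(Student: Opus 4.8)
The plan is to prove Lemma \ref{lem:vdc-osc} by van der Corput iteration: repeatedly apply the oscillatory integral estimate \eqref{osc-int-est} after performing discrete differentiations of the phase $\Phi(y) := \sum_{i=1}^m \zeta_i P_i(y) + \sum_{j=1}^n \xi_j Q_j(y)$, exploiting the fact that the polynomials have strictly increasing degrees. The key point is that the coefficient of the top-degree term of $\Phi$ is $\xi_n \ell(Q_n)$ (up to the normalization of the absolute value and the degree being $< \car(\KK)$), so a single application of \eqref{osc-int-est} after rescaling $y \mapsto Ny$ already yields $N^{\deg(Q_n)}|\xi_n| \lesssim \delta^{-d}$ where $d = \deg(Q_n)$. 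Then one wants to peel off the variables one at a time from the top.

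First I would reduce to the unit ball: writing $d\mu_{[N]}(y) = N^{-1}\ind{[N]}(y)d\mu(y)$ and substituting $y \mapsto Ny$ (using the dilation change-of-variables formula from Section \ref{local-fields}), we get
\[
m_N^{\mathcal P, \mathcal Q}(\zeta, \xi) = \int_{B_1(0)} {\rm e}\Big( \sum_{i=1}^m \zeta_i P_i(Ny) + \sum_{j=1}^n \xi_j Q_j(Ny)\Big) d\mu(y),
\]
which is $I(\widetilde\Phi)$ for the polynomial $\widetilde\Phi(y) = \Phi(Ny)$ of degree $d = \deg(Q_n)$. The coefficient of $y^k$ in $\widetilde\Phi$ is $N^k$ times the coefficient of $y^k$ in $\Phi$. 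By \eqref{osc-int-est}, the hypothesis $|m_N^{\mathcal P,\mathcal Q}(\zeta,\xi)| \ge \delta$ forces $\max_k |c_k(\widetilde\Phi)| \le C_d^d \delta^{-d}$, i.e. every coefficient of $\Phi$, scaled by the appropriate power of $N$, is $O(\delta^{-d})$. In particular the top coefficient $N^{\deg(Q_n)} |\xi_n \ell(Q_n)| \lesssim \delta^{-d}$, and since $|\ell(Q_n)|$ is a fixed nonzero constant depending only on $\mathcal Q$, this gives the $j=n$ case of \eqref{eq:78}.

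The heart of the argument is then a downward induction on the degree to control the remaining frequencies. Having pinned down $\xi_n$, the difficulty is that the coefficient of $y^{\deg(Q_{n-1})}$ in $\Phi$ is not simply $\xi_{n-1}\ell(Q_{n-1})$: it also receives a contribution $\xi_n c_{\deg(Q_{n-1})}(Q_n)$ from the lower-order terms of $Q_n$. But we have just shown $|\xi_n| \lesssim \delta^{-A} N^{-\deg(Q_n)}$, so this stray contribution is $\lesssim \delta^{-A} N^{\deg(Q_{n-1}) - \deg(Q_n)} |c_{\deg(Q_{n-1})}(Q_n)|$, which after multiplying by $N^{\deg(Q_{n-1})}$ is $\lesssim \delta^{-A}$ (using $N > 1$, so the negative power of $N$ only helps). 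Combined with the coefficient bound $N^{\deg(Q_{n-1})}|c_{\deg(Q_{n-1})}(\Phi)| \lesssim \delta^{-d}$ from the previous paragraph and the triangle inequality, we conclude $N^{\deg(Q_{n-1})}|\xi_{n-1}\ell(Q_{n-1})| \lesssim \delta^{-A}$, hence the $j = n-1$ case. Iterating this — at each step the already-controlled higher frequencies $\xi_n, \dots, \xi_{j+1}$ and $\zeta$'s of higher degree contribute only harmless cross-terms by the same mechanism, since $N>1$ — we successively obtain $N^{\deg(Q_j)}|\xi_j| \lesssim \delta^{-A}$ for all $j$ and then $N^{\deg(P_i)}|\zeta_i| \lesssim \delta^{-A}$ for all $i$, increasing the constant $A$ by a bounded amount at each of the $m+n$ steps. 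In positive characteristic we need $\deg(Q_n) < \car(\KK)$ so that all the factorials arising from differentiation (equivalently, so that the binomial/multinomial coefficients relating coefficients of $\Phi$ and $\widetilde\Phi$) are units; this is exactly the standing hypothesis of the paper.

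The main obstacle is purely bookkeeping: organizing the induction so that the cross-contributions from previously-controlled coefficients are genuinely absorbed into $\delta^{-A}$ for a constant $A$ that depends only on $\mathcal P$ and $\mathcal Q$ (through the fixed coefficients $c_k(P_i)$, $c_k(Q_j)$ and the leading coefficients) and grows only additively. There is no analytic difficulty beyond the single invocation of \eqref{osc-int-est}; the non-archimedean and complex cases are handled uniformly because \eqref{osc-int-est} is stated and proved for all locally compact $\KK$ in Section \ref{local-fields}.
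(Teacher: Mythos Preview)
Your approach is correct and essentially identical to the paper's: a single application of the oscillatory integral bound \eqref{osc-int-est} to the rescaled phase, followed by downward induction on the degree using the triangular structure of the coefficients. One small technical correction: in a general local field the scale $N$ is a positive real number, not an element of $\KK$, so the change of variables should be $y\mapsto \alpha y$ for some $\alpha\in\KK$ with $|\alpha|=N$ (this is exactly what the paper does); also, despite your opening sentence, no van der Corput differencing is actually needed or used---the characteristic hypothesis enters only through the validity of \eqref{osc-int-est} itself.
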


\begin{proof}
Fix an element $\alpha\in \KK$ such that $|\alpha| = N$ and make the change of variables $y \to \alpha y$ to write
$$
m_N^{\mathcal P, \mathcal Q}(\zeta, \xi) \ = \ \int_{B_1(0)} {\rm e} \Big(\sum_{i=1}^m\zeta_iP_i(\alpha y)+\sum_{j=1}^n\xi_jQ_j(\alpha y)\Big)
d\mu(y).
$$
Define $R(y):=\sum_{i=1}^m\zeta_iP_i(y)+\sum_{j=1}^n\xi_jQ_j(y)$. Then $R(y)$ may be rewritten as
\begin{align*}
R(y)=\sum_{l=1}^{\deg{Q_n}}\coe_l(R)y^l
\end{align*}
The oscillatory integral bound \eqref{osc-int-est} implies
\begin{align}
\label{eq:72}
|m_N^{\mathcal P, \mathcal Q}(\zeta, \xi)|\lesssim \bigg(1+\sum_{l=1}^{\deg{Q_n}}|\coe_l(R)|N^l\bigg)^{-1/\deg{Q_n}}.
\end{align}
Hence \eqref{eq:54} implies $\max_l |\coe_l(R)| N^{l} \lesssim \delta^{-d_{*}}$ where $d_{*} = \deg{Q_n}$ and the maximum is taken
over all $l \in \bra{\deg(Q_n)}$. From this, we see that for any
sufficiently large $A\ge d_{*}$,
\begin{align}
\label{eq:77}
|\coe_l(R)|N^l\le \delta^{-A}/A
\end{align}
for all $l\in \bra{\deg(Q_n)}$.

Using \eqref{eq:70} we observe that
\begin{gather}
\label{eq:73}
\coe_{\deg{Q_{j}}}(R)=\sum_{k=j}^n\coe_{\deg{Q_{j}}}(Q_{k})\xi_{k}, \quad \text{ for } \quad j\in\bra{n}),\\
\label{eq:74}\coe_{\deg{P_j}}(R)=\sum_{k=1}^n\coe_{\deg{P_j}}(Q_{k})\xi_{k}+\sum_{k=j}^m\coe_{\deg{P_j}}(P_k)\zeta_k
, \quad \text{ for } \quad j\in\bra{m}.
\end{gather}
Using \eqref{eq:73} for $j=n$, we see that \eqref{eq:77} implies \eqref{eq:78} for $N^{\deg{Q_n}} |\xi_n|$. Inductively we
now deduce, using \eqref{eq:73}, that \eqref{eq:77} implies that \eqref{eq:78} holds for all $N^{\deg{Q_j}} |\xi_j|$, $j\in\bra{n}$.
Similarly, using \eqref{eq:74} and \eqref{eq:77}, we see that that the second displayed equation in \eqref{eq:78} holds.
\end{proof}

The key ingredient in the proof of Theorem \ref{thm:inversem} will be a degree-lowering
argument, which reads as follows.
\begin{theorem}[Degree-lowering argument]
\label{thm:deg-low}
Let $N\ge  1$ be a scale and let $0<\delta\le 1$, $m\in\Z_+$ and $n\in\N$ be given. Let
$\mathcal P:=\{P_1,\ldots, P_{m}\}$ and
$\mathcal Q:=\{Q_1,\ldots, Q_n\}$ be collections of polynomials such
that 
\begin{gather*}
1\le \deg{P_1}<\ldots<\deg{P_{m}}<\deg{Q_1}<\ldots<\deg{Q_n}.
\end{gather*}
For $\xi\in \KK^n$, let $F_m^{\xi}$ be the dual function from \eqref{eq:55} corresponding to the form \eqref{eq:51} and 
 $1$-bounded functions $f_0, f_1,\ldots, f_{m-1}\in L^0(\KK)$ supported
on an interval $I\subset \KK$ of measure $N_0 := N^{\deg{P_m}}$.
Suppose that for some integer $s\in\Z_+$ one has
\begin{align}
\label{eq:67}
\|F_m^{\xi}\|_{\square_{[H_1], \ldots, [H_s]}^s(I)} \ge \delta,
\end{align}
where $H_i\simeq \delta^{O_{\mathcal P}(1)}N^{\deg(P_m)}$ for
$i\in\bra{s}$. Then 
\begin{align}
\label{eq:68}
\|F_m^{\xi}\|_{\square_{[H_1], \ldots, [H_{s-1}]}^{s-1}(I)} \gtrsim_{\mathcal P} \delta^{O_{\mathcal P}(1)}.
\end{align}
\end{theorem}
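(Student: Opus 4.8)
The plan is to combine the recursive identity \eqref{s+1-s} for the box norms with the $U^2$-inverse theorem (Lemma~\ref{2.10}) and the oscillatory integral bound \eqref{osc-int-est}, packaged as Lemma~\ref{lem:vdc-osc}; crucially, no inverse theorem for $U^s$ with $s\ge 3$ will be needed. Throughout write $F:=F_m^{\xi}$ and $N_0:=N^{\deg P_m}$, and recall from \eqref{eq:55}--\eqref{eq:56} that
\[
F(x)=\int_{\KK}\Big(\prod_{i=0}^{m-1}f_i\big(x+R_i(y)\big)\Big)\,{\rm e}\Big(\sum_{j=1}^{n}\xi_jQ_j(y)\Big)\,d\mu_{[N]}(y),
\]
where $R_0:=P_m$, $R_i:=P_m-P_i$ for $1\le i\le m-1$, and $P_0:=0$; in particular each $R_i$ has degree $\deg P_m$ and $R_i-R_0=-P_i$.

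First I would peel off all but the first two shift variables in the box norm. Iterating \eqref{s+1-s},
\[
\|F\|_{\square^{s}_{[H_1],\ldots,[H_{s}]}(I)}^{2^{s}}
=\int_{\KK^{s-2}}\big\|\Delta_{v_3,\ldots,v_{s}}F\big\|_{\square^{2}_{[H_1],[H_2]}(I)}^{4}\,d\nu_{[H_3]}(v_3)\cdots d\nu_{[H_{s}]}(v_{s}),
\]
and since $F$, hence each $\Delta_{\vec v}F$, is $1$-bounded, the hypothesis \eqref{eq:67} and Markov's inequality give a set $W\subseteq[H_3]\times\cdots\times[H_{s}]$ of density $\gtrsim_{\mathcal P}\delta^{O_{\mathcal P}(1)}$ on which $\|\Delta_{\vec v}F\|_{\square^{2}_{[H_1],[H_2]}(I)}\gtrsim_{\mathcal P}\delta^{O_{\mathcal P}(1)}$ for every $\vec v=(v_3,\ldots,v_{s})\in W$. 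As each $\Delta_{\vec v}F$ is supported in a fixed dilate of $I$ and $H_1,H_2\simeq_{\mathcal P}\delta^{O_{\mathcal P}(1)}N_0$, Lemma~\ref{2.10} then produces, for each such $\vec v$, a frequency $\theta(\vec v)\in\KK$ with $\big|\widehat{\Delta_{\vec v}F}(\theta(\vec v))\big|\gtrsim_{\mathcal P}\delta^{O_{\mathcal P}(1)}N_0$.

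The heart of the argument is to show that, on a subset of $W$ of comparable density, the modulation frequency is small: $N_0\,|\theta(\vec v)|\lesssim_{\mathcal P}\delta^{-O_{\mathcal P}(1)}$. To this end I would expand $\Delta_{\vec v}F=\prod_{\omega\in\{0,1\}^{s-2}}\mathcal C^{|\omega|}\big(F(\cdot+\omega\cdot\vec v)\big)$, substitute the representation of $F$ above with independent variables $\eta_\omega\in[N]$, and change variables $x\mapsto x-R_0(\eta_{\vec 0})$ to single out the $\omega=\vec 0$ copy (using $R_i-R_0=-P_i$). This exhibits $\widehat{\Delta_{\vec v}F}(\theta)$ as an integral over $\vec\eta=(\eta_\omega)_\omega$ of
\[
{\rm e}\Big(\theta\,P_m(\eta_{\vec 0})+\sum_{\omega}(-1)^{|\omega|}\sum_{j=1}^{n}\xi_jQ_j(\eta_\omega)\Big)
\]
times the $x$-integral $\int_{\KK}f_0(x)\prod_{i=1}^{m-1}f_i\big(x-P_i(\eta_{\vec 0})\big)\prod_{\omega\ne\vec 0}\prod_{i}\mathcal C^{|\omega|}f_i\big(x+\omega\cdot\vec v+R_i(\eta_\omega)-R_0(\eta_{\vec 0})\big)\,{\rm e}(-\theta x)\,d\mu(x)$, which is $O(N_0)$. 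Discarding the factor $f_0$ by the Cauchy--Schwarz inequality in $x$, and then using the Gowers--Cauchy--Schwarz inequality (Corollary~\ref{GCS-indep}) in the variables $\eta_\omega$, $\omega\ne\vec 0$, to detach the remaining $f_i$'s from the phase, one is reduced to bounding below the modulus of a pure exponential average in $\eta_{\vec 0}$ (and the finitely many $\eta_{\vec 0}$-differences introduced by the Cauchy--Schwarz steps). Since $\deg P_1<\cdots<\deg P_m<\deg Q_1<\cdots<\deg Q_n$, this average is precisely a multiplier of the kind treated in Lemma~\ref{lem:vdc-osc} with $\theta$ attached to the top-degree polynomial $P_m$, so the lower bound $\gtrsim_{\mathcal P}\delta^{O_{\mathcal P}(1)}N_0$ forces $N_0\,|\theta(\vec v)|\lesssim_{\mathcal P}\delta^{-O_{\mathcal P}(1)}$, as claimed.

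It remains to descend one box-norm level. For $\vec v=(v_3,\ldots,v_{s-1},v_s)$ in the good set, write $\Delta_{\vec v}F=\Delta_{v_s}G$ with $G:=\Delta_{v_3,\ldots,v_{s-1}}F$, and note the Plancherel identity $\widehat{\Delta_{v_s}G}(\theta)=\int_{\KK}\widehat G(\zeta+\theta)\overline{\widehat G(\zeta)}\,{\rm e}(-\zeta v_s)\,d\zeta$. Since $\theta(\vec v)$ lies in a ball of radius $\lesssim\delta^{-O(1)}N_0^{-1}$, on which $\widehat G$ — being the transform of a function supported in a dilate of $I$ of measure $\lesssim N_0$ — varies in a controlled way, one may absorb the shift by $\theta(\vec v)$ and remove the $v_s$-average; combined with the elementary converse to Lemma~\ref{2.10} (a Fourier spike of height $\gtrsim\delta^{O(1)}N_0$ for a function supported in a dilate of $I$ implies a box norm $\gtrsim\delta^{O(1)}$), this yields $\|\Delta_{v_3,\ldots,v_{s-1}}F\|_{\square^2_{[H_1],[H_2]}(I)}\gtrsim_{\mathcal P}\delta^{O_{\mathcal P}(1)}$ for a density-$\gtrsim_{\mathcal P}\delta^{O_{\mathcal P}(1)}$ set of $(v_3,\ldots,v_{s-1})$. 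Plugging this back into \eqref{s+1-s} gives $\|F_m^{\xi}\|_{\square^{s-1}_{[H_1],\ldots,[H_{s-1}]}(I)}\gtrsim_{\mathcal P}\delta^{O_{\mathcal P}(1)}$, which is \eqref{eq:68}. I expect the third paragraph to be the main obstacle — arranging the Cauchy--Schwarz and Gowers--Cauchy--Schwarz reductions so that what remains is genuinely an exponential average of the form handled by Lemma~\ref{lem:vdc-osc}, keeping careful track of which polynomial carries the frequency $\theta$ and invoking the strict degree ordering at each stage — together with making precise the transfer-of-Fourier-mass step in the last paragraph.
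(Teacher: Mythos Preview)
Your outline has the right shape — strip down to a $\square^2$ norm, use the $U^2$-inverse theorem to find a frequency $\theta(\vec v)$, show it is small, and use that to drop a box — and this is also the skeleton of the paper's proof. However, both of the steps you flag as obstacles are genuine gaps, and the paper resolves them quite differently from what you propose.

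\textbf{The frequency bound (your paragraph 3).} Your plan to reach a pure exponential integral by Cauchy--Schwarz in $x$ followed by Gowers--Cauchy--Schwarz in the $\eta_\omega$, $\omega\ne\vec 0$, does not work. After the change of variable $x\mapsto x-R_0(\eta_{\vec 0})$, the factors $f_1(x-P_1(\eta_{\vec 0})),\ldots,f_{m-1}(x-P_{m-1}(\eta_{\vec 0}))$ from the $\omega=\vec 0$ copy still depend on $\eta_{\vec 0}$; Cauchy--Schwarz in $x$ removes only $f_0$, and GCS in the \emph{other} $\eta_\omega$ leaves these untouched. You therefore cannot reach an integral to which Lemma~\ref{lem:vdc-osc} applies directly when $m\ge 3$. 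The paper handles this by a double induction on $m$: it first applies Cauchy--Schwarz in the $h$-variables (Lemma~\ref{lem:1}), which has two effects — the $Q_j$-phases cancel (since they are independent of $x$, the $2^{s-2}$-fold product annihilates them), and one is left with a bound $\big|\widehat{G_m}\big(\psi(h,h')\big)\big|\gtrsim\delta^{O(1)}N_0$, where $G_m$ is a dual function of exactly the same type as $F_m^{\xi}$ but with $\Delta^{s-2}_{h'-h}f_i$ in place of $f_i$. One then needs the implication ``$|\widehat{F_m^{\xi}}(\zeta)|\ge\delta N_0\Rightarrow N_0|\zeta|\lesssim\delta^{-O(1)}$'' (Claim~\ref{claim:3}), and this is proven by induction on $m$: for $m=2$ it is a direct multiplier estimate, while for $m\ge 3$ one rewrites $N_0^{-1}\widehat{F_m^{\xi}}(\zeta)$ as an $m$-linear form $\Lambda_{\mathcal P';N}^{\mathcal Q';\xi'}$, invokes Proposition~\ref{prop:dual} (hence PET, via Theorem~\ref{thm:Us}) to get a large box norm for $F_{m-1}^{\xi'}$, and applies Theorem~\ref{thm:deg-low} for $m-1$ together with Claim~\ref{claim:3} for $m-1$. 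There is no shortcut that bypasses this recursion.

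\textbf{Dropping one box (your paragraph 4).} Your transfer-of-Fourier-mass argument also fails as stated: the bound $|\theta(\vec v)|\lesssim\delta^{-O(1)}N_0^{-1}$ is a shift \emph{larger} than the scale $N_0^{-1}$ on which $\widehat{G}$ varies, so you cannot replace $\widehat G(\cdot+\theta)$ by $\widehat G(\cdot)$; and even if $\theta=0$, largeness of $\int|\widehat G|^2 e(-\eta v_s)\,d\eta$ for one specific $v_s$ does not give $\|G\|_{\square^2}\gtrsim\delta^{O(1)}$. The paper's mechanism is structural rather than analytic: once $\psi(h,h')$ has been pigeonholed to a constant, one can write $\phi(h)=\sum_{i=1}^{s-2}\psi_i(h)$ with each $\psi_i$ independent of $h_i$ (this is an exact telescoping identity coming from the definition of $\psi$). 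One then feeds this low-complexity decomposition into Lemma~\ref{lem:3}, which applies Corollary~\ref{GCS-indep} in the $h$-variables to kill the modulations $e(-\psi_i(h)x)$ and outputs $\|F_m^{\xi}\|_{\square^{s-1}_{[H_1],\ldots,[H_{s-1}]}(I)}\gtrsim\delta^{O(1)}$ directly. This step, not a Fourier continuity argument, is what actually lowers the degree.
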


Assuming momentarily Theorem \ref{thm:deg-low} we prove Theorem \ref{thm:inversem}.   
\begin{proof}[Proof of Theorem \ref{thm:inversem}]
Our goal is to prove \eqref{eq:53} when
$$
\delta \ \le \ |\Lambda_{\mathcal{P};N}^{\mathcal{Q}; \xi}(f_0,\ldots, f_m)|.
$$
The proof is by induction on $m\in \Z_+$. We divide the proof into two
steps.  In the first step we establish the base case for $m=1$.  In
the second step we will use Theorem \ref{thm:deg-low} to establish the inductive step.
\paragraph{\bf Step 1.}
Assume that $m=1$ so that $N_0 = N^{\deg{P_1}}$.
For $\zeta\in \KK$ and $\xi=(\xi_1,\ldots, \xi_n)\in \KK^n$ we define the multiplier
\begin{align*}
m_N(\zeta, \xi):=\int_\KK  e\Big(-\zeta P_1(y) + \sum_{j=1}^n \xi_j Q_j(y)\Big)d\mu_{[N]}(y).
\end{align*}
We now express
\begin{align*}
\Lambda_{\mathcal{P};N}^{\mathcal{Q}; \xi}(g_0,g_1)
=N_0^{-1}\int_{\KK} \widehat{g_0}(-\zeta) \widehat{g_1}(\zeta) m_N(\zeta, \xi)d\mu(\zeta).
\end{align*}
Using the Cauchy--Schwarz inequality and Plancherel's theorem we see
\begin{align}
\label{eq:61}
|\Lambda_{\mathcal{P};N}^{\mathcal{Q}; \xi}(g_0,g_1)|\leq  N_0^{-1}\| g_0 \|_{L^2(\KK)}  \|g_1\|_{L^2(\KK)}
\sup_{\zeta \in \supp{(\widehat{g_0}\widehat{g_1})}} |m_N(\zeta, \xi)|.
\end{align}

When $\KK$ is non-archimedean, let $\varphi(x) = \ind{[1]}(x) = \ind{B_1(0)}(x)$ so that
$\widehat{\varphi}(\zeta) = \ind{[1]}(\xi)$. When $\KK$ is archimedean,
choose a Schwartz function $\varphi: \KK \to \KK$ such that
\begin{align*}
\ind{[1]}(\zeta)\le \widehat{\varphi}(\zeta)\le \ind{[2]}(\zeta), \qquad  \zeta\in \KK.
\end{align*}
For a scale $M$, we set $\varphi_M(x) = M^{-1} \varphi(M^{-1} x)$ when $\KK = {\mathbb R}$ and
when $\KK = {\mathbb C}$, we set
$\varphi_{M}(z) = M^{-1} \varphi(M^{-1/2} z)$. When
$\KK$ is non-archimedean,  
we set $\varphi_{M}(x) = M^{-1} \ind{[M]}(x)$.  

Consider two scales $M_1\simeq\delta^{C}N^{\deg{P_1}}$ and $N_1\simeq\delta^{2C}N^{\deg{P_1}}/C$.
Then we obtain
\begin{align*}
\delta\le |\Lambda_{\mathcal{P};N}^{\mathcal{Q}; \xi}(f_0,f_1)|\le
|\Lambda_{\mathcal{P};N}^{\mathcal{Q}; \xi}(f_0,\varphi_{M_1}*f_1)|+
|\Lambda_{\mathcal{P};N}^{\mathcal{Q}; \xi}(f_0,f_1-\varphi_{M_1}*f_1)|.
\end{align*}
Note that
\begin{align*}
|\Lambda_{\mathcal{P};N}^{\mathcal{Q}; \xi}(f_0,\varphi_{M_1}*f_1)|
\le N_0^{-1}\|f_0\|_{L^{\infty}(\KK)}\|\varphi_{M_1}*f_1\|_{L^1(\KK)} \le N_0^{-1} \|\varphi_{M_1}*f_1\|_{L^1(\KK)},
\end{align*}
and
\begin{align*}
\|\varphi_{M_1}*f_1\|_{L^1(\KK)}&\le \|\varphi_{M_1}*\mu_{[N_1]}*f_1\|_{L^1(\KK)}+
\|(\varphi_{M_1}-\varphi_{M_1}*\mu_{[N_1]})*f_1\|_{L^1(\KK)}\\
&\lesssim \|\mu_{[N_1]}*f_1\|_{L^1(\KK)} +C^{-1}\delta^C N_0,
\end{align*}
since $\varphi_{M_1}-\varphi_{M_1}*\mu_{[N_1]} = 0$ when $\KK$ is non-archimedean and when $\KK$
is archimedean, we have the pointwise bound
\begin{align*}
|\varphi_{M_1}(x)-\varphi_{M_1}*\mu_{[N_1]}(x)|\lesssim C^{-1}\delta^C  \, M_1^{-1}\big(1+M_1^{-1}|x|\big)^{-10}.
\end{align*}
If $C\ge1$ is sufficiently large then we may write
\begin{align}
\label{eq:61a}
\delta\lesssim |\Lambda_{\mathcal{P};N}^{\mathcal{Q}; \xi}(f_0,f_1)|\le
N_0^{-1}\|\mu_{[N_1]}*f_1\|_{L^1(\KK)}+
|\Lambda_{\mathcal{P};N}^{\mathcal{Q}; \xi}(f_0,f_1-\varphi_{M_1}*f_1)|.
\end{align}
By \eqref{eq:61} we have that
\begin{align}
\label{eq:61b}
|\Lambda_{\mathcal{P};N}^{\mathcal{Q}; \xi}(f_0,f_1-\varphi_{M_1}*f_1)|\lesssim
\sup_{\zeta\in \KK: |\zeta|\ge M_1^{-1} } |m_N(\zeta, \xi)|,
\end{align}
since $\| f_0 \|_{L^2(\KK)}\le N_0^{1/2}$ and $\|f_1\|_{L^2(\KK)}\le N_0^{1/2}$.
We now prove that
\begin{align}
\label{eq:62}
\sup_{\zeta\in \KK: |\zeta|\ge M_1^{-1} } |m_N(\zeta, \xi)|\lesssim \delta^{2}.
\end{align}
Suppose that inequality \eqref{eq:62} does not hold, then one has
\begin{align*}
|m_N(\zeta, \xi)|\gtrsim \delta^{2}
\end{align*}
for some $\zeta\in \KK$ so that $|\zeta|\ge M_1^{-1}$. Then Lemma \ref{lem:vdc-osc} implies
$N^{\deg{P_1}} |\zeta| \lesssim \delta^{-A}$ for some large, fixed $A\gtrsim 1$ by \eqref{eq:78}.
Since $M_1 = \delta^C N^{\deg{P_1}}$, we have $\delta^{-C} \lesssim \delta^{-A}$ which is
a contradiction if $C \gg A$. Thus \eqref{eq:62} holds.

Hence by \eqref{eq:62}, \eqref{eq:61b} and \eqref{eq:61a}, we see that
$$
\delta \lesssim N_0^{-1}\|\mu_{[N_1]}*f_1\|_{L^1(\KK)}
$$
which establishes Theorem \ref{thm:inversem} when $m=1$.

\paragraph{\bf Step 2.} We now assume that Theorem \ref{thm:inversem}
is true for $m-1$ in place of $m$ for some integer $m\ge2$. Using
Theorem \ref{thm:deg-low} we show that this implies Theorem
\ref{thm:inversem} for $m\ge2$. Note that bound \eqref{eq:52} implies
inequality \eqref{eq:66} from Proposition \ref{prop:dual}. Now by
Theorem \ref{thm:deg-low} applied $s-2$ times we may conclude that
\begin{align*}
\|F_m^{\xi}\|_{\square_{[H_1], [H_2]}^2(I)} \gtrsim_{\mathcal P} \delta^{O_{\mathcal P}(1)},
\end{align*}
where $H_1, H_2\simeq \delta^{O_{\mathcal P}(1)}N^{\deg{P_m}}$. By Lemma \ref{2.10} we can find a $\xi_0\in \KK$ such that
\begin{align}
\label{eq:69}
N_0^{-1}\big|\widehat{F_m^{\xi}}(\xi_0)\big|  \gtrsim_{\mathcal P} \delta^{O_{\mathcal P}(1)},
\end{align}
since $N_0 = N^{\deg{P_m}}$.
By definitions \eqref{eq:55} and \eqref{eq:56} and making the change of variables $x\mapsto x-P_m(y)$ we may write
\begin{align*}
N_0^{-1}\widehat{F_m^{\xi}}(\xi_0)&=N_0^{-1}\iint_{\KK^2}F_{m;y}^{\xi}(x)e(-\xi_0 x)d\mu_{[N]}(y)d\mu(x)\\
=N_0^{-1}&\iint_{\KK^2}{\rm M}_{\xi_0}f_0(x)\prod_{i=1}^{m-1}f_{i}(x-P_i(y))e\Big(\xi_0 P_m(y)+\sum_{j=1}^n\xi_jQ_j(y)\Big)d\mu_{[N]}(y)d\mu(x)\\
&=M^{-1} \Lambda_{\mathcal P'; N}^{\mathcal Q', \xi'}({\rm M}_{\xi_0}f_0, f_1,\ldots, f_{m-1}),
\end{align*}
where ${\rm M}_{\xi_0}f_0(x):={\rm e}(-\xi_0 x)f_0(x), \, \mathcal P':=\mathcal P\setminus\{P_m\}, \, \mathcal Q':=\mathcal Q\cup\{P_m\}, \, \xi':=(\xi_0, \xi_1,\ldots, \xi_n)\in \KK^{n+1}$ and 
$M:=N^{\deg(P_m)-\deg(P_{m-1})}$. The parameter $N_0' := N_0 M^{-1}$ is what appears in the $m$-linear form
$\Lambda_{\mathcal P';N}^{\mathcal Q', \xi'}$. We note that  $N_0' = N^{\deg{P_{m-1}}}$.

Thus \eqref{eq:69} implies
\begin{align*}
M^{-1} |\Lambda_{\mathcal P'; N, I}^{\mathcal Q', \xi'}({\rm M}_{\xi_0}f_0, f_1,\ldots, f_{m-1})|\gtrsim \delta^{O(1)}.
\end{align*}
By translation invariance we may assume that all functions $f_0, f_1,\ldots, f_{m-1}$ are supported in $[N_0]$.
 We can partition $[N_0] = \bigcup_{k\in \bra{L}} E_k$ into $L \simeq M$ sets, each with measure $\simeq N_0'$
contained in an interval $I_k$ lying in an $O(N_0')$ neighbourhood of $E_k$.  Furthermore
$E_k$ is an $O(N_1)$ neighbourhood of a set $F_k$ such that $\mu(E_k\setminus F_k) \lesssim N_1$ and 
${\rm supp}(\ind{F_k}*\mu_{[N_1]}) \subseteq E_k$. Here $N_1 \simeq \delta^{O_{\mathcal P}(1)} N^{\deg(P_1)}$. 
In the non-archimedean setting, this decomposition is straightforward; in this case, we can take $F_k = E_k = I_k$.
If fact if $N_0 = q^{n_0}$ and $N_0' = q^{n_0 - \ell}$ so that $M = q^{\ell}$, then 
\begin{align*}
[N_0]  \  = \  B_{q^{n_0}}(0) \ = \ \bigcup_{x \in {\mathcal F}} B_{q^{n_0 -\ell}}(x)  
\end{align*}
gives our partition of $[N_0]$ where ${\mathcal F} = \{ x = \sum_{j=0}^{\ell-1} x_j \pi^{-n_0 + j} : x_j \in o_\KK/m_\KK \}$. Note $\# {\mathcal F} = q^{\ell} = M$. When $\KK = {\mathbb R}$, one simply decomposes the interval $[N_0] = [-N_0, N_0]$ into
$M$ subintervals $(E_k)_{k\in \bra{L}}$ of equal length and then extend and shrink to obtain intervals $I_k$ and $F_k$ with the desired properties.

When $\KK = {\mathbb C}$, the set $[N_0]$ is a disc and the decomposition is not as straightforward but not difficult to construct
by starting with a mesh of squares of side length $\sqrt{N_0'}$ which cover $[N_0]$. It is important that for this case
(when $\KK = {\mathbb C}$) that we allow the sets $E_k$ and $F_k$ to be general sets (not necessarily intervals)
with the above properties. The picture should be clear.

Hence by changing variables $x \to x+P_1(y)$ and then back again,
\begin{align*}
\ \ \ \ \ \ &M^{-1} \Lambda_{\mathcal P'; N, I}^{\mathcal Q', \xi'}({\rm M}_{\xi_0}f_0, f_1,\ldots, f_{m-1})  \\
= N_0^{-1} &\sum_{k\in \bra{L}}  \iint_{E_k\times \KK}{\rm M}_{\xi_0}f_0(x)\prod_{i=1}^{m-1}f_{i}(x-P_i(y))e\Big(\xi_0 P_m(y)+\sum_{j=1}^n\xi_jQ_j(y)\Big)d\mu_{[N]}(y)d\mu(x)\\
= N_0^{-1}&\sum_{k \in \bra{L}} 
\iint_{\KK^2}f_0^k(x)g^k(x-P_1(y))\prod_{i=2}^{m-1}f_{i}^k(x-P_i(y))e\Big(\xi_0 P_m(y)+\sum_{j=1}^n\xi_jQ_j(y)\Big)d\mu_{[N]}(y)d\mu(x)\\
&=M^{-1} \sum_{k\in \bra{L}} \Lambda_{\mathcal P'; N, I_k}^{\mathcal Q', \xi'}(f_0^k, g^k,f_2^k, \ldots, f_{m-1}^k),
\end{align*}
where 
$f_0^k:={\rm M}_{\xi_0}f_0\ind{I_k}, f_2^k:=f_2\ind{I_k}, \ldots, f_{m-1}^k:=f_{m-1}\ind{I_k}$ and $g^k = f_1 \ind{E_k}$.

By the pigeonhole principle there exists $L_0\subseteq \bra{L}$ such that $\# L_0\gtrsim \delta^{O_{\mathcal P'}(1)}M$ and 
for every $k\in L_0$ we have
\begin{align*}
|\Lambda_{\mathcal P'; N, I_k}^{\mathcal Q', \xi'}(f_0^k, g^k, f_2^k,\ldots, f_{m-1}^k)|\gtrsim \delta^{O_{\mathcal P'}(1)}.
\end{align*}
By the inductive hypothesis, we have 
\begin{align*}
(N_0')^{-1}\big\|\mu_{[N_1]}*(f_1\ind{E_k})\big\|_{L^1(\KK)}\gtrsim \delta^{O_{\mathcal P'}(1)}
\end{align*}
for every $k\in L_0$ and for every $N_1 = \delta^C N^{\deg{P_1}}$ with $C \ge C_1(\mathcal P')$.
Note that
\begin{align}
\label{eq:1}
(N_0')^{-1} \|\mu_{[N_1]}*(f_1(\ind{E_k} - \ind{F_k}))\|_{L^1(\KK)} \lesssim N_1 (N_0')^{-1} \lesssim \delta^{C} \ \ \
\end{align}
and hence  for $C\gg 1$ large  enough,
\begin{align*} 
(N_0')^{-1}\big\|\mu_{[N_1]}*(f_1\ind{F_k})\big\|_{L^1(\KK)}\gtrsim \delta^{O_{\mathcal P}(1)} \ \ \ {\rm for \ every}
\ \ k \in L_0.
\end{align*}
Now we can sum over $k\in L_0$, using the bound $\# L_0\gtrsim \delta^{O(1)}M$ and the pairwise disjoint supports
of 
$(\mu_{[N_1]}*(f_1 \ind{F_k}))_{k\in \bra{L}}$, 
 we obtain
\begin{align*}
N_0^{-1}\Big\|\mu_{[N_1]}*\Big(\sum_{k\in \bra{L}}f_1\ind{F_k}\Big)\Big\|_{L^1(\KK)} &\ge N_0^{-1} \sum_{k\in \bra{L}} \big\|\mu_{[N_1]}*(f_1 \ind{F_k})\big\|_{L^1(\KK)} \\
&\ge M^{-1} \sum_{k\in L_0} (N_0')^{-1} \big\|\mu_{[N_1]}*(f_1 \ind{F_k})\big\|_{L^1(\KK)} \
\gtrsim \ \delta^{O_{\mathcal P}(1)},
\end{align*}
which by \eqref{eq:1} yields
\begin{align*}
N_0^{-1}\big\|\mu_{[N_1]}*f_1\big\|_{L^1(\KK)} 
\gtrsim \ \delta^{O_{\mathcal P}(1)},
\end{align*}
as desired.
\end{proof}

We now  state two auxiliary technical lemmas which will be needed in the 
proof of Theorem \ref{thm:deg-low}. For $\omega = (\omega_1, \ldots, \omega_n) \in \{0,1\}^n$
and $h = (h_1, \ldots, h_n) \in \KK^n$, we write $\omega \cdot h = \sum_{i=1}^n \omega_i h_i$
and $1-\omega = (1-\omega_1, \ldots, 1-\omega_n)$.

\begin{lemma}
\label{lem:1}
Let $N\ge 1$ be a scale and let $0<\delta\le 1$, $m\in\Z_+$ with $m\ge 2$, $n\in\N$ and scales
$H_1,\ldots, H_n$ with each $H_i\le N$ be given.  Assume that $\phi:X\to\RR$
is a measurable function defined on a measurable set
$X\subseteq H:=\prod_{i=1}^n [H_i]$.  Let
$\mathcal P:=\{P_1,\ldots, P_{m}\}$ and
$\mathcal Q:=\{Q_1,\ldots, Q_n\}$ be collections of polynomials.  For $\xi\in \KK^n$, let
$F_m^{\xi}$ be the dual function defined in \eqref{eq:55} that
corresponds to the form \eqref{eq:51} and $1$-bounded functions
$f_0, f_1,\ldots, f_{m-1}\in L^0(\KK)$ supported on an interval $I\subset \KK$ of
measure $N_0 := N^{\deg{P_m}}$. Suppose that
\begin{align}
\label{eq:57}
\int_X\big|N^{-1}_0\reallywidehat{{\Delta}_h^n F_m^{\xi}}(\phi(h))\big|^2
d\Big(\bigotimes_{i=1}^{n}\nu_{[H_i]}\Big)(h)\ge \delta.
\end{align}
Then 
\begin{align}
\label{eq:81}
\int_{\square_n(X)}\bigg|N_0^{-1}\int_{\KK} F_m(x; h, h')e(-\psi(h, h')x)d\mu(x)\bigg|^2
d\Big(\bigotimes_{i=1}^{n}\nu_{[H_i]}\Big)^{\otimes2}(h, h')
\gtrsim \delta^{O(1)},
\end{align}
where
\begin{align*}
\square_n(X):=\Big\{(h, h')\in H^2: \omega \cdot h + (1-\omega)\cdot h' \in X \text{ for every } \omega\in\{0, 1\}^n\Big\},
\end{align*}
and
\begin{gather*}
F_m(x; h, h'):=\int_{\KK}{\Delta}_{h'-h}^nf_0(x+P_m(y))\prod_{i=1}^{m-1}
{\Delta}_{h'-h}^nf_{i}(x-P_i(y)+P_m(y))d\mu_{[N]}(y),\\
\psi(h, h'):=\sum_{\omega\in\{0, 1\}^n}(-1)^{|\omega|}\phi\Big(\omega \cdot h + (1-\omega)\cdot h'\Big).
\end{gather*}
\end{lemma}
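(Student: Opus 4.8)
The plan is to run the Cauchy--Schwarz ``doubling'' argument in the $h$-variables that underlies the degree-lowering method (in the spirit of the third author's earlier work), adapted to the present dual function $F_m^\xi$. Throughout write
$G(h):=N_0^{-1}\reallywidehat{\Delta_h^n F_m^\xi}(\phi(h))=N_0^{-1}\int_\KK \Delta_h^n F_m^\xi(x)\,{\rm e}(-\phi(h)x)\,d\mu(x)$,
so that \eqref{eq:57} reads $\int_X|G(h)|^2\,d(\bigotimes_i\nu_{[H_i]})(h)\ge\delta$, and recall that $G$ is $O(1)$-bounded since $|F_m^\xi|\le1$ with support on a fixed dilate of $I$.

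First I would unfold one $y$-integral. Using the product formula \eqref{product} and \eqref{eq:55}--\eqref{eq:56}, write $\Delta_h^n F_m^\xi(x)=F_m^\xi(x)\,R_h(x)$ with $R_h(x):=\prod_{\omega\in\{0,1\}^n\setminus\{0\}}{\mathcal C}^{|\omega|}F_m^\xi(x+\omega\cdot h)$ a $1$-bounded factor, and then substitute $F_m^\xi(x)=\int_\KK F_{m;y}^\xi(x)\,d\mu_{[N]}(y)$ to get $G(h)=\int_\KK\big(N_0^{-1}\int_\KK F_{m;y}^\xi(x)R_h(x){\rm e}(-\phi(h)x)\,d\mu(x)\big)\,d\mu_{[N]}(y)$. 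Applying Cauchy--Schwarz in $y$ (Jensen), expanding the resulting square in $x$, and rewriting in terms of multiplicative derivatives, the dual structure reassembles: pulling the $y$-integral back inside produces the \emph{diagonal} dual expression $\int_\KK\Delta_{\,\cdot}^n F_{m;y}^\xi(x)\,d\mu_{[N]}(y)$ — and, crucially, since $n\ge1$ the operator $\Delta^n$ applied to the $x$-independent unimodular factor ${\rm e}(\sum_j\xi_jQ_j(y))$ collapses to $1$, which is exactly why the $\mathcal Q$-modulation disappears from the conclusion and this expression becomes $F_m(x;\,\cdot,\cdot)$. What remains attached is a $1$-bounded factor built from $R_h$ and the modulation ${\rm e}(-\phi(h)\,\cdot\,)$.

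The core step is then to apply the Gowers--Cauchy--Schwarz inequality (Lemma \ref{GCS}, via Corollary \ref{GCS-indep}) in the product space $Z=\prod_i([H_i],\nu_{[H_i]})$, doubling each $h_i$ into a pair $(h_i,h_i')$. The factors of the $R_h$-product indexed by $\omega\ne(1,\dots,1)$ are $1$-bounded and each is independent of at least one coordinate $h_i$, so they get absorbed by the inequality; what is left as the ``core'' $f$ is the $\omega=(1,\dots,1)$ factor together with the indicator $\ind{X}(h)$ and the modulation. On the cube the indicator becomes $\ind{\square_n(X)}(h,h')=\prod_{\omega}\ind{X}(\omega\cdot h+(1-\omega)\cdot h')$, the $2^n$ modulations combine into ${\rm e}(\pm\psi(h,h')\,\cdot\,)$ with $\psi(h,h')=\sum_\omega(-1)^{|\omega|}\phi(\omega\cdot h+(1-\omega)\cdot h')$, and the various offsets fuse into the vector $h'-h$; re-folding the $x$-integral and the $\mu_{[N]}$-average then turns the core into $N_0^{-1}\int_\KK F_m(x;h,h'){\rm e}(-\psi(h,h')x)\,d\mu(x)$ and produces \eqref{eq:81}, after accounting for constants and a harmless power loss turning $\delta$ into $\delta^{O(1)}$ coming from the H\"older/Cauchy--Schwarz steps.

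The main obstacle is the bookkeeping of the cube in this last step: reconciling the hypothesis, in which $\Delta_h^n$ acts on the \emph{full} dual function $F_m^\xi$ with the $\mu_{[N]}$-average on the outside, with the conclusion, in which $\Delta_{h'-h}^n$ has been pushed \emph{inside} onto each component $f_i$ under a single $\mu_{[N]}$-average. This is what forces the dual-function unfolding and the $y$-Cauchy--Schwarz to be inserted at precisely the right point relative to the $n$ coordinate doublings, and requires carefully matching each translate $\omega\cdot h+(1-\omega)\cdot h'$ and each frequency $\phi(\,\cdot\,)$ to its proper vertex of the cube. A secondary technical point is that when $\KK$ is archimedean the $\nu_{[H_i]}$ are Fej\'er kernels rather than normalized indicators and the functions involved are supported only up to fixed dilates of $I$, so one must check (as in the proof of Lemma \ref{lem:vdC}) that the resulting boundary discrepancies contribute errors that are absorbable into the $\delta^{O(1)}$ on the right-hand side; in the non-archimedean case these issues are absent.
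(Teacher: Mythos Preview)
Your proposal has a genuine gap at the heart of the argument. The error is the claim that after Cauchy--Schwarz in $y$ ``the dual structure reassembles'' into $\int_\KK \Delta^n_{\cdot} F_{m;y}^\xi(x)\,d\mu_{[N]}(y)$. The single $y$ you unfold comes only from the $\omega=0$ factor $F_m^\xi(x)$; the remaining factors in $R_h(x)=\prod_{\omega\ne 0}\mathcal{C}^{|\omega|}F_m^\xi(x+\omega\cdot h)$ are still full dual functions $F_m^\xi$, each carrying its own hidden $\mu_{[N]}$-average, not $F_{m;y}^\xi$ for your shared $y$. Consequently, when you take the $\omega=(1,\dots,1)$ factor of $R_h$ as the ``core'' $f$ in Corollary~\ref{GCS-indep} and run the cube, what survives is $\prod_{\omega'}\mathcal{C}^{|\omega'|}F_m^\xi(x+\ldots)$, which after translation is $\Delta^n_{h'-h}F_m^\xi(x)$ --- a product of $2^n$ \emph{independent} $\mu_{[N]}$-averages --- and not the required $F_m(x;h,h')=\int_\KK \Delta^n_{h'-h}F_{m;y}^\xi(x)\,d\mu_{[N]}(y)$ with one shared $y$. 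These are genuinely different objects, and it is precisely the single-$y$ structure of $F_m(x;h,h')$ (making it a dual function for the $1$-bounded inputs $\Delta^n_{h'-h}f_i$) that allows Claim~\ref{claim:3} to be applied downstream in the proof of Theorem~\ref{thm:deg-low}. Your unfolded $\omega=0$ factor $F_{m;y}^\xi(x)$, being independent of all $h_i$, is simply absorbed as one of the $g_\alpha$ in Corollary~\ref{GCS-indep} and never reappears.

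The paper's proof avoids this by a different route. It expands \emph{all} $2^{n+1}$ $\mu_{[N]}$-averages at the outset (one for each factor $F_{m;y_{(\omega,j)}}^\xi$ in $\Delta^n_h F_m^\xi(x)\overline{\Delta^n_h F_m^\xi(z)}$) and then applies Cauchy--Schwarz in the $h_i$ coordinates \emph{iteratively}, one at a time, rather than via a single global Gowers--Cauchy--Schwarz. At stage $i$, the $\omega_i=0$ factors (together with their $y$-variables) are absorbed; after expanding the square in $h_i$, the change of variables $x\mapsto x-h_i$, $z\mapsto z-h_i$ fuses the two $h_i$-shifted copies into $\Delta_{h_i'-h_i}F_{m;y}^\xi$ for the \emph{same} surviving $y$. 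This translation step is the mechanism that halves the count of $y$-variables at each stage, from $2^{n+1}$ down to $2$, and delivers exactly $\Delta^n_{h'-h}F_{m;y}^\xi(x)\overline{\Delta^n_{h'-h}F_{m;y'}^\xi(z)}$ at the end. A one-shot application of Corollary~\ref{GCS-indep} cannot replicate this $y$-collapsing, because the inequality only discards the $g_\alpha$ and cubes the core; it contains no translation step in $x,z$ to merge $y$-variables.
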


\begin{proof}
We shall write ${\bm \nu}_n:=\bigotimes_{i=1}^{n}\nu_{[H_i]}$.
Using \eqref{product}, \eqref{eq:55} and \eqref{eq:56} we see that the left-hand side of 
\eqref{eq:57} can be written as
\begin{align*}
\frac{1}{N_0^2} \int_{\KK^{2^{n+1}}}\int_{\KK^2}\int_{\KK^n}G_0(x, z, h;{\bm y})
d{\bm \nu}_n(h)d\mu(x)d\mu(z)d\mu^{\otimes 2^{n+1}}_{[N]}({\bm y})
\end{align*}
where for ${\bm y}=(y_{(\omega, 0)}, y_{(\omega, 1)})_{\omega\in \{0, 1\}^n}\in \KK^{2^{n+1}}$, $x, z\in \KK$ and $h\in \KK^n$. We have set
\begin{gather*}
G_0(x, z, h;{\bm y})
:=\ind{X}(h) {\rm e} (-\phi(h)(x-z))\prod_{\omega\in \{0, 1\}^n} {\mathcal C}^{|\omega|} F_{m; y_{(\omega, 0)}}^{\xi}(x+ h\cdot\omega)\overline{F_{m; y_{(\omega, 1)}}^{\xi}(z+ h\cdot\omega)} .
\end{gather*}
Write elements in $X$ as $(h_1, h)$ with $h_1 \in \KK$ and apply the Cauchy--Schwarz inequality in all but the $h_1$ variable
(noting that $(x,z) \to G_0(x, z, h;{\bm y})$ is supported in a product of intervals of measure $\simeq N_0^2$) to conclude
\begin{align}
\label{eq:58}
\frac{1}{N_0^2} \int_{\KK^{2^{n}}}\int_{\KK^2}\int_{\KK^{n-1}} H_0(x, z, h;{\bm y})
d{\bm \nu}_{n-1}(h)d\mu(x)d\mu(z)d\mu^{\otimes 2^{n}}_{[N]}({\bm y})\gtrsim \delta^{O(1)},
\end{align}
where
\begin{align*}
H_0(x, z, h;{\bm y}):=\bigg|\int_{\KK}G_0^1(x, z, (h_1, h);{\bm y})d\nu_{[H_1]}(h_1)\bigg|^2,
\end{align*}
and
\begin{gather*}
G_0^1(x, z, (h_1, h);{\bm y})
:=\ind{X}(h_1, h){\rm e}(-\phi(h_1, h)(x-z))\\
\times\prod_{\omega\in \{0, 1\}^{n-1}} {\mathcal C}^{|\omega|} F_{m; y_{(1, \omega, 0)}}^{\xi}(x+ (h_1, h)\cdot(1, \omega))
\overline{F_{m; y_{(1, \omega, 1)}}^{\xi}(z+ (h_1, h)\cdot(1, \omega))}
\end{gather*}
for ${\bm y}=(y_{(1, \omega, 0)}, y_{(1, \omega, 1)})_{(j,\omega)\in \{0, 1\}^{n}}\in \KK^{2^{n}}$ and  $x, z, h_1\in \KK$, $h\in \KK^{n-1}$.
Expanding the square and
changing variables $x\mapsto x-h_1$ and $z\mapsto z-h_1$ we may rewrite \eqref{eq:58} as 
\begin{gather}
\label{eq:59}
\nonumber \frac{1}{N_0^2} \int_{\KK^{2^{n}}}\int_{\KK^2}\int_{\KK^{n+1}} G_1(x, z, h_1, h_1', h;{\bm y})d\nu_{[H_1]}^{\otimes2}(h_1, h_1')
d{\bm \nu}_{n-1}(h)d\mu(x)d\mu(z)d\mu^{\otimes 2^{n}}_{[N]}({\bm y})\\
\gtrsim \delta^{O(1)},
\end{gather}
where
\begin{gather*}
G_1(x, z, h_1, h_1', h;{\bm y})
:=\ind{X}(h_1, h)\ind{X}(h_1', h) {\rm e}
(-(\phi(h_1, h)-\phi(h_1', h))(x-z))\\
\times\prod_{\omega\in \{0, 1\}^{n-1}} {\mathcal C}^{|\omega|}
\Delta_{h_1'-h_1}F_{m; y_{(1, \omega, 0)}}^{\xi}(x+ h\cdot\omega)
\overline{\Delta_{h_1'-h_1}F_{m; y_{(1, \omega, 1)}}^{\xi}(z+ h\cdot\omega)}.
\end{gather*}
Iteratively, for each $i\in\{2,\ldots, n\}$, we apply the Cauchy--Schwarz inequality in all but the $h_i$
variable to conclude that
\begin{align*}
\frac{1}{N_0^2}\int_{\KK^2}\int_{\KK^2}\int_{\square_n(X)} G_n(x, z, h, h';y, y')
d{\bm \nu}_n^{\otimes2}(h, h')d\mu(x)d\mu(z)d\mu^{\otimes 2}_{[N]}(y, y')
\gtrsim \delta^{O(1)},
\end{align*}
where
\begin{align*}
G_n(x, z, h, h';y, y'):={\Delta}_{h'-h}^nF_{m; y}^{\xi}(x)\overline{{\Delta}_{h'-h}^nF_{m; y'}^{\xi}(z)} 
{\rm e}(-\psi((h, h'))(x-z)).
\end{align*}
We have arrived at \eqref{eq:81},  completing  the proof of the lemma
\end{proof}

The following lemma is a slight variant of a result found in \cite{PEL}.

\begin{lemma}
\label{lem:3}
Given a scale $N\ge 1$, $0<\delta\le 1$, $m\in\Z_+$ with $m\ge 2$, $n\in\N$ and scales
$H_1,\ldots, H_{n+1}$ with each $H_i \le N$.  We assume for every $i\in\bra{n}$ that $\varphi_i:\KK^n\to \KK$  
is a measurable function independent of the variable $h_i$ in a vector $h=(h_1, \ldots, h_n)\in \KK^n$.  Let
$\mathcal P:=\{P_1,\ldots, P_{m}\}$ and
$\mathcal Q:=\{Q_1,\ldots, Q_n\}$ be collections of polynomials.  For $\xi\in \KK^n$ let
$F_m^{\xi}$ be the dual function defined in \eqref{eq:55} that
corresponds to the form \eqref{eq:51} and $1$-bounded functions
$f_0, f_1,\ldots, f_{m-1}\in L^0(\KK)$ supported on an interval $I\subset \KK$ of
measure $N_0 = N^{\deg{P_m}}$. Suppose that 
\begin{align}
\label{eq:82}
\int_{\KK^n}\Big|N^{-1}_0\reallywidehat{{\Delta}_h^n F_m^{\xi}}\Big(\sum_{i=1}^n\varphi_i(h)\Big)\Big|^2
d\Big(\bigotimes_{i=1}^{n}\nu_{[H_i]}\Big)(h)\ge \delta.
\end{align}
Then
\begin{align*}
\|F_m^{\xi}\|_{\square_{[H_1],\ldots,  [H_{n+1}]}^{n+1}(I)} \gtrsim_{\mathcal P} \delta^{O_{\mathcal P}(1)}.
\end{align*}
\end{lemma}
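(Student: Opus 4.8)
The plan is to feed the hypothesis \eqref{eq:82} into Lemma \ref{lem:1} with the phase $\phi := \sum_{i=1}^n\varphi_i$ and with the set $X$ taken to be the whole box $H = \prod_{i=1}^n[H_i]$, and then to use the assumption that each $\varphi_i$ is independent of $h_i$ to see that the ``second-difference'' phase $\psi$ appearing in the conclusion \eqref{eq:81} of Lemma \ref{lem:1} vanishes identically. Once the phase is gone, \eqref{eq:81} becomes a box-type average of the dual function, which I would then convert into the box norm $\|F_m^\xi\|_{\square^{n+1}_{[H_1],\ldots,[H_{n+1}]}(I)}$ after manufacturing one extra averaging direction and invoking the Gowers--Cauchy--Schwarz inequality.

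In detail, I would first apply Lemma \ref{lem:1} with $X := H$, $\phi := \sum_{i=1}^n\varphi_i$, and the same data $\mathcal P, \mathcal Q, \xi, N, H_1,\ldots,H_n$. Since ${\bm \nu}_n := \bigotimes_{i=1}^n\nu_{[H_i]}$ is supported on $H$, condition \eqref{eq:57} is literally \eqref{eq:82}, so Lemma \ref{lem:1} yields
\[
\int_{\square_n(H)}\Bigl|\tfrac{1}{N_0}\int_\KK F_m(x;h,h')\,{\rm e}(-\psi(h,h')x)\,d\mu(x)\Bigr|^2\,d{\bm \nu}_n^{\otimes 2}(h,h')\ \gtrsim_{\mathcal P}\ \delta^{O_{\mathcal P}(1)}.
\]
Two simplifications then occur. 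Each coordinate of $\omega\cdot h + (1-\omega)\cdot h'$ equals the corresponding coordinate of $h$ or of $h'$, hence lies in $[H_i]$, so $\square_n(H)=H^2$. More importantly, for each fixed $i\in\bra{n}$ the quantity $\varphi_i(\omega\cdot h + (1-\omega)\cdot h')$ does not depend on $\omega_i$, since $\varphi_i$ is independent of its $i$-th argument and flipping $\omega_i$ changes only the $i$-th coordinate of $\omega\cdot h + (1-\omega)\cdot h'$, while $(-1)^{|\omega|}$ does change sign; pairing the $\omega$'s accordingly gives $\sum_{\omega\in\{0,1\}^n}(-1)^{|\omega|}\varphi_i(\omega\cdot h + (1-\omega)\cdot h') = 0$, and summing over $i$ shows $\psi\equiv 0$. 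Thus the bound above reduces to
\[
\int_{H^2}\Bigl|\tfrac{1}{N_0}\int_\KK F_m(x;h,h')\,d\mu(x)\Bigr|^2\,d{\bm \nu}_n^{\otimes 2}(h,h')\ \gtrsim_{\mathcal P}\ \delta^{O_{\mathcal P}(1)}.
\]

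It remains to recognise the left-hand side in terms of $F_m^\xi$. Because the discrete derivative $\Delta$ is multiplicative and the factor ${\rm e}(\sum_j\xi_jQ_j(y))$ has modulus $1$, one checks directly from \eqref{eq:55}--\eqref{eq:56} that $F_m(x;h,h') = \int_\KK \Delta_{h'-h}^n F_{m;y}^\xi(x)\,d\mu_{[N]}(y)$. To create the $(n+1)$-st direction I would argue as in the proof of Proposition \ref{prop:dual}: write $\int_\KK F_m(x;h,h')\,d\mu(x) = \iint_{\KK^2} F_m(x+h_{n+1};h,h')\,d\mu_{[H_{n+1}]}(h_{n+1})\,d\mu(x)$ (valid since every function in sight is supported in a fixed dilate of $I$), apply the Cauchy--Schwarz inequality in $x$, and expand the square; this yields
\[
\frac{1}{N_0}\int_{H^2}\int_\KK\int_\KK \Delta_{h_{n+1}}F_m(x;h,h')\,d\nu_{[H_{n+1}]}(h_{n+1})\,d\mu(x)\,d{\bm \nu}_n^{\otimes 2}(h,h')\ \gtrsim_{\mathcal P}\ \delta^{O_{\mathcal P}(1)}.
\]
Substituting the identity for $F_m$ and expanding, the integrand is a product over $\omega\in\{0,1\}^{n+1}$ of shifted copies of $F_{m;y}^\xi$ and $F_{m;y'}^\xi$ (where $y,y'$ are the two $\mu_{[N]}$-parameters introduced by the two squarings), with cube directions $h_1'-h_1,\ldots,h_n'-h_n, h_{n+1}$. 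Finally I would apply the Gowers--Cauchy--Schwarz inequality, Lemma \ref{GCS}, in the variables $y,y'$ --- which enter only through the $1$-bounded functions $f_0,\ldots,f_{m-1}$ and are independent of the relevant cube variables --- to decouple the $2^{n+1}$ inner $\mu_{[N]}$-averages; this bounds the displayed quantity above by a constant multiple of $\|F_m^\xi\|_{\square^{n+1}_{[H_1],\ldots,[H_{n+1}]}(I)}^{2^{n+1}}$, and combining with the lower bound finishes the proof.

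The conceptual heart is the vanishing $\psi\equiv 0$: this is precisely where the hypothesis ``$\varphi_i$ independent of $h_i$'' enters, and it is what makes Lemma \ref{lem:1} useful here; reading \eqref{eq:82} as an instance of \eqref{eq:57} is immediate. I expect the genuinely fiddly part to be the last paragraph --- tracking the change of variables that creates the direction $[H_{n+1}]$, checking that the induced measures are the Fej\'er kernels $\nu_{[H_i]}$ occurring in the definition of $\square^{n+1}$, and arranging the Gowers--Cauchy--Schwarz so that it lands exactly on $F_m^\xi$ with all the auxiliary factors genuinely $1$-bounded.
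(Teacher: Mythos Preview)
Your observation that feeding \eqref{eq:82} into Lemma~\ref{lem:1} with $X=H$ and $\phi=\sum_i\varphi_i$ forces $\psi\equiv 0$ is correct, and this is indeed the place where the independence hypothesis is used. The manufacture of the extra direction $[H_{n+1}]$ via translation invariance and Cauchy--Schwarz is also fine.

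The gap is the final step. The function $F_m(x;h,h')$ delivered by Lemma~\ref{lem:1} equals $\int_\KK \Delta_{h'-h}^n F_{m;y}^\xi(x)\,d\mu_{[N]}(y)$, a \emph{single} $y$-average of the differenced integrand; it is \emph{not} $\Delta_{h'-h}^n F_m^\xi(x)$, because the nonlinear operator $\Delta$ does not commute with the $y$-average. Concretely, $\Delta_k^{n+1} F_m^\xi(x)=\prod_{\omega}\mathcal C^{|\omega|}\int_\KK F_{m;y_\omega}^\xi(x+\omega\cdot k)\,d\mu_{[N]}(y_\omega)$ carries $2^{n+1}$ independent $y$-integrals, whereas after your Cauchy--Schwarz you have exactly two. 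There is no cube structure in the pair $(y,y')$ for Lemma~\ref{GCS} to act on; and if instead you apply GCS in the cube variables for fixed $(y,y')$, you bound your quantity above by products of $\|F_{m;y}^\xi\|_{\square^{n+1}}$, which is useless since the triangle inequality only gives $\|F_m^\xi\|_{\square^{n+1}}=\bigl\|\int F_{m;y}^\xi\,d\mu_{[N]}(y)\bigr\|_{\square^{n+1}}\le \int\|F_{m;y}^\xi\|_{\square^{n+1}}\,d\mu_{[N]}(y)$ --- the wrong direction.

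The paper avoids this by never routing through Lemma~\ref{lem:1}. It expands the square $|N_0^{-1}\reallywidehat{\Delta_h^n F_m^\xi}(\cdot)|^2$ directly so that $F_m^\xi$ itself (already $y$-averaged) appears from the start, then removes the phases $e(-\varphi_i(h-h')(x-z))$ via Corollary~\ref{GCS-indep} applied in the $h$-variables --- this is exactly your ``$\psi=0$'' mechanism, but implemented one level up --- and after a short peeling argument and one more Cauchy--Schwarz lands on $\mathcal I^{2^{n+1}}\le \|F_m^\xi\|_{\square^{n+1}}^{2^{n+1}}$. So the conceptual heart you identified is right; it just has to be executed with $F_m^\xi$ rather than with the object $F_m(x;h,h')$ coming out of Lemma~\ref{lem:1}.
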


\begin{proof}
We shall write as before ${\bm \nu}_n:=\bigotimes_{i=1}^{n}\nu_{[H_i]}$
and also let ${\bm \mu}_n:=\bigotimes_{i=1}^n\mu_{[H_i]}$. Expanding the Fej{\'e}r kernel we may write
the left-hand side of \eqref{eq:82} as
\begin{gather*}
{\mathcal I}:=\int_{\KK^n}\Big|N_0^{-1}\reallywidehat{{\Delta}^n_hF_m^{\xi}}\Big(\sum_{i=1}^n\varphi_i(h)\Big)\Big|^2
d{\bm\nu}_n(h)\\
=
\int_{\KK^{2n}}\Big|N_0^{-1}\reallywidehat{{\Delta}_{h-h'}^nF_m^{\xi}}\Big(\sum_{i=1}^n\varphi_i(h-h')\Big)\Big|^2
d{\bm\mu}_n^{\otimes2}(h, h')\\
=\frac{1}{N^{2}_0}\int_{\KK^{2n+2}}{\Delta}_{h-h'}^nF_m^{\xi}(x)\overline{{\Delta}_{h-h'}^nF_m^{\xi}(z)}e\Big(-\sum_{i=1}^n\varphi_i(h-h')(x-z)\Big)d{\bm\mu}_n^{\otimes2}(h, h')d\mu(x)d\mu(z).
\end{gather*}
We apply the Cauchy--Schwarz inequality in the $x, z$ and $h'$
variables and Corollary \ref{GCS-indep} to deduce that 
\begin{align*}
{\mathcal I}^{2^n}&\le
\frac{1}{N_0^{2}}\int_{\KK^{3n+2}}  \prod_{\omega\in\{0, 1\}^n}\mathcal C^{|\omega|}\big({\Delta}_{h^{(\omega)}-h}^nF_m^{\xi}(x)
\overline{{\Delta}_{h^{(\omega)}-h}^nF_m^{\xi}(z)}\big)d{\bm\mu}_n^{\otimes3}(h^{(0)}, h^{(1)}, h)d\mu(x)d\mu(z)\\
=&  \frac{1}{N_0^2} \int_{\KK^{3n}}  
{\mathcal A}(x,z, h_n', h^{(0)}, h^{(1)}, h'){\mathcal B}(x,z,h^{(0)}, h^{(1)}, h') d\mu(x) d\mu(z) d{\bm\mu}_{n-1}^{\otimes3}(h^{(0)}, h^{(1)}, h')d\mu_{[H_n]}(h_n'),
\end{align*}
where 
\begin{align*}
{\mathcal A}(x,z,h_n', h^{(0)}, h^{(1)}, h') :=&
\int_{\KK^2} \prod_{\omega'\in\{0, 1\}^{n-1}}{\mathcal C}^{|\omega'|}
\Bigl[{\Delta}_{h^{(\omega')}-h'}^{n-1}  \Bigl( F_m^{\xi}(x + h_n^0 - h_n')\\
&\times{\overline{F_m^{\xi}(x+h_n^1 - h_n') F_m^{\xi}(z+ h_n^0 - h_n')}} F_m^{\xi}(z + h_n^1 - h_n') \Bigr)\Bigr]
d\mu_{[H_n]}^{\otimes2}(h_n^0, h_n^1),
\end{align*}
\begin{align*}
{\mathcal B}(x,z,h^{(0)}, h^{(1)}, h'):=\prod_{\omega'\in\{0, 1\}^{n-1}}  {\mathcal C}^{|\omega'|}
\Bigl[{\Delta}_{h^{(\omega')}-h'}^{n-1} \Bigl( |F_m^{\xi}(x)|^2 |F_m^{\xi}(z)|^2 \Bigr)\Bigr].
\end{align*}
Since ${\mathcal A}\ge 0$, we see that
\begin{align*}
{\mathcal I}^{2^n} \le &N_0^{-2} \int_{\KK^{3n}} {\mathcal A}(x,z, h_n', h^{(0)}, h^{(1)}, h') d\mu(x) d\mu(z) d{\bm\mu}_{n-1}^{\otimes3}(h^{(0)}, h^{(1)}, h')d\mu_{[H_n]}(h_n')\\
&=
\frac{1}{N_0^2} \int_{\KK^{3n+1}} 
\prod_{\omega'\in\{0, 1\}^{n-1}}{\mathcal C}^{|\omega'|}
\Bigl[{\Delta}_{h^{(\omega')}-h'}^{n-1} \Bigl( F_m^{\xi}(x + h_n^0)\overline{F_m^{\xi}(x+h_n^1)}\\
&\hspace{3cm} \times
 \overline{F_m^{\xi}(z+ h_n^0)} F_m^{\xi}(z + h_n^1) \Bigr)\Bigr] d\mu(x) d\mu(z)
 d{\bm\mu}_{n}^{\otimes2}(h^{(0)}, h^{(1)}) d{\bm\mu}_{n-1}(h')\\
 &= \frac{1}{N_0^2} \int_{\KK^{3n+1}} 
\mathcal C(x,z,h^{(0)}, h^{(1)}, h') d\mu(x) d\mu(z)
d{\bm\mu}_{n}^{\otimes2}(h^{(0)}, h^{(1)}) d{\bm\mu}_{n-1}(h'),
\end{align*}
where
\[
\mathcal C(x,z,h^{(0)}, h^{(1)}, h'):=\prod_{\omega'\in\{0, 1\}^{n-1}}{\mathcal C}^{|\omega'|}
\Bigl[{\Delta}_{h^{(\omega')}-h'}^{n-1} {\Delta}_{h_n^1 - h_n^0}\Bigl( F_m^{\xi}(x)
{\overline{F_m^{\xi}(z)}}\Bigr)\Bigr].
\]
In the penultimate equality we made the change of variables $x \to x -h_n^0+h_n'$ and $z \to z - h_n^0+ h_n'$.
Now proceeding inductively we see that
$$
{\mathcal I}^{2^n} \le \frac{1}{N_0^2}\int_{\KK^{2n+2}} {\Delta}_{h-h'}^n F_m^{\xi}(x) {\overline{{\Delta}^n_{h-h'} F_m^{\xi}(z)}}
d\mu(x) d\mu(z)
d{\bm\mu}_{n}^{\otimes2}(h, h').
$$
Inserting an extra average in the $x$ variable and using the pigeonhole principle to fix $z$, it follows that
\begin{align*}
{\mathcal I}^{2^n}\le \frac{1}{N_0}\int_{\KK^{2n+1}}
\overline{{\Delta}_{h-h'}^nF_m^{\xi}(z)}\int_{\KK}{\Delta}_{h-h'}^nF_m^{\xi}(x+w)d\mu_{[H_{n+1}]}(w)d{\bm\mu}_n^{\otimes2}(h,h')d\mu(x).
\end{align*}
To conclude we apply the Cauchy--Schwarz inequality to double the $w$ variable and so 
$$
\delta^{2^{n+1}} \le {\mathcal I}^{2^{n+1}} \le \frac{1}{N_0} \int_{\KK^{2n+3}} {\Delta}_{h-h'}^{n+1} F_m^{\xi}(x) d{\bm\mu}_{n+1}^{\otimes2}(h,h')d\mu(x)
= \|F_m^{\xi}\|_{\square_{[H_1],\ldots,  [H_{n+1}]}^{n+1}(I)}.
$$
This completes the proof of the lemma.
\end{proof}

\begin{proof}[Proof of Theorem \ref{thm:deg-low}]
The proof is by induction on $m\in \Z_+$.  The proof will consist of
several steps. We begin by establishing the following claim.
\begin{claim}
\label{claim:3}
Let $N\ge 1$ be a scale, $0<\delta\le 1$, $m\in\Z_+$ with $m\ge 2$ and $n\in\N$ be given. Let
$\mathcal P:=\{P_1,\ldots, P_{m}\}$ and
$\mathcal Q:=\{Q_1,\ldots, Q_n\}$ be collections of polynomials such
that 
$$
1\le \deg{P_1}<\ldots<\deg{P_{m}}<\deg{Q_1}<\ldots<\deg{Q_n}.
$$
For $\xi\in \KK^n$ let $F_m^{\xi}$ be the dual function defined in \eqref{eq:55} that corresponds to the form \eqref{eq:51} and 
 $1$-bounded functions $f_0, f_1,\ldots, f_{m-1}\in L^0(\KK)$ supported
on an interval $I\subset \KK$ of measure $N_0 := N^{\deg{P_m}}$.
Suppose that 
\begin{align}
\label{eq:60}
N^{-1}_0\big|\widehat{F_m^{\xi}}(\zeta)\big|  \ge \delta.
\end{align}
Then for any sufficiently large constant $C\gtrsim_{\mathcal P, \mathcal Q}1$ one has
\begin{align}
\label{eq:80}
|\zeta|\lesssim  \delta^{-C} N^{-\deg(P_m)},
\quad \text{ and } \quad 
|\xi_j|\lesssim \delta^{-C} N^{-\deg(Q_j)}
\quad \text{ for all } \quad j\in\bra{n}.
\end{align}

\end{claim}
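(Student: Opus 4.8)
The plan is to prove Claim~\ref{claim:3} by induction on $m$; although the statement is recorded for $m\ge 2$, the case $m=1$ serves as the base of the induction. The starting point is a change of variables: replacing $x$ by $x-P_m(y)$ inside $\widehat{F_m^\xi}(\zeta)=\int\widehat{F_{m;y}^\xi}(\zeta)\,d\mu_{[N]}(y)$ (see \eqref{eq:55}, \eqref{eq:56}) turns the hypothesis $N^{-1}_0|\widehat{F_m^\xi}(\zeta)|\ge\delta$ into
\[
\delta\le\Bigl|\frac{1}{N_0}\iint_{\KK^2}{\rm M}_\zeta f_0(x)\prod_{i=1}^{m-1}f_i(x-P_i(y))\,{\rm e}(R(y))\,d\mu_{[N]}(y)\,d\mu(x)\Bigr|,\qquad R(y):=\zeta P_m(y)+\sum_{j=1}^n\xi_jQ_j(y),
\]
where ${\rm M}_\zeta f_0(x):={\rm e}(-\zeta x)f_0(x)$. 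The key structural point is that $R$ has degree $\deg Q_n$, strictly larger than $\deg P_1,\dots,\deg P_{m-1}$, and that its coefficients at the degrees $\deg P_m,\deg Q_1,\dots,\deg Q_n$ determine $\zeta,\xi_1,\dots,\xi_n$ through the triangular relations of the type \eqref{eq:73}, \eqref{eq:74}.

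\emph{Base case $m=1$.} Here the product over $i$ is empty, the $x$- and $y$-integrals separate, and since $\|f_0\|_{L^1(\KK)}\le N_0$ we get $\delta\le N_0^{-1}|\widehat{f_0}(\zeta)|\cdot|m_N^{\{P_1\},\mathcal Q}(\zeta,\xi)|\le|m_N^{\{P_1\},\mathcal Q}(\zeta,\xi)|$. Lemma~\ref{lem:vdc-osc} then yields $N^{\deg P_1}|\zeta|\lesssim\delta^{-A}$ and $N^{\deg Q_j}|\xi_j|\lesssim\delta^{-A}$ for all $j\in\bra{n}$, which is \eqref{eq:80}, since $\deg P_m=\deg P_1$ when $m=1$.

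\emph{Inductive step.} For $m\ge 2$ I would Fourier-expand the single factor $f_{m-1}(x-P_{m-1}(y))=\int_\KK\widehat{f_{m-1}}(\eta)\,{\rm e}(\eta(x-P_{m-1}(y)))\,d\mu(\eta)$; carrying out the $x$-integration and using $\widehat{fg}=\widehat f*\widehat g$ one checks that $\widehat{F_m^\xi}(\zeta)=\widehat{f_{m-1}\,F_{m-1}^{\xi'}}(\zeta)$, where $F_{m-1}^{\xi'}$ is the $(m-1)$-st dual function attached to the collections $\{P_1,\dots,P_{m-1}\}$ and $\{P_m-P_{m-1},Q_1,\dots,Q_n\}$ with frequency vector $\xi'=(\zeta,\xi_1,\dots,\xi_n)$ — so $\xi'$ records exactly the quantities we must bound, and the degree chain $\deg P_1<\dots<\deg P_{m-1}<\deg(P_m-P_{m-1})=\deg P_m<\deg Q_1<\dots<\deg Q_n$ matches the hypotheses of the claim at level $m-1$. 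Writing $\widehat{f_{m-1}F_{m-1}^{\xi'}}(\zeta)=\int\widehat{f_{m-1}}(\eta)\widehat{F_{m-1}^{\xi'}}(\zeta-\eta)\,d\mu(\eta)$, I would run a dichotomy in $\eta$: splitting $f_{m-1}=\varphi_M*f_{m-1}+(f_{m-1}-\varphi_M*f_{m-1})$ at a scale $M\simeq\delta^{O(1)}N^{\deg P_{m-1}}$, as in Step~1 of the proof of Theorem~\ref{thm:inversem}, the low-frequency piece is $\lesssim\delta^{-O(1)}N_0^{-1}\|\widehat{F_{m-1}^{\xi'}}\|_{L^\infty(\KK)}$ after Cauchy--Schwarz over the ball $|\eta|\lesssim M^{-1}$ and Plancherel $\|\widehat{f_{m-1}}\|_{L^2}^2=\|f_{m-1}\|_{L^2}^2\le N_0$, while the high-frequency piece should be $O(\delta^{O(1)})$ by a van der Corput argument reducing it to a multiplier $m_N$ whose frequency arguments already contain $\zeta$ and the $\xi_j$. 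Hence either \eqref{eq:80} holds directly, or $N_0^{-1}\|\widehat{F_{m-1}^{\xi'}}\|_{L^\infty(\KK)}\gtrsim\delta^{O(1)}$; in the second case I would apply the inductive hypothesis at level $m-1$ to the frequency realizing the supremum, after partitioning $I$ into $\simeq N^{\deg P_m-\deg P_{m-1}}$ subintervals to match the support normalisation (the device of Step~2 of the proof of Theorem~\ref{thm:inversem}), obtaining $|\xi'_\ell|\lesssim\delta^{-C}N^{-\deg Q'_\ell}$ and hence $|\zeta|\lesssim\delta^{-C}N^{-\deg P_m}$ and $|\xi_j|\lesssim\delta^{-C}N^{-\deg Q_j}$.

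\emph{Main obstacle.} The delicate point is making the high-frequency alternative rigorous. The remainder $\int(f_{m-1}-\varphi_M*f_{m-1})(x)\,F_{m-1}^{\xi'}(x)\,{\rm e}(-\zeta x)\,d\mu(x)$ still carries the product of the $m-1$ functions defining $F_{m-1}^{\xi'}$, so to see that it is genuinely governed by the oscillation of the high-degree polynomial $R$ — and not by some cancellation-free coincidence among the $f_i$ — one cannot merely bound it by an $L^1$ norm of $\widehat{f_{m-1}}$ (which is unavailable); instead one must feed it back through a van der Corput step and Lemma~\ref{lem:vdc-osc}, or iterate the whole reduction. Propagating the support normalisations and, in the archimedean cases, the Fej\'er-kernel error terms through this loop is the principal bookkeeping burden.
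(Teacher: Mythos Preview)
Your overall strategy --- reduce from $m$ to $m-1$ by expressing $\widehat{F_m^\xi}(\zeta)$ in terms of a dual function $F_{m-1}^{\xi'}$ with $\xi'=(\zeta,\xi)$ --- matches the paper, and your identity $\widehat{F_m^\xi}(\zeta)=\widehat{f_{m-1}\,F_{m-1}^{\xi'}}(\zeta)$ (at the fixed $\zeta$ from the hypothesis) is correct. The base case is fine. However, the ``main obstacle'' you flag is a genuine, unresolved gap for $m\ge 3$.

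The problem is this: your high-frequency piece
\[
N_0^{-1}\Bigl|\int (f_{m-1}-\varphi_M*f_{m-1})(x)\,F_{m-1}^{\xi'}(x)\,{\rm e}(-\zeta x)\,d\mu(x)\Bigr|
\]
is, after expanding $F_{m-1}^{\xi'}$, an $m$-linear form in $f_0,\dots,f_{m-2}$ and the high-frequency part of $f_{m-1}$. Showing it is $O(\delta^{O(1)})$ amounts to proving that this form is small when one input has no low-frequency mass --- precisely the content of the inverse theorem (Theorem~\ref{thm:inverse}), which is \emph{downstream} of Claim~\ref{claim:3}. For $m=2$ you are saved because $\widehat{F_1^{\xi'}}$ factors as $\widehat{f_0}\cdot m_N$, so Cauchy--Schwarz in frequency and Lemma~\ref{lem:vdc-osc} close the loop; for $m\ge 3$ no such factorisation exists, and neither a single van der Corput step nor iterating your reduction produces a bare multiplier carrying $\zeta$ and the $\xi_j$.

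The paper avoids this obstacle by \emph{not} decomposing $f_{m-1}$ at all. It observes that $N_0^{-1}|\widehat{F_m^\xi}(\zeta)|\ge\delta$ is exactly $M^{-1}|\Lambda_{\mathcal P';N}^{\mathcal Q',\xi'}({\rm M}_\zeta f_0,f_1,\dots,f_{m-1})|\ge\delta$ with $\mathcal P'=\mathcal P\setminus\{P_m\}$, $\mathcal Q'=\mathcal Q\cup\{P_m\}$. After pigeonholing onto a subinterval of measure $\simeq N^{\deg P_{m-1}}$, Proposition~\ref{prop:dual} (which encapsulates the full PET induction of Theorem~\ref{thm:Us}) yields a large $\square^s$-box norm of $F_{m-1}^{\xi'}$; then Theorem~\ref{thm:deg-low} \emph{at level $m-1$} lowers this to a large $\square^2$-norm, and Lemma~\ref{2.10} supplies the large value of $|\widehat{F_{m-1}^{\xi'}}|$ needed for the inductive hypothesis of Claim~\ref{claim:3}. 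The crucial structural point is that Claim~\ref{claim:3} is \emph{not} a standalone induction: its proof at level $m$ requires Theorem~\ref{thm:deg-low} at level $m-1$, which (via Step~2 of the paper's argument) requires Claim~\ref{claim:3} at level $m-1$. Your proposal tries to decouple these two inductions, and that is exactly what fails.
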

The proof of Claim \ref{claim:3} for each integer $m\ge2$ is itself
part of the inductive proof of Theorem \ref{thm:deg-low}.  In the
first step we prove Claim \ref{claim:3} for $m=2$.  In the second step
we show that Claim \ref{claim:3} for all integers $m\ge2$ implies
Theorem \ref{thm:deg-low}, this in particular will establish Theorem
\ref{thm:deg-low} for $m=2$. In the third step we finally show that
Claim \ref{claim:3} for all integers $m\ge3$ follows from Claim
\ref{claim:3} and Theorem \ref{thm:deg-low} for $m-1$. Taken together,
this shows that Claim \ref{claim:3} and Theorem \ref{thm:deg-low} hold
for each integer $m\ge2$, completing the proof of Theorem \ref{thm:deg-low}.

\paragraph{\bf Step 1.}
We now prove Claim \ref{claim:3} for $m=2$. Here $N_0 = N^{\deg{P_2}}$.
For $\zeta_1, \zeta_2\in \KK$ and $\xi\in \KK^n$ we define the multiplier
\begin{align*}
m_N(\zeta_1, \zeta_2, \xi):=\int_{B_1(0)} {\rm e}\Big(-\zeta_1P_1(\alpha y)+\zeta_2 P_2(\alpha y)+\sum_{j=1}^n\xi_jQ_j(\alpha y)\Big)
d\mu(y)
\end{align*}
where $\alpha \in \KK$ satisfies $|\alpha| = N$.
By definitions \eqref{eq:55} and \eqref{eq:56} and making the change of variables $x\mapsto x-P_2(y)$ we may write
\begin{align*}
N_0^{-1}\widehat{F_2^{\xi}}(\zeta_2)&=N_0^{-1}\int_{\KK}\int_{\KK}F_{2;y}^{\xi}(x){\rm e}(-\zeta_2 x)d\mu_{[N]}(y)d\mu(x)\\
&=N_0^{-1}\int_{\KK}\widehat{f}_0(\zeta_2-\zeta_1)\widehat{f}_1(\zeta_1)m_N(\zeta_1, \zeta_2, \xi)d\zeta_1.
\end{align*}
By the Cauchy--Schwarz inequality and Plancherel's theorem we obtain
\begin{align*}
\delta\le N_0^{-1}|\widehat{F_2^{\xi}}(\zeta_2)|\lesssim  N_0^{-1}\|f_0\|_{L^2(\KK)}
\|f_1\|_{L^2(\KK)}\sup_{\zeta_1\in \KK}|m_N(\zeta_1, \zeta_2, \xi)|,
\end{align*}
which gives for some $\zeta_1\in \KK$ that
\begin{align*}
\delta\lesssim |m_N(\zeta_1, \zeta_2, \xi)|,
\end{align*}
since $\|f_0\|_{L^2(\KK)}, \|f_1\|_{L^2(\KK)}\lesssim N_0^{1/2}$. Applying Lemma \ref{lem:vdc-osc} with $\mathcal P=\{-P_1, P_2\}$ and $\mathcal Q=\{Q_1,\ldots, Q_n\}$ we deduce that for every sufficiently large $C\gtrsim 1$ one has
\begin{align*}
|\zeta_j|\lesssim \delta^{-C}N^{-\deg(P_j)}
\quad \text{ for all } \quad j\in\bra{2}, \quad\text{ and } \quad
|\xi_j|\lesssim \delta^{-C} N^{-\deg(Q_j)}
\quad \text{ for all } \quad j\in\bra{n}.
\end{align*}
This completes the proof of Claim \ref{claim:3} for $m=2$.
\paragraph{\bf Step 2.}
In this step we show that Claim \ref{claim:3} for all integers $m\ge2$
implies Theorem \ref{thm:deg-low}. In view of Step 1. this will in
particular establish Theorem \ref{thm:deg-low} for $m=2$, which is the
base case of our double induction.  As before we shall write
${\bm \nu}_{j}:=\bigotimes_{i=1}^{j}\nu_{[H_i]}$ for any
$j\in\Z_+$. Recall that
$N_0 = N^{\deg(P_m)}$
and note
\begin{align*}
\|F_m^{\xi}\|_{\square_{[H_1], \ldots, [H_s]}^s(I)}^{2^s}=
\int_{\KK^{s-2}}\|{\Delta}_h^{s-2}F_m^{\xi}\|_{\square_{[H_{s-1}], [H_s]}^2(I)}^4
d{\bm \nu}_{s-2}(h).
\end{align*}
By \eqref{eq:67} and the pigeonhole principle 
 there exists a measurable set 
$X\subseteq \prod_{i=1}^{s-2}[H_i]$ so that ${\bm\nu}_{s-2}(X)\gtrsim \delta^{O(1)}$, and
for all $h \in X$ one has
\begin{align*}
\|{\Delta}_h^{s-2}F_m^{\xi}\|_{\square_{[H_{s-1}], [H_s]}^2(I)}
\gtrsim\delta^{O(1)}.
\end{align*}
Here we used that $\supp F_m^{\xi}$ is a subset of an
interval whose measure is at most
$O(N_0)$. 
By Lemma \ref{2.10} we have
\begin{align*}
N_0^{-1}\bigl\|\reallywidehat{{\Delta}_h^{s-2}F_m^{\xi}}\bigr\|_{L^{\infty}(\KK)}  \gtrsim \delta^{O(1)}.
\end{align*}
Next we claim that there is a countable set ${\mathcal F} \subset \KK$, depending on $N$ and $\delta$ such that
\begin{align}\label{sup-F}
\sup_{\phi\in {\mathcal F}}N_0^{-1}\big|\reallywidehat{{\Delta}_h^{s-2}F_m^{\xi}}(\phi)\big|  \gtrsim \delta^{C_0}
\end{align}
for some absolute constant $C_0\in\Z_+$ and for all $h\in X$. When $\KK$ is non-archimedean, we take
$$
{\mathcal F} \ = \ \bigcup_{M\ge 1} \, \Bigl\{ z = \sum_{j= -M}^{L-1} z_j \pi^j \in \KK : z_j \in o_\KK/m_\KK\Bigr\}
$$
where $N_0 = q^L$. Let $x \in I = B_{N_0}(x_0)$. For any $\zeta \in \KK$, we have $\zeta \in B_{N_0^{-1}}(\zeta_0)$
for some $\zeta_0 \in {\mathcal F}$. Note that
$$
{\rm e}(-\zeta x) \ = \ {\rm e}(-x \zeta_0) \, {\rm e}(-(x-x_0)(\zeta - \zeta_0)) \, {\rm e}(-x_0(\zeta - \zeta_0)) \ = \
{\rm e}(-x \zeta_0) \, {\rm e}(-x_0(\zeta - \zeta_0))
$$
since $|(x-x_0)(\zeta - \zeta_0)| \le N_0 N_0^{-1} = 1$ and ${\rm e} = 1$ on $o_\KK$. Therefore
$|\reallywidehat{{\Delta}_h^{s-2}F_m^{\xi}}(\zeta)| = |\reallywidehat{{\Delta}_h^{s-2}F_m^{\xi}}(\zeta_0)|$
since ${\Delta}_h^{s-2}F_m$ is supported in $I$ whenever $h \in X$. This shows that \eqref{sup-F} holds for
non-archimedean fields.

When $\KK= {\mathbb R}$, we take ${\mathcal F} := T_0 {\mathbb Z}$, where 
\begin{align*}
T_0:=\delta^{C_0}\big(CN_0\big)^{-1}.
\end{align*}
for a sufficiently large constant $C\gtrsim1$. When $\KK = {\mathbb C}$, we take ${\mathcal F} := T_1 {\mathbb Z}^2$
where $T_1 := \delta^{C_0} (C \sqrt{N_0})^{-1}$. By the Lipschitz nature of characters on ${\mathbb R}$ or ${\mathbb C}$,
we again see that \eqref{sup-F} holds in the archimedean cases.
In particular, there exists a measurable function $\phi: X \to {\mathcal F}$ so
that
\begin{align}
\label{eq:64}
N^{-1}_0\big|\reallywidehat{{\Delta}_h^{s-2}F_m^{\xi}}(\phi(h))\big|\gtrsim \delta^{C_0}
\end{align}
for all $h\in X$. If necessary, we may additionally assume that the range of $\phi$ is finite.

By Lemma \ref{lem:1} it follows that
\begin{align*}
\int_{\square_{s-2}(X)}\bigg|N^{-1}_0\int_{\KK} F_m(x; h, h')e(-\psi((h, h'))x)d\mu(x)\bigg|^2
d{\bm \nu}_{s-2}^{\otimes2}(h, h')
\gtrsim \delta^{O(1)},
\end{align*}
where
\begin{gather*}
F_m(x; h, h'):=\int_{\KK}{\Delta}_{h'-h}^{s-2}f_0(x+P_m(y))\prod_{i=1}^{m-1}{\Delta}_{h'-h}^{s-2}f_{i}(x-P_i(y)+P_m(y))d\mu_{[N]}(y),\\
\psi(h, h'):=\sum_{\omega\in\{0, 1\}^{s-2}}(-1)^{|\omega|}\phi\Big(\omega\cdot h+(1-\omega)\cdot h')\Big).
\end{gather*}

Thus by the pigeonhole principle, there exists a measurable set $X_0\subseteq \square_{s-2}(X)$ with
${\bm \nu}_{s-2}^{\otimes2}(X_0)\gtrsim\delta^{O(1)}$ such that for every $(h, h')\in X_0$ one has
\begin{align*}
\bigg|N^{-\deg(P_m)}\int_{\KK} F_m(x; h, h')e(-\psi((h, h'))x)d\mu(x)\bigg|
\gtrsim \delta^{O(1)}.
\end{align*}
By  Claim \ref{claim:3} there is  a $c:=c_{m, s}\ge1$ such that for each $(h, h')\in X_0$, one has
\begin{align*}
|\psi((h, h'))|\lesssim_{m, s} \delta^{-c} N^{-\deg(P_m)}.
\end{align*}
By the pigeonhole principle there exists $h'\in \prod_{i=1}^{s-2}[H_i]$ and a measurable set 
\[
X_0(h'):=\big\{h\in X: (h, h')\in X_0\ \text{ and } \ |\psi((h, h'))|\lesssim \delta^{-c} N^{-\deg(P_m)}\big\}
\]
satisfying ${\bm \nu}_{s-2}(X_0(h'))\gtrsim \delta^{O(1)}$.
Since $\psi((h, h'))\in {\mathcal F}$ we see that
\begin{align*}
X_0(h')\subseteq\bigcup_{k\in {\mathcal \KK}}X_{0}^k(h')
\end{align*}
where ${\mathcal \KK} =  [{O(\delta^{-O(1)})}]\cap{\mathbb Z}$ when $\KK = {\mathbb R}$. In this case,
$X_{0}^k(h'):=\{h\in X: \psi((h, h'))=T_0k\}$. When $\KK = {\mathbb C}$, we have 
 ${\mathcal \KK} =  [{O(\delta^{-O(1)})}]\cap{\mathbb Z}^2$ and
$X_{0}^k(h'):=\{h\in X: \psi((h, h'))=T_1 k\}$. Finally when $\KK$ is non-archimedean, 
$$
{\mathcal \KK} =  \bigl[{O(\delta^{-O(1)})}\bigr] \cap \bigl\{ k = \sum_{j= -M}^{-1} k_j \pi^j \in \KK : k_j \in o_\KK/m_\KK\bigr\}
$$
and $X_{0}^k(h') := \{h\in X: \psi((h,h')) = \pi^L k \}$.

Thus by the pigeonhole principle  there is $k_0\in {\mathcal \KK}$ such that
${\bm \nu}_{s-2}(X_0^{k_0}(h'))\big)\gtrsim \delta^{O(1)}$.
When $\KK = {\mathbb R}$, this shows that
$\psi(h, h')=T_0 k_0 =: \phi_m$ for all $h\in X_0^{k_0}(h')$. When $\KK = {\mathbb C}$, we have
$\phi(h,h') = T_1 k_0$ for all $h \in X_0^{k_0}(h')$ and when $\KK$ is non-archimedean, $\phi(h,h') = \pi^L k_0$ for all 
$h \in X_0^{k_0}(h')$. We will denote these values by $\phi_m$ in all cases.

Set
\begin{align*}
\psi_1(h, h'):=(-1)^{s+1}\sum_{\substack{\omega\in\{0,1\}^{s-2} \\ \omega_1=0}}(-1)^{|\omega|}\phi\Big((\omega\cdot h+(1-\omega)\cdot h')\Big) + (-1)^s \phi_m
\end{align*}
and, for $i=\bra{s-2}\setminus\{1\}$, set
\begin{align*}
\psi_i(h, h'):=(-1)^{s+1}\sum_{\substack{\omega\in\{0,1\}^{s-2}\setminus\{0\} \\ \omega_1=\ldots=\omega_{i-1}=1 \\ \omega_{i}=0}}(-1)^{|\omega|}\phi\Big((\omega\cdot h+(1-\omega)\cdot h')\Big).
\end{align*}
Note that $\psi_i$ does not depend on $h_{i}$ and we can write
\begin{align*}
\phi(h)=\sum_{i=1}^{s-2}\psi_i(h, h').
\end{align*}
Averaging \eqref{eq:64} over $\mathbb{X}:=X_0^{k_0}(h')$ and using positivity, we obtain
\begin{gather*}
\int_{\KK^{s-2}}\Big|N_0^{-1}\reallywidehat{{\Delta}_h^{s-2}F_m^{\xi}}\Big(\sum_{i=1}^{s-2}\psi_i(h, h')\Big)\Big|^2
d{\bm\nu}_{s-2}(h)\\
\ge \int_{\mathbb{X}}\Big|N_0^{-1}\reallywidehat{{\Delta}_h^{s-2}F_m^{\xi}}(\phi(h))\Big|^2
d{\bm\nu}_{s-2}(h)\gtrsim \delta^{O(1)}.
\end{gather*}
Invoking Lemma \ref{lem:3} we conclude that
\begin{align*}
\|F_m^{\xi}\|_{\square_{[H_1], \ldots, [H_{s-1}]}^{s-1}(I)}\gtrsim \delta^{O(1)}.
\end{align*}

\paragraph{\bf Step 3.}
Gathering together the conclusions of Step 1. and Step 2. (for $m=2$), we see
that the base step of a double induction has been established. In this step we
shall illustrate how to establish the inductive step. We assume that
Claim \ref{claim:3} and Theorem \ref{thm:deg-low} hold for $m-1$ in
place $m$ for some integer $m\ge3$. Then we will prove that Claim
\ref{claim:3} holds for $m\ge3$, which in view of Step 2. will allow
us to deduce that Theorem \ref{thm:deg-low} also holds for $m\ge3$.
This will complete the proof of Theorem \ref{thm:deg-low}.

Recall that $N_0 = N^{\deg(P_m)}$. By definitions \eqref{eq:55} and
\eqref{eq:56} and making the change of variables $x\mapsto x-P_m(y)$
we may write
\begin{align*}
N_0^{-1}\widehat{F_m^{\xi}}(\zeta_m)&=N_0^{-1}\int_{\KK^2}F_{m;y}^{\xi}(x){\rm e}(-\zeta_m x)d\mu(x)d\mu_{[N]}(y)\\
=N_0^{-1}&\int_{\KK^2}{\rm M}_{\zeta_m}f_0(x)\prod_{i=1}^{m-1}f_{i}(x-P_i(y)){\rm e}\Big(\zeta_m P_m(y)+\sum_{j=1}^n\xi_jQ_j(y)\Big)d\mu_{[N]}(y)d\mu(x)\\
&=: M^{-1} \Lambda_{\mathcal P'; N}^{\mathcal Q', \xi'}({\rm M}_{\zeta_m}f_0, f_1,\ldots f_{m-1}),
\end{align*}
where ${\rm M}_{\zeta_m}f_0(x):={\rm e}(-\zeta_m x)f_0(z)$, $\mathcal P':=\mathcal P\setminus\{P_m\}$, $\mathcal Q':=\mathcal Q\cup\{P_m\}$, $\xi':=(\zeta_m, \xi_1,\ldots, \xi_n)\in \KK^{n+1}$ and $M = N_0 {N_0'}^{-1}$ where 
$N_0'$ is the scale $N^{\deg(P_{m-1})}$.

Thus \eqref{eq:60} implies
\begin{align*}
M^{-1}|\Lambda_{\mathcal P'; N}^{\mathcal Q', \xi'}({\rm M}_{\zeta_m}f_0, f_1,\ldots, f_{m-1})|\gtrsim \delta^{O(1)}.
\end{align*}
As in the proof of Theorem \ref{thm:inverse}, 
by the pigeonhole principle, we can find an interval
$I'\subset \KK$ of measure about $ N_0'$ 
such that
\begin{align*}
|\Lambda_{\mathcal P'; N, I'}^{\mathcal Q', \xi'}(f_0', f_1',\ldots, f_{m-1}')|\gtrsim \delta^{O(1)},
\end{align*}
where 
$f_0':={\rm M}_{\zeta_m}f_0\ind{I'}, f_1':=f_1\ind{I'},\ldots, f_{m-1}':=f_{m-1}\ind{I'}$.

Consequently, by Proposition \ref{prop:dual}, there exists an $s\in \Z_+$ such that
\begin{align*}
\|F_{m-1}^{\xi'}\|_{\square_{[H_1], \ldots, [H_s]}^s(N_0')}
\gtrsim \delta^{O(1)},
\end{align*}
where $F_{m-1}^{\xi'}$ is the dual function respect the form
$\Lambda_{\mathcal P'; N, I'}^{\mathcal Q', \xi'}(f_0', f_1',\ldots, f_{m-1}')$ and
$H_i\simeq \delta^{O_{\mathcal P'}(1)}N^{\deg(P_{m-1})}$ for $i\in\bra{s}$. By the induction hypothesis (for Theorem \ref{thm:deg-low})
we deduce that
\begin{align*}
\|F_{m-1}^{\xi'}\|_{\square_{[H_1], [H_2]}^2(N_0')}
\gtrsim \delta^{O(1)},
\end{align*}
which in turn by Lemma \ref{2.10} implies
\begin{align*}
(N'_0)^{-1}\big|\widehat{F_{m-1}^{\xi'}}(\zeta_{m-1})\big|  \gtrsim \delta^{O(1)}
\end{align*}
for some $\zeta_{m-1}\in \KK$. By the induction hypothesis (for Claim \ref{claim:3}) we deduce that
\begin{align*}
&|\zeta_j|\lesssim \delta^{-C} N^{-\deg(P_j)}
\qquad \text{ for all } \quad j\in \bra{m}\setminus\bra{m-2}, \quad\text{ and }\\
&|\xi_j|\lesssim \delta^{-C} N^{-\deg(Q_j)}
\qquad \text{ for all } \quad j\in\bra{n},
\end{align*}
which in particular implies \eqref{eq:80} and we are done.
\end{proof}

\section{Sobolev estimates}
\label{sec:sobolev}
As a consequence of the $L^\infty$-inverse theorem from the previous
section we establish some Sobolev estimates, which will be critical in
the proof of Theorem \ref{thm:main}.

We begin with a smooth variant of Theorem \ref{thm:inverse}. When $\KK$ is archimedean,
we fix a  Schwartz function $\varphi$ on $\KK$ so that
\begin{align*}
\ind{[1]}(\xi)\le \widehat{\varphi}(\xi)\le \ind{[2]}(\xi), \qquad \xi\in \KK.
\end{align*}
When $\KK={\mathbb R}$, we set $\varphi_N(x) = N^{-1} \varphi(N^{-1} x)$ for any
$N>0$ and when $\KK = {\mathbb C}$, we set
$\varphi_{N}(z) = N^{-1} \varphi(N^{-1/2} z)$ for any $N > 0$. When
$\KK$ is non-archimedean, we set $\varphi(x) = \ind{B_1(0)}(x)$ so that $\widehat{\varphi}(\xi) = \ind{B_1(0)}(\xi)$ and 
we set $\varphi_{N}(x) = N^{-1} \ind{[N]}(x)$ for any scale $N$.  

\begin{theorem}[A smooth variant of the inverse theorem]
\label{thm:inverse-s}
Let $N\ge 1$ be a scale, $0<\delta\le 1$, $m\in\Z_+$ be given. Let $\mathcal P:=\{P_1,\ldots, P_m\}$
be a collection of polynomials such that
$1\le \deg{P_1}<\ldots<\deg{P_m}$.  
Let $f_0, f_1,\ldots, f_m\in L^0(\KK)$ be $1$-bounded functions
supported on an interval $I\subset \KK$ of measure $N_0 = N^{\deg{P_m}}$.
Suppose that the $(m+1)$-linear form defined in
\eqref{eq:6} satisfies
\begin{align}
\label{eq:63}
|\Lambda_{\mathcal P; N}(f_0,\ldots, f_m)|\ge\delta.
\end{align}
Then for any $j\in\bra{m}$ there exists an  absolute constant $C_j\gtrsim_{\mathcal P}1$ so that
\begin{align}
\label{eq:19-s}
N_0^{-1}\big\| \varphi_{N_j}*f_j\big\|_{L^1(\KK)} \gtrsim_{\mathcal P} \delta^{O_{\mathcal P}(1)},
\end{align}
where $N_j\simeq \delta^{C_j}N^{\deg(P_j)}$, provided $N \gtrsim \delta^{-O_{\mathcal P}(1)}$.
\end{theorem}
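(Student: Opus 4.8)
The theorem refines Theorem~\ref{thm:inverse} in two ways: the rough average $\mu_{[N_1]}$ is replaced by the smooth cut-off $\varphi_{N_j}$, and the conclusion is asserted for every index $j\in\bra m$ rather than only for $j=1$. The plan is to reduce everything to Theorem~\ref{thm:inverse} and its modulated refinement Theorem~\ref{thm:inversem}.

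First I would dispose of the difference between $\varphi_{N_j}$ and a rough average: it suffices to prove, for each $j$, that $N_0^{-1}\|\mu_{[N_j']}*f_j\|_{L^1(\KK)}\gtrsim_{\mathcal P}\delta^{O_{\mathcal P}(1)}$ for some scale $N_j'\simeq\delta^{O_{\mathcal P}(1)}N^{\deg P_j}$. Granting this, pick $N_j=\delta^{C_j}N^{\deg P_j}$ with $C_j$ large enough that $N_j\le N_j'$ --- the hypothesis $N\gtrsim\delta^{-O_{\mathcal P}(1)}$ ensures these are genuine scales $\ge1$ --- write $\mu_{[N_j']}*f_j=\varphi_{N_j}*(\mu_{[N_j']}*f_j)+(\delta_0-\varphi_{N_j})*\mu_{[N_j']}*f_j$, bound the $L^1$ norm of the first term by $\|\varphi_{N_j}*f_j\|_{L^1(\KK)}$ via Young's inequality, and observe that $(\delta_0-\varphi_{N_j})*\mu_{[N_j']}$ vanishes when $\KK$ is non-archimedean and is a boundary term of mass $\lesssim(N_j/N_j')^{\theta}$ when $\KK$ is archimedean, so the second term contributes $\ll\delta^{O_{\mathcal P}(1)}N_0$. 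This is the manipulation already carried out in Step~1 of the proof of Theorem~\ref{thm:inversem}.

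For the rough-average statement I would induct on $m$, working with the phase-decorated forms $\Lambda^{\mathcal Q;\xi}_{\mathcal P;N}$ of Theorem~\ref{thm:inversem} so that the induction closes. The case $j=1$ is precisely Theorem~\ref{thm:inversem} (equivalently Theorem~\ref{thm:inverse} when $\mathcal Q=\emptyset$). For $2\le j\le m-1$ I would run verbatim the degree-reduction of Step~2 of the proof of Theorem~\ref{thm:inversem}: Proposition~\ref{prop:dual}, iterated applications of Theorem~\ref{thm:deg-low}, and Lemma~\ref{2.10} produce a frequency $\xi_0$ with $N_0^{-1}|\widehat{F_m^{\xi}}(\xi_0)|\gtrsim\delta^{O_{\mathcal P}(1)}$, which rewrites as $M^{-1}|\Lambda^{\mathcal Q';\xi'}_{\mathcal P';N}({\rm M}_{\xi_0}f_0,f_1,\dots,f_{m-1})|\gtrsim\delta^{O_{\mathcal P}(1)}$ with $\mathcal P'=\{P_1,\dots,P_{m-1}\}$, $\mathcal Q'=\mathcal Q\cup\{P_m\}$, $M=N^{\deg P_m-\deg P_{m-1}}$; after decomposing the common support into $\simeq M$ pieces of measure $\simeq N^{\deg P_{m-1}}$ and pigeonholing to a positive-density family of pieces on which the localized form is $\gtrsim\delta^{O(1)}$, one applies the inductive hypothesis to each such piece --- legitimate since $j\le m-1$ is still an index of the smaller form --- and reassembles over pieces using the essentially disjoint supports of $\mu_{[N_j']}*(f_j\ind{F_k})$, exactly as at the end of that proof, with $f_j$ in the role that $f_1$ plays there.

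The case $j=m$ is the step I expect to be the main obstacle: the dual-function reduction consumes $f_m$, so it cannot be reached by peeling the top polynomial, and no change of variables places $f_m$ in the slot controlled by Theorem~\ref{thm:inverse}. Here I would argue directly from $\Lambda^{\mathcal Q;\xi}_{\mathcal P;N}(f_0,\dots,f_m)=\tfrac1{N_0}\int_\KK F_m^{\xi}(x)f_m(x)\,d\mu(x)$, splitting $f_m=\varphi_{N_m}*f_m+(\delta_0-\varphi_{N_m})*f_m$: the first piece contributes at most $N_0^{-1}\|\varphi_{N_m}*f_m\|_{L^1(\KK)}$, and the second (moving the convolution onto $F_m^{\xi}$) at most $N_0^{-1}\|F_m^{\xi}-\varphi_{N_m}*F_m^{\xi}\|_{L^1(\KK)}$. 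The heart of the matter is to show this last quantity is $\ll\delta$ --- that is, that the dual function $F_m^{\xi}$ is, up to a negligible $L^1$ error, unchanged by smoothing at scale $N_m=\delta^{C_m}N^{\deg P_m}$ --- which is an oscillatory-integral estimate for the $y$-average defining $F_m^{\xi}$, proved via repeated van der Corput (Lemma~\ref{lem:vdC}) together with the bound \eqref{osc-int-est}; the delicate point is that the decay coming from \eqref{osc-int-est} degrades with $\deg P_m$, so one has to interpolate it against trivial bounds and exploit that $C_m$ may be taken arbitrarily large (this, together with $N\gtrsim\delta^{-O_{\mathcal P}(1)}$, is where the size constraints in the statement enter). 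With $N_0^{-1}\|\varphi_{N_m}*f_m\|_{L^1(\KK)}\gtrsim\delta$ in hand, combining the cases with the smoothing reduction finishes the proof.
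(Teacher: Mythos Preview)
Your reduction from $\varphi_{N_j}$ to the rough average $\mu_{[N_j']}$ is fine, and your top-down induction for $1\le j\le m-1$ --- peeling off $P_m$ via Proposition~\ref{prop:dual}, Theorem~\ref{thm:deg-low}, Lemma~\ref{2.10}, and the pigeonhole/reassembly of Step~2 of Theorem~\ref{thm:inversem} with $f_j$ in place of $f_1$ --- is sound and is a genuinely different route from the paper.  The paper instead peels from the \emph{bottom}: having established \eqref{eq:19-s} for $j=1$, it records the refinement $|\Lambda_{\mathcal P;N}(f_0,\varphi_{N_1}*f_1,f_2,\ldots,f_m)|\gtrsim\delta$, partitions the $y$-interval $[N]$ into subintervals of length $\simeq\delta^M N$ on which $\varphi_{N_1}*f_1(x-P_1(y))$ is essentially constant in $y$, absorbs that factor into $f_0$, and applies Theorem~\ref{thm:inverse} to the resulting form in the shifted family $\{P_2',\ldots,P_m'\}$ (where now $f_2$ sits in the smallest-degree slot).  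Iterating this freezing step reaches every index, and in particular after $m-1$ iterations leaves a single-polynomial form in which $f_m$ is directly controlled.

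The genuine gap is your treatment of $j=m$.  The assertion that $N_0^{-1}\|F_m^{\xi}-\varphi_{N_m}*F_m^{\xi}\|_{L^1(\KK)}\ll\delta$ does not follow from ``repeated van der Corput together with \eqref{osc-int-est}''.  By duality that assertion is equivalent to the uniform Sobolev bound
\[
\bigl|\Lambda^{\mathcal Q;\xi}_{\mathcal P;N}\bigl(f_0,\ldots,f_{m-1},(\delta_0-\tilde\varphi_{N_m})*g\bigr)\bigr|\ll\delta
\quad\text{for all $1$-bounded }g,
\]
and the only control PET induction (Theorem~\ref{thm:Us}) provides on such a form is through the box norm $\|(\delta_0-\tilde\varphi_{N_m})*g\|_{\square^s_{[H_1],\ldots,[H_s]}(I)}$ with $H_i\simeq\delta^{O(1)}N_0$.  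But a function with Fourier support in $\{|\zeta|\gtrsim N_m^{-1}\}$ need not have small box norm at those scales: for $g(x)={\rm e}(\zeta_0 x)\ind{I}(x)$ the differences $\Delta_h g$ have modulus~$1$, so the $\square^s$-norm is $\simeq 1$ regardless of $|\zeta_0|$.  Claim~\ref{claim:3} does give a pointwise Fourier bound $|\widehat{F_m^{\xi}}(\zeta)|\lesssim(|\zeta|N_0)^{-1/C}N_0$, but with the large exponent $C$ coming from degree-lowering this is not integrable and does not upgrade to $L^1$ smoothing.  In fact the $L^1$ smoothness of the dual you are asserting is exactly the content of Corollary~\ref{struct} at index~$m$, whose proof already invokes Theorem~\ref{thm:inverse-s}.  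Your top-down induction discards $f_m$ at the first step, so it cannot close at $j=m$; once you have $j\le m-1$ by your method, you would still need the paper's bottom-up freezing (or something equivalent) to finish.
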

\begin{proof}
By translation invariance we can assume that $f_j$ is supported on $[N_0]$ for every $j\in\bra{m}$.
The proof will consist of two steps. In the
first step we will invoke Theorem \ref{thm:inverse} to prove
\eqref{eq:19-s} for $j=1$. In the second step we will use \eqref{eq:19-s}
for $j=1$ to establish \eqref{eq:19-s} for $j=2$, and continuing
inductively we will obtain \eqref{eq:19-s} for all $j\in\bra{m}$.
\paragraph{{\bf Step 1.}} We first  establish \eqref{eq:19-s} for $j=1$. When $\KK$ is non-archimedean,
this is an immediate consequence of Theorem \ref{thm:inverse} since $\varphi_{N_1} = \mu_{[N_1]}$
in this case. Nevertheless we make the observation that
\begin{align}\label{var-form}
 |\Lambda_{\mathcal P; N}(f_0,\varphi_{N_1}*f_1, \ldots, f_m)|\gtrsim\delta
\end{align}
holds. In fact we will see that \eqref{var-form} holds for any $\KK$, non-archimedean or archimedean. First let us
see \eqref{var-form} when $\KK$ is non-archimedean. Suppose that $|\Lambda_{\mathcal P; N}(f_0,\varphi_{N_1}*f_1, \ldots, f_m)| \le c \, \delta$
for some small $c>0$. Then, since
$$
\delta \le |\Lambda_{\mathcal P; N}(f_0,f_1, \ldots, f_m)| \le
|\Lambda_{\mathcal P; N}(f_0,\varphi_{N_1}*f_1, \ldots, f_m)| +
|\Lambda_{\mathcal P; N}(f_0,f_1 -\varphi_{N_1}*f_1, \ldots, f_m)|,
$$
we conclude that $|\Lambda_{\mathcal P; N}(f_0,f_1 - \varphi_{N_1}*f_1, \ldots, f_m)|\gtrsim\delta$. Therefore Theorem \ref{thm:inverse}
implies that $N_0^{-1} \|\varphi_{N_1} *(f_1 - \varphi_{N_1} * f_1)\|_{L^1(\KK)}| \gtrsim \delta^{O(1)}$ but this is a
contradiction since $\varphi_{N_1} * \varphi_{N_1} = \varphi_{N_1}$ when $\KK$ is non-archimedean (in which case 
$\varphi_{N_1} = N_1^{-1} \ind{[N_1]}$) and so $\varphi_{N_1} * (f_1 - \varphi_{N_1}* f_1) \equiv 0$. 

We now turn to establish \eqref{eq:19-s} for $j=1$ when $\KK$ is archimedean (when $\KK = {\mathbb R}$ or $\KK = {\mathbb C}$).
Let $\eta:\KK \to [0, \infty)$ be  a Schwartz function so that $\int_{\KK}\eta=1$, ${\widehat{\eta}}\equiv 1$ near 0 and
${\rm supp}\:\widehat{\eta}\subseteq [2]$.
For $t>0$, we write $\eta_{t}(x):=t^{-1}\eta(t^{-1}x)$ when $\KK={\mathbb R}$ and $\eta_t(x):= t^{-2}\eta(t^{-1}x)$
when $\KK = {\mathbb C}$.  We will also need a Schwartz function
$\rho:\KK\to [0, \infty)$ such that
\begin{align*}
\ind{[1]\setminus [1-\delta^M]}(x)\le \rho(x)\le \ind{[1]}(x), \qquad x\in \KK
\end{align*}
for some large absolute constant $M\ge1$, which will be specified
later.  We shall also write $\rho_{(t)}(x):=\rho(t^{-1}x)$ for $t>0$ and $x\in \KK$.

Let $N_0'\simeq N_0$ when $\KK = {\mathbb R}$ and $N_0' \simeq \sqrt{N_0}$ when $\KK = {\mathbb C}$. 
Observe that \eqref{eq:63} implies that at least one of the following lower bounds holds:
\begin{gather}
\label{eq:86}
|\Lambda_{\mathcal P; N}(f_0,\varphi_{N_1}*f_1, \ldots, f_m)|\gtrsim\delta,\\
\label{eq:83}
|\Lambda_{\mathcal P; N}(f_0,\rho_{(N_0')}(f_1-\varphi_{N_1}*f_1), \ldots, f_m)|\gtrsim\delta,\\
\label{eq:84}
|\Lambda_{\mathcal P; N}(f_0,(1-\rho_{(N_0')})(f_1-\varphi_{N_1}*f_1), \ldots, f_m)|\gtrsim\delta.
\end{gather}
By Theorem \ref{thm:inverse} it is easy to see that \eqref{eq:86} yields
that
\begin{align*}
N_0^{-1}\big\| \varphi_{N_1}*f_1\big\|_{L^1(\RR)} \gtrsim \delta,
\end{align*}
which in turn will imply \eqref{eq:19-s} for $j=1$ provided that the remaining two alternatives \eqref{eq:83} and \eqref{eq:84} do not hold.  
If this is the case, then \eqref{var-form} also holds when $\KK={\mathbb R}, {\mathbb C}$ is archimedean. 

If the second alternative holds we let $f_1':= \rho_{(N_0')}(f_1-\varphi_{N_1}*f_1)$ and then Theorem \ref{thm:inverse} implies that 
\begin{align*}
N_0^{-1}\big\| \mu_{[N_1']}*f_1'\big\|_{L^1(\KK)} \gtrsim_{\mathcal P} \delta^{C_0'},
\end{align*}
with $N_1'\simeq \delta^{C_1'}N^{\deg(P_1)}$. By the Cauchy--Schwarz inequality (the support of $\mu_{[N_1']} *f_1'$ is contained
in a fixed dilate of $[N_0]$), we have
\begin{align*}
N_0^{-1}\big\| \mu_{[N_1']}*f_1'\big\|_{L^2(\KK)}^2 \gtrsim_{\mathcal P} \delta^{2C_0'}.
\end{align*}
Let $N_1'':= \delta^{A+C_1'}N^{\deg(P_1)}/A$ for some $A\ge1$ to be determined later. We now show that
\begin{align}\label{L1-norm}
\big\| \mu_{[N_1']}-\mu_{[N_1']}*\eta_{N_1''}\big\|_{L^1(\KK)}^2
\lesssim \sqrt{N_1'' /N_1'}
\lesssim \sqrt{\delta^{A}/A}.
\end{align}
We note that for $|x| \ge C N_1'$,
\begin{align*}
|\ind{[N_1']}(x)-\ind{[N_1']}*\eta_{N_1''}(x)|=\Big|\int_{\KK}\ind{[N_1']}(x-y))\eta_{N_1''}(y)d\mu(y)\Big|
\end{align*}
and so
\begin{align*}
\int_{|x|\ge CN_1'} |\ind{[N_1']}(x)-\ind{[N_1']}*\eta_{N_1''}(x)| d\mu(x) \lesssim N_1''.
\end{align*}
When $|x| \le C N_1'$ is small, we use the Cauchy--Schwarz inequality
\begin{align*}
\int_{|x|\le CN_1'} |\ind{[N_1']}(x)-\ind{[N_1']}*\eta_{N_1''}(x)| d\mu(x) \lesssim \sqrt{N_1'} \, 
\|\ind{[N_1']} * (\delta_0 - \eta_{N_1''})\|_{L^2(\KK)} 
\end{align*}
and then Plancherel's theorem,
\begin{align*}
\|\ind{[N_1']} * (\delta_0 - \eta_{N_1''})\|_{L^2(\KK)}^2 = \int_\KK |1 - {\widehat{\eta_{N_1''}}}(\xi)|^2 |\widehat{\ind{[N_1']}}(\xi)|^2 d\mu(\xi)
\lesssim \sqrt{N_1' N_1''}.
\end{align*}
Here we use the facts that $\widehat{\eta} \equiv 1$ near $0$ and the Fourier decay bound for euclidean balls,
$$
|\widehat{\ind{[N_1']}}(\xi)|^2 \lesssim |\xi|^{-2} \ {\rm when} \ \KK = {\mathbb R} \ \ {\rm and} \ \ 
|\widehat{\ind{[N_1']}}(\xi)|^2 \lesssim \sqrt{N_1'} |\xi|^{-3} \ {\rm when} \ \KK = {\mathbb C}.
$$
This establishes \eqref{L1-norm} and so
\begin{align*}
N_0^{-1}\big\| (\mu_{[N_1']}-\mu_{[N_1']}*\eta_{N_1''})*f_1'\big\|_{L^2(\KK)}^2&\lesssim
\big\| \mu_{[N_1']}-\mu_{[N_1']}*\eta_{N_1''}\big\|_{L^1(\KK)}^2\\
&\lesssim \sqrt{N_1'' /N_1'}
\lesssim \sqrt{\delta^{A}/A}.
\end{align*}
Consequently
\begin{align*}
\delta^{2C_0'}\lesssim_{\mathcal P}N_0^{-1}\big\| \mu_{[N_1']}*f_1'\big\|_{L^2(\KK)}^2\lesssim
N_0^{-1}\big\| \mu_{[N_1']}*\eta_{N_1''}*f_1'\big\|_{L^2(\KK)}^2+ \sqrt{\delta^{A}/A},
\end{align*}
which for sufficiently large $A\ge C_0'$ yields
\begin{align*}
N_0^{-1}\big\| \eta_{N_1''}*f_1'\big\|_{L^2(\KK)}^2\gtrsim_{\mathcal P} \delta^{2C_0'}.
\end{align*}
Taking $N_1:=\frac{1}{2}N_1''$ and using support properties of
$\widehat{\varphi}$ and $\widehat{\eta}$, by the Plancherel theorem we
may write (when $\KK = {\mathbb R}$)
\begin{gather*}
N_0^{-1}\big\| \eta_{N_1''}*f_1'\big\|_{L^2({\mathbb R})}^2
=N_0^{-1}\big\| \widehat{\eta}_{N_1''}\big(\widehat{\rho_{(N_0')}}*((1-\widehat{\varphi}_{N_1})\widehat{f}_1)\big)\big\|_{L^2(\RR)}^2\\
\lesssim N_0^{-1}\int_{\RR}\bigg(\int_{\RR}\frac{N_0'}{(1+N_0'|\xi-\zeta|)^{200}}|\widehat{f}_1(\zeta)(1-\widehat{\varphi}(N_1\zeta))||\widehat{\eta}(N_1''\xi)|\bigg)^2d\mu(\xi)\\
\lesssim N_0^{-1}\delta^{100(A+C_1')}\|f_1\|_{L^2(\RR)}^2.
\end{gather*}
A similar bound holds when $\KK = {\mathbb C}$. Therefore 
\begin{align*}
\delta^{2C_0'}\lesssim_{\mathcal P} N_0^{-1}\big\| \eta_{N_1''}*f_1'\big\|_{L^2(\KK)}^2\lesssim \delta^{100(A+C_1')},
\end{align*}
which is impossible if $A\ge1$ is large enough. Thus the second
alternative \eqref{eq:83} is impossible. To see that the third
alternative \eqref{eq:84} is also impossible observe that
\begin{align*}
\delta\lesssim
|\Lambda_{\mathcal P; N}(f_0,(1-\rho_{(N_0')})(f_1-\varphi_{N_1}*f_1), \ldots, f_m)|\lesssim
N_0^{-1}\int_{[N_0']}(1-\rho_{(N_0')})(x)d\mu(x)\lesssim \delta^M,
\end{align*}
which is also impossible if $M\ge1$ is sufficiently large. Hence \eqref{eq:86} must necessarily hold and
we are done. 

\paragraph{{\bf Step 2.}} 
Let $M\ge1$ be a large constant to be determined later, and define
$N'\simeq \delta^MN$ and $N_0' \simeq \delta^M N_0$.  The main idea is to partition the intervals $[N]$ and $[N_0]$ into
$\KK\simeq\delta^{-M}$ disjoint intervals of measure $\simeq N'$ and $\simeq N_0'$, respectively. Such partitions are straightforward
when $\KK = {\mathbb R}$. When $\KK$ is non-archimedean, we only need to partition $[N]$ and not $[N_0]$. Finally when
$\KK = {\mathbb C}$, intervals are discs and it is not possible to partition a disc into subdiscs and so we will need to be careful
with this technical issue.

We first concentrate
on the case when $\KK$ is non-archimedean. In this case, we only need to partition $[N]$ and not $[N_0]$. Such a partition was given in the proof of Theorem \ref{thm:inversem}.
In fact, choosing $\ell \gg 1$ such that $q^{-\ell} \simeq \delta^M$ and setting $N = q^n$ so that $N' = q^{n-\ell}$, we have
\begin{align*}
[N]  \  = \  B_{q^{n}}(0) \ = \ \bigcup_{y \in {\mathcal F}} B_{q^{n -\ell}}(y),
\end{align*}
which gives a partition of $[N]$ where ${\mathcal F} = \{ y = \sum_{j=0}^{\ell-1} y_j \pi^{-n + j} : y_j \in o_\KK/m_\KK \}$. 
Note $\# {\mathcal F} = q^{\ell}$ so that $\# {\mathcal F} \simeq \delta^{-M}$. 
Hence $\Lambda_{\mathcal P; N}(f_0,\varphi_{N_1}*f_1, \ldots, f_m) =$
$$
\frac{1}{N_0 N} \sum_{y_0 \in {\mathcal F}} \int_{\KK} \int_{B_{q^{n-\ell}}(y_0)}
f_0(x) \varphi_{N_1} * f_1 (x - P_1(y)) \prod_{i=2}^m f_i(x - P_i(y)) d\mu(y) d\mu(x).
$$
We observe that $\varphi_{N_1} * f_1 (x - P_1(y)) = \varphi_{N_1}* f_1(x - P_1(y_0))$ for any $y \in B_{q^{n-\ell}}(y_0)$
by the non-archimedean nature of $\KK$, if $M$ is chosen large enough depending on $P_1$. Hence, by the pigeonhole principle, we can find a
$y_0 \in {\mathcal F}$ such that
$$
\Bigl| \frac{1}{N_0 N'} \int_{\KK} \int_{B_{q^{n-\ell}}(y_0)}
f_0(x) \varphi_{N_1} * f_1 (x - P_1(y_0)) \prod_{i=2}^m f_i(x - P_i(y)) d\mu(y) d\mu(x) \Bigr| \gtrsim \delta.
$$
Changing variables $y \to y_0 + y$ allows us to write the above as 
$$
|\Lambda_{{\mathcal P}', N'}(f_0', f_2',\ldots, f_m')| \ \gtrsim \ \delta \ \ \ {\rm where} 
$$
$$
\Lambda_{{\mathcal P}', N'}(f_0', f_2',\ldots, f_m') \ = \frac{1}{N_0} \int_{\KK^2} f_0'(x) \prod_{j=2}^m f_j'(x - P_j'(y)) d\mu_{[N']}(y) d\mu(x),
$$
with $P_j'(y) = \ P_j(y_0 + y) - P_j(y_0)$, $f_0'(x) \ = \ f_0(x) \varphi_{N_1}*f_1(x - P_1(y_0))$ and 
$f_j'(x) = f_j (x + P_j(y_0))$.
Note that each $f_j'$ is supported in a fix dilate of $I$. In order to apply Theorem \ref{thm:inverse}, we require
$N' \simeq \delta^M N \ge 1$ and here is where the condition $N \gtrsim \delta^{-O_{\mathcal P}(1)}$ is needed.
Therefore Theorem \ref{thm:inverse} implies that
$$
N_0^{-1} \|\mu_{[N_2]} * f_2 \|_{L^1(\KK)} \ = \ N_0^{-1} \|\mu_{[N_2]} * f_2' \|_{L^1(\KK)} \ \gtrsim \ \delta^{O(1)}.
$$ 
The equality of $L^1$ norms follows from the change of variables $x \to x + P_2(y_0)$.
This completes the proof of \eqref{eq:19} for $j=2$ when $\KK$ is non-archimedean since $\mu_{[N_2]} = \varphi_{N_2}$.

We now turn to the archimedian case, when $\KK = {\mathbb R}$ or when $\KK={\mathbb C}$. Here we argue
as in Step 1. and establish the version of \eqref{var-form} for the function $f_2$. More precisely, writing 
$$
\Lambda_{\mathcal P; N}(f_0, \ldots, f_m) =
\Lambda_{\mathcal P;N}(f_0, f_1, \varphi_{N_2} * f_2, \ldots, f_m) + \Lambda_{\mathcal P;N}(f_0, f_1, f_2 - \varphi_{N_2}*f_2, \ldots, f_m),
$$
the argument in Step 1. shows that \eqref{eq:63} implies  
\begin{align}\label{var-form-2}
|\Lambda_{\mathcal P;N}(f_0, f_1, \varphi_{N_2} * f_2, \ldots, f_m)| \ \gtrsim \ \delta.
\end{align}
This inequality allows us to reduce matters to showing that \eqref{eq:63} implies 
$N_0^{-1}\|\mu_{[N_2]}*f_2\|_{L^1(\KK)} \gtrsim \delta^{O(1)}$ since then \eqref{var-form-2} would
imply 
$$
\delta^{O(1)} \lesssim N_0^{-1} \|\mu_{[N_2]}*\varphi_{N_2}*f_2\|_{L^1(\KK)} \le N_0^{-1} \|\varphi_{N_2}*f_2\|_{L^1(\KK)},
$$
establishing \eqref{eq:19-s} for $j=2$.

We give the details when $\KK = {\mathbb C}$ since there are additional
technical difficulties alluded to above. The case ${\mathbb R}$ is easier. Given a large, general interval ${\mathcal I}$ in
${\mathbb C}$ (that is, ${\mathcal I}$ is a disc with large radius $R$), we can clearly find a mesh of $K\simeq \delta^{-M}$ disjoint squares
$(S_k)_{k\in\bra{K}}$ of side length $\delta^{M/2} R$ which sit inside ${\mathcal I}$ such that 
$\mu({\mathcal I}\setminus \bigcup_{k\in\bra{K}} S_k) \lesssim \delta^2 R^2$. We fix such a mesh of squares $(S_k)_{k\in\bra{K}}$ for $[N]$
and a mesh of squares $(T_j)_{j\in\bra{J}}$ for $[N_0]$ so that
$$
\Lambda_{\mathcal P, N}(f_0, \varphi_{N_1} * f_1, \ldots, f_m) \ = 
$$
$$
\frac{1}{N_0 N}
\sum_{j\in\bra{J}} \sum_{k\in\bra{K}} \int_{T_j} \int_{S_k} f_0(x) f_0(x) \varphi_{N_1}*f_1(x - P_1(y)) \prod_{i=2}^m f_i(x - P_i(y)) d\mu(x) d\mu(y)
\ + \ O(\delta^2).
$$
Since $|\Lambda_{\mathcal P; N}(f_0, \varphi_{N_1}*f_1, \ldots, f_m)| \gtrsim \delta$ by \eqref{var-form} and since
the number of terms in each sum above is about $\delta^{-M}$, the pigeonhole principle gives us a square $T_0$ in $[N_0]$
and a square $S_0$ in $[N]$ such that 
$$
\Bigl| \frac{1}{N_0' N'} \int_{T_0} \int_{S_0} f_0(x) f_0(x) \varphi_{N_1}*f_1(x - P_1(y)) \prod_{i=2}^m f_i(x - P_i(y)) d\mu(x) d\mu(y) \Bigr|
\gtrsim \delta.
$$
Write $[N']_{sq} = \{ z \in {\mathbb C}: |z |_{\infty} \le \sqrt{N'}\}$ where $|z|_{\infty} = \max(|x|, |y|)$
for $z = x+iy$. Hence $S_0 = y_0 + [N']_{sq}$ for some $y_0 \in [N]$. For $z \in S_0$, we have $z = y_0 + y$ for some
$y\in [N']_{sq}$ and so by the
mean value theorem and the 1-boundedness of $f_1$,
\begin{align*}
&|\varphi_{N_1}* f_1 (x - P_1(y_0 + y)) - \varphi_{N_1}*f_1(x - P_1(y_0))| \\
&\le \ \sqrt{\frac{(N')^{{\rm Deg}P_1}}{N_1}} 
\int_{{\mathbb C}} \|(\nabla \varphi)_{N_1}(z)\| d\mu(z) \ \lesssim_{\varphi} \ \delta^{(M{\rm Deg}P_1 - C_1)/2}
\end{align*}
where $N_1 = \delta^{C_1} N^{{\rm Deg}P_1}$. Ensuring that $M \deg{P_1} - C_1 \ge 4$, we see that 
$$
 \Bigl| \frac{1}{N_0' N'} \int_{T_0} \int_{[N']_{sq}} f_0(x)  \varphi_{N_1}*f_1(x - P_1(y_0)) \prod_{i=2}^m f_i^t(x - P_i'(y)) d\mu(x) d\mu(y) \Bigr|
\gtrsim \delta,
$$
where $P_i'(y) = P_i(y_0 + y) - P_i(y_0)$ and $f_i^t(x) = f_i(x + P_i(y_0))$. For an appropriate interval
$I'$ containing $T_0$ with measure $\simeq N_0'$, we can write the above inequality as
$|\Lambda_{{\mathcal P}'; N'}(f_0', f_2', \ldots, f_m')| \gtrsim \delta$ where ${\mathcal P}' = \{P_2',\ldots, P_m'\}$,
$f_0'(x) = f_0(x) \varphi_{N_1}*f_1(x-P_1(y_0)) \ind{T_0}(x)$ and  $f_i'(x) = f_i^t(x) \ind{I'}(x)$ for $i\in\bra{m}\setminus\bra{1}$.
Here
$$
\Lambda_{{\mathcal P}'; N'}(f_0',\ldots, f_m') = \frac{1}{N_0'} \iint_{{\mathbb C}^2} f_0'(x) \prod_{i=2}^m f_i'(x-P_i'(y)) d\mu_{[N']_{sq}}(y)
d\mu(x).
$$
Again, in order to apply Theorem \ref{thm:inverse},  we need $N' = \delta^M N \ge 1$ which holds
provided $N \gtrsim \delta^{-O_{\mathcal P}(1)}$. Therefore  
by Theorem \ref{thm:inverse} (see the remark following the statement of Theorem \ref{thm:inverse}), we conclude that
\begin{align*}
(N_0')^{-1}\big\| \mu_{[N_2]_{sq}}*f_2'\big\|_{L^1({\mathbb C})} \gtrsim_{\mathcal P} \delta^{O(1)}
\end{align*}
for some $N_2\simeq \delta^{C_2+M\deg(P_2)}N^{\deg(P_2)}$.
The function $\mu_{[N_2]}*f_2'$ is supported on an interval
$I''\supseteq I'$ such that $\mu(I''\setminus I') \lesssim N_2$. Furthermore we can find an
interval $I'''\subseteq I'$  so that $\mu(I'\setminus I''') \lesssim N_2$ and for 
$x\in I'''$, we have $\ind{I'}(x-u) = 1$ for all $u \in [N_2]_{sq}$. Hence
$$
\delta^{O(1)} \lesssim \frac{1}{N_0'} \int_{I'''} \Bigl| \int_{\mathbb C} f_2(x+P_2(y_0) - u) d\mu_{[N_2]_{sq}}(u) \Bigr| d\mu(x)
 \ + \ O(N_2 (N_0')^{-1})
$$ 
where $N_2/N_0' \lesssim \delta^{M(\deg{P_2} - 1)}$ and $\deg{P_2} -1 \ge 1$. Hence, for $M\gg 1$ sufficiently
large, we conclude that
\begin{align}\label{sq-ineq}
\delta^{O(1)} \lesssim \frac{1}{N_0'} \int_{I'''} \Bigl| \int_{\mathbb C} f_2(x+P_2(y_0) - u) d\mu_{[N_2]_{sq}}(u) \Bigr| d\mu(x)
\lesssim N_0^{-1} \|\mu_{[N_2]_{sq}} * f_2 \|_{L^1({\mathbb C})}.
\end{align}
In the final inequality, we promoted the integration in $x$ to all of ${\mathbb C}$ and changed variables $x \to x + P_2(y_0)$. Hence we have shown that \eqref{eq:63} implies $N_0^{-1}\|\mu_{[N_2]_{sq}}*f_2\|_{L^1({\mathbb C})} \gtrsim \delta^{O(1)}$.
Since \eqref{eq:63} holds with $f_2$ replaced by $\varphi_{N_2}*f_2$ (this is \eqref{var-form-2}), we see that
$$
\delta^{O(1)} \lesssim N_0^{-1} \|\mu_{[N_2]_{sq}}*\varphi_{N_2}*f_2\|_{L^1({\mathbb C})} \le N_0^{-1}
\|\varphi_{N_2}*f_2\|_{L^1({\mathbb C})},
$$
establishing \eqref{eq:19-s} for $j=2$. Now we can proceed inductively and obtain \eqref{eq:19-s} for all $j\in\bra{m}$.
\end{proof}

\subsection{Multilinear functions and their duals} Recall the multilinear form
$$
\Lambda_{\mathcal P; N}(f_0, f_1, \ldots, f_m) = \frac{1}{N_0} \iint_{\KK^2} f_0(x) \prod_{i=1}^m f_i(x - P_i(y)) d\mu_{[N]}(y) d\mu(x).
$$
We define the multilinear function
\begin{align*}
A_N^{\mathcal P}(f_1,\ldots, f_m)(x):=\int_{\KK}\prod_{i=1}^mf_{i}(x-P_i(y))d\mu_{[N]}(y)
\end{align*}
so that $\Lambda_{\mathcal P; N}$
can be written as a pairing of $A_N^{\mathcal P}$ with $f_0$,
\begin{align*}
\langle A_N^{\mathcal P}(f_1,\ldots, f_m), f_0\rangle = N_0 \, \Lambda_{\mathcal P; N, [N_0]}(f_0, f_1,\ldots, f_m)
\end{align*}
where $\langle f, g\rangle = \int_\KK f(x) g(x) d\mu(x)$.
By duality we have
\begin{align*}
\langle A_N^{\mathcal P}(f_1,\ldots, f_m), f_0\rangle=
\langle (A_N^{\mathcal P})^{*j}(f_1,\ldots, f_{j-1}, f_0, f_{j+1}, \ldots, f_m), f_j\rangle,
\end{align*}
where
\begin{align*}
(A_N^{\mathcal P})^{*j}(f_1,\ldots,  f_0,  \ldots, f_m) (x) :=
\int_{\KK}\prod_{\substack{i=1\\i\neq j}}^mf_{i}(x-P_i(y)+P_j(y))f_0(x+P_j(y))d\mu_{[N]}(y).
\end{align*}

\begin{lemma}[Application of Hahn--Banach]\label{hahn} Let $A,B > 0$, let $I\subset \KK$ be an interval
and let $G$ be an element of $L^2(I)$.  Let $\Phi$ be a family of
vectors in $L^2(I)$, and assume the following inverse theorem:
whenever $f \in L^2(I)$ is such that $\|f\|_{L^\infty(I)} \leq 1$ and
$|\langle f, G \rangle| > A$, then $|\langle f, \phi \rangle| > B$ for
some $\phi \in \Phi$.  Then $G$ lies in the closed convex hull of
\begin{equation}\label{dip}
V= \{ \lambda\phi\in L^2(I): \phi \in \Phi, \ |\lambda| \leq A/B\}
\cup \{ h \in L^2(I): \|h\|_{L^1(I)} \leq A \}.
\end{equation}
\end{lemma}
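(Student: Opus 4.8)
The plan is to prove the contrapositive via the Hahn--Banach separation theorem. Let $K$ denote the closed convex hull of the set $V$ in \eqref{dip}; it is closed and convex, and $0\in K$ since $0$ lies in the second family in \eqref{dip}. Suppose, for contradiction, that $G\notin K$. Regard $L^2(I)$ as a \emph{real} Hilbert space, with real inner product $(f,g):=\Rea\int_I f\overline{g}\,d\mu$, under which its continuous real-linear functionals are precisely the maps $g\mapsto \Rea\langle g,u\rangle$ with $u\in L^2(I)$, where $\langle f,g\rangle=\int_I fg\,d\mu$ is the bilinear pairing used elsewhere in the paper (one passes between the two descriptions via $u\leftrightarrow\overline{u}$). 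Strict separation of the point $G$ from the closed convex set $K$ then supplies a $u\in L^2(I)$ with
\begin{align*}
\Rea\langle G,u\rangle\ >\ \alpha\ :=\ \sup_{v\in K}\Rea\langle v,u\rangle.
\end{align*}

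Next I would compute the support function $\alpha$ explicitly, using that the supremum of a continuous linear functional over $K$ equals its supremum over the generating set $V$. For the first family in \eqref{dip}, $\sup\{\Rea\langle \lambda\phi,u\rangle:\phi\in\Phi,\ |\lambda|\le A/B\}=\frac{A}{B}\sup_{\phi\in\Phi}|\langle u,\phi\rangle|$, since for fixed $\phi$ the supremum over $|\lambda|\le A/B$ of $\Rea(\lambda\langle u,\phi\rangle)$ is $\frac{A}{B}|\langle u,\phi\rangle|$. For the second family, $L^1$--$L^\infty$ duality on the finite-measure set $I$ gives $\sup\{\Rea\langle h,u\rangle:\|h\|_{L^1(I)}\le A\}=A\|u\|_{L^\infty(I)}$. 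Hence $\alpha=\max\bigl(\frac{A}{B}\sup_{\phi\in\Phi}|\langle u,\phi\rangle|,\ A\|u\|_{L^\infty(I)}\bigr)$. In particular $\alpha\ge A\|u\|_{L^\infty(I)}$, and $\alpha\ge 0>?$ forces $u\ne 0$ (otherwise $\Rea\langle G,u\rangle=0>\alpha\ge0$ is absurd), so $\|u\|_{L^\infty(I)}>0$ and thus $\alpha>0$; set $g:=\frac{A}{\alpha}u\in L^2(I)$.

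Finally I would check that $g$ contradicts the hypothesised inverse theorem. We have $\|g\|_{L^\infty(I)}=\frac{A}{\alpha}\|u\|_{L^\infty(I)}\le 1$ because $\alpha\ge A\|u\|_{L^\infty(I)}$; we have $|\langle g,G\rangle|\ge \Rea\langle g,G\rangle=\frac{A}{\alpha}\Rea\langle G,u\rangle>\frac{A}{\alpha}\cdot\alpha=A$; and we have $\sup_{\phi\in\Phi}|\langle g,\phi\rangle|=\frac{A}{\alpha}\sup_{\phi\in\Phi}|\langle u,\phi\rangle|\le B$, using $\frac{A}{B}\sup_{\phi}|\langle u,\phi\rangle|\le\alpha$. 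Applying the assumed inverse theorem to $g$ (which satisfies $\|g\|_{L^\infty(I)}\le 1$ and $|\langle g,G\rangle|>A$) yields some $\phi\in\Phi$ with $|\langle g,\phi\rangle|>B$, contradicting the previous bound. Therefore $G\in K$, proving the lemma. The one place that requires genuine care is the real-versus-complex bookkeeping for $L^2(I)$ and matching the two support-function computations to the paper's bilinear pairing $\langle f,g\rangle=\int_I fg\,d\mu$; beyond that the argument is a routine application of Hahn--Banach separation plus duality.
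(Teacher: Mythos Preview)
Your proof is correct and follows essentially the same approach as the paper: assume $G$ lies outside the closed convex hull, separate via Hahn--Banach/Riesz to obtain a vector in $L^2(I)$, and then check that this vector violates the assumed inverse theorem. The only cosmetic difference is that the paper first rescales the separating functional so that the separation constant equals $A$ and then invokes the balancedness of $V$ to pass from $\Rea\langle f,h\rangle\le A$ to $|\langle f,h\rangle|\le A$, whereas you compute the support function $\alpha$ explicitly and rescale by $A/\alpha$; these are equivalent manoeuvres.
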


\begin{proof} By way of contradiction, suppose that $G$ does not lie in 
$W = \overline{{\rm conv} V}^{\|\cdot\|_{L^2(I)}}$. From the Hahn-Banach theorem, we can
find a continuous linear functional $\Lambda$ of $L^2(I)$ which seperates $G$ from $W$; that is,
there is a  $C \in {\mathbb R}$ such that $\Rea\Lambda(h) \le C < \Rea \Lambda(G)$ for all $h \in W$.
Scaling $\Lambda$ allows us to change the constant $C$ so we can choose $\Lambda$ such that $C=A$ is in the statement of the lemma. 
Since $W$ is balanced, we see that $|\Lambda(h)| \le A < \Rea \Lambda(G)$ for all $h \in W$. 
By the Riesz representation theorem, there is an $f \in L^2(I)$ which represents $\Lambda$ so that
$|\langle f, h\rangle|\le A < \mathrm{Re} \langle f, G \rangle$ for all $h\in V$. This implies that
\[
|\langle f, \phi\rangle| \leq B
\]
for all $\phi \in \Phi$, and that
\[
\|f\|_{L^\infty(I)}=\sup_{\|h\|_{L^1(I)}\le1} |\langle f, h\rangle|\le 1,
\]
contradicting the hypothesis of the lemma.  This completes the
proof of the lemma.
\end{proof}

\begin{cor}[Structure of dual functions]\label{struct}
Let $N\ge 1$ be a scale, $m\in\Z_+$ and $0<\delta\le 1$ be given. Let $\mathcal P:=\{P_1,\ldots, P_m\}$
be a collection of polynomials such that
$1\le \deg{P_1}<\ldots<\deg{P_m}$.  Let
$f_0, f_1,\ldots, f_m\in L^0(\KK)$ be $1$-bounded functions supported
on an interval of measure $N_0 = N^{\deg(P_m)}$.  Then for every $j\in\bra{m}$, provided $N\gtrsim \delta^{-O_{\mathcal P}(1)}$, there exist a decomposition
\begin{equation}\label{decomp}
(A_N^{\mathcal P})^{*j}(f_1,\ldots, f_0, \ldots, f_m)(x)  = H_j(x) + E_j(x)
\end{equation} 
where  $H_j \in L^2(\KK)$ has Fourier transform supported in $[(N_j)^{-1}]$
where $N_j \simeq \delta^{C_j} N^{\deg{P_j}}$ and $C_j$ is as in Theorem \ref{thm:inverse-s}, 
and obeys the bounds
\begin{equation}\label{faq-bound}
\|H_j\|_{L^\infty(\KK)} \lesssim_m 1,
\quad\text{ and }\quad
\|H_j\|_{L^1(\KK)} \lesssim_m N_0.
\end{equation}
The error term $E_j\in L^1(\KK)$ obeys the bound
\begin{equation}\label{e-bound}
\|E_j\|_{L^1(\KK)} \leq \delta N_0.
\end{equation}
\end{cor}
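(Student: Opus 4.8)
The plan is to realize the dual object $G:=(A_N^{\mathcal P})^{*j}(f_1,\ldots,f_0,\ldots,f_m)$ (with $f_0$ in the $j$-th slot) as a $1$-bounded function that is essentially supported on one interval, and then to split off its ``low-frequency part'' using the inverse theorem, packaged through the Hahn--Banach dichotomy of Lemma~\ref{hahn}. First the set-up: since $d\mu_{[N]}$ is a probability measure and $G$ is an average of products of $1$-bounded functions, $\|G\|_{L^\infty(\KK)}\le1$; moreover $G(x)\neq0$ forces $x+P_j(y)\in\supp f_0$ for some $y\in[N]$, and $|P_j(y)|\lesssim_{\mathcal P}N^{\deg P_j}\le N_0$ there, so $G$ is supported on a single interval $I$ that is a fixed dilate of the given one, with $\mu(I)\lesssim_{\mathcal P}N_0$ and hence $\|G\|_{L^1(\KK)}\lesssim_{\mathcal P}N_0$. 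Let $N_j\simeq\delta^{C_j}N^{\deg P_j}$ and $\varphi$ be as in Theorem~\ref{thm:inverse-s} (so $\varphi$ is even and $\widehat\varphi$ is supported in $[2]$), and fix a further even Schwartz function $\psi$ with $\widehat\psi\equiv1$ on $[2]$ and $\supp\widehat\psi\subseteq[4]$ (take $\psi=\varphi$ when $\KK$ is non-archimedean, where $\varphi_{N_j}$ is idempotent).

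Next I would apply Lemma~\ref{hahn} (over $L^2$ of a sufficiently large fixed ball) to $G$ with threshold $A:=c_{\mathcal P}\,\delta N_0$, family $\Phi:=\{\varphi_{N_j}*g:\ \|g\|_{L^\infty(\KK)}\le1,\ \supp g\subseteq I'\}$ for a fixed dilate $I'$ of $I$, and $B:=c'_{\mathcal P}\,\delta^{O_{\mathcal P}(1)}N_0$. Its hypothesis is exactly Theorem~\ref{thm:inverse-s}: if $f$ is $1$-bounded (we may take $\supp f\subseteq I$ since $G$ is supported there) and $|\langle f,G\rangle|>A$, then the duality identity $\langle(A_N^{\mathcal P})^{*j}(f_1,\ldots,f_0,\ldots,f_m),f\rangle=N_0\,\Lambda_{\mathcal P;N}(f_0,f_1,\ldots,f_{j-1},f,f_{j+1},\ldots,f_m)$ gives $|\Lambda_{\mathcal P;N}(f_0,\ldots,f,\ldots,f_m)|\gtrsim\delta$ (after a routine reduction to the interval normalisation of Theorem~\ref{thm:inverse-s}, harmless since dilates of $I$ by $O_{\mathcal P}(1)$ are allowed and $N\gtrsim\delta^{-O_{\mathcal P}(1)}$), and Theorem~\ref{thm:inverse-s} with index $j$ yields $N_0^{-1}\|\varphi_{N_j}*f\|_{L^1}\gtrsim_{\mathcal P}\delta^{O_{\mathcal P}(1)}$; as $\varphi$ is even and $\varphi_{N_j}*f$ is, up to a rapidly decaying tail (harmless because $N_j\le N_0$), supported on $I'$, some $\phi=\varphi_{N_j}*g\in\Phi$ satisfies $|\langle f,\phi\rangle|=|\langle\varphi_{N_j}*f,g\rangle|>B$. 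Lemma~\ref{hahn} then produces $G=H_j^{\flat}+E_j^{\flat}$ with $\|E_j^{\flat}\|_{L^1}\le A$ and $H_j^{\flat}=\varphi_{N_j}*g_\infty$ for some $\|g_\infty\|_{L^\infty}\le A/B$, so $\widehat{H_j^{\flat}}$ is supported in $[2N_j^{-1}]$ — though $H_j^{\flat}$ is only $\delta^{-O(1)}$-bounded, which is too weak. To reach the $\delta$-independent bound of the corollary I would therefore discard $H_j^\flat$ and instead set
\begin{equation*}
H_j:=\psi_{N_j}*G,\qquad E_j:=G-H_j=(\delta_0-\psi_{N_j})*G .
\end{equation*}
Then $\|H_j\|_{L^\infty}\le\|\psi\|_{L^1}\|G\|_{L^\infty}\lesssim_m1$ and $\|H_j\|_{L^1}\le\|\psi\|_{L^1}\|G\|_{L^1}\lesssim_m N_0$, while $\widehat{H_j}=\widehat{\psi_{N_j}}\widehat G$ is supported in $[(N_j/4)^{-1}]$ with $N_j/4\simeq\delta^{C_j}N^{\deg P_j}$; finally $E_j=(\delta_0-\psi_{N_j})*H_j^{\flat}+(\delta_0-\psi_{N_j})*E_j^{\flat}$, and the first term \emph{vanishes} because $\widehat{H_j^{\flat}}$ is supported in $[2N_j^{-1}]$ where $\widehat{\psi_{N_j}}\equiv1$, so $\|E_j\|_{L^1}\le(1+\|\psi\|_{L^1})\|E_j^{\flat}\|_{L^1}\le\delta N_0$ once $c_{\mathcal P}$ in the definition of $A$ is chosen small enough.

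All the quantitative content is already available: the sole arithmetic input is Theorem~\ref{thm:inverse-s}, and the remaining work is the bookkeeping around it — the duality identity, the reduction to the exact interval normalisation, and the harmless archimedean truncation of the non-compactly supported convolutions. The one genuinely delicate point is the two-scale choice of $\psi$ versus $\varphi$, encapsulated in $\widehat\psi\equiv1$ on $\supp\widehat\varphi$: it both forces $(\delta_0-\psi_{N_j})*H_j^{\flat}=0$ and converts the $\delta^{-O(1)}$-bounded structured term coming out of Lemma~\ref{hahn} into the $O_m(1)$-bounded $H_j=\psi_{N_j}*G$ demanded by the statement. (Equivalently, one can bypass Lemma~\ref{hahn} and prove $\|(\delta_0-\psi_{N_j})*G\|_{L^1}\le\delta N_0$ directly by the same testing argument, using $\varphi_{N_j}*(\delta_0-\psi_{N_j})=0$ to reach a contradiction with Theorem~\ref{thm:inverse-s}.) When $\KK=\mathbb C$ it is cleanest to run everything with the square-cutoff version of Theorem~\ref{thm:inverse-s} noted in the remark after Theorem~\ref{thm:inverse}, so as not to fuss with discs.
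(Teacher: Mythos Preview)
Your proposal is correct and reaches the same conclusion, but the route differs from the paper's in one essential place. The paper does not use Theorem~\ref{thm:inverse-s} as a black box: instead, it reopens the proof of that theorem to extract the stronger statement that whenever $|\langle f,G\rangle|>\delta N_0$ one has $|\langle f,F\rangle|\ge c_m\,\delta N_0$ for a specific \emph{$1$-bounded} function $F=\tilde\varphi_{N_j}*(A_N^{\mathcal P})^{*j}(\varphi_{N_1}*f_1,\ldots,\varphi_{N_{j-1}}*f_{j-1},f_0,f_{j+1},\ldots,f_m)$ with $\widehat F$ supported in $[N_j^{-1}]$ and $\|F\|_{L^1}\le N_0$. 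With this choice of $\Phi$ one gets $A/B\lesssim_m 1$, so the Hahn--Banach structured piece is $O_m(1)$-bounded from the outset; the only remaining work is to repair the Fourier support after the $\ind{I_0}$-truncation (trivial non-archimedeanly, a mollification $\ind{I_0}\mapsto\ind{I_0}*\psi_M$ archimedeanly). Your approach keeps Theorem~\ref{thm:inverse-s} as a black box, accepts the weaker $B\simeq\delta^{O(1)}N_0$ and hence $A/B\simeq\delta^{-O(1)}$, and then recovers the $O_m(1)$ bound by the two-scale device $H_j:=\psi_{N_j}*G$ together with $(\delta_0-\psi_{N_j})*\varphi_{N_j}=0$. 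This is a genuine alternative: it is more modular and your parenthetical ``bypass Hahn--Banach'' variant is in fact the cleanest of the three arguments, while the paper's version is more direct and avoids introducing the auxiliary $\psi$. The only places to be careful in your write-up are the archimedean truncations (the Hahn--Banach must be run on a bounded interval to control the $L^1$ norm of the closure error, so the structured piece is really $(\varphi_{N_j}*g_\infty)\ind{I}$, and one must absorb the Schwartz tail of $\varphi_{N_j}*g_\infty$ outside $I$ into the error using $N_j\ll N_0$) and the observation that $N_j$ may be taken with exponent $C_j$ as large as needed, so that these tails are $\ll\delta N_0$ even after multiplication by $\|g_\infty\|_{L^\infty}\lesssim\delta^{-O(1)}$. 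You have flagged both points correctly.
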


\begin{proof}
Fix $j\in\bra{m}$, let
$I_0:=\supp{(A_N^{\mathcal P})^{*j}(f_1,\ldots, f_0, \ldots, f_m)}$,
and recall that $N_0= N^{\deg(P_m)}$. By translation invariance
we may assume $\supp{f_j}\subseteq [N_0]$ for all $j\in\bra{m}$, and that $I_0:=[O(N_0)]$. If there exists
$f\in L^{\infty}(I_0)$ with $\|f\|_{L^{\infty}(I_0)}\le 1$ such that
\begin{align}
\label{eq:85}
|\langle f, (A_N^{\mathcal P})^{*j}(f_1,\ldots, f_0, \ldots, f_m)  \rangle|> \delta N_0,
\end{align}
then proceeding as in the proof of Theorem \ref{thm:inverse-s} we may conclude that
\begin{align*}
|\langle \varphi_{N_j}*f, (A_N^{\mathcal P})^{*j}(\varphi_{N_1}*f_1,\ldots, \varphi_{N_{j-1}}* f_{j-1}, f_0, f_{j+1} \ldots, f_m)  \rangle|\ge c_m \, \delta N_0,
\end{align*}
where $N_i\simeq \delta^{C_i} N^{\deg(P_i)}$ for $i\in\bra{j}$.
This implies that there exists a 1-bounded $F\in L^2(\KK)$ with $\|F\|_{L^1(\KK)} \le N_0$  
such that  $\supp{\widehat{F}}\subseteq [N_j^{-1}]$ and
\begin{align}
\label{eq:87}
|\langle f, F  \rangle|\ge c_m \, \delta N_0.
\end{align}
If fact, we can take
$$
F(x) = {\tilde{\varphi}}_{N_j}*(A_N^{\mathcal P})^{*j}(\varphi_{N_1}*f_1,\ldots, \varphi_{N_{j-1}}* f_{j-1}, f_0, f_{j+1} \ldots, f_m)(x)
$$
where ${\tilde{\varphi}}(x) = \varphi(-x)$.
Let $\Psi$ denote the collection of all 1-bounded $F\in L^2(\KK)$ with $\supp{\widehat{F}}\subseteq [N_j^{-1}]$
and $\|F\|_{L^1(\KK)}\le N_0$.
Invoking  Lemma \ref{hahn} with $A=\delta N_0/4$ and $B = c_m \delta N_0$ and the set 
$\Phi = \{ F \ind{I_0} : F \in \Psi\}$,
we obtain a decomposition
\begin{align}
\label{eq:88}
(A_N^{\mathcal P})^{*j}(f_1,\ldots,  f_{j-1}, f_0, f_{j+1}, \ldots, f_m)  = \sum_{l=1}^{\infty}c_l\phi_l+E(1)+E(2),
\end{align}
with the following properties:
\begin{itemize}
\item[(i)]  for each $l\in\Z_+$ we have that $\phi_l=\lambda_lF_l \ind{I_0}$, $F_l \in \Psi$  and $\lambda_l\in\C$ such that  $|\lambda_l|\lesssim_m 1$;
\item[(ii)] the coefficients $c_l$ are non-negative with
$\sum_{l=1}^{\infty}c_l\le 1$, and all but finitely $c_l$ vanish;
\item[(iii)] the error term $E(1)\in L^1(I_0)$  satisfies  $\|E(1)\|_{L^1(I_0)} \leq \delta N_0/2$;
\item[(iv)] the error term $E(2)\in L^2(I_0)$  satisfies  $\|E(2)\|_{L^2(I_0)} \leq \delta$.
\end{itemize}
The latter error term arises as a consequence of the fact that
one is working with the closed convex hull instead of the convex hull. In fact, its $L^2(I_0)$ norm can be made arbitrarily small, but
$\delta$ will suffice for our purposes. 

Grouping together terms in the deomposition \eqref{eq:88}, we have
\[
(A_N^{\mathcal P})^{*j}(f_1,\ldots, f_{j-1}, f_0, f_{j+1}, \ldots, f_m)
=   H_j' + E_j'
\]
where
\begin{align*}
H_j' = \bigl[\sum_{l=1}^{\infty} c_l \lambda_l F_l\bigr] \ind{I_0} \quad \text{satisfies} \quad 
\|H_j'\|_{L^1(\KK)} \le \sum_{l=1}^{\infty} c_l |\lambda_l| \|F_l\|_{L^1(\KK)} \lesssim_m N_0 \ \ \text{and}
\end{align*} 
\begin{align*} 
\|H_j'\|_{L^{\infty}(\KK)} \le \sup_{l\in\NN} \|F_l\|_{L^{\infty}(\KK)} \sum_{l=1}^{\infty} c_l |\lambda_l| \lesssim_m 1.
\end{align*}
Also $E_j' = E(1) + E(2)$ satisfies $\|E_j'\|_{L^1(I_0)} \le \delta N_0$ by (iii) and (iv) above since by the Cauchy--Schwarz inequality, 
we have $\|E(2)\|_{L^1(I_0)} \leq \delta N_0^{1/2}$. 

We note that the function $F(x) = \sum_{i=1}^{\infty} c_i \lambda_i F_i (x)$ is Fourier supported in the interval $[N_j^{-1}]$.

When $\KK$ is non-archimedean, ${\rm supp}({\widehat{\ind{I_0}}}) \subseteq [N_0^{-1}]$ and so the Fourier transform
of $H_j'$ is supported in $[N_j^{-1}]$. 
This verifies \eqref{faq-bound} in this case and completes the proof when $\KK$ is non-archimedean since
the decomposition $H_j' + E_j'$ of $(A_N^{\mathcal P})^{*j}$ satisfies \eqref{faq-bound} and \eqref{e-bound}.

Now suppose $\KK$ is archimedean. Let $\psi$ be a Schwartz function such that $\int_{\KK}\psi(x)d\mu(x)=1$ and  
$\supp{\widehat{\psi}}\subseteq [2]$. 
Let $M\simeq \delta^{O(1)} N_0$ and as usual, set $\psi_M(x) = M^{-1} \psi(M^{-1}x)$ when $\KK = {\mathbb R}$
and $\psi_M(x) = M^{-1}\psi(M^{-1/2} x)$ when $\KK = {\mathbb C}$. From the proof of \eqref{L1-norm}, we have
\begin{align}\label{L1-ind}
\|\ind{I_0}-\ind{I_0}*\psi_M\|_{L^1(\KK)} \lesssim M^{1/4} {N_0}^{3/4}.
\end{align}
We set $H_j(x)  = F(x) \ind{I_0} * \psi_M (x)$ and $E_j = E(1) + E(2) + (\ind{I_0} - \ind{I_0}*\psi_M) F$ so that
$$
(A_N^{\mathcal P})^{*j}(f_1,\ldots, f_{j-1}, f_0, f_{j+1}, \ldots, f_m) (x)
=   H_j(x) + E_j (x).
$$
From \eqref{L1-ind}, we see that $E_j$ satisfies \eqref{e-bound}.
The properties $\|H_j\|_{L^\infty(\KK)} \lesssim_m 1$ and $\|H_j\|_{L^1(\KK)} \lesssim_m N_0$ are 
still preserved.
Moreover, $\supp{\widehat{H}_j}\subseteq [O(N_j^{-1})]$, since 
\[
\widehat{H}_j=(\widehat{\ind{I_0}}\widehat{\varphi}_M)*\widehat{F}.
\]
The shows that \eqref{faq-bound} holds for $H_j$ and this completes the proof of the corollary.
\end{proof}

We will combine Corollary \ref{struct} and the following $L^p$ improving bound for polynomial averages
to establish the key Sobolev inequality.

\begin{lemma}[$L^p$-improving for polynomial averages]\label{poly-improving} Let 
$Q \in \KK[{\rm y}]$ with $\deg(Q) = d$ and let $N\gg_Q 1$ be a large scale. Consider the averaging operator 
\[
M_N^{Q}g(x):=\int_\KK g(x-Q(y))d\mu_{[N]}(y).
\]
For any parameters
$1 < r < s< \infty$ satisfying $1/s = 1/r - 1/d$, the following inequality holds:
\begin{align}
\label{eq:90}
\|M_N^{Q}g\|_{L^s(\KK)}\lesssim_Q N^{d(\frac{1}{s}-\frac{1}{r})}\|g\|_{L^r(\KK)}\quad \text{ for } \quad g\in L^r(\KK).
\end{align}
\end{lemma}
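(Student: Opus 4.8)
The plan is to realise $M_N^{Q}$ as a convolution with a measure, to show that this measure is absolutely continuous with density of controlled weak-$L^{d/(d-1)}$ size, and then to apply the weak-type Young (Hardy--Littlewood--Sobolev) inequality. Let $\sigma_N$ denote the push-forward of $d\mu_{[N]}$ under the polynomial map $y\mapsto Q(y)$, so that $M_N^{Q}g = g*\sigma_N$. Since, by hypothesis, the characteristic of $\KK$ (if positive) exceeds $d$, the derivative $Q'$ is a nonzero polynomial of degree $d-1$ and so vanishes at no more than $d-1$ points of $\KK$; away from these zeros the map $y\mapsto Q(y)$ is a local diffeomorphism (in the non-archimedean case, locally a dilation by the factor $|Q'|$), and consequently $\sigma_N$ is absolutely continuous, with density
\[
\rho_N(x)\ =\ N^{-1}\sum_{\substack{y\in[N]\\ Q(y)=x}}|Q'(y)|^{-1}
\]
for $x$ outside the finitely many critical values of $Q$; this is a sum of at most $d$ terms, and $\rho_N$ may be unbounded near those critical values.

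The heart of the matter is the estimate $\|\rho_N\|_{L^{d/(d-1),\infty}(\KK)}\lesssim_{Q}N^{-1}$, i.e.\ $\mu(\{x:\rho_N(x)>\lambda\})\lesssim_{Q}(N\lambda)^{-d/(d-1)}$ for every $\lambda>0$. To see this, note that if $\rho_N(x)>\lambda$ then one of the (at most $d$) summands exceeds $\lambda/d$, hence $x=Q(y)$ for some $y\in E_\lambda:=\{y\in[N]:|Q'(y)|<d/(N\lambda)\}$, so that $\{\rho_N>\lambda\}\subseteq Q(E_\lambda)$. By the area (coarea) inequality, $\mu(Q(E_\lambda))\le\int_{E_\lambda}|Q'|\,d\mu\le\tfrac{d}{N\lambda}\,\mu(E_\lambda)$, and the sublevel-set estimate for the degree-$(d-1)$ polynomial $Q'$ gives $\mu(E_\lambda)\lesssim_{Q}(N\lambda)^{-1/(d-1)}$; together these yield $\mu(\{\rho_N>\lambda\})\lesssim_{Q}(N\lambda)^{-d/(d-1)}$ whenever $N\lambda\ge N^{-(d-1)}$. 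When $N\lambda<N^{-(d-1)}$ one instead bounds $\mu(\{\rho_N>\lambda\})\le\mu(Q([N]))\lesssim_{Q}N^{d}\le(N\lambda)^{-d/(d-1)}$, using that $|Q(y)|\lesssim_{Q}N^{d}$ on $[N]$.

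Finally, the weak-type Young inequality on $\KK$ (which holds on any locally compact abelian group, here with exponent $p=d/(d-1)>1$) gives, for any $1<r<s<\infty$ with $\tfrac1r+\tfrac{d-1}{d}=1+\tfrac1s$, i.e.\ $\tfrac1s=\tfrac1r-\tfrac1d$,
\[
\|M_N^{Q}g\|_{L^s(\KK)}\ =\ \|g*\rho_N\|_{L^s(\KK)}\ \lesssim\ \|\rho_N\|_{L^{d/(d-1),\infty}(\KK)}\,\|g\|_{L^r(\KK)}\ \lesssim_{Q}\ N^{-1}\|g\|_{L^r(\KK)}.
\]
Since $N^{-1}=N^{d(1/s-1/r)}$, this is precisely \eqref{eq:90}.

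The main obstacle is supplying, uniformly over all locally compact topological fields, the two classical-looking inputs used above: the absolute continuity of $\sigma_N$ together with the coarea inequality $\mu(Q(E))\le\int_{E}|Q'|\,d\mu$, and the sublevel-set bound $\mu(\{y\in\KK:|Q'(y)|<\tau\})\lesssim_{Q}\tau^{1/(d-1)}$. For $\KK={\mathbb R}$ or ${\mathbb C}$ these are standard (monotonicity of $Q'$ on $O_d(1)$ arcs, or factorisation of $Q'$ and estimation of neighbourhoods of its roots); for non-archimedean $\KK$ they follow from the local structure of polynomials over local fields (Weierstrass preparation and Newton-polygon considerations, in the same circle of ideas underlying \eqref{osc-int-est}). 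The hypothesis $N\gg_{Q} 1$ is used only to ensure these lower-order contributions remain harmless.
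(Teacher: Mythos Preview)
Your proof is correct and arrives at the same destination as the paper---control by a fractional integral followed by Hardy--Littlewood--Sobolev/weak Young---but the route is genuinely different in one respect. The paper first rescales: picking $|\alpha|=N$ and writing $Q_\alpha(y)=\alpha^{-d}Q(\alpha y)$, one reduces to $N=1$ with a monic polynomial that is close to $y^d$ when $N\gg_Q 1$. In that near-monomial regime the push-forward density is bounded \emph{pointwise} by $|s|^{-(d-1)/d}$ on $|s|\le 2$, so the operator is dominated by the Riesz potential and HLS finishes the job. You instead stay at scale $N$ and prove only a weak-$L^{d/(d-1)}$ bound on the density, via the coarea inequality together with the sublevel-set estimate for $Q'$, and then invoke the weak-type Young inequality. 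What your approach buys is that it does not rely on the lower-order coefficients of $Q$ becoming negligible, so in principle it is less sensitive to the largeness of $N$; what the paper's rescaling buys is that one only ever has to analyse a single model case ($y^d$ plus a small perturbation), where the density bound is essentially explicit and no sublevel-set machinery is needed. Both arguments ultimately require the same nontrivial input over non-archimedean $\KK$: a change-of-variables/area formula for polynomial maps (the paper cites Evans over $\mathbb{Q}_p$ and a construction from \cite{W-igusa} in general), which in your language is the coarea inequality $\mu(Q(E))\le\int_E|Q'|\,d\mu$. Your sublevel-set bound for $Q'$ over a general local field follows, as you indicate, from factoring $Q'$ over the algebraic closure and using the unique extension of the absolute value.
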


\begin{proof} As our bounds are allowed to depend on $Q$, we may assume that $Q$ is monic.
Let $\alpha\in \KK$ be such that $|\alpha| = N$ and change variables $y \to \alpha y$ to write
$$
M^N_Q g(x) \ = \int_{B_1(0)} g(x - Q(\alpha y)) \, d\mu(y) = \int_{B_1(0)} g_{\alpha}(\alpha^{-d} x - Q_{\alpha}(y)) \, d\mu(y)
$$
where $g_{\alpha}(x) = g(\alpha^d x)$ and 
$Q_{\alpha}(y) = \alpha^{-d} Q(\alpha y) =  y^d + \alpha^{-1} a_{d-1} y^{d-1} + \ldots+\alpha^{-d}a_0$. Hence the right-hand side
above can be written as $M^1_{Q_{\alpha}} g_{\alpha}( \alpha^{-d} x)$. Since $\|g_{\alpha}\|_{L^r(\KK)} = N^{-d/r} \|g\|_{L^r(\KK)}$,
we see that matters are reduced to proving \eqref{eq:90} for $N=1$ and $Q = Q_{\alpha}$ with uniform bounds in $\alpha$.

The mapping $y \to Q_{\alpha}(y)$ is $d$-to-$1$ and we can use a
generalised change of variables formula to see that
\begin{align*}
|M^1_{Q_{\alpha}} g(x)| \lesssim \int_{|s|\le 2} |g(x -s)| |s|^{-(d-1)/d} d\mu(s)
\end{align*}
when $N\gg_{Q} 1$. Hence $M^1_{\alpha}$ is controlled by fractional integration, uniformly in $\alpha$. 
When $\KK$ is archimedean, such a change of variables formula is well-known. Recall that when $\KK = {\mathbb C}$, $|s| = s {\overline{s}}$ is the square of the usual absolute value. 

When $\KK = {\mathbb Q}_p$ is the $p$-adic field, such a formula is given
in \cite{Evans}. The argument in \cite{Evans} generalises to general non-archimedean fields (when the charateristic, if positive,
is larger than $d$).  Alternatively one can use a construction in \cite{W-igusa}, valid in any local field and valid
for any polynomial $Q$ where $Q'(x)$ does not equal to zero mod $m_\KK$ for any nonzero $x$ (we need the condition on the
characteristic of the field for this), in which the unit group $U = \bigcup_{j\in\bra{J}} U_j$ is partitioned into $J={\rm gcd}(d, q-1)$ open sets and analytic isomorphisms $\phi_j :D_j \to \phi_j(D_j)$ are constructed such that $y = \phi_j(x)$
precisely when $Q(y) = x$. For us, $Q_{\alpha}'(x) \not= 0$ mod $m_\KK$ for any nonzero $x$  
if $|\alpha| = N \gg_Q 1$ is sufficiently large.

By the Hardy-Littlewood-Sobolev
inequality (easily seen to be valid over general locally compact topological fields), we have
$$
\|M^1_{Q_{\alpha}} g \|_{L^s(\KK)} \lesssim \|g\|_{L^r(\KK)},
$$
uniformly in $\alpha$ whenever $1/s = 1/r - 1/d$, completing the proof of the lemma.
\end{proof}

We now come to the proof of Theorem \ref{sobolev-informal}.

As in the set up for Theorem \ref{thm:inverse-s}, we fix a smooth function $\varphi$ with
compact Fourier support. When $\KK$ is archimedean, let $\varphi$ be a
Schwartz function on $\KK$ so that
\begin{align*}
\ind{[1]}(\xi)\le \widehat{\varphi}(\xi)\le \ind{[2]}(\xi), \qquad \xi\in \KK.
\end{align*}
When $\KK={\mathbb R}$, we set $\varphi_N(x) = N^{-1} \varphi(N^{-1} x)$ for any
$N>0$ and when $\KK = {\mathbb C}$, we set
$\varphi_{N}(z) = N^{-1} \varphi(N^{-1/2} z)$ for any $N > 0$. When
$\KK$ is non-archimedean, we set $\varphi(x) = \ind{B_1(0)}(x)$ so that $\widehat{\varphi}(\xi) = \ind{B_1(0)}(\xi)$ and 
we set $\varphi_{N}(x) = N^{-1} \ind{[N]}(x)$ for any scale $N$.  
We restate Theorem \ref{sobolev-informal} in a more formal, precise way.

\begin{theorem}[A Sobolev inequality for $A_N^{\mathcal P}$]
\label{sobolev}
Let
$\mathcal P:=\{P_1,\ldots, P_m\}$ be a collection of polynomials such
that $1\le \deg{P_1}<\ldots<\deg{P_m}$. Let $N \gg_{\mathcal P} 1$ be a scale, $m\in\Z_+$ and $0<\delta\le 1$ be given. 
Let $1<p_1,\ldots, p_m<\infty$ satisfying
$\frac{1}{p_1}+\ldots+\frac{1}{p_m}=1$ be given. Suppose $N\gtrsim \delta^{-O_{\mathcal P}(1)}$. Then
for all $f_1\in L^{p_1}(\KK),\ldots, f_m\in L^{p_m}(\KK)$ we have
\begin{align}
\label{eq:97}
\|A_N^{\mathcal P}(f_1,\ldots,f_{j-1}, (\delta_0-\varphi_{N_j})*f_j,f_{j+1}\ldots, f_m)\|_{L^1(\KK)}
\lesssim \delta^{1/8}
\prod_{i=1}^{m}
\|f_i\|_{L^{p_i}(\KK)},
\end{align}
where $N_j \simeq \delta^{C_j} N^{\deg{P_j}}$ and $C_j$ is the parameter from Theorem \ref{thm:inverse-s}. Here 
$\widehat{\delta_0} \equiv 1$.
\end{theorem}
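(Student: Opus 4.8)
The plan is to derive Theorem~\ref{sobolev} by marrying the structural decomposition of the adjoint operators in Corollary~\ref{struct} with the $L^p$-improving bound for polynomial averages in Lemma~\ref{poly-improving}. Throughout write $N_0 = N^{\deg P_m}$ and, for the fixed index $j$, abbreviate $T(\vec f) := A_N^{\mathcal P}(f_1,\dots,(\delta_0-\varphi_{N_j})*f_j,\dots,f_m)$.

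\emph{Reductions.} By homogeneity we may assume $\|f_i\|_{L^{p_i}(\KK)}=1$ for all $i$, and by duality it suffices to bound $|\langle T(\vec f),f_0\rangle|$ uniformly over $1$-bounded $f_0$. A standard multilinear interpolation (decomposing each function into its level sets and summing — the one mildly delicate point being that we are at the endpoint exponent tuple $\sum_i 1/p_i = 1$) reduces matters to the case that each $f_i$, and also $f_0$, is the indicator of a measurable set $E_i$ (resp.\ $E_0$). Finally, $A_N^{\mathcal P}(\vec g)(x)$ depends on $g_i$ only through its restriction to $x - P_i([N])$, an interval of measure $O(N_0)$; so I would partition $\KK$ into intervals $I_k$ of measure $\simeq N_0$ — a mesh of congruent squares when $\KK=\mathbb C$, exactly as in the proofs of Theorems~\ref{thm:inversem} and \ref{thm:inverse-s} — replace $(\delta_0-\varphi_{N_j})*\ind{E_j}$ on $I_k$ by $(\delta_0-\varphi_{N_j})*\ind{E_j\cap\tilde I_k}$ (with $\tilde I_k$ a fixed dilate of $I_k$) up to a negligible error from the rapid decay of the mollifier $\varphi_{N_j}$, which lives at the smaller scale $N_j\le N_0$, and thereby reduce to a single uniform per-interval estimate in which all functions are $1$-bounded and supported in $\tilde I_k$.

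\emph{The per-interval estimate.} Fix $k$. Taking $\varphi$ even (or carrying the reflection through, which changes nothing since $\widehat{\varphi_{N_j}}\equiv 1$ is real), duality gives
\[
\langle A_N^{\mathcal P}(\ind{E_1\cap\tilde I_k},\dots,(\delta_0-\varphi_{N_j})*\ind{E_j\cap\tilde I_k},\dots),\ \ind{E_0\cap I_k}\rangle = \langle (\delta_0-\varphi_{N_j})*(A_N^{\mathcal P})^{*j}(\ind{E_1\cap\tilde I_k},\dots,\ind{E_0\cap I_k},\dots,\ind{E_m\cap\tilde I_k}),\ \ind{E_j\cap\tilde I_k}\rangle .
\]
By Corollary~\ref{struct} (legitimate since $N\gtrsim\delta^{-O_{\mathcal P}(1)}$) we write $(A_N^{\mathcal P})^{*j}(\cdots) = H_j + E_j$ with $\widehat{H_j}$ supported in $[N_j^{-1}]$, $\|H_j\|_{L^\infty(\KK)}\lesssim 1$, $\|H_j\|_{L^1(\KK)}\lesssim N_0$ and $\|E_j\|_{L^1(\KK)}\le\delta N_0$. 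Since $\widehat{\varphi_{N_j}}\equiv 1$ on $[N_j^{-1}]$, the operator $\delta_0-\varphi_{N_j}$ annihilates $H_j$; hence the pairing above equals $\langle (\delta_0-\varphi_{N_j})*E_j,\ind{E_j\cap\tilde I_k}\rangle$, whose modulus is $\lesssim\|E_j\|_{L^1(\KK)}\lesssim\delta N_0$. Independently, estimating the $y$-integral defining $A_N^{\mathcal P}$ by H\"older and applying Lemma~\ref{poly-improving} to each average $M_N^{P_i}$ (here one needs $N\gg_{\mathcal P}1$) yields a second bound of the shape $N^{-\kappa}\prod_{i}|E_i\cap\tilde I_k|^{\gamma_i}$ with $\kappa>0$ and with exponents $\gamma_i$ that can be prescribed within the admissible range $1<r<s<\infty$, $1/s=1/r-1/\deg P_i$, of Lemma~\ref{poly-improving}; this bound is the better one precisely when the $E_i$ are small relative to $N_0$.

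\emph{Interpolation and summation.} Interpolating the two per-interval bounds geometrically and summing over $k$ via H\"older's inequality for sequences — using $\sum_k|E_i\cap\tilde I_k|\lesssim|E_i|=1$ and, crucially, the constraint $\sum_i 1/p_i = 1$ to recombine the measure factors into $\prod_i|E_i|^{1/p_i}=1$ — produces a scalar gain which, after balancing the resulting powers of $N$ and $N_0=N^{\deg P_m}$, is $\lesssim\delta^{1/8}$; summing the per-interval estimates then yields \eqref{eq:97}. The main obstacle is exactly this last bookkeeping step: one must choose the H\"older exponents so that Lemma~\ref{poly-improving} applies to every $P_i$, so that the interpolated measure exponents match the prescribed $(1/p_i)_i$, and so that the $N$-powers balance against $N_0 = N^{\deg P_m}$ to leave a clean power of $\delta$ — which is what forces the quantitative hypothesis $N\gtrsim\delta^{-O_{\mathcal P}(1)}$. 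A secondary technical nuisance is carrying out the localization and controlling the mollifier tails when $\KK=\mathbb C$, where the ``intervals'' are discs and one must work with meshes of squares as in the previous sections.
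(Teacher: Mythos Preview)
Your proposal correctly isolates the two engines of the argument --- Corollary~\ref{struct} and Lemma~\ref{poly-improving} --- and the duality move to the adjoint $(A_N^{\mathcal P})^{*j}$ is exactly right. But the overall architecture you propose diverges from the paper's, and the step you yourself flag as ``the main obstacle'' is a genuine gap as written.

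The paper does not pass to indicator functions or attempt a restricted-type summation. Instead it works throughout with the $L^2$ norm of the high-pass adjoint $(\delta_0-\varphi_{N_j})*(A_N^{\mathcal P})^{*j}(\cdots)$. Step~1 uses Corollary~\ref{struct} to get a local bound $\delta^{1/2}N_0^{1/2}$ when all inputs are $1$-bounded and $f_m$ is supported on a single interval of measure $N_0$; Step~2 uses the pointwise domination $|(A_N^{\mathcal P})^{*j}(\cdots)|\le M_N^{P_m-P_j}|f_m|$ together with Lemma~\ref{poly-improving} to obtain an $L^r\to L^2$ bound, and interpolates with Step~1 to relax the $L^\infty$ control on $f_m$ to $L^2$ control with gain $\delta^{1/4}$. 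Step~3 then removes the support hypothesis on $f_m$ by \emph{squaring} the sum $\sum_I(\delta_0-\varphi_{N_j})*D_I$ and bounding the cross terms $\langle(\delta_0-\varphi_{N_j})*D_I,(\delta_0-\varphi_{N_j})*D_J\rangle$ via Schur's test: one estimate uses the local $\delta^{1/2}$ bound, the other uses the rapid spatial decay of $\varphi_{N_j}$ to produce $(1+N_0^{-1}\mathrm{dist}(I,J))^{-M}$, and their geometric mean gives $\delta^{1/4}$ with summable off-diagonal decay --- whence $\delta^{1/8}$ after the square root. Only then does multilinear interpolation produce the general $(p_1,\dots,p_m)$ bound.

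Your route --- per-interval geometric interpolation of the structure bound $\delta N_0$ with an $L^p$-improving bound, then H\"older over $k$ --- runs into trouble because the two bounds are not shaped to combine. Bound~1 carries a bare factor $N_0$ with no dependence on $|E_i\cap\tilde I_k|$; after interpolation with weight $\theta$ this leaves $(\delta N_0)^\theta$, and the H\"older summation over $k$ then requires the remaining measure exponents to sum to at least $1$, forcing $\theta$ small and hence giving at best $\delta^\theta$ with $\theta$ depending on how much $L^p$-improving can absorb $N_0^\theta=N^{d\theta}$. You never verify that this balance is achievable, let alone that it yields $\delta^{1/8}$. (Note also that Lemma~\ref{poly-improving} is applied in the paper once, to the single average $M_N^{P_m-P_j}$ dominating the adjoint, not ``to each $M_N^{P_i}$'' via H\"older in $y$.) The $L^2$/Schur mechanism in the paper sidesteps this entirely: the $N_0$ factor in Step~1 is exactly the right size to interpolate against the $L^p$-improving gain to produce a scale-free $\delta^{1/4}$ \emph{before} any summation, and the summation itself exploits almost-orthogonality rather than size.
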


\paragraph{\bf Remark} The proof of Theorem \ref{sobolev} (and its statement) implicitly assumes that $m\ge 2$ but there is a version when $m=1$, which 
will be given in Section \ref{appendix} where it is needed. 
\begin{proof}
We fix $j\in\bra{m-1}$ and recall $N_j\simeq \delta^{O(1)}N^{\deg(P_j)}$. We first prove that for every functions  $f_1,\ldots, f_{j-1}, f_{j+1},\ldots, f_{m-1}\in L^{\infty}(\KK)$ and $f_j, f_m\in L^2(\KK)$,  we have
\begin{gather}
\label{eq:94}
\begin{split}
\|A_N^{\mathcal P}(f_1,\ldots,f_{j-1}, (\delta_0-\varphi_{N_j})*f_j,f_{j+1}\ldots, f_m)\|_{L^1(\KK)}\\
\lesssim \delta^{1/8}
\bigg(\prod_{\substack{i=1\\i\neq j}}^{m-1}
\|f_i\|_{L^\infty(\KK)}\bigg)\|f_j\|_{L^2(\KK)}\|f_m\|_{L^2(\KK)}.\qquad 
\end{split}
\end{gather}
Choose $f_0\in L^{\infty}(\KK)$  so that $\|f_0\|_{L^{\infty}(\KK)}=1$ and 
\begin{align*}
\|A_N^{\mathcal P}(f_1,&\ldots,f_{j-1}, (\delta_0-\varphi_{N_j})*f_j,f_{j+1}\ldots, f_m)\|_{L^1(\KK)}\\
&\simeq |\langle A_N^{\mathcal P}(f_1,\ldots,f_{j-1}, (\delta_0-\varphi_{N_j})*f_j,f_{j+1}\ldots, f_m), f_0\rangle|\\
&=|\langle (\delta_0-\varphi_{N_j})*(A_N^{\mathcal P})^{*j}(f_1,\ldots, f_0, \ldots, f_m), f_j\rangle|
\end{align*}
By the Cauchy--Schwarz inequality it will suffice to prove
\begin{gather}
\label{eq:96}
\begin{split}
\|(\delta_0-\varphi_{N_j})*(A_N^{\mathcal P})^{*j}(f_1,\ldots, f_0, \ldots, f_m)\|_{L^2(\KK)}\\
\lesssim \delta^{1/8}
\|f_0\|_{L^\infty(\KK)}\bigg(\prod_{\substack{i=1\\i\neq j}}^{m-1}
\|f_i\|_{L^\infty(\KK)}\bigg)\|f_m\|_{L^2(\KK)}.
\end{split}
\end{gather}
By multilinear interpolation, the bounds \eqref{eq:94} imply \eqref{eq:97} and so the proof of
Theorem \ref{sobolev} is reduced to establishing \eqref{eq:96}
which will be divided into three steps. In the first two steps, we will assume that $f_m$ is supported
in some interval of measure $N_0$ where $N_0 \simeq N^{\deg(P_m)}$.
\paragraph{\bf Step 1.} In this step, we will establish the bound
\begin{gather}
\label{eq:91}
\begin{split}
\|(\delta_0-\varphi_{N_j})*(A_N^{\mathcal P})^{*j}(f_1,\ldots, f_0, \ldots, f_m)\|_{L^2(\KK)}\quad \\
\lesssim \delta^{1/2}N_0^{1/2}
\|f_0\|_{L^\infty(\KK)}\bigg(\prod_{\substack{i=1\\i\neq j}}^{m-1}
\|f_i\|_{L^\infty(\KK)}\bigg)\|f_m\|_{L^\infty(\KK)}
\end{split}
\end{gather}
under the assumption that $f_m$ is supported in an interval of measure $N_0$ (when $\KK = {\mathbb C}$, this implies 
in particular that $f_m$ is supported in a square with measure about $N_0$, which in Step 3.  will be a helpful observation). 
When $f_m$ has this support condition, 
$$
(A_N^{\mathcal P})^{*j}(f_1,\ldots, f_0, \ldots, f_m)  = (A_N^{\mathcal P})^{*j}(f_1',\ldots, f_0', \ldots, f_m')
$$
where $f_i'(x) = f_i(x) \ind{I_0}(x)$ for some interval $I_0$ of measure $O(N_0)$. To prove \eqref{eq:91}, it suffices
to assume $\|f_i\|_{L^{\infty}(\KK)} = 1$ for $i=0,1,\ldots, j-1, j+1,\ldots, m$ and so \eqref{eq:91} takes the form
\begin{align}
\label{eq:89}
\|(\delta_0-\varphi_{N_j})*(A_N^{\mathcal P})^{*j}(f_1,\ldots, f_0, \ldots, f_m)\|_{L^2(\KK)}\lesssim \delta^{1/2}N_0^{1/2}.
\end{align}
We apply the decomposition \eqref{decomp} to $(A_N^{\mathcal P})^{*j}(f_1',\ldots, f_0', \ldots, f_m')$ to write
$$
(A_N^{\mathcal P})^{*j}(f_1,\ldots, f_0, \ldots, f_m) (x) = H_j(x) + E_j(x)
$$
where $H_j$ satisfies \eqref{faq-bound} and $E_j$ satisfies \eqref{e-bound}.
Using the fact that $\widehat{H}_j\subseteq [(N_j)^{-1}]$ we conclude
that $(\delta_0-\varphi_{N_j})*H_j=0$. Thus 
\[
(\delta_0-\varphi_{N_j})*(A_N^{\mathcal P})^{*j}(f_1,\ldots, f_0, \ldots, f_m)=(\delta_0-\varphi_{N_j})*E_j.
\]
From \eqref{e-bound} and the 1-boundedness of $(A_N^{\mathcal P})^{*j}(f_1,\ldots, f_0, \ldots, f_m)$, we have
\begin{align*}
\|(\delta_0 -\varphi_{N_j})*E_j\|_{L^1(\KK)}\lesssim \delta N_0,
\quad \text{ and } \quad
\|(\delta_0 -\varphi_{N_j})*E_j\|_{L^\infty(\KK)}\lesssim 1,
\end{align*}
respectively. Therefore
\begin{align*}
\|(\delta_0 -\varphi_{N_j})*E_j\|_{L^2(\KK)}\lesssim \delta^{1/2} N_0^{1/2},
\end{align*}
establishing \eqref{eq:89} and hence \eqref{eq:91}. This completes Step 1.
\paragraph{\bf Step 2.}
We continue with our assumption that $f_m$ is supported in an interval of measure $N_0$ but
now we relax the $L^\infty(\KK)$ control on $f_m$ to $L^2(\KK)$
control and show that
\begin{gather}
\label{eq:92}
\begin{split}
\|(\delta_0-\varphi_{N_j})*(A_N^{\mathcal P})^{*j}(f_1,\ldots, f_0, \ldots, f_m)\|_{L^2(\KK)}\\
\lesssim \delta^{1/4}\|f_0\|_{L^\infty(\KK)}\bigg(\prod_{\substack{i=1\\i\neq j}}^{m-1}
\|f_i\|_{L^\infty(\KK)}\bigg)\|f_m\|_{L^2(\KK)}.
\end{split}
\end{gather}
The main tool for this will be the $L^p$-improving
estimate \eqref{eq:90} for the polynomial average $M_N^Q$.
We have a pointwise bound 
\begin{align*}
|(A_N^{\mathcal P})^{*j}(f_1,\ldots, f_0, \ldots, f_m)(x)|\le M_N^{P_m-P_j}|f_m|(x),
\end{align*}
which combined with \eqref{eq:90} (for $Q=P_m-P_j$, $d = \deg(P_m)$, $s=2$ and $r=(d+2)/2d$) yields 
\begin{gather}
\label{eq:93}
\begin{split}
\|(\delta_0-\varphi_{N_j})*(A_N^{\mathcal P})^{*j}(f_1,\ldots, f_0, \ldots, f_m)\|_{L^2(\KK)}\quad\\
\lesssim N_0^{- 1/d}\|f_0\|_{L^\infty(\KK)}\bigg(\prod_{\substack{i=1\\i\neq j}}^{m-1}
\|f_i\|_{L^\infty(\KK)}\bigg)\|f_m\|_{L^r(\KK)}.
\end{split}
\end{gather}
Interpolating \eqref{eq:91} and \eqref{eq:93} we obtain \eqref{eq:92} as desired.
\paragraph{\bf Step 3.} In this final step, we remove the support condition on $f_m$ and establish
\eqref{eq:96}. To prove \eqref{eq:96}, we may assume that $\|f_i\|_{L^{\infty}(\KK)} = 1$ for $i=0, 1, \ldots, j-1, j+1, \ldots, m-1$.
We split $f_m = \sum_{I\in\mathcal I} f_m \ind{I}$ where $I$ ranges over a partition $\mathcal I$ of $\KK$ into intervals $I$ of 
measure $N_0$. We  have seen this is possible when $\KK$ is non-archimedean or when $\KK={\mathbb R}$. This is not possible
when $\KK = {\mathbb C}$ but in this case, we can find a partition $\mathcal I$ of squares.
By Step 1. and Step 2.,  the local dual function 
$D_I \coloneqq (A_N^{\mathcal P})^{*j}(f_1,\ldots, f_0, \ldots, f_m \ind{I})$ obeys the bound
\begin{equation}\label{anghi}
\| (\delta_0 - \varphi_{N_j})* D_I \|_{L^2(\KK)} \lesssim \delta^{1/4} \| f_m \|_{L^2(I)}
\end{equation}
for each interval $I$, and we wish to establish
\[
\Big\| \sum_{I\in\mathcal I} (\delta_0 - \varphi_{N_j})* D_I\Big\|_{L^2(\KK)} \lesssim\delta^{1/8} \| f_m \|_{L^2(\KK)}.
\]
We will square out the sum. To handle the off-diagonal terms, we observe that for finite intervals
$I, J\subset \KK$ (squares when $\KK = {\mathbb C}$) of measure $N_0$ and $M>0$ and $1\le p<\infty$, we have
\begin{align}
\label{eq:95}
\|\varphi_{N_j}*(f\ind{I})\|_{L^p(J)}\lesssim_{M, p} \big(1+N_0^{-1}{\rm dist}(I, J)\big)^{-M}  \|f\|_{L^p(I)}.
\end{align}
By squaring and applying Schur's test, it suffices to obtain the decay bound
\[
\bigl| \langle (\delta_0 - \varphi_{N_j})* D_I, (1 - \varphi_{N_j})* D_J \rangle \bigr| \lesssim
\delta^{1/4} \big(1+N_0^{-1}\mathrm{dist}(I,J) \big)^{-2} \| f_m \|_{L^2(I)} \| f_m \|_{L^2(J)}
\]
for all intervals $I,J$ of measure $N_0$. By Cauchy--Schwarz and \eqref{anghi} we know
\[
\langle (\delta_0 - \varphi_{N_j})* D_I, (1 - \varphi_{N_j})* D_J \rangle \lesssim
\delta^{1/2}  \| f_m \|_{L^2(I)} \| f_m \|_{L^2(J)}.
\]
On the other hand, $D_I$ is supported in a $O(N_0)$-neighborhood of $I$, and similarly for $D_J$.  From \eqref{eq:95} and Cauchy--Schwarz, we thus have
\begin{align*}
\langle (\delta_0 - \varphi_{N_j})* D_I, (1 - \varphi_{N_j})* D_J \rangle &\lesssim
\big(1+N_0^{-1}\mathrm{dist}(I,J) \big)^{-10} \| D_I \|_{L^2(\KK)} \| D_J \|_{L^2(\KK)}\\
&\lesssim \big(1+N_0^{-1}\mathrm{dist}(I,J) \big)^{-10} \| f_m \|_{L^2(I)} \| f_m \|_{L^2(J)}.
\end{align*}
Taking the geometric mean of the two estimates, we obtain the claim in \eqref{eq:96}.
This completes the proof of Theorem \ref{sobolev}.
\end{proof}

\section{The implication Theorem \ref{sobolev-informal} $\Longrightarrow$ Theorem \ref{thm:main}}\label{appendix}

Here we give the details of Bourgain's argument in \cite{B} which allow us to pass from Theorem \ref{sobolev-informal}
to Theorem \ref{thm:main} on polynomial progressions. Let ${\mathcal P} = \{P_1, \ldots, P_m\}$ be a sequence of polynomials in $\KK[{\rm y}]$ with distinct degrees and
no constant terms. Without loss of generality, we may assume
$$
\deg{P_1} < \deg{P_2} < \cdots < \deg{P_m}
$$
and we set $d_{m-j} := \deg{P_j}$ and $d := d_0 = \deg{P_m}$ so that $d_{m-1} < \cdots < d_1 < d$.

Since the argument showing how Theorem \ref{sobolev-informal} implies Theorem \ref{thm:main} has been given
in \cite{B}, \cite{D+}, and \cite{CGL} in the euclidean setting (albeit for shorter polynomial progressions), we will only give the details
for non-archimedean fields $\KK$ where uniform notation can be employed.

We will proceed in several steps.

\paragraph{\bf Step 1} When $\KK$ is non-archimedean, the family $(Q_t)_{t>0}$ of convolution operators defined by
\[
Q_t f (x) \ = \ f * \mu_{[t]}(x)  = \ \frac{1}{t} \int_{|y|\le t} f(x - u) d\mu(u) \quad \text{for scales} \quad  t>0
\]
gives us a natural appoximation of the identity and
form the analogue of the Poisson semigroup in the non-archimedean setting. They also
give us Fourier localization since 
\begin{align}\label{poisson-fourier}
{\widehat{Q_t f}}(\xi) = {\widehat{Q_t}}(\xi) {\widehat{f}}(\xi) = \ind{[t^{-1}]}(\xi) {\widehat{f}}(\xi).
\end{align}
We will need the
following bound for $(Q_t)_{t>0}$ (see Lemma 6 in \cite{B} or  Lemma 2.1 in \cite{D+}): for $f \ge 0$ and
scales $0 < t_1, \ldots, t_m \le 1$,
\begin{align}\label{poisson}
\int_{B_1(0)} f(x) Q_{t_1} f(x) \cdots Q_{t_m} f(x) d\mu(x)  \ \ge \ \Bigl( \int_{B_1(0)} f(x) d\mu(x) \Bigr)^{m+1}.
\end{align}
The proof in the euclidean setting given in \cite{D+} established \eqref{poisson} for general approximations of the identity
but the first step is to show \eqref{poisson} for martingales $(E_k)_{k\in\N}$ defined with respect to dyadic intervals.
However a small scale $t$ in a non-archimedean field $\KK$ is the form $t = q^{-k}$ and
\[
Q_t f(x) = q^k \int_{|y|\le q^{-k}} f(x - y) d\mu(y) = \sum_{{\underline{x}}\in {\mathcal C}_k} A_{k,{\underline{x}}} f
\ \ind{B_{q^{-k}}}({\underline{x}}), \quad \text{ where }
\]
\[
{\mathcal C}_k \ = \ \{ {\underline{x}}=x_0 + x_1 \pi + \cdots + x_{k-1}\pi^{k-1} : x_j \in o_\KK/m_\KK \}
\quad \text{ and } \quad
A_{k,\underline{x}}f = q^{k} \int_{B_{q^{-k}({\underline{x}})}} f(u) d\mu(u). 
\]
Hence $(Q_t)_{t>0}$
is a martingale with respect to the dyadic structure of non-archimedean fields and so the argument in \cite{D+}
extends without change to establish \eqref{poisson}. 

\paragraph{\bf Step 2} Fix $\varepsilon>0$. Our goal is to find  a $\delta(\varepsilon, {\mathcal P}) > 0$ and $N(\varepsilon, {\mathcal P}) \ge 1$ such
that for any scale $N \ge N(\varepsilon, {\mathcal P})$ and $f \in L^0(\KK)$ with $0\le f \le 1$ satisfying $\int_\KK f d\mu \ge \varepsilon N^d$,
we have
\begin{align}\label{mult-recurrence-function}
I := \frac{1}{N^d} \iint_{\KK^2} f(x) f(x+P_1(y)) \cdots f(x+P_m(y)) d\mu_{[N]}(y) d\mu(x) \ \ge \ \delta.
\end{align}
Taking $f = \ind{S}$ with $S\subseteq \KK$ in Theorem \ref{thm:main} implies \eqref{mult-recurrence}, the desired conclusion.
We may assume the $f$ is supported in the interval $[N^d]$. 

Let $\alpha, \beta \in \KK$ satisfy $|\alpha| = N^d$ and $|\beta| = N$ and write
$$
I = \iint_{\KK^2} g(x) g(x + R_1(y)) \cdots g(x +R_m(y)) d\mu_{[1]}(y) d\mu(x),
$$
where $g(x) = f(\alpha x)$ and $R_j(y) = \alpha^{-1} P_j(\beta y)$. In particular, we have
$\int_\KK g \ge \varepsilon$. We note that $g$ is supported in $[1] = B_1(0)$. Fix three small scales $0<t_0 \ll t_1 \ll t \ll 1$
and decompose
\begin{align}\label{I-decomp}
t_1^{-1} I \ge \iint_{\KK^2} g(x) g(x + R_1(y)) \cdots g(x + R_m(y)) d\mu_{[t_1]}(y) d\mu(y) \ =: \ I_1 + I_2 + I_3,
\end{align}
where 
$$
I_1 = \iint_{\KK^2} g(x) \prod_{j=1}^{m-1} g(x + R_j(y))\  Q_t g(x +R_m(y)) d\mu_{[t_1]}(y) d\mu(x),
$$
$$
I_2 = \iint_{\KK^2} g(x) \prod_{j=1}^{m-1} g(x + R_j(y))\  [Q_{t_0} - Q_t ] g(x +R_m(y)) d\mu_{[t_1]}(y) d\mu(x) \ \ {\rm and}
$$
$$
I_3 = \iint_{\KK^2} g(x) \prod_{j=1}^{m-1} g(x + R_j(y))\  [ {\rm Id} - Q_{t_0}] g(x +R_m(y)) d\mu_{[t_1]}(y) d\mu(x).
$$
For $I_1$, we note that for $t_1 \ll_{P_m} t$,
$$
Q_t g(x + R_m(y)) = \frac{1}{t} \int_{|u|\le t} g(x + R_m(y) - u) d\mu(u) = \frac{1}{t} \int_{|u|\le t} g(x-u) d\mu(u) = 
Q_t g(x)
$$
whenever $|y| \le t_1$. For the final equality we made the change of variables $u \to u - R_m(y)$, noting that when $|y| \le t_1$, then
$|R_m(y)| \le C_{P_m} t_1 \le t$. Hence
\begin{align*}
I_1 = \iint_{\KK^2} g(x)  \prod_{j=1}^{m-1} g(x + R_j(y))\   Q_t g(x)\, d\mu_{[t_1]}(y) d\mu(x).
\end{align*}
For $I_2$ we use the Cauchy--Schwarz inequality to see that
\begin{align}\label{I_2}
I_2 \ \le \ \| Q_{t_0} g - Q_t g \|_{L^2(\KK)}.
\end{align}
For $I_3$, we will use the more precise formulation of Theorem \ref{sobolev-informal} given in Theorem \ref{sobolev}.
We rescale $I_3$, moving from $g, R_j$ back to $f, P_j$ and write
\[
I_3 = \frac{1}{N^d} \iint_{\KK^2} f(x) \prod_{j=1}^{m-1} f(x + P_j(y))\   [{\rm Id} - Q_{t_0 N^d}] f (x + P_m(y)) d\mu_{[t_1N]}(y) d\mu(x),
\]
where the function $h(x) = [{\rm Id} - Q_{t_0N^d}] f(x)$ has the property that ${\widehat{h}}(\xi) = 0$ 
whenever $|\xi| \le (t_0 N^d)^{-1}$, see \eqref{poisson-fourier}. Hence
$$
I_3 \le N^{-d} \|A^{\mathcal P}_{t_1N}(f,f, \ldots, f, [{\rm Id} - Q_{t_0N^d}] f) \|_{L^1(\KK)}
$$
and we will want to apply Theorem \ref{sobolev} to the expression on the right
with $N$ replaced by $t_1N$ and $0 < \delta \le 1$ defined by $\delta^{C_m} (N t_1)^d = N^d t_0$
or $\delta = (t_0/ t_1^d)^{1/C_m}$. In order to apply Theorem \ref{sobolev}, we will need to ensure
\begin{align}\label{N-delta}
N \ \ge \ t_1^{-1} (t_1^{d_{m-1}} / t_0)^{C'} \ge\ldots \ge \ t_1^{-1} (t_1^{d} / t_0)^{C'}
\end{align}
for some appropriate large $C' = C'_{\mathcal P}$. If \eqref{N-delta} holds, then Theorem \ref{sobolev} implies 
there exists a constant $b = b_{\mathcal P} > 0$ such that
$$
\|A^{\mathcal P}_{t_1N}(f,f, \ldots, f, h)\|_{L^1(\KK)} \lesssim_{\mathcal P} \bigl(t_0/t_1^d\bigr)^b \prod_{j=1}^m \|f\|_{L^{p_i}(\KK)}
\le \bigl(t_0/t_1^d\bigr)^b N^d
$$
since $1/p_1 + \cdots + 1/p_m = 1$ and $\|f\|_{L^{p_i}(\KK)} \le N^{d/p_i}$ for $i\in\bra{m}$  (which follows since $f$ is 1-bounded and supported in $[N^d]$).
 Hence
\begin{align*}
I_3 \lesssim_{\mathcal P} \bigl(t_0/t_1^d\bigr)^b  \quad \text{ if } \quad\eqref{N-delta}\quad \text{holds}.
\end{align*}

\paragraph{\bf Step 3} Next we decompose $I_1 = I_1^1 + I_{2}^1 + I_3^1$, where
$$
I_1^1 \ = \ \iint_{\KK^2}  g(x)  \prod_{j=1}^{m-2} g(x + R_j(y))\  Q_{t/N^{d-d_1}} g(x + R_{m-1}(y)) Q_t g(x) \, d\mu_{[t_1]}(y) d\mu(x),
$$
$$
I_{2}^1 = \iint_{\KK^2} g(x) \prod_{j=1}^{m-2} g(x + R_j(y))\ [Q_{t_0/N^{d-d_1}} - Q_{t/N^{d-d_1}} ] g(x +R_{m-1}(y)) 
Q_t g(x) d\mu_{[t_1]}(y) d\mu(x) \ \ {\rm and}
$$
$$
I_{3}^1 = \iint_{\KK^2} g(x) \prod_{j=1}^{m-2} g(x + R_j(y))\ [ {\rm Id} - Q_{t_0/N^{d-d_1}}] g(x +R_{m-1}(y)) Q_t g(x) d\mu_{[t_1]}(y) d\mu(x).
$$

For $I_1^1$, we set $s = t/N^{d-d_1}$ and note that for $t_1 \ll_{\mathcal P} t$,
$$
Q_{s} g(x + R_{m-1}(y))
= \frac{1}{s} \int_{|u|\le s} g(x + R_{m-1}(y) - u) d\mu(u)
= \frac{1}{s} \int_{|u|\le s} g(x-u) d\mu(u) = 
Q_s g(x)
$$
whenever $|y| \le t_1$. For the final equality we made the change of variables $u \to u - R_{m-1}(y)$, noting that when $|y| \le t_1$, then
$|R_{m-1}(y)| \le C_{P_{m-1}} N^{-(d-d_1)}t_1 \le s$ since $ t_1 \ll_{\mathcal P} t$. Hence
\begin{align*}
I_1^1 = \iint_{\KK^2} g(x)  \prod_{j=1}^{m-2} g(x + R_j(y))\ Q_{t/N^{d-d_1}} g(x) Q_t g(x) \, d\mu_{[t_1]}(y) d\mu(x).
\end{align*}
As in \eqref{I_2}, we have
$$
I_{2}^1 \le \|Q_{t_0/N^{d-d_1}} g  -  Q_{t/N^{d-d_1}} g \|_{L^2(\KK)}.
$$

For $I_{3}^1$, we will use Theorem \ref{sobolev}.
We rescale $I_{3}^1$, moving from $g, R_j$ back to $f, P_j$ and write
$$
I_{3}^1 = \frac{1}{N^d} \iint_{\KK^2} f(x) \prod_{j=1}^{m-2} f(x + P_j(y))\ [{\rm Id} - Q_{t_0 N^{d_1}}] f (x + P_{m-1}(y))
Q_{t N^d} f(x)  d\mu_{[t_1N]}(y) d\mu(x)
$$
where the function $h'(x) = [{\rm Id} - Q_{t_0 N^{d_1}}] f(x)$ has the property that ${\widehat{h'}}(\xi) = 0$ 
whenever $|\xi| \le (t_0 N^{d_1})^{-1}$. Hence for ${\mathcal P'} = \{P_1, \ldots, P_{m-1}\}$,
\begin{align*}
I_{3}^1 \le N^{-d} \|A^{\mathcal P'}_{ t_1 N}(fQ_{t N^d} f,f, \ldots, f, [{\rm Id} - Q_{t_0 N^{d_1}}] f) \|_{L^1(\KK)}
\end{align*}
and so, as long as \eqref{N-delta} holds, Theorem \ref{sobolev} implies there exists a constant 
$b' = b_{\mathcal P'} > 0$ such that
$$
\|A^{\mathcal P'}_{t_1N}(fQ_{t N^d} f,f, \ldots, f, h')\|_{L^1(\KK)} \lesssim_{\mathcal P'} \bigl(t_0/t_1^d\bigr)^{b'} \prod_{j=1}^m \|f\|_{L^{p_i}(\KK)}
\le \bigl(t_0/t_1^d\bigr)^{b'} N^d
$$
since $1/p_1 + \cdots + 1/p_{m-1} = 1$ and $\|f\|_{L^{p_i}(\KK)} \le N^{d/p_i}$ for $i\in\bra{m-1}$  (which follows since $f$ is 1-bounded and supported in $[N^d]$).
 Hence
\begin{align*}
I_{3}^1 \lesssim_{\mathcal P'} \bigl(t_0/t_1^d\bigr)^{b'} \quad \text{ if } \quad \eqref{N-delta} \quad \text{holds}.
\end{align*}

\paragraph{\bf Step 4} We iterate, decomposing $I_1^1 = I_{1}^2 + I_{2}^2 + I_{3}^2$, followed by decomposing 
$I_1^2 = I_1^3 + I_2^3 + I_3^3$ and so on. For each $0\le j \le m-1$, we have
\begin{align}
\label{I_1^j}
&I_1^j =  \iint_{\KK^2} g(x)  \Big(\prod_{i=1}^{m-j-1} g(x+ R_i(y))\Big) \, \Big(\prod_{i=0}^jQ_{t/N^{d-d_i}} g(x)\Big)\, d\mu_{[t_1]}(y) d\mu(x),\\
\label{I_3^j}
&I_{2}^j \le \|Q_{t_0/N^{d-d_j}} g  -  Q_{t/N^{d-d_j}} g \|_{L^2(\KK)} \quad \text{and} \quad 
I_{3}^j \lesssim_{\mathcal P} \bigl(t_0/t_1^d\bigr)^{b} \quad \text{for some} \quad b = b_{\mathcal P} > 0,
\end{align}
again if \eqref{N-delta} holds.
Strictly speaking, the estimate \eqref{I_3^j} for $I_3^j$ does not follow from Theorem \ref{sobolev} when $j=m-1$
since the proof of Theorem \ref{sobolev} assumed that the collection ${\mathcal P}$ of polynomials consisted
of at least two polynomials. Nevertheless the bound \eqref{I_3^j} holds when $j=m-1$. To see this, we apply
the Cauchy--Schwarz inequality and Plancherel's theorem to see that 
\begin{gather*}
| I_3^{m-1} |^2 \ \le \ \frac{1}{N^{d}} \int_\KK \bigl| \int_\KK  [{\rm Id} - Q_{t_0N^{d_{m-1}}}] f (x + P_1(y))
\, d\mu_{[t_1N ]}(y)\bigr|^2\, d\mu(x)\\
= \frac{1}{N^{d}} \int_{|\xi|\ge (N^{d_{m-1}} t_0)^{-1}} |{\widehat{f}}(\xi)|^2 |m_{N,t_1}(\xi)|^2\,
d\mu(\xi), \quad  \text{ where } \quad  m_{N, t_1}(\xi) := \int_{B_1(0)} {\rm e}(P_1(t_1N y)\xi)\, d\mu(y).
\end{gather*}
The oscillatory integral bound \eqref{osc-int-est} implies that $|m_{N,t_1}(\xi)| \lesssim_{\mathcal P} (t_0/t_1)^b$
whenever $|\xi| \ge (N^{d_{m-1}} t_0)^{-1}$ and so \eqref{I_3^j} for $I_3^j$ follows when $j=m-1$ since
$\|f\|_{L^2(\KK)}^2 \le N^d$.

\paragraph{\bf Step 5} From \eqref{I-decomp} and the iterated decomposition of $I_1$, we see that $t_1^{-1} I \ge A + B + C$, where
$$
 A = \int_{\KK} g(x)  \prod_{j=0}^{m-1}Q_{t/N^{d-d_j}} g(x) \, d\mu(x) \ \ge \ \varepsilon^{m+1}
$$
by \eqref{poisson}, and for some $C_{\mathcal P}>0$ we have 
\[
|B| \le C_{\mathcal P} \sum_{j=0}^{m-1} \|Q_{t_0/N^{d-d_j}} g  -  Q_{t/N^{d-d_j}} g \|_{L^2(\KK)}
\quad \text{and} \quad |C| \ \le C_{\mathcal P} \ 
\bigl(t_0/t_1^d\bigr)^{b} \le \varepsilon^{m+1}/4
\]
if $t_0 \le c_0 \, \varepsilon^{(m+1)/b} \, t_1^d$ and $c_0^b C_{\mathcal P}<1/4$ and \eqref{N-delta} holds.

Finally we claim that we can find a triple $t_0 \ll t_1 \ll t$ of small scales such that $|B| \le \varepsilon^{m+1}/4$.
If we are able to do this, then $I \ge \varepsilon^{m+1} t_1/2$ and the proof is complete.

Define $v:=-C_0\log_q( c_0 \varepsilon^{(m+1)/b})$ for some large constant $C_0\gg d$.
Choose a sequence of small scales $t_0 = q^{-\ell_j}$ and $t_1 = q^{-k_j}$ and $t=q^{-u_j}$ satisfying
\begin{align}
\label{eq:4}
\begin{gathered}
0\le u_1< dk_1+v < \ell_1 <u_2< dk_2+v < \ell_2 < \ldots < u_n<dk_n+v<\ell_n<\ldots \\
\text{ and } \qquad\ell_{n+1}\le \ell_n-C_0\log_q( c_0 \varepsilon^{(m+1)/b}).
\end{gathered}
\end{align}
 Taking $L\in\NN$ such that $L=\lfloor 16C_{\mathcal P}m^2\varepsilon^{-2(m+1)}\rfloor +1$ we claim that there exists $j\in\bra{L}$ such that
\begin{align}
\label{eq:3}
C_{\mathcal P}\sum_{n=0}^{m-1} \|Q_{q^{-\ell_j} N^{-(d-d_n)}} g  -  Q_{q^{-u_j }N^{-(d-d_n)}} g \|_{L^2(\KK)} <\varepsilon^{m+1}/4.
\end{align}
Indeed, suppose for a contradiction that \eqref{eq:3} does not hold. Then for all $j\in\bra{L}$ by the Cauchy--Schwarz inequality we have
$$
\varepsilon^{2(m+1)} \le 16C_{\mathcal P}^2m
\sum_{n=0}^{m-1} \|Q_{q^{-\ell_j} N^{-(d-d_n)}} g  -  Q_{q^{-u_j }N^{-(d-d_n)}} g \|_{L^2(\KK)}^2. 
$$
Then
\begin{gather*}
L \varepsilon^{2(m+1)} \le 16C_{\mathcal P}^2m\sum_{j=1}^L
\sum_{n=0}^{m-1}   \|Q_{q^{-\ell_j} N^{-(d-d_n)}} g  -  Q_{q^{-u_j} N^{-(d-d_n)}} 
g \|_{L^2(\KK)}^2 \\
 = 16C_{\mathcal P}^2m \sum_{n=0}^{m-1}   \int_\KK |{\widehat{g}}(\xi)|^2 \sum_{j=1}^L \bigl| \ind{[q^{\ell_j} N^{d-d_n}]}(\xi) - 
\ind{[q^{u_j} N^{d-d_n}]}(\xi) \bigr|^2 d\mu(\xi) \le 16C_{\mathcal P}^2m^2 \|g\|_{L^2(\KK)}^2
\end{gather*}
and this implies $L \le 16C_{\mathcal P}^2m^2\varepsilon^{-2(m+1)}$ since $\|g\|_{L^2(\KK)} \le 1$, which is impossible by our choice of $L$. 

Therefore there exists $j\in\bra{L}$ and  a corresponding  triple of scales $t_0 = q^{-\ell_j}\ll t_1 = q^{-k_j}\ll t=q^{-u_j}$ satisfying the desired properties for which \eqref{eq:3} is true. In particular, 
$|B| \le \varepsilon^{m+1}/4$ holds.

\paragraph{\bf Step 6} Furthermore, with these scales by \eqref{eq:4},  we have $t_0= q^{-\ell_j} \gtrsim (c_0 \varepsilon^{m+1})^{O_{\mathcal P}(m^2 \varepsilon^{-2(m+1)})}$.
In order to ensure that \eqref{N-delta} holds for every iteration in the decomposition, we set
$$
N(\varepsilon, {\mathcal P}) \ := \ (c_0 \varepsilon^{m+1})^{- O_{\mathcal P}( m^2 \varepsilon^{-2(m+1)})}
$$
so that for every $N\ge N(\varepsilon, {\mathcal P})$ condition \eqref{N-delta} holds.
Hence
$$
I \gtrsim  \varepsilon^{m+1} t_1 \gtrsim \varepsilon^{m+1} t_0 \gtrsim \varepsilon^{m+1} 
(c_0\varepsilon^{m+1})^{O_{\mathcal P}(m^2 \varepsilon^{-2(m+1)})},
$$
establishing the desired bound \eqref{mult-recurrence-function} with $\delta =  \varepsilon^{C_1 \varepsilon^{-2m-2}}$
for some $C_1>0$ depending only on ${\mathcal P}$. 

This completes the proof of Theorem \ref{thm:main}.

\paragraph{\bf Conflict of interests} None.

\paragraph{\bf Financial Support} Mariusz Mirek is supported by the NSF grant DMS-2154712 and by the NSF CAREER grant DMS-2236493. Sarah
Peluse is supported by the NSF grant DMS-2401117 and was supported by
the NSF Mathematical Sciences Postdoctoral Research Fellowship Program
under grant DMS-1903038. James Wright is supported by a Leverhulme
Research Fellowship RF-2023-709$\backslash$9

\subsection*{Acknowledgments}
We thank the referees for
careful reading of the manuscript and useful remarks that led to the
improvement of the presentation.


\begin{thebibliography}{10}

\typeout{get arXiv to do 4 passes: Label(s) may have changed. Rerun}

\bibitem{ACK}
G.I. Arkhipov, V.N Chubarikov and A.A. Karatsuba. 
Trigonometric sums in Number
Theory and Analysis, de Gruyter Expositions in Mathematics 39, Walter de Gruyter, 2004.

\bibitem{BO}
J. Bourgain.
A Szemer\'edi type theorem for sets of positive density in ${\mathbb R}^k$. 
Israel J. Math. 54 (1986), no. 3, 307--316.

\bibitem{B}
J. Bourgain. A nonlinear version of Roth's theorem for sets of positive density in the real line.
J. Analyse Math. 50 (1988), 169--181.

\bibitem{BC}
J. Bourgain and M.-C. Chang. 
Nonlinear Roth type theorems in finite fields. Israel J. Math. 221 (2017), no. 2, 853--867.


\bibitem{BFW}
V. Bergelson, H. Furstenberg and B. Weiss.
Seminar talk at Stanford University, August 1986.

\bibitem{BL}
V. Bergelson and A. Leibman. Polynomial extensions of van der Waerden’s and Szemer\'{e}di’s theorems. 
J. Amer. Math. Soc., 9(3):725--753, 1996.

\bibitem{BloomSisask}
T. F. Bloom and O. Sisask
An improvement to the Kelley-Meka bounds on three-term arithmetic progressions.
Available at arXiv:2309.02353 .

\bibitem{CGL}
X. Chen, J. Guo, X. Li.
Two bipolynomial Roth theorems in $\mathbb{R}$.
J. Funct. Anal. 281 (2021), no. 2, pp. 109024.


\bibitem{CLP}
V. Chang, I. Laba, M. Pramanik.
Finite configurations in sparse sets.
J. Anal. Math. 128 (2016), 289--335.

\bibitem{DLS}
D. Dong, X. Li, and W. Sawin. 
Improved estimates for polynomial roth type theorems in finite fields.
J. Anal. Math. 141 (2020), no. 2, 689--705.

\bibitem{D+}
P. Durcik, S. Guo, J. Roos.
A polynomial Roth theorem on the real line.
Trans. Amer. Math. Soc. 371 (2019), no. 10, 6973--6993.




\bibitem{Evans}
S.N. Evans.
The expected number of zeros of a random system of p-adic polynomials. 
Electron. Comm. Probab. 11 (2006), 278--290. 

\bibitem{H}
L.K. Hua.
On exponential sums.
J. Chinese Math. Soc. 20 (1940), 301--312.

\bibitem{HLM}
L. Huckaba, N. Lyall, A. Magyar.
Simplices and sets of positive upper density in $\Bbb R^d$. Proc. Amer. Math. Soc. 145 (2017), no. 6, 2335--2347. 


\bibitem{Lie}
A. Gaitan, V. Lie.
The Boundedness of the (Sub-)Bilinear Maximal Function Along ``Non-Flat" Smooth Curves.
Available at arXiv:1903.11002.

\bibitem{GT-primes}
B. Green and T. Tao. Linear equations in primes. 
Ann. of Math. 171 (2010), no. 3, 1753-1850.

\bibitem{GT}
B. Green and T. Tao. 
New bounds for Szemer\'edi's theorem, III: a polylogarithmic bound for $r_4(N)$.
Mathematika, 63 (2017), no. 3, 944-1040.

\bibitem{G}
W. T. Gowers. 
A new proof of Szemer\'edi's theorem. 
Geom. Funct. Anal., 11 (2001), no. 3, 465-588.

\bibitem{Kel1}
T. Keleti A 1-dimensional subset of the reals that intersects each of its translates in at most
a single point. Real Anal. Exchange, 24(2):843–844, 1998/99. 

\bibitem{Kel2}
T. Keleti. Construction of one-dimensional subsets of the reals not containing similar
copies of given patterns. Anal. PDE, 1(1):29–33, 2008.

\bibitem{KM} 
Z. Kelley and R. Meka.
Strong bounds for 3-progressions.
In \emph{IEEE 64th Annual Symposium on Foundations of Computer Science—FOCS} (2023), 933–973.

\bibitem{K}
B. Krause. {A Non-Linear Roth Theorem for Fractals of Sufficiently Large Dimension}. Available at arXiv:1904.10562.

\bibitem{K+}
B. Krause, M. Mirek, T. Tao. Pointwise ergodic theorems for non-conventional bilinear polynomial averages.
Ann. Math. 195 (2022), no. 3,  997--1109.

\bibitem{LP}
I. Laba, M. Pramanik.
Arithmetic progressions in sets of fractional dimension. 
Geom. Funct. Anal. 19 (2009), no. 2, 429--456.

\bibitem{LAC}
M. Lacey.
The bilinear maximal functions map into $L^p$ for $2/3< p\leq 1$. Ann. of Math. (2) 151 (2000), no. 1, 35--57.

\bibitem{LSS}
J. Leng, A. Sah and M. Sawhney.
Improved Bounds for Szemer\'edi's Theorem.
Available at arXiv:2402.17995


\bibitem{Li0}
X. Li.
Bilinear Hilbert transforms along curves, I, Analysis $\&$ PDE Vol. 6 (2013), No. 1, 197--220.


\bibitem{Li}
X. Li. L. Xiao. Uniform estimates for bilinear Hilbert transforms and bilinear maximal functions associated to polynomials.
Amer. J. Math. 138 (2016), no. 4, 907--962.

\bibitem{Lie0}
V. Lie. 
On the boundedness of the Bilinear Hilbert transform along “non-flat” smooth curves. American Journal of
Mathematics, 137 (2015), no. 2, 313--363.

\bibitem{LM}
N. Lyall, A. Magyar.
An Optimal Version of Sark\"{o}zy's Theorem. Available at arXiv:1010.3451.

\bibitem{LM1}
N. Lyall, A. Magyar.
Optimal polynomial recurrence. 
Canad. J. Math. 65 (2013), no. 1, 171--194.  

\bibitem{LM2}
N. Lyall, A. Magyar.
Distances and trees in dense subsets of $\mathbb{Z}^d$.
 Israel J. Math, 240, (2020), 769--790 

\bibitem{LM3}
N. Lyall, A. Magyar.
Product of simplices and sets of positive upper density in $\Bbb{R}^d$.
Math. Proc. Cambridge Philos. Soc. 165 (2018), no. 1, 25--51.

\bibitem{M}
L.J. Mordell.
On a sum analogous to a Gauss sum.
Quart. J. Math. 3 (1932), 161--162.

\bibitem{N} J. Neukirch, \emph{Algebraic Number Theory}, Springer-Verlag (1999).

\bibitem{PEL-field}
S. Peluse. 
Three-term polynomial progressions in subsets of finite fields.
Israel J. Math. 228  (2018) no. 1, 379--405.

\bibitem{PEL0}
S. Peluse.
On the polynomial Szemer\'{e}di theorem in finite fields.
Duke Math. J. 168 (2019), no. 5, 749--774.

\bibitem{PEL}
S. Peluse.
Bounds for sets with no polynomial progressions.
Forum Math. Pi (2020), e16.

\bibitem{PP}
S. Peluse and S. Prendiville. 
Quantitative bounds in the non-linear Roth theorem. Available at
arXiv:1903.02592.

\bibitem{Roth}
K. F. Roth
On certain sets of integers
J. London Math. Soc. 28 (1953), 104--109

\bibitem{FatS}
E. Stein, E
Harmonic analysis: real-variable methods, orthogonality, and oscillatory integrals.
Princeton Mathematical Series, 43. Monographs in Harmonic Analysis, III. Princeton University
Press, Princeton, NJ, 1993.


\bibitem{SW}
E. Stein and S. Wainger.
 Oscillatory integrals related to {C}arleson's theorem.
Math. Res. Lett. 8, (2001) 789--800.

\bibitem{S}
E. Szemer\'edi. 
On sets of integers containing no k elements in arithmetic progression. 
Acta Arith. 27 (1975), 199--247.  

\bibitem{W}
T. Wolff.
Lectures on harmonic analysis.
With a foreword by Charles Fefferman and a preface by Izabella Łaba. Edited by Izabella Laba and Carol Shubin. University Lecture Series, 29. American Mathematical Society, Providence, RI, 2003.

\bibitem{W-jga}
J. Wright.
From oscillatory integrals and sublevel sets to polynomial congruences and character sums. 
J. Geom. Anal. 21 (2011), no. 1, 224--240.

\bibitem{W-igusa}
J. Wright.
\newblock{On a conjecture of Igusa in two dimensions.}
\newblock{Amer. J. Math. 142 (2020), no. 4, 1193--1238.}

\end{thebibliography}
\end{document}